\documentclass[11pt,reqno]{amsart} 
\usepackage[utf8]{inputenc} 
\usepackage[left=2.4cm,right=2.4cm,top=2.7cm,bottom=2.7cm]{geometry}
\usepackage{graphicx} 
\usepackage{color}
\usepackage{cite}
\usepackage{mathtools}
\usepackage{xcolor}

\renewcommand{\wedge}{\times}

\makeatletter

\renewcommand\subsubsection{\@secnumfont}{\bfseries}%
\renewcommand\subsubsection{\@startsection{subsubsection}{3}
  \z@{.5\linespacing\@plus.7\linespacing}{-.5em}%
  {\normalfont\bfseries}}
  
  \makeatother

\usepackage{array} 
\usepackage{paralist} 
\usepackage{verbatim} 
\usepackage{subfig} 

\usepackage{esint}
\usepackage{amsfonts}
\usepackage{amsmath}
\usepackage{amsthm}
\usepackage{amssymb}
\usepackage{hyperref}
\usepackage{dsfont}

\hypersetup{
	colorlinks=true,
	linkcolor=blue, 
	citecolor=blue,
	filecolor=blue,
	urlcolor=blue,
}

\newcommand{\mel}{\MoveEqLeft}

\newtheorem{theorem}{Theorem}[section]
\newtheorem{proposition*}{Proposition\textsuperscript{*}}

\newtheorem{corollary*}{Corollary\textsuperscript{*}}
\newtheorem{proposition}[theorem]{Proposition}

\newtheorem{lemma}[theorem]{Lemma}

\theoremstyle{definition}

\newtheorem{remark}[theorem]{Remark} 

\newtheorem{example*}{Example\textsuperscript{*}}

\numberwithin{equation}{section}

\usepackage{bbm}

\def\Limes#1#2 {\lim\limits_{#1\rightarrow #2}}
\renewcommand{\Re}{\operatorname{Re}}
\renewcommand{\Im}{\operatorname{Im}}

\newcommand{\htau}{\check{\tau}}

\DeclareMathOperator{\Id}{Id}

\def\eps{\epsilon}

\def\R{\mathbb{R}}
\def\C{\mathbb{C}}

\def\T{\mathbb{T}}
\def\Z{\mathbb{Z}}

\renewcommand{\P}{\mathcal{P}}

\def\N{\mathbb{N}}

\def\XXint#1#2#3{{\setbox0=\hbox{$#1{#2#3}{\int}$ }
\vcenter{\hbox{$#2#3$ }}\kern-.59\wd0}}

\renewcommand{\div}{\grad\cdot}
\renewcommand{\epsilon}{\varepsilon}

\def\scalar#1#2{\langle #1,#2 \rangle}
\def\de{\partial}
\renewcommand{\div}{\operatorname{div}}

\def\dx{\,\mathrm{d}x}

\def\dy{\,\mathrm{d}y}

\def\dl{\,\mathrm{d}l}

\newcommand{\be}{\beta}
\newcommand{\ep}{\varepsilon}

\newcommand{\ga}{\gamma}

\newcommand{\la}{\lambda}
\newcommand{\om}{\omega}
\newcommand{\si}{\sigma}
\newcommand{\te}{\theta}

\newcommand{\ze}{\zeta}
\newcommand{\Ga}{\Gamma}

\newcommand{\tu}{\widetilde{u}_\ep}
\newcommand{\tul}{\widetilde{u}_{\ep,l}}

\def\CC{\mathbb{C}}
\def\NN{\mathbb{N}}
\def\RR{\mathbb{R}}

\def\ZZ{\mathbb{Z}}

\def\TT{\mathbb{T}}

\def\bD{\mathbb{M}}

\newcommand{\cA}{{\mathcal A}}

\newcommand{\cC}{{\mathcal C}}

\newcommand{\cH}{{\mathcal H}}

\newcommand{\cK}{{\mathcal K}}

\newcommand{\cT}{{\mathcal T}}
\newcommand{\cX}{{\mathcal X}}
\newcommand{\cY}{{\mathcal Y}}

\newcommand\cZ{\mathcal Z}

\newcommand{\pd}{\partial}
\newcommand\minus\backslash

\newcommand\lan\langle
\newcommand\ran\rangle

\DeclareMathOperator{\supp}{supp}

\newcommand{\dd}{{\,\mathrm d}}
\DeclareMathOperator\Div{div}

\newcommand{\norm}[1]{\left\lVert#1\right\rVert}

\renewcommand\leq\leqslant
\renewcommand\geq\geqslant

\DeclareMathOperator{\BS}{BS}

\title[Vortex-sheet desingularization]{Vortex-sheet desingularization \\ for three-dimensional ideal fluids}
 
\author[A. Enciso]{Alberto Enciso}
\vspace{-1cm}
\address{   
\newline
\textbf{{\small Alberto Enciso}} 
\vspace{0.15cm}
\newline \indent Instituto de Ciencias Matem\'aticas, Consejo Superior de Investigaciones Cient\'\i ficas, 28049 Madrid, \indent Spain}
\email{aenciso@icmat.es}

 \author[A. J. Fern\'andez]{Antonio J.\ Fern\'andez}
 \address{ \vspace{-0.4cm}
\newline 
\textbf{{\small Antonio J. Fern\'andez}} 
\vspace{0.15cm}
\newline \indent Departamento de Matem\'aticas, Universidad Aut\'onoma de Madrid, 28049 Madrid, Spain}
 \email{antonioj.fernandez@uam.es}

 \author[D. Meyer]{David Meyer}
 \address{ \vspace{-0.4cm}
\newline 
\textbf{{\small David Meyer}} 
\vspace{0.15cm}
\newline \indent Max Planck Institute for Mathematics in the Sciences, Inselstrasse 22-26, 04103 Leipzig,
Germany}
 \email{david.meyer@mis.mpg.de}

\keywords{}
\subjclass[2020]{}

\begin{document}
\begin{abstract}
We prove a desingularization theorem for analytic vortex sheets of the 3D incompressible Euler equations. Starting from an analytic solution of the corresponding Birkhoff-Rott system, we construct, for every sufficiently small thickness parameter  $\varepsilon>0 $, an exact Euler vorticity supported on a tubular neighborhood of width $ O(\varepsilon) $ around the sheet, and defined on a time interval that does not shrink to~0 as $\ep\to0$. We show that, as $\ep \to 0$, these vorticities converge, in the sense of distributions, to the prescribed vortex sheet. In particular, we conclude that analytic 3D vortex sheet motions arise as limits of exact Euler flows with lifespan bounded from below independently of~$ \varepsilon $. The proof hinges on the study of vorticities defined in terms of a time-dependent foliation by almost parallel surfaces and of divergence-free vector fields tangent to these surfaces.
\end{abstract}

\maketitle

\section{Introduction}
We consider the desingularization of vortex sheets for the 3D incompressible Euler equations. In vorticity form, these equations read as
\begin{equation}\label{E.vorticity_intro}
    \pd_t \om + (u\cdot\nabla)\om = (\om\cdot\nabla)u\,,
    \quad
    \om=\operatorname{curl}u\,,\quad \div u=0\,,
\end{equation}
so the velocity $u$ is recovered from the vorticity $\om$ via the Biot--Savart law. These equations are supplemented with some initial condition $\om(\cdot,0)=\om_0$, where the initial vorticity~$\om_0$ is a divergence-free vector field.

A vortex sheet is, formally, a weak solution to the Euler equations whose vorticity is supported on a time-dependent surface~$\Gamma(t)$; as a consequence of this, the velocity has a jump discontinuity across the interface $\Gamma(t)$. The study of vortex sheets is classical in fluid mechanics,  both from the analytical and the numerical point of view. In particular, the Kelvin--Helmholtz instability has been known for over a century~\cite{Kelvin, Helmholtz}; see also \cite{Majda, Sa}. However, a rigorous connection between vortex sheets and three-dimensional Euler flows remains open. The aim of this paper is to establish such a link.

For simplicity, we will present the main ideas in the simplest setting, in which the system is $1$-periodic in the first two variables, and the moving surface is a graph. This corresponds to studying the Euler equations on $\mathbb M:= \T^2\times\RR$, where $\T:=\R/\Z$. The same approach works, with minor modifications, for general vortex sheets (say, with $\Ga(0)$ being any closed embedded surface in~$\R^3$); details are given in Section~\ref{S.closedSurface}.

We write points in~$\mathbb M$ as $x=(x',x_3)$ with $x'\in\T^2$ and $x_3\in\RR$. Given $T > 0$, a vortex sheet is described by a height function $h:\T^2\times[0,T]\to\RR$ and a vector field density $\varpi:\T^2\times[0,T]\to\RR^3$. The singular vorticity $\om^{\mathrm{sing}}$ corresponding to a vortex sheet then has the form
\begin{equation}\label{E.omega_sing_intro}
    \om^{\mathrm{sing}}(x,t):= \varpi(x',t) \, \delta\bigl(x_3-h(x',t)\bigr)\,.
\end{equation}
The associated velocity field $u^{\mathrm{sing}}$ is recovered using the Biot--Savart operator on~$\mathbb M$, which reads as
\begin{equation}
\BS[w](x,t):=\frac1{4\pi}\sum_{k\in\ZZ^2\times\{0\}}\int_{[-\tfrac12,\tfrac12]^2\times\R}  w(y,t) \wedge\frac{x-y+k}{|x-y+k|^3}\,\dy \,.\label{BS3D}
\end{equation}
Note that this formula, which is well defined without a principal value when~$w$ has zero mean, coincides with the usual Biot--Savart integral when $w$ is regarded as a vector field on $\mathbb{R}^3$ that is periodic in two directions. The incompressibility of~$u^{\mathrm{sing}}$ imposes geometric constraints on the sheet strength: if $\om^{\mathrm{sing}}$ is given by \eqref{E.omega_sing_intro}, then $\varpi$ is a tangent vector field on $\Ga(t)=\{(s,h(s,t))\}$ satisfying $\operatorname{div}_s\varpi'(s,t)=0$. Recall that the normal part of the velocity is continuous across the surface~$\Gamma(t)=\{ (x',h(x',t))\}$, while the tangential component exhibits a jump.

It is well known, at least at a formal level, that smooth three-dimensional vortex sheets are governed by the Birkhoff--Rott system,
\begin{equation}\label{E.BR_graph_intro}
\left\{
\begin{aligned}
    & \pd_t h(s,t) + \tilde{u}'(s,t)\cdot \nabla_s h(s,t) = \tilde{u}^3(s,t), \\
    & \pd_t \varpi(s,t) + \partial_{s_1} (\tilde{u}^1 \varpi)(s,t) + \partial_{s_2} (\tilde{u}^2 \varpi)(s,t) 
    = (\varpi'(s,t)\cdot\nabla_s)\tilde{u}(s,t).
\end{aligned}
\right.
\end{equation}
Here
\[
\tilde{u}(s,t) = \frac{1}{4\pi}\, {\rm PV} \sum_{k \in \ZZ^2 \times \{0\}} \int_{[-\tfrac12,\tfrac12]^2} \varpi(s-\varsigma,t) \times
\frac{\ga_0(s,t)-\ga_0(s-\varsigma,t)+k}
{\lvert \ga_0(s,t)-\ga_0(s-\varsigma,t)+k\rvert^3}
\dd \varsigma\,,
\]
is the averaged velocity trace at the sheet point $\ga_0(s,t):=(s,h(s,t))$, given by  $\tilde u=(u^++u^-)/2$ on $\Ga(t)$ with $u^\pm$ being the one-sided traces of $u^{\mathrm{sing}}$ on the interface\footnote{The last term is well defined since $\varpi$ is purely tangential.}.
The first equation is the kinematic condition for the graph, while the second describes the tangential sheet-density evolution in terms of tangential derivatives of~$\tilde u$.
In spite of the emergence of the aforementioned Kelvin--Helmholtz instabilities, the Birkhoff--Rott equation is locally well posed for analytic data $(\varpi^0,h^0)$ by the classical theorem of Sulem, Sulem, Bardos, and Frisch~\cite{SS}. This is however no longer the case if one moves to Sobolev spaces, as already proven by Caflisch and Orellana \cite{CO}.

Regarding the desingularization of vortex sheets, that is, the approximation of Birkhoff--Rott solutions by smooth Euler flows with concentrated vorticities, the contrast between two and three dimensions is sharp. 

In two dimensions, beginning with the approximation by vortex patches carried out by Benedetto and Pulvirenti \cite{BP}, one now has a substantial theory, including the approximation of graph-type sheets by exponentially concentrated smooth vorticities proven by Caflisch, Lombardo and Sammartino \cite{Caflisch}, and our recent smooth compactly supported regularization~\cite{EFM}. 

By contrast, no desingularization theorem for unsteady three-dimensional Euler vortex sheets has been available up to now. Indeed, Caflisch, Lombardo and Sammartino explicitly note \cite[p.~2138]{Caflisch} that their construction does not extend to 3D Euler, and that establishing such a result would require overcoming the vanishing lifespan of the corresponding smooth approximations. The main difficulty is structural. This is because desingularization arguments for concentrated vorticity{---both for vortex sheets and vortex filaments or points---}typically rely on the existence of globally well-posed dynamics for the fluid evolution. This is of course compatible with the 2D Euler equations, and actually underlies a number of significant recent gluing constructions carried out in this context~\cite{DDMW,DDMP}. However, for 3D Euler there is no comparable global well-posedness theory. Therefore, any viable approach must start off directly with the singular vortex sheet dynamics. This is the approach developed here.

Our theorem starts from an analytic solution $(h,\varpi)$ to the Birkhoff--Rott system~\eqref{E.BR_graph_intro} on some time interval and constructs, for each sufficiently small $\ep>0$, an exact Euler vorticity supported on a tubular neighborhood of thickness $O(\ep)$ around the surface $\Ga(t)$. The existence time is independent of~$\ep$, and the smooth vorticity $\om_\ep(\cdot,t)$ converges to $\om^{\mathrm{sing}}(\cdot,t)$ as $\ep\to0$, in the distributional sense.

Since the width of the layer is $O(\ep)$, the corresponding Euler solutions have large norms for small~$\ep$ after the natural Euler scaling. Thus, the result may also be viewed as a long-lifespan construction for a class of large solutions (of a very particular layered structure). Outside the classes of stationary solutions, and symmetric flows without swirl\footnote{Note that for axisymmetric and helical flows without swirl global well-posedness is known under mild assumptions. See for instance \cite{Danchin1} and the references therein. We also refer to \cite{GuoZhao2025} and the references therein for some recent results. }, there are very few known examples of long-lifespan 3D Euler flows~\cite{Danchin2,RenTian2024,EncisoPeraltaSalasTorres2023,GuoHuangPausaderWidmayer2023,GuoPausaderWidmayer2023, LiZhou2024}.

The key ingredient of the proof is the precise way in which layers appear in the construction. Specifically, the desingularized vorticity $\om_\ep(x,t)$ is built from a family of nearby graphs
\[
    x_3=\nu_{\ep,l}(x',t)\,,
\]
with sheet densities $\varpi_{\ep,l}:\T^2\times[0,T]\to\RR^3$, labeled by $l\in[-1,1]$. The spacing between neighboring leaves is encoded by the derivative
$
    \pd_l \nu_{\ep,l}
$,
which will remain under control in our construction. The corresponding (nonsingular) Euler vorticity $\om_\ep$ can be written as 
\begin{equation}\label{E.omeps_intro}
    \om_\ep(x,t)
    = \int_{-1}^1
    \varpi_{\ep,l}(x',t) \,
    \delta\bigl(x_3-\nu_{\ep,l}(x',t)\bigr)\,\dl\,.
\end{equation}
For the solutions constructed below, the leaves remain at mutual distance comparable to~$\ep$, so $\om_\ep$ is supported on an $O(\ep)$-neighborhood of the limiting sheet. The point is that $\om_\ep$ is an exact solution of the 3D Euler equations.

We can now informally state the main result. A more precise statement can be found in Theorem~\ref{main thm} below. 
 
\begin{theorem}\label{T.main_intro}
Let $(h,\varpi)$ be an analytic solution of the three-dimensional Birkhoff--Rott system~\eqref{E.BR_graph_intro} on some time interval. Then, there exists an $\ep_0>0$ such that, for every $0<\ep\leq \ep_0$, one can find 
\[
    \nu_{\ep,l}:\T^2\times[0,T)\to\RR\,, \quad \varpi_{\ep,l}:\T^2\times[0,T) \to \R^3 \,,    \quad \textup{with } l\in[-1,1],
\]
defined on an $\ep$-independent time interval $[0,T)$, such that the vorticity
\[
    \om_\ep(x,t)
    = \int_{-1}^1
    \varpi_{\ep,l}(x',t)\,
    \delta\bigl(x_3-\nu_{\ep,l}(x',t)\bigr)\dl\in C^\infty(\mathbb M\times[0,T),\RR^3)\,,
\]
is an exact solution to the 3D Euler equations on~$\mathbb M \times [0,T)$. Moreover, 
\[
    \supp\om_\ep(\cdot,t)
    \subset
    \bigl\{x\in \mathbb M:\ |x_3-h(x',t)|\lesssim \ep\bigr\}\,,
\]
and
\[
    \om_\ep(\cdot,t)\rightharpoonup \om^{\mathrm{sing}}(\cdot,t)\,, \quad \textup{as } \ep \to 0\,,
\]
for all $t\in[0,T)$. Here, $\om^{\mathrm{sing}}$ is given by~\eqref{E.omega_sing_intro}.
\end{theorem}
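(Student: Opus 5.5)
The plan is to look for $\om_\ep$ directly in the layered form~\eqref{E.omeps_intro} and to reduce the 3D Euler equations to a coupled system of Birkhoff--Rott-type equations for the leaves $(\nu_{\ep,l},\varpi_{\ep,l})$, $l\in[-1,1]$. This system will then be solved by an abstract Cauchy--Kovalevskaya (Nirenberg--Nishida) argument in scales of analytic spaces, with constants uniform in~$\ep$.

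\textbf{Step 1: the leafwise system.} A vorticity of the form~\eqref{E.omeps_intro} is transported and stretched by $u_\ep=\BS[\om_\ep]$ provided each leaf $x_3=\nu_{\ep,l}$ moves with the normal velocity of $u_\ep$ and each density $\varpi_{\ep,l}$, tangent to its leaf and with $\div_s\varpi'_{\ep,l}=0$, satisfies the analogue of the second equation in~\eqref{E.BR_graph_intro}. The relevant velocity is now the genuine trace $u_\ep(s,\nu_{\ep,l}(s,t),t)$, which is continuous because $\om_\ep$ is not concentrated on any single leaf. One checks this by testing against $\phi\in C_c^\infty$ and using the coarea-type change of variables $x_3=\nu_{\ep,l}(x')$, which requires $\pd_l\nu_{\ep,l}>0$. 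Writing $\nu_{\ep,l}=h_l+\ep\,\eta_l$, the system becomes a family of Birkhoff--Rott equations indexed by $l$, coupled through the velocity
\[
u_\ep(s,\nu_{\ep,l})=\int_{-1}^1 K[\nu_{\ep,l},\nu_{\ep,l'}]\varpi_{\ep,l'}\,\dd l',
\]
where $K$ is the Biot--Savart kernel between two leaves.

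\textbf{Step 2: initial data and the ansatz.} Take $\nu_{\ep,l}(\cdot,0)=h^0+\ep\,l$ and $\varpi_{\ep,l}(\cdot,0)=\tfrac12\varpi^0$, with $\varpi^0$ suitably projected to be tangent and divergence free on the shifted leaf; this projection is an $O(\ep)$ analytic correction. With this choice the supports have width $O(\ep)$. Moreover, $\int_{-1}^1\varpi_{\ep,l}\,\delta(x_3-\nu_{\ep,l})\,\dd l\rightharpoonup\om^{\rm sing}$ once we know that $\nu_{\ep,l}-h=O(\ep)$ and $\tfrac12\int_{-1}^1\varpi_{\ep,l}\,\dd l-\varpi=o(1)$ uniformly on $[0,T)$. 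Smoothness of $\om_\ep$ follows from analyticity in $(s,l)$ and from $\pd_l\nu_{\ep,l}\gtrsim\ep$, provided the profile in $l$ is smooth and vanishes at $l=\pm1$ to infinite order. For instance, I would replace the constant $\tfrac12$ by a smooth bump $\chi(l)$ with $\int\chi=1$ that is preserved along the flow, since $l$ is a Lagrangian label.

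\textbf{Step 3: uniform analytic estimates (main obstacle).} The hard step is proving that the leaf velocity operator is a bounded map from the analytic space of radius $\rho$ to that of radius $\rho'<\rho$, with norm $\lesssim(\rho-\rho')^{-1}$, \emph{uniformly in $\ep$}, even though leaves at distance $\ep|l-l'|$ create nearly singular kernels. I would handle this by splitting
\[
K[\nu_l,\nu_{l'}]=K_{\rm PV}[h_l]+\bigl(K[\nu_l,\nu_{l'}]-K_{\rm PV}[h_l]\bigr).
\]
The first term is the Birkhoff--Rott principal value operator, with the same Cauchy--Kovalevskaya bounds used by Sulem--Sulem--Bardos--Frisch~\cite{SS}. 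The difference is a jump-type term: integrated in $l'$ it produces the $\pm\tfrac12\varpi$ sign contributions that average to the correct $\tilde u$, plus remainders that are bounded in complex strips. The key point is to exploit that the near-field kernel, after integrating in $l'$ against the smooth profile, is a convolution of $\varpi$ with an approximate identity of width $\ep$ in the normal direction. Such a convolution acts boundedly on analytic spaces, provided complexification in $s$ keeps $\Im\nu$ small compared to the real separation. This last proviso is delicate, and I would ensure it by shrinking the analyticity radius relative to $\|\nabla h\|$ rather than relative to $\ep$.

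Granting these estimates, the abstract Cauchy--Kovalevskaya theorem yields a solution on $[0,T)$ with $T$ independent of $\ep$. Comparison with the analytic solution $(h,\varpi)$, as another solution of a nearby system, gives the $O(\ep)$ bounds of Step 2. Propagation of $\pd_l\nu_{\ep,l}\gtrsim\ep$ follows from differentiating the system in $l$ and running the same estimates.
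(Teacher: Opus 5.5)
Your overall architecture matches the paper's. It has the layered ansatz, a leafwise system in which each $\varpi_{\ep,l}$ is stretched only by tangential derivatives of the trace $\widetilde u_{\ep,l}$, and a Nishida/Cauchy--Kovalevskaya argument in analytic scales with $\ep$-independent constants. The genuine gap is the control of the leaf spacing, which you defer to ``differentiating the system in $l$ and running the same estimates''. That does not work.

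\textbf{Why termwise estimates fail.} We have $\pd_l\widetilde u_{\ep,l}=\ep(1+\te_{\ep,l})\,\pd_3u_\ep$, and $\pd_3u_\ep=O(\ep^{-1})$ inside the layer, because it resolves the $O(1)$ velocity jump over a width $O(\ep)$. For tilted leaves ($\nabla_s\nu_{\ep,l}\neq0$), both $\pd_l\widetilde u^3_{\ep,l}$ and $\pd_l\widetilde u'_{\ep,l}\cdot\nabla_s\nu_{\ep,l}$ are therefore of size $O(1)$. Termwise bounds give only $|\pd_t\pd_l\nu_{\ep,l}|\lesssim 1$, which guarantees $\pd_l\nu_{\ep,l}>0$ only for times $O(\ep)$.

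\textbf{The missing idea is incompressibility.} The identity $\div u_\ep=0$ gives exactly $\pd_l(\widetilde u^3_{\ep,l}-\widetilde u'_{\ep,l}\cdot\nabla_s\nu_{\ep,l})=-\nabla_s\cdot(\widetilde u'_{\ep,l}\,\pd_l\nu_{\ep,l})$. This is the conservation law \eqref{eq:3.3}, which involves only one tangential derivative of the leaf velocity.

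\textbf{The spacing must sit inside the iteration.} It cannot be recovered a posteriori. The uniform kernel bounds need $\Re|d|_\C^2\gtrsim|s-\varsigma|^2+\ep^2|l-\ell|^2$, together with $|\Im\te|\ll\Re(1+\te)$ and $\Im\nabla_s\nu$ small, at every point of the ball on which the Cauchy--Kovalevskaya map must be Lipschitz. An $\ep$-independent ball in a norm that sees only $\nu_{\ep,l}$ contains configurations with crossing or coinciding leaves. This is why the paper's unknown is $w=(\nu,\pd_{s_1}\nu,\pd_{s_2}\nu,1+\te,\varpi)$ on the constrained set $\cZ_\rho$.

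\textbf{The $\nabla_s\nu$ components also matter.} With unknowns $(\nu,\varpi)$ alone, $\widetilde u$ already costs one derivative of $\nu$; you say as much with your $(\rho-\rho')^{-1}$ bound for the velocity operator. The stretching term $(\varpi'\cdot\nabla_s)\widetilde u$ would then cost two derivatives, which Nishida's theorem cannot absorb. The velocity must be Lipschitz in $w$ with no loss, which is the content of Propositions~\ref{main est} and~\ref{est kfar}.

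\textbf{Two further steps rest on a false premise.} You treat the $l$-profile of the vorticity as preserved by the flow. It is not. Different leaves move with tangential velocities that differ by $O(1)$, since $\widetilde u_{\ep,l}$ interpolates between the two one-sided traces according to how much vorticity lies below leaf $l$. The densities on different leaves are therefore sheared and stretched differently, and the Cauchy--Kovalevskaya spaces carry only $L^\infty$ control in $l$.

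First, your kernel step, which averages in $l'$ against a smooth profile to get an approximate identity, has no $l$-regularity to work with. It is also unnecessary. The paper bounds the single-pair operator $K^c_\ep$ in $\cX_\rho$ uniformly in $\delta=\ep|l-\ell|$. It flattens the chord to a Calder\'on--Zygmund kernel $x/|Mx+\delta b e_3|_\C^3$ with complex coefficients, bounded uniformly in $\delta$ by Lemma~\ref{SingInt}. What remains is an integrable $|s-\varsigma|^{-3/2}$ term.

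Second, your convergence-by-comparison step fails. The lifted sheet $(h+\ep l,\chi(l)\varpi)$ leaves an $O(1)$ residual in each individual $\varpi_{\ep,l}$ equation. Only $\int_{-1}^1\varpi_{\ep,l}\dl$ converges, and the $l$-dependent deviations of $\widetilde u_{\ep,l}$ from $\widetilde u$ cancel only in the vorticity-weighted flux, not leaf by leaf. The paper instead argues as follows:
\begin{itemize}
\item it extracts a limit by compactness in the analytic scale (Lemma~\ref{lem lsc});
\item it identifies that limit as a vortex-sheet solution;
\item it invokes uniqueness of analytic Birkhoff--Rott solutions.
\end{itemize}
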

 
The paper, which is devoted to proving the more precise version of this result given in Theorem~\ref{main thm}, is organized as follows. Section~\ref{S.setup} introduces the graph formulation, the layered ansatz, and the precise desingularization theorem (Theorem~\ref{main thm}). Section \ref{S.effective} reduces the problem to a system for the leaf variables. In other words, we derive the effective system of evolution equations. In Section \ref{S.localExistence} we prove the local well-posedness of the effective system, with a lifespan which is independent of $\ep$, and then complete the proof of Theorem \ref{main thm}. The kernel estimates required for the local well-posedness are postponed to Sections \ref{S.Kernel} and \ref{S.prop44}. Finally, Section~\ref{S.closedSurface} explains how the same argument extends to normal graphs over an arbitrary analytic closed surface in~$\RR^3$. 

\section{Geometric setup and main result} \label{S.setup}

For $\rho>0$, we set $\T^2_\rho:=\T^2\times B_\rho(0)$ and denote by $\cH(\T^2_\rho)$ the space of holomorphic functions~$f$ on the strip
\[
\big\{z=s+i\beta\in\CC^2 : s,\,\beta\in\R^2,\;  |\beta|\leq \rho\big\}\,,
\]
which satisfy the periodicity condition
\[
f(z_1,z_2)=f(z_1+1,z_2)=f(z_1,z_2+1)\,,
\]
for all $z=(z_1,z_2)$ in the strip. Further, we denote by 
$$
\cX_\rho:=\big\{f\in \cH(\T^2_\rho):  f(\T^2)\subset\R\big\}\,,
$$
the space of periodic holomorphic functions $f$ on the strip that arise as complexifications of real-valued analytic functions on $\T^2$. We regard $\cX_\rho$ as a Banach space, equipped with the norm 
\begin{align} \label{E.norm rho}
\norm{f}_{\rho}:=\sup_{|\beta|\leq \rho}\norm{f(\cdot+i\beta)}_{C^{\frac{1}{2}}(\T^2)}.
\end{align}

We will also need to keep track of families of functions $f_l\in \cX_{\rho}$ that depend on a real parameter $l\in[-1,1]$. In this case, we will say that the family $f_l$ is in the space $C^r\mathcal \cX_\rho$, for some reals $r,\rho>0$, if $l\mapsto f_l(\cdot)$ is of class $C^{\lfloor r \rfloor}((-1,1),\mathcal \cX_\rho)$ and the norm
\[
\|f_l\|_{C^r\mathcal \cX_\rho}:=\sum_{j=0}^{\lfloor r\rfloor}\sup_{|l|\leq 1}\|\partial_l^jf_l\|_\rho+\sup_{l,l'\in [-1,1]}\frac{\|{\de_l^{\lfloor r\rfloor}(f_l-f_{l'})\|}_{\rho}}{|l-l'|^{r-\lfloor r\rfloor}}
\]
is finite.

We consider an initial vorticity for
\begin{equation} \label{eq 3dEuler}
    \left\{
    \begin{aligned}
        \, & \pd_t \omega + (u \cdot \nabla) \omega = (\omega \cdot \nabla) u \quad &&\textup{in } \bD \times (0,T)\,, \\
        & u:= \BS[w] &&\textup{in } \bD \times [0,T)\,, \\
        & \omega(\cdot,0) = \omega_0 && \textup{in } \bD\,,
    \end{aligned}
    \right.
\end{equation}
which is given as an infinite superposition of almost parallel vortex sheets. To make this precise, we start by considering a family of functions $\nu_{\ep,l}^0\in \cX_{\rho_0}$, for some fixed $\rho_0>0$, depending on a small parameter $\ep>0$ and of class $C^r$ for some $1 \leq r \leq \infty$ with respect to the parameter $l \in (-1,1)$. We assume that the derivative of $\nu_{\ep,l}^0$ with respect to the parameter~$l$ is small and reasonably close to a constant, in that the functions
\begin{align}\label{E.theta0}
\theta_{\eps,l}^0(z):=\frac{\de_l\nu_{\eps,l}^0(z)}{\ep}-1
\end{align}
belong to $\cX_{\rho_0}$ and are bounded as
\begin{align}
\sup_{|l|\leq 1}\norm{\theta_{\ep,l}^0}_{\cX_{\rho_0}}<\frac{1}{3}\,.\label{theta small}
\end{align}
Note that this bound does not mean that the functions $\nu_{\eps,l}^0$ themselves are of order~$\ep$; instead, they should be understood as small perturbations of functions that do not depend on~$l$.
For technical reasons, we also assume that the imaginary parts of these functions are small, in that
\begin{align}
\sup_{|l| \leq 1}\sup_{|\beta|\leq \rho_0} \left\{ \norm{\Im \nu_{\eps,l}^0(\cdot+i\beta)}_{C^1(\T^2)}+\norm{\Im \theta_{\eps,l}^0(\cdot+i\beta)}_{C^0(\T^2)} \right\} \leq B \,, \label{im smal}
\end{align}
for some suitably small constant~$B$. Since $\nu_{\eps,l}^0(\cdot+i\beta)$ and $\theta_{\eps,l}^0(\cdot+i\beta)$ are real-valued for $\beta=0$, this can always be achieved by taking a small enough~$\rho_0$. 

We then define a family of surfaces in~$\TT^2_{\rho_0}\times\R$ as the image of the maps
\begin{align}
\gamma_{\ep,l}^0(s):=\left(s,\nu_{\ep,l}^0(s)\right)\,, \label{def gamma}
\end{align}
with small $\ep>0$ and $l\in[-1,1]$. Note that condition \eqref{theta small} ensures that the surface $\gamma_{\ep,l}^0(\T^2)\subset\R^3$, of dimension 2, does not intersect $\gamma_{\ep,l'}^0(\T^2)$ for any $l\neq l'$.

As a consequence of this, if the map $l\mapsto \gamma^0_{\ep,l}$ is of class $C^r((-1,1),\cX_{\rho_0}^3)$ for some $1\leq r\leq\infty$, \eqref{theta small} also guarantees that
\[
(l,s)\mapsto \gamma^0_{\ep,l}(s)
\]
defines a $C^r$-diffeomorphism~$\Gamma^0_\ep$ from $(-1,1) \times \T^2$ onto the set
\[
\cT^0_\ep:=\bigcup_{l\in(-1,1)}\gamma^0_{\ep,l}(\T^2)\,,
\]
which is a thin tubular neighborhood (of width at most $C\ep$) around the surface $\gamma_{\ep,0}^0(\T^2)$ in $\bD$. Note that this surface is given by the graph of the analytic real-valued function $\nu^0_{\ep,0}|_{\T^2}$. 

We next introduce a parametrized family of holomorphic vector fields $\widetilde{\omega}_{\eps,l}^0 \in \cX_{\rho_0}^3$. Together with the above diffeomorphism~$\Gamma^0_\ep$, we will use these vector fields to define an initial vorticity $\om^0_\ep:\bD \to\RR^3$, supported on~$\cT^0_\ep$, by means of the formula
\begin{equation} \label{def omega0}
\omega_\eps^0(x)=\begin{cases}
\widetilde{\omega}_{\ep,l}^0(s) &\text{if $x=\gamma_{\ep,l}^0(s)\in\cT^0_\ep$}\,,\\
0 &\text{if } x\not\in\cT^0_\ep\,.
\end{cases}
\end{equation}
We impose the following conditions on this family of holomorphic vector fields:
\begin{enumerate}
    \item {Support:} $\widetilde{\omega}_{\eps,l}^0 \equiv 0$ for all $|l| \geq 1/2$. \label{itemone}
    \item {Regularity:} The map $l \mapsto \widetilde{\omega}_{\eps,l}^0$ is of class $C^r((-1,1), \cX_{\rho_0}^3)$ for some real $r > 1$, which we will assume is not an integer.
    \item {Tangency:} For all $s\in\T^2$ and all $l\in(-1,1)$, the vector  $\widetilde{\omega}_{\eps,l}^0(s)$ is tangent to the surface $\gamma_{\eps,l}^0(\T^2)\subset\bD$ at the point $\gamma_{\ep,l}^0(s)$. \label{itemthree}
    \item {Divergence-free:} With $\om^0_\ep$ defined as above, $\Div\om^0_\ep =0$ in~$\bD$. \label{itemfour}
    \item Zero mean: $\int_{\bD} \om^0_\ep\dd x=0$. \label{itemfive}
\end{enumerate}
For later purposes, we also set the ``vorticity density''
$
\varpi_{\ep,l}^0:= \ep(1+\te_{\ep,l}^0) \widetilde{\omega}_{\ep,l}^{\,0}\,.
$

See also figure \ref{fig1} below for a sketch.

\begin{figure}[h]
\includegraphics[width=0.42\textwidth]{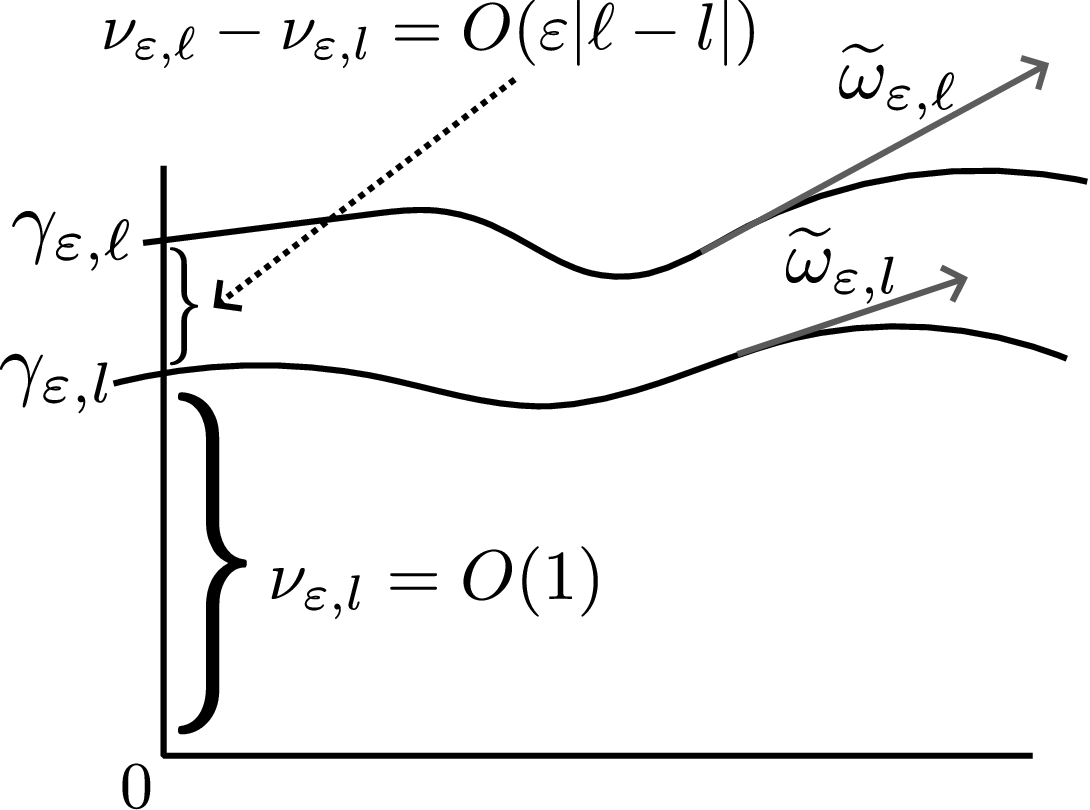}
\caption{A sketch of the two-dimensional cross-section of the arrangement of some of the quantities\label{fig1}}
\end{figure}

\begin{remark} \label{R.initialdata}
    It is easy to construct functions~$\widetilde{\om}^0_{\ep,l}$ with these properties. The only point that is perhaps unclear is that~\ref{itemthree} and~\ref{itemfour} do not clash. To see that this is the case, parametrize a vector field with the tangency condition~\ref{itemthree} as
    \[
    \widetilde{\om}^0_{\ep,l}(s)= a^1_{\ep,l}(s)\, \partial_{s_1}\gamma^0_{\ep,l}(s)+ a^2_{\ep,l}(s)\, \partial_{s_2}\gamma^0_{\ep,l}(s)\,,
    \]
    with arbitrary functions $a^j_{\ep,l}\in\cX_{\rho_0}$ whose dependence on~$l$ is of class~$C^r$, and which vanish for $|l|\geq1/2$. In~$\cT^0_\ep$, the fact that
    \[
    \Div \om_\ep^0= 0\ \Longleftrightarrow\ \frac1{J_{\ep,l}^{\, 0}}\partial_{s_1}(J_{\ep,l}^{\, 0}\,a^1_{\ep,l})+\frac1{J_{\ep,l}^{\, 0}}\partial_{s_2}(J_{\ep,l}^{\, 0}\,a^2_{\ep,l})=0 \qquad \text{for all } (l,s)\in(-1,1)\times \T^2\,,
    \]
    where $J_{\ep,l}^{\, 0}:=\partial_l\gamma^0_{\ep,l}(s)\cdot \partial_{s_1}\gamma^0_{\ep,l}(s)\times \partial_{s_2}\gamma^0_{\ep,l}(s) $ is the Jacobian, provides the only constraint for the coefficients, other than the soft integral condition \ref{itemfive}. A useful concrete example to keep in mind is 
    \[
    \nu_{\eps,l}^0(z):=\nu_0(z)+\eps l\,,\qquad \widetilde{\omega}_{\eps,l}^0(z)=\eta(l)\,\widetilde{\omega}_0(z)\,,
    \]
    where the vorticity is supported in a small tubular neighborhood of the graph of $\nu_0|_{\T^2}$, where $\widetilde{\omega}_0$ is the complexification of an analytic tangent vector field tangent to this surface and divergence-free with respect to the induced metric, and where $\eta$ is a cutoff function.
\end{remark}

\begin{remark}
    The above representation of~$\om^0_\ep$ is not unique. For instance, one can rescale $l$ to get a different family of functions $\gamma_{\ep,l}^0,\ \widetilde{\om}^0_{\ep,l}$ defining the same~$\om^0_\ep$.
\end{remark}

Under the conditions \ref{itemone}--\ref{itemfive}, the initial vorticity~\eqref{def omega0} is of class~$C^r(\bD)$, with $r > 1$ a non-integer. It is therefore well-known that, for any $\ep>0$, there exist some maximal time of existence $T_\ep\in(0,\infty]$ and a unique local strong solution~$\om_\ep$ to the Euler equation~\eqref{eq 3dEuler} in $\bD\times(0,T_\ep)$ with initial vorticity~$\om^0_\ep$. Furthermore, $\om_\ep \in C_{\mathrm{w}}([0,T_\ep),C_c^r(\bD))$, where the subscript ``w'' denotes weak-$*$ continuity. We will denote by
\[
u_\ep:= \BS[\om_\ep]
\]
the corresponding fluid velocity field, which is of class $L^\infty([0,T_\ep),C^{r+1}(\bD))\cap W^{1,\infty}([0,T_\ep), C^r(\bD))$.

It is also standard that, by a classical formula due to Cauchy (see e.g.\ \cite[Formula 2.116]{Majda}), the unique solution to \eqref{eq 3dEuler} is given by 
\begin{align}
\om_\ep(x,t) :={\rm D} X_t( X_t^{-1}(x))  \om_\eps^0( X_t^{-1}(x))\,,\label{Cauchy}
\end{align}
where $ X_t$ denotes the particle-trajectory map at time~$t$, namely the homeomorphism defined by the value at time~$t$ of the unique solution to the ODE
\begin{equation} \label{E.flowEquation}
\frac{\partial}{\partial t}  X_t(x) = u_\ep( X_t(x),t)\,, \qquad X_0(x) = x\,.
\end{equation}
Equivalently, the vorticity at time~$t$ is the push-forward of~$\om_\ep^0$ by~$X_t$, that is,
\[
\om_\ep(\cdot, t)=X_{t\#} \om_\ep^0\,.
\]
Therefore, the support of~$\om_\ep(\cdot,t)$ is contained in the set
\begin{equation}
\cT_\ep(t):= X_t(\cT^0_\ep)\,.\label{def teps}
\end{equation}

Let us now focus on the structure of the solution~$\om_\ep$ at small but positive times. For this, let us respectively denote by
\[
\Pi':\bD\to\T^2\,,\qquad  \Pi:\bD\to \RR
\]
the projectors mapping $\bD=\T^2\times\RR$ to its factors.  
It follows from the regularity of the solution~$\om_\ep$ that for all small enough~$t\geq0$, there exists a diffeomorphism $\sigma_t$ of~$\TT^2$ such that
\begin{equation} \label{E.sgraph}
  \Pi'[ X_t(\ga_{\ep,l}^0(\si_t(s)))] = s \,, \quad \textup{for all } s \in \TT^2\,.
\end{equation}
Note that $\si_0$ is the identity. Using this map, we reparametrize the transported surfaces as 
\begin{equation} \label{def gamma evolved}
    \gamma_{\ep,l}(s,t) :=  X_t(\ga_{\ep,l}^0(\si_t(s)))\,,
\end{equation}
and stress that, for small~$t$, the surface~$\gamma_{\ep,l}(\TT^2\times\{t\})$ is a graph over~$\T^2$. In fact,
\[
\gamma_{\ep,l}(s,t)=\left(s,\nu_{\eps,l}(s,t)\right)\,,
\]
with
\begin{equation}  \label{def nu eps l}
\nu_{\eps,l}(s,t) :=\Pi[ X_t(\ga_{\ep,l}^0(\si_t(s)))]\,.
\end{equation}
Observe that, by the continuity of the flow map,  the intersection $\gamma_{\eps,l}(\T^2\times\{t\})\cap \gamma_{\eps,l'}(\T^2\times\{t\})$ is empty if $l\neq l'$, so the map
\begin{equation}\label{E.diffeogamma}
(l,s)\mapsto \gamma_{\ep,l}(s,t)
\end{equation}
is a $C^{r}$-diffeomorphism $(-1,1)\times\TT^2\to \mathcal{T}_\eps(t)$. 
This property holds, in fact, as long as 
$$
\sup_{(l,s)\in(-1,1)\times\T^2}|\theta_{\eps,l}(s,t)|<1\,,
$$
where
\begin{equation} \label{E.thetaepsl}
\theta_{\eps,l}(s,t):=\frac{\partial_l \nu_{\ep,l}(s,t)}\ep-1\,.
\end{equation}
We can use this function to write
\begin{equation}  \label{def nu eps l2}
\nu_{\ep,l}(s,t) - \nu_{\ep,\ell}(s,t) := \ep \int_\ell^l \big( 1+ \theta_{\eps,\mu}(s,t) \big) \dd \mu\,.
\end{equation}
Since the support of $\om_\ep(\cdot,t)$ is contained in~$\cT_\ep(t)$, for small enough~$t$, there exists a unique family of functions $\widetilde{\omega}_{\eps,l}(\cdot,t):\T^2\rightarrow \R^3$, which are of class~$C^r$, such that
\begin{equation}\label{E.ansatzomega}
    \om_\ep(x,t)=\begin{cases}
\widetilde{\omega}_{\ep,l}(s,t) &\text{if $x=\gamma_{\ep,l}(s,t)\in\cT_\ep(t)$}\,,\\
0 &\text{if } x\not\in\cT_\ep(t)\,.
\end{cases}
\end{equation}
The vorticity should be interpreted as the compactly supported time-dependent distribution that acts on $\Phi\in C^0(\bD,\RR^3)$ as
\begin{equation*} 
\langle  \om_\ep(\cdot,t), \Phi\rangle := \int_{\TT^2} \int_{-1}^1 \widetilde{\om}_{\ep,l}(s,t) \cdot \Phi(\gamma_{\ep,l}(s,t)) \,J_{\ep,l}(s,t) \dd l \dd s\,,
\end{equation*}
where $J_{\ep,l}$ is the Jacobian given by
\begin{equation} \label{E.JacobianRemark}
    J_{\ep,l} := \pd_l \ga_{\ep,l} \cdot (\pd_{s_1} \ga_{\ep,l }\times  \pd_{s_2} \ga_{\ep,l}) = \pd_l \nu_{\ep,l} = \ep (1+ \te_{\ep,l})\,.
\end{equation}
For later purposes, we also set the ``vorticity density''
$$
\varpi_{\ep,l}:= J_{\ep,l}\, \widetilde{\omega}_{\ep,l} = \ep(1+\te_{\ep,l}) \widetilde{\omega}_{\ep,l}\,.
$$

Having this notation and this geometric setting at hand, we can now give a more precise version of our main result:

\begin{theorem}\label{main thm}
Let $\rho_0 > 0$ be a real number and $r > 1$ a non-integer. For $\ep \in (0,1)$, consider families of functions $(\nu_{\ep,l}^0)_\ep\subset  C^r \cX_{\rho_0} $ and $ (\widetilde{\omega}_{\ep,l}^0)_\ep \subset C^r \cX_{\rho_0}^3$ satisfying the bounds~\eqref{E.theta0}-\eqref{theta small}, the structural conditions \ref{itemone}-\ref{itemfive}, and such that
\[
C_0:=\sup_{0<\ep<1}\sup_{|l|\leq 1} \left(\, \|\nu_{\ep,l}^{0}\|_{ \cX_{\rho_0}}+ \|\nabla\nu_{\ep,l}^{0}\|_{ \cX_{\rho_0}^2} + \|\varpi_{\ep,l}^0\|_{\mathcal \cX_{\rho_0}^3}\,\right)<\infty\,.\label{def C0}
\]
Then, there exist $\ep_0 \in (0,1)$ such that, for every $\ep \in (0,\ep_0]$, there exist a constant $B_0 > 0$ and  a time $T > 0$, depending on $C_0$ but not on $\ep$, such that, if the bound~\eqref{im smal} holds for some $B<B_0$, one has:
\begin{itemize}
    \item[\textbf{(i)}] The strong solution $\om_\ep$ to the Euler equations~\eqref{eq 3dEuler} with initial datum \eqref{def omega0} exists up to time~$T$ and $\omega_\ep \in C_{\mathrm{w}}([0,T), C_c^r(\bD))$. Furthermore, it is of the form~\eqref{E.ansatzomega} for all $t \in [0,T)$, with $\widetilde{\om}_{\ep,l}(s,t)$ tangent to the surface defined by \eqref{def gamma evolved}. \smallbreak  
    \item[\textbf{(ii)}] For all $t \in [0,T)$, the support of $\om_\ep(\cdot,t)$ is contained in the tubular neighborhood~$\cT_\ep(t)$ given in \eqref{def teps}, of width less than~$ C_1 \ep$, for some $C_1 > 0$.
    \smallbreak
    \item[\textbf{(iii)}] If the initial data converges as
    $$
    \nu_{\ep,0}^0 \to h^0 \quad \textup{in }\cX_{\rho_0}\,, \quad \textup{and} \quad \int_{-1}^1 \varpi_{\ep,l}^0 \dd l \to \varpi^0 \quad \textup{in }\cX_{\rho_0}^3\,,\ \textup{ as } \ep \to 0^{+}\,,
    $$
    then the solution converges to the vortex sheet in the following sense: let $(h, \varpi)$ be the unique analytic solution to \eqref{E.BR_graph_intro} with initial data $(h^0, \varpi^0)$. Then, it follows that, as $\ep \to 0^{+}$,
    \begin{align*}
    & {\textbf{(a)}}\ \nu_{\eps,l}\rightarrow h\quad \text{in $L^\infty([0,T],C^k(\T^2))$ for all $l\in (-1,1)$ and all $k\in \N$}\,,\\
    & {\textbf{(b)}}\ \int_{-1}^1\varpi_{\eps,l}\dd l\rightarrow \varpi \quad \text{in $L^\infty([0,T],C^k(\T^2))$ for all $k\in \N\,.$}
    \end{align*}
In particular, $\omega_\eps(\cdot,t)$ converges distributionally to  $\omega^{\mathrm{sing}}(\cdot,t)$  given in \eqref{E.omega_sing_intro}.    
\end{itemize}
Furthermore, if $ (\widetilde{\omega}_{\ep,l}^0)_\ep \subset C^r \cX_{\rho_0}^3$ and $({\nu_{\ep,l}^0})_\ep \subset C^r \cX_{\rho_0}$ for all $r>1$, then $\omega_\ep \in C^\infty([0,T)\times\bD)$. 
\end{theorem}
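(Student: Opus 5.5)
We will prove Theorem~\ref{main thm} by reducing the Euler evolution of the layered vorticity~\eqref{E.ansatzomega} to a closed system for the leaf variables $(\nu_{\ep,l},\varpi_{\ep,l})_{l\in[-1,1]}$. We then solve that system in a scale of analytic spaces $C^r\cX_\rho$ with $\rho$ decreasing in time, in the spirit of the abstract Cauchy--Kovalevskaya theorem (Nirenberg--Nishida). This is the mechanism behind Sulem--Sulem--Bardos--Frisch, and it is the only tool compatible with the Kelvin--Helmholtz ill-posedness.

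\emph{Step 1: the effective equations.} Since $\om_\ep(\cdot,t)=X_{t\#}\om_\ep^0$ and each surface $X_t(\ga^0_{\ep,l}(\T^2))$ is material, differentiating~\eqref{def gamma evolved} in $t$ gives the kinematic condition
\[
\pd_t\nu_{\ep,l}+u_\ep'(\ga_{\ep,l})\cdot\nabla_s\nu_{\ep,l}=u_\ep^3(\ga_{\ep,l}).
\]
The Cauchy formula~\eqref{Cauchy} shows that $\widetilde\om_{\ep,l}$ stays tangent to the leaves. Transport of the $2$-form $\iota_\om\dd x$, equivalently the layer-wise divergence-free condition of Remark~\ref{R.initialdata} with Jacobian $J_{\ep,l}=\ep(1+\te_{\ep,l})$, gives exactly the second Birkhoff--Rott equation for each leaf:
\[
\pd_t\varpi_{\ep,l}+\pd_{s_j}(u^j_\ep(\ga_{\ep,l})\varpi_{\ep,l})=(\varpi'_{\ep,l}\cdot\nabla_s)\,[u_\ep(\ga_{\ep,l})].
\]
The velocity on the leaf is
\[
u_\ep(\ga_{\ep,l}(s))=\int_{-1}^1\!\int \varpi_{\ep,\ell}(s-\varsigma)\times K\bigl(\ga_{\ep,l}(s)-\ga_{\ep,\ell}(s-\varsigma)\bigr)\dd\varsigma\dd\ell,
\]
with $K$ the periodized Biot--Savart kernel of~\eqref{BS3D}. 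Using~\eqref{def nu eps l2}, the difference $\ga_{\ep,l}(s)-\ga_{\ep,\ell}(s-\varsigma)$ is the single-sheet difference plus the vertical offset $\ep\int_\ell^l(1+\te)$. We also derive the evolution of $\te_{\ep,l}$ by differentiating the kinematic equation in $l$. This yields a closed system $\pd_t U=F_\ep(U)$ for $U=(\nu_{\ep,l},\te_{\ep,l},\varpi_{\ep,l})$.

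\emph{Step 2: uniform kernel estimates (main obstacle).} We must show that $F_\ep$ maps $\{\|U\|_{C^r\cX_\rho}\le R\}$ into $C^r\cX_{\rho'}$ for all $\rho'<\rho$, with the bound
\[
\|F_\ep(U)-F_\ep(V)\|_{\rho'}\le \frac{C}{\rho-\rho'}\|U-V\|_\rho,
\]
and that $C$ is \emph{independent of $\ep$}. The difficulty is that the kernel evaluated between two leaves at distance $\sim\ep|l-\ell|$ is nearly singular, and after complexification the denominator $|\cdot|^3$ must be replaced by a holomorphic extension whose nonvanishing needs~\eqref{theta small} and the smallness~\eqref{im smal} of imaginary parts; this is where $B_0$ enters. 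I would first split the kernel into its principal part, linearized at the leaf $\ga_{\ep,l}$ and acting as a Riesz-type transform with an extra Poisson-like factor in the offset, and a smoother remainder. The principal part behaves like the Poisson extension $e^{-\ep|l-\ell||\xi|}$ of a Calderón--Zygmund operator. It therefore obeys $C^{1/2}$ bounds uniform in the offset, since the Poisson semigroup is uniformly bounded on Hölder spaces. The $l$-integration is harmless, and the $C^r$ regularity in $l$ is propagated by differentiating under the integral. The loss of one derivative in $\nabla_s$ and $\pd_{s_j}$ is absorbed by the Cauchy estimate on the shrinking strip. The remainder would be estimated by commutator or Taylor arguments, again uniformly in the offset.

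\emph{Step 3: conclusion.} The abstract Cauchy--Kovalevskaya theorem gives a solution on $[0,T)$ with $T\sim\rho_0/C(C_0)$ independent of $\ep$, keeping $|\te_{\ep,l}|<1/2$. Uniqueness of strong Euler solutions identifies it with $\om_\ep$, which proves (i). Property (ii) follows from $\nu_{\ep,1}-\nu_{\ep,-1}=\ep\int(1+\te)\le 3\ep$. For (iii), set $\varpi_\ep:=\int\varpi_{\ep,l}\dd l$ and $h_\ep:=\nu_{\ep,0}$. The system for $(h_\ep,\varpi_\ep)$ is Birkhoff--Rott plus errors of size $O(\ep)$ in $\cX_{\rho'}$, because the offsets are $O(\ep)$ and the limit of the Poisson-regularized principal value is the symmetric average $(u^++u^-)/2$, i.e.\ $\tilde u$. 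Cauchy--Kovalevskaya stability then gives convergence in $\cX_{\rho'}$, hence in every $C^k(\T^2)$ by Cauchy estimates, and $\nu_{\ep,l}-\nu_{\ep,0}=O(\ep)$ handles all $l$. Distributional convergence follows by testing against $\Phi$. Smoothness when $r$ is arbitrary follows by running the argument for every $r$.
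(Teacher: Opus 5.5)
Your plan for (i)--(ii) follows the paper's architecture. It derives leaf equations in which, thanks to propagated tangency, only tangential derivatives of $u_\ep$ enter the stretching term. It applies Nishida's theorem in a strip-analytic scale. It uses a near/far splitting with the principal part linearized at the leaf and bounded uniformly in the offset $\delta=\ep|l-\ell|$ (Lemma~\ref{SingInt}), plus a remainder.

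\textbf{The gap is in (iii).} It is false that the velocity on each leaf tends to the Birkhoff--Rott average $\tilde u$. To leading order, sheet $\ell$ induces on leaf $l$ its principal value plus $\pm\frac12$ of its jump, with the sign of $l-\ell$ (Plemelj). For flat leaves and $\varpi'_{\ep,l}=\nabla^\perp\psi_l$, with $\nabla^\perp=(-\pd_2,\pd_1)$, this reads
\[
\widetilde u'_{\ep,l}=\tilde u'+\tfrac12\int_{-1}^1\operatorname{sign}(l-\ell)\,\nabla\psi_\ell\,\dd\ell+o(1)\,.
\]
This is an $O(1)$ deviation; leaves with $|l|>\frac12$ simply see $u^\pm$. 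The equation for $\varpi_\ep=\int\varpi_{\ep,l}\,\dd l$ is quadratic, so these deviations enter $\int[\pd_{s_j}(\widetilde u^j_{\ep,l}\varpi_{\ep,l})-(\varpi'_{\ep,l}\cdot\nabla_s)\widetilde u_{\ep,l}]\,\dd l$ at order one, and small offsets do nothing to remove them. They do cancel, but only through a structural identity you have not identified. In the flat case, the leafwise divergence-free structure turns the correction into $\nabla^\perp\bigl(\frac12\iint\operatorname{sign}(l-\ell)\,\nabla\psi_\ell\cdot\nabla\psi_l\,\dd\ell\,\dd l\bigr)$ up to sign, which vanishes by antisymmetry. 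In general you would need a curved version of this with $O(\ep)$ errors, and a quantitative Plemelj expansion of the near-singular kernel in $\cX_{\rho'}$. Neither follows from your Step~2 bounds, so ``Birkhoff--Rott plus $O(\ep)$'' is unjustified as written.

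The paper avoids all of this. Uniform bounds and the compact embedding $\cX_\rho\hookrightarrow\cX_{\rho'}$ (Lemma~\ref{lem lsc}) give subsequential limits. The velocity $u_\ep$ is bounded in $L^\infty$ (bound on the support plus the maximum principle) and converges in $L^1_{\rm loc}$, so the limit is a weak Euler solution carried by an analytic graph. Uniqueness of analytic Birkhoff--Rott solutions then identifies it and gives convergence of the whole family.

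\textbf{Smaller points in (i).}

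Propagating $C^r$ regularity in $l$ ``by differentiating under the integral'' fails. The $l$-derivative of the kernel at offset $\ep\int_\ell^l(1+\te)$ is $\ep(1+\te_{\ep,l})\pd_3K$, whose $\varsigma$-convolution has size $|l-\ell|^{-1}$ unless you exploit cancellation. The jump at $\ell=l$ also contributes a local term. The paper instead works with $\sup_l$ norms ($\cY_\rho$) and recovers the $C^r$ and $C^\infty$ regularity of $\om_\ep$ a posteriori by weak--strong uniqueness, with $T$ independent of $r$.

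Your phase space must contain $\nabla_s\nu_{\ep,l}$ at the same analyticity level, as the paper does with $w_2,w_3$ in $\cZ_\rho$. The kernel bounds need $C^{3/2}$ control of the leaves; without it $F_\ep$ loses two derivatives and Nishida's theorem does not apply.

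The $\te$-equation must be put in the conservative form \eqref{eq:3.3} using $\operatorname{div}u_\ep=0$. Differentiating the kinematic equation in $l$ produces $(1+\te_{\ep,l})\pd_3u_\ep(\ga_{\ep,l})$, which is $O(\ep^{-1})$ term by term.

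Finally, you can invoke uniqueness of strong Euler solutions only after proving the converse of Step~1. You must show that the layered field built from a solution of the leaf system is a divergence-free weak Euler solution whose Biot--Savart velocity restricts to $\widetilde u_{\ep,l}$ on the leaves (Lemma~\ref{weak sol}). Comparing in the other direction would require knowing a priori that the classical solution has analytic leaf data.
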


\subsection{Sketch of the proof}  \label{S.sketch}

In one phrase, our approach to establishing the existence part of Theorem \ref{main thm} is to rephrase \eqref{eq 3dEuler} as an evolution system for the leaf variables $\nu_{\ep,l}$, $\te_{\ep,l}$ and $\widetilde{\om}_{\ep,l}$ (see \eqref{def nu eps l}, \eqref{E.thetaepsl} and \eqref{E.ansatzomega}), and then to show existence, uniqueness and boundedness of solutions to this system, with a lifespan independent of $\ep$. 

First, in Lemma \ref{eff system}, we derive an effective system of equations for $\nu_{\ep,l},\, \theta_{\ep,l}$ and $\widetilde{\omega}_{\ep,l}$.  Once we have this system, \eqref{eq:3.2}--\eqref{eq:3.5}, in Lemma \ref{weak sol}, we show that if we have a solution to \eqref{eq:3.2}--\eqref{eq:3.5}, not necessarily coming from a solution to \eqref{eq 3dEuler}, then we can construct a solution to \eqref{eq 3dEuler} from such data. Moreover, this solution will keep the desired structure, see \eqref{E.ansatzomega}. 

Taking this connection into account, the proof of Theorem \ref{main thm} (i) and (ii) is reduced to establishing an existence result for \eqref{eq:3.2}--\eqref{eq:3.5} with a lifespan independent of $\ep$. Section \ref{S.localExistence} is mainly devoted to the proof of such a result. The proof relies on an abstract Cauchy-Kovaleskaya type theorem, namely Nishida's Theorem (cf. Theorem \ref{Nishida}).  

The key step to show that we can apply Theorem \ref{Nishida} to \eqref{eq:3.2}--\eqref{eq:3.5} is to prove uniform bounds on the velocity $u_\ep$, restricted to the surfaces $\gamma_{\eps,l}$, and taken as a function of $s$ in the analytic function spaces $\cX_\rho$.
The tangential nature of the initial vorticity and the fact that this structure is propagated under the evolution (cf.\ Lemma \ref{tan lemma}) are essential here; otherwise the $l$-derivative of $u_\ep$ (which is of order $\eps^{-1}$ and diverges to the jump of the velocity in the limit) would contribute to the vortex stretching, and would destroy the uniform time of existence.
The other crucial observation is that we can use the prescribed structure of the vorticity as a superposition of vortex sheets to decompose the velocity into the contributions of the individual sheets. Therefore, we only need to show the boundedness of a variant of the usual Birkhoff--Rott operator (see Proposition \ref{main est}) which describes the velocity that a vortex sheet induces on an almost parallel nearby sheet. In order to streamline the presentation, we will postpone many technical details to Sections \ref{S.Kernel} and \ref{S.prop44}.

Once we have the local well-posedness for \eqref{eq:3.2}--\eqref{eq:3.5} with a lifespan which is independent of $\ep$, we conclude Section \ref{S.localExistence} by proving the distributional convergence of the constructed solutions to the solution of the Birkhoff--Rott equation. Roughly speaking, the argument here is that the solution must converge to a weak solution whose vorticity is a measure concentrated on a curve by compactness, which must also be analytic by our a priori bounds and lower semicontinuity. The uniqueness of analytic solutions to the Birkhoff-Rott equation then ensures that the limit is the solution to Birkhoff-Rott.

\section{Back and forth from 3D Euler to an effective system for vortex sheets} \label{S.effective}

This section is devoted to showing the connection between \eqref{eq 3dEuler} and the effective system for the leaf variables mentioned in Subsection \ref{S.sketch}.

It will be absolutely crucial for us that the tangency property~\ref{itemthree} in Section \ref{S.setup} is propagated under the evolution. More precisely, we have:

\begin{lemma}\label{tan lemma}
Suppose that the (strong) solution~$\om_\ep$ to \eqref{eq 3dEuler} with initial datum $\omega_\ep^0$ as in \eqref{def omega0} exists up to some positive time~$T_\star$, and that the map~\eqref{E.diffeogamma} remains a diffeomorphism for all $t\in[0,T_\star]$. Then, for all $(l,s,t)\in (-1,1) \times \TT^2\times [0,T_\star)$, the vector $
\widetilde{\omega}_{\eps,l}(s,t)\in\RR^3$ is tangential to the 
surface $\gamma_{\eps,l}(\T^2,t)\subset\bD$ at the point $\gamma_{\ep,l}(s,t)$.
\end{lemma}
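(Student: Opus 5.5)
The plan is to use the Cauchy formula \eqref{Cauchy}: the vorticity at time $t$ is the push-forward $X_{t\#}\om^0_\ep$. Tangency to a transported material surface is preserved under push-forward by a diffeomorphism, because the differential of a diffeomorphism maps the tangent space of a surface onto the tangent space of its image.

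\textbf{Step 1: identify the evolved leaves as material surfaces.} Fix $l\in(-1,1)$ and $t\in[0,T_\star)$. By the definition \eqref{def gamma evolved},
\[
\gamma_{\ep,l}(\T^2,t)=X_t\bigl(\gamma^0_{\ep,l}(\T^2)\bigr),
\]
since $\si_t$ is a diffeomorphism of $\T^2$. Hence every point of the $l$-th leaf at time $t$ has the form $x=\gamma_{\ep,l}(s,t)=X_t(y)$ with $y=\gamma^0_{\ep,l}(\si_t(s))$. Moreover, by the diffeomorphism hypothesis on \eqref{E.diffeogamma} and uniqueness in \eqref{E.ansatzomega}, the value $\widetilde\om_{\ep,l}(s,t)$ equals $\om_\ep(x,t)$.

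\textbf{Step 2: apply the Cauchy formula and the tangency of the initial data.} By \eqref{Cauchy},
\[
\om_\ep(x,t)={\rm D}X_t(y)\,\om^0_\ep(y)={\rm D}X_t(y)\,\widetilde\om^0_{\ep,l}(\si_t(s)).
\]
By condition \ref{itemthree}, the vector $\widetilde\om^0_{\ep,l}(\si_t(s))$ lies in $T_y\bigl(\gamma^0_{\ep,l}(\T^2)\bigr)$, so it can be written as $\sum_j a^j\,\pd_{s_j}\gamma^0_{\ep,l}(\si_t(s))$. Differentiating the identity $\gamma_{\ep,l}(\si_t^{-1}(\cdot),t)=X_t\circ\gamma^0_{\ep,l}(\cdot)$ with respect to $s_j$ via the chain rule gives
\[
{\rm D}X_t(y)\,\pd_{s_j}\gamma^0_{\ep,l}=\text{a linear combination of } \pd_{s_1}\gamma_{\ep,l}(s,t),\ \pd_{s_2}\gamma_{\ep,l}(s,t),
\]
with coefficients given by the entries of ${\rm D}\si_t^{-1}$. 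This shows that $\om_\ep(x,t)$ is tangent to the leaf at $x$.

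\textbf{Main obstacle: regularity.} The chain rule requires $X_t$ to be $C^1$. This holds because $u_\ep\in L^\infty([0,T_\star),C^{r+1})$ with $r>1$, so the flow map is a $C^{1}$ (indeed $C^{r+1}$) diffeomorphism and ${\rm D}X_t$ exists pointwise. The map $\si_t$ must also be a diffeomorphism; this is implicit in the hypothesis that \eqref{E.diffeogamma} is a diffeomorphism and in the graph description \eqref{E.sgraph}. If one prefers to avoid relying on $\si_t$, an alternative is to show that the normal component $\om_\ep\cdot N$, with $N=(-\nabla\nu_{\ep,l},1)$, satisfies a linear transport ODE along trajectories with zero initial datum, and then conclude by Gr\"onwall. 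The push-forward argument above is more direct, however, and is the one I would use.
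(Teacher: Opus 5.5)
Your proposal is correct and follows essentially the same route as the paper. Both arguments identify $X_t^{-1}(\gamma_{\ep,l}(s,t))=\gamma^0_{\ep,l}(\sigma_t(s))$, use the Cauchy formula \eqref{Cauchy} to write $\widetilde\omega_{\ep,l}(s,t)={\rm D}X_t\,\widetilde\omega^0_{\ep,l}(\sigma_t(s))$, and conclude from condition \ref{itemthree} together with the identity $T_{\gamma_{\ep,l}(s,t)}={\rm D}X_t\,T_{\gamma^0_{\ep,l}(\sigma_t(s))}$; your chain-rule computation and your remark on the $C^1$ regularity of $X_t$ simply make that identity explicit.
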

\begin{proof}
With some abuse of notation, let $T_{\gamma_{\eps,l}(s,t)}$ be the tangent space to the surface $\gamma_{\eps,l}(\T^2,t)$ at the point $\gamma_{\eps,l}(s,t)$.
We first note that the preimage of some point $x=\gamma_{\eps,l}(s,t)$ under the flow map $ X_t$ is precisely \begin{align}
 X_t^{-1}(x)=\gamma_{\eps,l}^0(\sigma_t(s))\,.\label{preim}
\end{align}
By \eqref{def gamma evolved}, we can write \begin{align*}
T_{\gamma_{\eps,l}(s,t)}={\rm D} X_t(\gamma_{\eps,l}^0(\sigma_t(s))) T_{\gamma_{\eps,l}^0(\sigma_t(s))}\,.
\end{align*}
Now, we know that $\widetilde{\omega}_{\eps,l}^0(\sigma_t(s))\in T_{\gamma_{\eps,l}^0(\sigma_t(s))}$
by the condition~\ref{itemthree} and hence also that
\begin{align*}
{\rm D} X_t(\gamma_{\eps,l}^0(\sigma_t(s)))\widetilde{\omega}_{\eps,l}^0(\sigma_t(s))\in T_{\gamma_{\eps,l}(s,t)}\,.
\end{align*}
However, \eqref{preim} and \eqref{Cauchy} yield \begin{align*}
{\rm D} X_t(\gamma_{\eps,l}^0(\sigma_t(s)))\widetilde{\omega}^0_{\eps,l}(\sigma_t(s))=\widetilde{\omega}_{\eps,l}(s,t)\,,
\end{align*}
establishing the lemma.
\end{proof}

Having this property at hand, we now derive the desired effective system. In addition to the notation introduced in Section \ref{S.setup}, we will use the notation \begin{align} \label{E.tul}
\tul(s,t):=u_{\eps}(\gamma_{\eps,l}(s,t),t)\,,
\end{align}
and denote by
\[
\tul'(s,t):= \Pi' (\tul(s,t)) = \left(u^1_\eps(\gamma_{\eps,l}(s,t),t),u^2_\eps(\gamma_{\eps,l}(s,t),t)\right)^{\rm T},
\]
the projection on the horizontal plane. Here and in what follows, superscripts are used to denote the components of a vector field. We will further write $\nabla_s$ to denote the gradient in the $s$-variable, and write $\tu'\cdot\nabla_s$ with the obvious meaning. We use the same convention for the divergence and the Jacobian matrix in the $s$-variable.  

We will also need an orthonormal basis of tangent vectors to the surface $\gamma_{\ep,l}(\T^2,t)$, such as
\begin{equation*}
\tau_{1}:=\frac{\htau_{1,\ep,l}}{|\htau_{1,\ep,l}|}\,,\quad \textup{and} \quad \tau_{2}:=\frac{\htau_{2,\ep,l}|\htau_{1,\ep,l}|^2-\htau_{1,\ep,l}\,(\htau_{1,\ep,l}\cdot \htau_{2,\ep,l})} {|\htau_{2,\ep,l}|\htau_{1,\ep,l}|^2-\htau_{1,\ep,l}\,(\htau_{1,\ep,l}\cdot \htau_{2,\ep,l})|}\,,
\end{equation*}
with
\[
\htau_{1,\ep,l}(s,t):=\partial_{s_1}\gamma_{\ep,l}(s,t)=\left(1,0,\partial_{s_1}\nu_{\ep,l}(s,t)\right)^{\rm T}\,,\qquad \htau_{2,\ep,l}(s,t):=\partial_{s_2}\gamma_{\ep,l}(s,t)=\left(0,1,\partial_{s_2}\nu_{\ep,l}(s,t)\right)^{\rm T}\,.
\]
 
\begin{lemma}\label{eff system}
Assume that the unique strong solution~$\om_\ep$ to \eqref{eq 3dEuler} with initial datum $\omega_\ep^0$ as in \eqref{def omega0} exists up to some time $T_\star > 0$, has the structure \eqref{E.sgraph}-\eqref{E.ansatzomega} and that it holds that \begin{align}
    \sup_{|l|\leq 1}\norm{\theta_{\eps,l}}_{L^\infty(\T^2\times[0,T_\star])}<1\,.\label{thetaok}
\end{align} Then, for $t \in (0, T_\star)$, it holds that
\begin{align}
        \partial_t \nu_{\varepsilon,l} + (\tul' \cdot \nabla_s) \nu_{\ep,l}   &= \widetilde{u}^3_{\ep,l}\,, \label{eq:3.2} \\
    \partial_t \theta_{\varepsilon,l} + \nabla_s \cdot \big(\tul' (1+\theta_{\ep,l}) \big) &= 0\,, \label{eq:3.3}
\end{align}
and the vorticity evolves according to
\begin{equation}\label{eq:3.5}
    \partial_t \widetilde{\omega}_{\varepsilon,l} + \widetilde{u}^1_{\ep,l} \,\partial_{s_1} \widetilde{\omega}_{\varepsilon,l} + \widetilde{u}^2_{\ep,l} \, \partial_{s_2} \widetilde{\omega}_{\varepsilon,l} = \mathcal A_{\ep} \widetilde{\omega}_{\varepsilon,l}\,.
\end{equation}
Here,
\begin{equation*} 
  \mathcal A_{\ep}\widetilde{\omega}_{\varepsilon,l} := (\widetilde{\omega}_{\varepsilon,l} \cdot \tau_1) D_{\tau_1} \tul + (\widetilde{\omega}_{\varepsilon,l} \cdot \tau_2) D_{\tau_2} \tul\,,
\end{equation*}
where 
\begin{equation*}
    D_{\tau_1}\tul := \frac{1}{|\partial_{s_1}\ga_{\ep,l}|} \partial_{s_1}\tul\,, \quad 
    D_{\tau_2}\tul:= \frac{1}{|\partial_{s_2}\ga_{\ep,l} - ( \partial_{s_2}\ga_{\ep,l} \cdot\tau_1) \tau_1|} \left( \partial_{s_2} \tul- \frac{ \partial_{s_2} \ga_{\ep,l} \cdot \tau_1}{|\partial_{s_1}\ga_{\ep,l}|}\, \partial_{s_1} \tul \right)\,,
\end{equation*}
are the directional derivatives of $\tul$ along $\tau_j$.
\end{lemma}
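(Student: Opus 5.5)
The plan is to derive the three equations directly from the Lagrangian description \eqref{def gamma evolved} and the Cauchy formula \eqref{Cauchy}, using Lemma~\ref{tan lemma} for the vorticity equation.

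\textbf{Equation \eqref{eq:3.2}.} Write $y=\gamma^0_{\ep,l}(\si_t(s))$, so that $\gamma_{\ep,l}(s,t)=X_t(y)$. For fixed $(l,\xi)$, the particle $X_t(\gamma^0_{\ep,l}(\xi))$ has the form $(S(t),\nu_{\ep,l}(S(t),t))$ with $S(t)=\si_t^{-1}(\xi)$. Differentiating in $t$ and using \eqref{E.flowEquation}, the horizontal component gives $\dot S=\tul'(S,t)$. The vertical component gives
\[
\pd_t\nu_{\ep,l}+\dot S\cdot\nabla_s\nu_{\ep,l}=\widetilde u^3_{\ep,l}.
\]
Together these yield \eqref{eq:3.2}. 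The needed regularity ($\nu_{\ep,l}$ being $C^1$ in $(s,t)$) follows from $u_\ep\in W^{1,\infty}_tC^r$ and the implicit function theorem applied to \eqref{E.sgraph}.

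\textbf{Equation \eqref{eq:3.3}.} Differentiate \eqref{eq:3.2} in $l$, using that $\tul$ depends on $l$ through $\gamma_{\ep,l}$. This gives
\[
\pd_l\tul=(\pd_l\nu_{\ep,l})\,(\pd_3u_\ep)\circ\gamma_{\ep,l}.
\]
The resulting identity is
\[
\pd_t\pd_l\nu+\tul'\cdot\nabla_s\pd_l\nu+\pd_l\nu\,\bigl[(\pd_3u'_\ep)\circ\gamma\cdot\nabla_s\nu-(\pd_3u^3_\ep)\circ\gamma\bigr]=0.
\]
Next use $\div u_\ep=0$ together with the chain rule $\pd_{s_j}\widetilde u'^{\,j}=(\pd_ju^j_\ep)\circ\gamma+\pd_{s_j}\nu\,(\pd_3u^j_\ep)\circ\gamma$. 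These combine to show that the bracket equals $\nabla_s\cdot\tul'$, and hence
\[
\pd_t\pd_l\nu+\nabla_s\cdot(\tul'\,\pd_l\nu)=0.
\]
Dividing by $\ep$, and noting that $\nabla_s\cdot\tul'$ absorbs the constant $1$, gives \eqref{eq:3.3}. Alternatively, this is the continuity equation for the Jacobian \eqref{E.JacobianRemark} under a volume-preserving flow.

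\textbf{Equation \eqref{eq:3.5}.} The Eulerian vorticity equation in \eqref{eq 3dEuler} states that $\frac{d}{dt}\,\om_\ep(X_t(y),t)=(\om_\ep\cdot\nabla)u_\ep$ along trajectories. By \eqref{E.ansatzomega}, $\om_\ep(X_t(y),t)=\widetilde\om_{\ep,l}(S(t),t)$, so the left side equals $\pd_t\widetilde\om_{\ep,l}+\tul'\cdot\nabla_s\widetilde\om_{\ep,l}$, which is the left side of \eqref{eq:3.5}.

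The key step is the right side. By Lemma~\ref{tan lemma}, $\widetilde\om_{\ep,l}$ is tangent to the leaf, so
\[
\widetilde\om_{\ep,l}=(\widetilde\om_{\ep,l}\cdot\tau_1)\tau_1+(\widetilde\om_{\ep,l}\cdot\tau_2)\tau_2,
\]
and $(\om_\ep\cdot\nabla)u_\ep$ only involves derivatives of $u_\ep$ along $\tau_1,\tau_2$. These are computable from the trace $\tul$ alone: since $\pd_{s_j}\tul=(\htau_{j}\cdot\nabla)u_\ep$, one has
\[
(\tau_1\cdot\nabla)u_\ep=\pd_{s_1}\tul/|\htau_1|.
\]
Writing $\tau_2$ as a combination of $\htau_2$ and $\htau_1$ via the Gram--Schmidt formula then gives exactly $D_{\tau_2}\tul$ as stated. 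This yields $\mathcal A_\ep\widetilde\om_{\ep,l}$.

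The main point, and the only potential obstacle, is this last step. Without tangency, the normal component would require $\pd_3u_\ep$, which cannot be expressed through the trace on a single leaf. Lemma~\ref{tan lemma} removes this problem, and everything else is chain-rule bookkeeping. The hypothesis \eqref{thetaok} is used only to guarantee that \eqref{E.diffeogamma} remains a diffeomorphism, so that Lemma~\ref{tan lemma} applies and $l$-derivatives are legitimate.
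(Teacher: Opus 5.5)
Your proposal is correct. For \eqref{eq:3.2} and \eqref{eq:3.5} you follow the paper's argument: Lagrangian differentiation of $\gamma_{\ep,l}$ along particle paths, then the vorticity equation along trajectories combined with Lemma~\ref{tan lemma}. You also supply the Gram--Schmidt check that $\mathcal A_\ep$ is the tangential part of ${\rm D}u_\ep$, which the paper only asserts.

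The real difference is in \eqref{eq:3.3}.

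\textbf{The paper's route.} It subtracts the kinematic equation for two leaves $l\neq l'$. It writes the difference of vertical velocities as $\int_{\nu_{\ep,l'}}^{\nu_{\ep,l}}\pd_r u^3_\ep\,\dd r$ and uses $\div u_\ep=0$ to turn this into the horizontal divergence of the slab flux $\int_{\nu_{\ep,l'}}^{\nu_{\ep,l}}u'_\ep\,\dd r$. It then changes variables $r=\nu_{\ep,\lambda}$, which is where \eqref{thetaok} enters. Only at the end does it divide by $l-l'$ and let $l'\to l$.

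\textbf{Your route.} You differentiate \eqref{eq:3.2} in $l$ pointwise and use the chain-rule identity
$\nabla_s\cdot\tul'=(\pd_3u_\ep')\circ\gamma_{\ep,l}\cdot\nabla_s\nu_{\ep,l}-(\pd_3u^3_\ep)\circ\gamma_{\ep,l}$.
This is the infinitesimal version of the same computation.

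\textbf{Comparison.} Your version is shorter, and it needs \eqref{thetaok} only to make the ansatz \eqref{E.ansatzomega} and Lemma~\ref{tan lemma} available. In exchange, it requires the mixed derivatives $\nabla_s\pd_l\nu_{\ep,l}$ and $\pd_t\pd_l\nu_{\ep,l}=\pd_l\pd_t\nu_{\ep,l}$ to exist and commute. That holds here, since the flow is $C^1$ in time and $\nu^0_{\ep,l}$ is $C^r$ in $l$ with values in analytic functions. The paper's integrated form needs no $l$-derivatives until the final limit. More importantly, it shows directly that the slab relation $\nu_{\ep,l}-\nu_{\ep,l'}=\ep\int_{l'}^{l}(1+\theta_{\ep,\mu})\,\dd\mu$ is compatible with the evolution. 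The paper later uses exactly this fact to check that $F$ maps the admissible set $\cZ_\rho$ into $\cZ_{\rho'}$.
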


\begin{remark} \label{R.effectiveSystem} In terms of the ``vorticity density" 
    \begin{align}
    \varpi_{\eps,l}:=\eps(1+\theta_{\eps,l})\widetilde{\omega}_{\eps,l}\,,\label{def pi}
    \end{align}   
    an elementary calculation shows that \eqref{eq:3.5} is equivalent to
    \begin{align}
    \de_t\varpi_{\eps,l}+\de_{s_1}(\tul^1\,\varpi_{\eps,l})+\de_{s_2}(\tul^2 \,\varpi_{\eps,l})=\mathcal A_{\eps} \varpi_{\eps,l}\label{eff3'}\,,
    \end{align}
    provided that the factor $1+\theta_{\eps,l}$ does not vanish. 
    
    Likewise, let us stress that, in terms of $q_{\ep,l} := 1+\theta_{\ep,l}$, \eqref{eq:3.3} is equivalent to
    $$
    \pd_t q_{\ep,l} + \nabla_s \cdot (\widetilde{u}_{\ep,l}'\, q_{\ep,l} ) = 0\,.
    $$

    We also remark that it follows from the fact that $\check{\tau}_{1,\eps,l}$ and $\check{\tau}_{2,\eps,l}$ are not parallel that none of the denominators is $0$.
\end{remark}

\begin{proof}
Let us derive the equations one by one. 

\subsubsection*{First equation} We can differentiate the identity~\eqref{def gamma evolved} with respect to $t$ to get
\begin{align}(0,0,\partial_t\nu_{\ep,l}(s,t))^{\rm T}&=\partial_t\gamma_{\ep,l}(s,t)= \pd_t X_t[\gamma_{\ep,l}^0(\sigma_t(s))]+ {\rm D}X_t[\gamma_{\ep,l}^0(\sigma_t(s))]\, {\rm D}\gamma_{\ep,l}^0(\sigma_t(s))\, \pd_t\sigma_t(s\notag)\\
&=\tul(s,t) + {\rm D} X_t[\gamma_{\ep,l}^0(\sigma_t(s))]\, {\rm D}\gamma_{\ep,l}^0(\sigma_t(s))\, \pd_t\sigma_t(s)\,.\label{E.Ideq1}
\end{align}
Here we have used the flow equation \eqref{E.flowEquation}. Likewise, we can differentiate the identity \eqref{def gamma evolved} with respect to~$s$, obtaining
\begin{equation}\label{E.Ideq2}
\left[
\begin{aligned}
    &\Id \\
     \nabla_s&\nu_{\ep,l}(s,t)
\end{aligned}
\right] = {\rm D}_s\gamma_{\ep,l}(s,t)={\rm D}X_t[\gamma_{\ep,l}^0(\sigma_t(s))]\, {\rm D} \gamma_{\ep,l}^0(\sigma_t(s))\, {\rm D}_s\sigma_t(s)\,,
\end{equation}
where $\Id$ is the $2\times2$ identity matrix. Then, denoting the components of the flow map by $X_t=(X_t',X^3_t)$, and using the first two rows in \eqref{E.Ideq1}--\eqref{E.Ideq2}, we obtain that
\begin{align*}
{\rm D}_s\sigma_t&=\left[{\rm D} X_t'[\gamma_{\ep,l}^0(\sigma_t(s))]{\rm D}\gamma_{\ep,l}^0(\sigma_t(s))\right]^{-1}\,,\\
\pd_t\sigma_t &=-\left[{\rm D} X_t'[\gamma_{\ep,l}^0(\sigma_t(s))]{\rm D}\gamma_{\ep,l}^0(\sigma_t(s))\right]^{-1}\,\tul'= -{\rm D}_s \sigma_t \, \tul'\,.
\end{align*}
The time derivative of $\nu_{\ep,l}$ is therefore 
\begin{align*}
    \pd_t\nu_{\ep,l}(s,t)& =\tul^3(s,t)-\nabla X_t^3[\gamma_{\ep,l}^0(\sigma_t(s))]{\rm D} \gamma_{\ep,l}^0(\sigma_t(s)) {\rm D}_s\si_t(s)\tul'(s,t) \\
    & = \tul^3(s,t)- \nabla_s\nu_{\ep,l}(s,t) \tul'(s,t)\,,
\end{align*}
which is~\eqref{eq:3.2}. To pass to the last identity, we have used the last row in ~\eqref{E.Ideq2}.

\subsubsection*{Second equation} Let us denote by $a,b$ indices that range over $\{1,2\}$, and by $i,j$ indices ranging over~$\{1,2,3\}$. Also, to simplify the notation, we will use here the Einstein summation convention. First, subtracting~\eqref{eq:3.3} for two different values of~$l$ $(l \neq l')$, we get
\begin{equation*}
    \pd_t(\nu_{\ep,l}-\nu_{\ep,l'})+\widetilde{u}^a_{\ep,l}\pd_{s_a}\nu_{\ep,l}-\widetilde{u}^a_{\ep,l'}\pd_{s_a}\nu_{\ep,l'}-\widetilde{u}^3_{\ep,l}+\widetilde{u}^3_{\ep,l'}=0\,.
\end{equation*}
Now, note that
\begin{align*}
\pd_{s_a}\int_{\nu_{\ep,l'}}^{\nu_{\ep,l}}& u^a_\ep(s,r,t) \dd r =u^a_\ep(s,\nu_{\ep,l},t)\pd_{s_a}\nu_{\ep, l}-  u^a_\ep(s,\nu_{\ep,l'},t)\pd_{s_a}\nu_{\ep, l'}+\int_{\nu_{\ep,l'}}^{\nu_{\ep,l}} \pd_{s_a}u^a_\ep(s,r,t)\dd r\\
&= u^a_\ep(s,\nu_{\ep,l},t)\pd_{s_a}\nu_{\ep, l}-  u^a_\ep(s,\nu_{\ep,l'},t)\pd_{s_a}\nu_{\ep, l'}-\int_{\nu_{\ep,l'}}^{\nu_{\ep,l}} \pd_r u^3_\ep(s,r,t)\dd r\\
&= u^a_\ep(s,\nu_{\ep,l},t)\pd_{s_a}\nu_{\ep, l}-  u^a_\ep(s,\nu_{\ep,l'},t)\pd_{s_a}\nu_{\ep, l'}-\widetilde{u}^3_{\ep,l}+\widetilde{u}^3_{\ep,l'}\,,
\end{align*}
where we have used that $u_\ep$ is divergence-free to pass to the second line. Also, by the definition of~$\theta_{\ep,l}$ and \eqref{thetaok}, one can change variables in the integral to arrive at
\[
\int_{\nu_{\ep,l'}}^{\nu_{\ep,l}} u^a_\ep(s,r,t)\dd r=\ep\int_{l'}^l u^a_\ep(s,\nu_{\ep,\lambda},t)\,(1+\theta_{\ep,\lambda}) \dd\lambda,
\]
which results in
\[
\pd_t\left(\frac{\nu_{\ep,l}-\nu_{\ep,l'}}{\ep} \right) + \pd_{s_a}\int_{l'}^l u^a_\ep(s,\nu_{\ep,\lambda},t)\,(1+\theta_{\ep,\lambda}) \dd\lambda=0\,.
\]
Dividing by $l-l'$ and sending $l'\to l$, we get \eqref{eq:3.3}.

\subsubsection*{Third equation}  

It follows from Euler's equation~\eqref{eq 3dEuler} that the time derivative of the vector field $(x,t)\mapsto \om_\ep(X_t(x),t)$ is given by
\begin{align*}
\pd_t[\om_\ep(X_t(x),t)]&= \pd_t\om_\ep(X_t(x),t)+ (u_\ep\cdot \nabla)\om_\ep(X_t(x),t)=(\om_\ep\cdot \nabla) u_\ep(X_t(x),t)\,.
\end{align*}
Also, combining \eqref{E.ansatzomega} and \eqref{E.sgraph}, we get that
\begin{align*}
    \pd_t[\omega_\ep(X_t(\gamma_{\ep,l}^0(s)),t)] =   \pd_t[ \widetilde{\om}_{\ep,l}(\sigma_t^{-1}(s),t)] = \pd_t \widetilde{\om}_{\ep,l}(\sigma_t^{-1}(s),t) + {\rm D}_s \widetilde{\omega}_{\ep,l}(\sigma_t^{-1}(s),t) \tul'(\sigma_t^{-1}(s),t)
\end{align*}
Hence, it follows that
\begin{equation} \label{E.TEconclusion}
\pd_t \widetilde{\om}_{\ep,l}(s,t) + {\rm D}_s  \widetilde{\om}_{\ep,l}(s,t) \tul'(s,t) = {\rm D}u_\ep(\gamma_{\ep,l}(s,t),t)  \widetilde{\om}_{\ep,l}(s,t)\,.
\end{equation}
Taking into account Lemma \ref{tan lemma}, we can replace ${\rm D} u_\ep$ by its tangential projection. This is precisely the role of the matrix $\cA_\ep$ in the statement. In other words,
$$
\cA_\ep \widetilde{\om}_{\ep,l}(s,t) := {\rm pr}[{\rm D}u_\ep](\gamma_{\ep,l}(s,t),t) \widetilde{\om}_{\ep,l}(s,t)\,.
$$
Plugging this definition into \eqref{E.TEconclusion} we get \eqref{eq:3.5}, as desired. 
\end{proof}

As explained in Subsection \ref{S.sketch}, once we have the effective system \eqref{eq:3.2}--\eqref{eq:3.5}, we need to show that we can ``come back'' to \eqref{eq 3dEuler}. In the next lemma we show that,  if we have a solution to \eqref{eq:3.2}--\eqref{eq:3.5} in $(0,T_\star)$, not necessarily coming from a solution to \eqref{eq 3dEuler}, then we can construct a solution to \eqref{eq 3dEuler}, as long as the data remains sufficiently regular and $\sup_{|l| \leq 1}\norm{\theta_{\eps,l}(\cdot,t)}_{L^\infty(\TT^2)}<1$, for all $t \in (0,T_\star)$. 

\begin{lemma}\label{weak sol}
Let $r > 1$ be a non-integer, and let $(\tul(\cdot,t), \nu_{\eps,l}(\cdot,t),\theta_{\eps,l}(\cdot,t),\widetilde{\omega}_{\eps,l}(\cdot,t))$ be $C^r$~functions, continuously differentiable in $t$, uniformly bounded in $l$, and satisfying \eqref{eq:3.2}--\eqref{eq:3.5} on some time interval $(0,T_\star)$. Assume that:
\begin{itemize}
    \item[\rm (i)] The initial data $(\nu_{\eps,l}^0,\,\theta_{\eps,l}^0,\,\widetilde{\omega}_{\eps,l}^0)$ is of the form discussed in Section~\ref{S.setup},
    \item[\rm (ii)] $\sup_{|l| \leq 1}\norm{\widetilde{\omega}_{\eps,l}}_{L^\infty(\T^2 \times (0,T_\star))}<\infty$,
    \item[\rm (iii)] $\sup_{|l| \leq 1}\norm{\theta_{\eps,l}}_{L^\infty(\T^2 \times (0,T_\star))}<1$,
\end{itemize}
and let
\begin{equation} \label{E.omegaeback}
\om_\ep(x,t):=\begin{cases}
    \widetilde{\om}_{\ep,l}(s,t) & \text{if $x=\gamma_{\eps,l}(s,t)$ for some }(l,s)\,,\\
    0 & \text{otherwise}\,.
\end{cases}
\end{equation}
If its associated velocity field $u_\ep:=\BS[\om_\ep]$ satisfies
\begin{equation} \label{E.ueback}
u_\ep(s,\nu_{\ep,l}(s,t),t)= \tul(s,t)\,, \quad \textup{for all }(s,l,t)\in\T^2\times(-1,1)\times(0,T_\star)\,,
\end{equation}
then $\om_\ep\in C_{\rm w}([0,T_\star),C_c^r(\bD))$ is a divergence-free vector field solving \eqref{eq 3dEuler}.
\end{lemma}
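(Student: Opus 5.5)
We prove the converse of Lemma~\ref{eff system} by reversing its derivation. Since $\om_\ep$ is defined by \eqref{E.omegaeback} and the data are only assumed to solve \eqref{eq:3.2}--\eqref{eq:3.5}, we will show that $\om_\ep$ is transported by the flow of $u_\ep$ in the sense of Cauchy's formula \eqref{Cauchy}, and then identify it with the Euler solution. The main tool is the family of surfaces $\gamma_{\ep,l}(s,t)=(s,\nu_{\ep,l}(s,t))$ together with the velocity restriction \eqref{E.ueback}.

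\emph{Step 1: geometry and regularity.} By (iii) and the identity $\partial_l\nu_{\ep,l}=\ep(1+\theta_{\ep,l})$, the map $(l,s)\mapsto\gamma_{\ep,l}(s,t)$ is a $C^r$ diffeomorphism onto a tube $\cT_\ep(t)$ for every $t$. Some care is needed here. Equation \eqref{eq:3.3} is the $l$-derivative of \eqref{eq:3.2}, after using $\div u_\ep=0$ and \eqref{E.ueback} exactly as in the second part of the proof of Lemma~\ref{eff system}, read backwards. Both equations hold, so the identity $\theta_{\ep,l}=\ep^{-1}\partial_l\nu_{\ep,l}-1$ is preserved from $t=0$.

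With this, $\om_\ep(\cdot,t)$ is $C^r$ inside the tube. It vanishes near $\partial\cT_\ep(t)$, because the support property $\widetilde\om_{\ep,l}\equiv0$ for $|l|\ge1/2$ is propagated by the homogeneous transport equation \eqref{eq:3.5}. Hence $\om_\ep(\cdot,t)\in C^r_c(\bD)$, uniformly in $t$ by (ii). The velocity $u_\ep=\BS[\om_\ep]$ is then $C^{r+1}$, and its flow $X_t$ is well defined.

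\emph{Step 2: the surfaces are transported.} Let $x=\gamma_{\ep,l}(s,t)$. Equation \eqref{eq:3.2} with \eqref{E.ueback} says that the normal velocity of the moving graph equals the normal component of $u_\ep$. Hence each surface $\gamma_{\ep,l}(\T^2,t)$ is invariant under the flow, and $X_t(\gamma^0_{\ep,l}(\T^2))=\gamma_{\ep,l}(\T^2,t)$.

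To make this precise, solve the horizontal ODE $\dot s=\tul'(s,t)$, which defines a family of diffeomorphisms $\sigma_t^{-1}$ of $\T^2$. Then check directly that $t\mapsto\gamma_{\ep,l}(s(t),t)$ solves $\dot X=u_\ep(X,t)$: the horizontal components match by construction, and the vertical one matches by \eqref{eq:3.2}. Uniqueness for the Lipschitz ODE \eqref{E.flowEquation} identifies this curve with the particle trajectory. This also shows $\Div\om_\ep=0$ is preserved, as in Remark~\ref{R.initialdata}.

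\emph{Step 3: Cauchy's formula.} This is the main obstacle. Along these trajectories, \eqref{eq:3.5} reads $\frac{\mathrm d}{\mathrm dt}\widetilde\om=\cA_\ep\widetilde\om$. We want to replace this by $\frac{\mathrm d}{\mathrm dt}\widetilde\om={\rm D}u_\ep\,\widetilde\om$, which is the equation satisfied by ${\rm D}X_t\,\om^0_\ep$. The two agree whenever $\widetilde\om$ is tangent to the surface.

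By \eqref{E.ueback}, $\partial_{s_a}\tul={\rm D}u_\ep\,\partial_{s_a}\gamma_{\ep,l}$, so $\cA_\ep$ coincides with ${\rm D}u_\ep$ restricted to the tangent plane. We will therefore argue as follows. Let $W(t)={\rm D}X_t\,\widetilde\om^0_{\ep,l}$. By the argument of Lemma~\ref{tan lemma}, $W$ is tangent to the surface for all $t$ and solves $\dot W={\rm D}u_\ep W=\cA_\ep W$. Since $\widetilde\om$ solves the same linear ODE $\dot Y=\cA_\ep Y$ with the same initial datum, uniqueness for this ODE gives $\widetilde\om=W$. This yields $\om_\ep(\cdot,t)=X_{t\#}\om^0_\ep$.

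\emph{Step 4: conclusion.} A vector field of the form $X_{t\#}\om^0_\ep$, with $u_\ep=\BS[\om_\ep]$ and $X_t$ the flow of $u_\ep$, is divergence-free and solves \eqref{eq 3dEuler} in the weak, and hence strong, sense. This follows by differentiating $\langle\om_\ep(\cdot,t),\Phi\rangle$ in $t$ using the representation with the Jacobian $J_{\ep,l}$, and it gives $\om_\ep\in C_{\rm w}([0,T_\star),C^r_c(\bD))$.

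The delicate points are the consistency of $\theta_{\ep,l}$ with $\nu_{\ep,l}$ in Step 1 and the tangency and uniqueness argument of Step 3.
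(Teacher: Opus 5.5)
Your Lagrangian strategy differs from the paper's route, and Steps 2--3 are sound ODE arguments \emph{provided} $u_\ep$ is $C^1$. The problem is that Step 1 asserts this regularity without justification, and everything afterwards rests on it.

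The lemma only gives functions that are $C^r$ in $s$, with bounds uniform in $l$. No regularity in the label $l$ is assumed, and none is available where the lemma is used: Nishida's theorem is run in $\cY^7_\rho$, which is only $L^\infty$ in $l$. So you cannot conclude that $(l,s)\mapsto\gamma_{\ep,l}(s,t)$ is a $C^r$ diffeomorphism, or that $\om_\ep(\cdot,t)\in C^r_c(\bD)$. Moreover, (ii) is an $L^\infty$ bound, so ``uniformly in $t$ by (ii)'' gives no $C^r$ control at all.

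A priori $\om_\ep(\cdot,t)$ is merely bounded with compact support, so $u_\ep$ is only log-Lipschitz. Step 2 survives, by Osgood uniqueness for trajectories. But ${\rm D}X_t$ and the variational equation $\dot W={\rm D}u_\ep\,W$ used in Step 3 are not available, and the ``weak, hence strong'' in Step 4 has nothing behind it. The $C^r$ regularity of $\om_\ep$ has to be an output of the argument, not an input.

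This is exactly what the paper's proof is built around. It never constructs the flow. Instead it differentiates $\langle\om_\ep(\cdot,t),\Phi\rangle=\int\!\int\varpi_{\ep,l}\cdot\Phi(\gamma_{\ep,l})\dl\dd s$ in time and uses \eqref{eq:3.2}, \eqref{eff3'}, tangency and integration by parts \emph{in $s$ only} to obtain the weak vorticity equation. It gets $\Div\om_\ep=0$ from DiPerna--Lions uniqueness for the transport equation satisfied by $\Div\om_\ep$; tangency makes this a bounded function with no transverse derivatives, and a log-Lipschitz $u_\ep$ suffices. Only at the end does it recover $C^r$ regularity, by weak--strong uniqueness with the strong solution issuing from $\om^0_\ep\in C^r_c$.

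Your idea can be salvaged along these lines by making Step 3 leafwise. Let $\psi_t$ be the flow of $\tul'$ on $\T^2$ and $\Gamma_t:=\gamma_{\ep,l}(\psi_t(\cdot),t)$. Then \eqref{eq:3.2} gives $\pd_t\Gamma_t=\tul(\psi_t,t)$. So $W(t):={\rm D}\Gamma_t(s_0)\,(\widetilde\om^0_{\ep,l})'(s_0)$ satisfies $\dot W=\pd_{s_a}\tul\,\bigl({\rm D}\psi_t(s_0)(\widetilde\om^0_{\ep,l})'(s_0)\bigr)^a=\cA_\ep W$, using only $s$-derivatives. ODE uniqueness then gives tangency and a leafwise Cauchy formula.

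You would still have to derive the weak equation and close with a weak--strong uniqueness argument, with (ii) ruling out blow-up of the strong solution before $T_\star$. That final ingredient is what your proposal is missing.
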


\begin{proof} First, observe that, for any test vector field $\Phi \in C(\bD, \RR^3)$,
\begin{equation}
\begin{aligned}
\scalar{\omega_{\ep}(\cdot,t)}{\Phi} & = \int_{\TT^2} \int_{-1}^1 \widetilde{\om}_{\ep,l}(s,t) \cdot \Phi(\gamma_{\ep,l}(s,t)) \,J_{\ep,l}(s,t) \dd l \dd s  \\
& =\int_{\TT^2} \int_{-1}^1 \varpi_{\ep,l}(s,t) \cdot \Phi(\gamma_{\ep,l}(s,t)) \dd l \dd s\,, \quad \textup{for all } t \in [0,T_\star)\,,
\end{aligned} \label{int form varpi}
\end{equation}
where $J_{\ep,l}$ is the Jacobian given in \eqref{E.JacobianRemark}, and $\varpi_{\ep,l}$ was defined in \eqref{def pi}. Hence, we have that $\omega_\ep \in C^1([0,T_\star), \mathcal{D}'(\bD,\RR^3))$. Moreover, it is straightforward to check that $\omega_\ep$ defines a time-dependent vector field on $\bD$ which is bounded and has compact support, and that $\widetilde{\om}_{\ep,l}(s,t) \subset \RR^3$ is tangential to the surface $\ga_{\ep,l}(\T^2,t) \subset \bD$ at the points $\ga_{\ep,l}(s,t) = (s, \nu_{\ep,l}(s,t))$.

Next, assume that $u_\ep := \BS[\omega_\ep]$ satisfies \eqref{E.ueback}. Then, on the one hand, it follows that
\begin{align*}
    & \scalar{\pd_t \om_\ep(\cdot,t)}{\Phi} =  \int_{\TT^2} \int_{-1}^1 \pd_t \varpi_{\ep,l}(s,t) \cdot \Phi(\ga_{\ep,l}(s,t)) \dl \dd s \\
    & \quad + \int_{\TT^2} \int_{-1}^1 \varpi_{\ep,l}(s,t) \cdot \pd_3\Phi(\ga_{\ep,l}(s,t)) \pd_t \nu_{\ep,l}(s,t) \dd l \dd s\,, \quad \textup{for all } t \in (0,T_\star) \textup{ and all } \Phi \in C^1(\bD,\R^3)\,.
\end{align*}
Using the tangentiality property of $\widetilde{\omega}_{\ep,l}$, \eqref{eq:3.2} and \eqref{eff3'}, and integrating by parts, we get
\begin{align*}
    & \scalar{\pd_t \om_\ep(\cdot,t)}{\Phi} = \int_{\TT^2} \int_{-1}^1 \varpi_{\ep,l}(s,t) {\rm D} u_\ep(\ga_{\ep,l}(s,t)) \Phi(\ga_{\ep,l}(s,t)) \dl \dd s \\
    & \quad +  \int_{\TT^2} \int_{-1}^1 \varpi_{\ep,l}(s,t) \cdot {\rm D}\Phi(\ga_{\ep,l}(s,t)) \tul(s,t) \dd l \dd s\,, \quad \textup{for all } t \in (0,T_\star) \textup{ and all } \Phi \in C^1(\bD,\R^3)\,. 
\end{align*}

On the other hand, note that  
\begin{align*}
    & \scalar{\div(\om_\ep(\cdot,t) u_\ep(\cdot,t))}{\Phi} = - \scalar{\om_\ep(\cdot,t)}{[{\rm D}\Phi\, u_\ep] (\cdot,t)} \\
    & \quad = - \int_{\TT^2} \int_{-1}^1 \varpi_{\ep,l}(s,t) \cdot {\rm D}\Phi(\ga_{\ep,l}(s,t)) u_\ep(\ga_{\ep,l}(s,t),t) \dl \dd s\,, 
\end{align*}
and that
\begin{align*}
    & \scalar{{\rm D} u_\ep(\cdot,t) \om_\ep(\cdot,t)}{\Phi} = \scalar{\om_\ep(\cdot,t)}{{\rm D}u_\ep(\cdot,t) \Phi} \\
    & \quad = \int_{\TT^2} \int_{-1}^1 \varpi_{\ep,l}(s,t) \cdot {\rm D}u_\ep(\ga_{\ep,l}(s,t),t) \Phi(\ga_{\ep,l}(s,t)) \dl \dd s \,,
\end{align*}
for all $t \in (0,T_\star)$ and all  $\Phi \in C^1(\bD,\R^3)$.  Combining these identities, we conclude that
$$
\scalar{\pd_t \om_\ep(\cdot,t) + (u_\ep \cdot \nabla) \om_\ep(\cdot,t) - (\omega_\ep \cdot \nabla) u_\ep (\cdot,t)}{\Phi}  = 0 \,,  \quad \textup{for all } t \in (0,T_\star) \textup{ and all } \Phi \in C^1(\bD,\R^3)\,. 
$$

\noindent Now, using the identity above and the fact that $\omega_0$ given by \eqref{def omega0} and $u_\ep(\cdot,t)$ are divergence-free vector fields, the incompressibility of $\omega_\ep(\cdot,t)$ for all $ t \in (0,T_\star)$ immediately follows. Indeed, combining these facts, one can check that $f_\ep := \div \omega_\ep$ is a distributional solution to
$$
\pd_t f_\ep + u_\ep \cdot \nabla f_\ep = 0 \quad \textup{in } \bD \times (0,T_\star)\,, \quad  \quad  f_\ep(\cdot,0) = 0 \quad \textup{in } \bD\,.
$$
Furthermore, since $\widetilde{\omega}_{\ep,l}(s,t)$ is tangential to the surface $\ga_{\ep,l}(\TT^2,t)$ at the points $\ga_{\ep,l}(s,t) = (s,\nu_{\ep,l}(s,t))$, and $\widetilde{\omega}_{\eps,l}(\cdot,t)$ is $C^r$, for some non-integer $r > 1$, by assumption, this (distributional) divergence is a compactly supported bounded function on $\bD$. Also, by standard elliptic regularity, we know that $u_\ep(\cdot,t)$ is log-Lipschitz. Hence, it follows from \cite[Theorem II.2] {diPernaLions} that
$$
\div \omega_\ep(\cdot,t) = 0 \quad \textup{for all } t \in (0,T_\star)\,.
$$
At this point, we conclude that $\om_\ep \in C^1([0,T_\star), L^{\infty}(\bD))$ is a compactly supported distributional solution to \eqref{eq 3dEuler}. 

Finally, since $\om_\ep^0 \in C_c^r(\bD)$, using for instance the weak-strong uniqueness result given in \cite[Corollary 7.18]{Bahouri11}, the result follows from standard elliptic regularity. 
\end{proof}

\section{Local existence for the effective system and proof of Theorem \ref{main thm}} \label{S.localExistence}

This section is mainly devoted to showing that the effective system we have derived in Lemma  \ref{eff system} is locally well-posed, with a lifespan which is independent of $\ep$. Note that, having Lemma \ref{weak sol} at hand, this essentially concludes the proof of Theorem \ref{main thm} (i) and (ii). The proof of this local well-posedness relies on an abstract Cauchy-Kovalevskaya theorem, namely Nishida's theorem. We conclude the section showing the convergence to the vortex sheet. 

\begin{theorem}[Nishida~\cite{Nishida}]\label{Nishida}
Let $  \{Y_\rho\}_{\rho\in[0,\infty)}$ be a scale of Banach spaces, and let $B^R_\rho:=\{w\in Y_\rho: \|w\|_{Y_\rho}<R\}$ denote the ball in~$Y_\rho$ of radius $R$. Assume that, for any $0\leq \rho'<\rho$, $Y_\rho$ is a linear subspace of $ Y_{\rho'}$ and $\| w \|_{Y_{\rho'}} \leq \| w \|_{Y_\rho}$ for all $w\in Y_\rho$. Consider the initial value problem 
\begin{equation}\label{E.Nishida}
    w'(t) = F(w(t), t)\,, \qquad 
    w(0) = w_0\,,
\end{equation}
and assume the following conditions on $F$ and $w_0$,  for  some fixed positive numbers $R ,\delta ,\rho_0 $:
\begin{itemize}
    \item[(i)]  For every $0 < \rho' < \rho < \rho_0$, the map $F: B_\rho^R \times(-\delta,\delta)\to Y_{\rho'}, \ (w, t) \mapsto F(w, t),$ is continuous.
    \item[(ii)] For any $0<\rho' < \rho < \rho_0$ and all $w^1, w^2 \in B_\rho^R$, and for any $t\in (-\delta,\delta)$, $F$ satisfies
    \begin{equation*}
        \| F(w^1, t) - F(w^2, t) \|_{Y_{\rho'}} \leq \frac{C}{\rho - \rho'}\, \| w^1 - w^2 \|_{Y_\rho}\,,
    \end{equation*}
    where $C > 0$ is a fixed constant independent of $t, w^1, w^2, \rho$ or $\rho'$.
    \item[(iii)] $w^0 \in B_{\rho_0}^{R_0}$ for some $0 < R_0 < R$ and $F(w^0, \cdot ): (- \delta,\delta)\to Y_\rho$, $\,t \mapsto F(w^0,t)$, is a continuous function for every $0<\rho < \rho_0$. Moreover,  it satisfies, with a fixed constant $K > 0$,
    \begin{equation*}
        \sup_{|t|<\delta}\| F(w^0, t) \|_{Y_\rho} \leq \frac{K}{\rho_0 - \rho}.
    \end{equation*}
\end{itemize}
Under these hypotheses, there is a positive constant $\delta_0$ (depending only on $R_0$, $R$, $C$ and $K$) such that there exists a unique function $w(t)$ which, for every $0\leq \rho < \rho_0$, is a continuously differentiable function of $t\in (-(\rho_0 - \rho)\delta_0,\,(\rho_0 - \rho)\delta_0)$ with values in $B_\rho^R$ that satisfies the initial value problem~\eqref{E.Nishida}.
\end{theorem}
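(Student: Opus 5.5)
We would prove Theorem~\ref{Nishida} by Picard iteration, in the weighted-norm form of Nirenberg and Nishida. Set $w_0(t)\equiv w^0$ and
\[
w_{k+1}(t):=w^0+\int_0^t F(w_k(\tau),\tau)\dd\tau .
\]
Since hypothesis (ii) loses a factor $(\rho-\rho')^{-1}$ at each step, we let the admissible time interval shrink linearly as $\rho\uparrow\rho_0$. For a parameter $a\in(0,\delta/\rho_0)$ to be fixed, and functions $v$ defined on $\{(\rho,t): 0\le\rho<\rho_0,\ |t|<a(\rho_0-\rho)\}$, we use the weighted quantity
\[
M_a[v]:=\sup_{0\le \rho<\rho_0}\ \sup_{|t|<a(\rho_0-\rho)} \|v(t)\|_{Y_\rho}\Bigl(1-\frac{|t|}{a(\rho_0-\rho)}\Bigr).
\]
The plan is to show that the differences $v_k:=w_{k+1}-w_k$ satisfy $M_a[v_{k+1}]\le \tfrac12 M_a[v_k]$ when $a$ is small, while the iterates stay in $B^R_\rho$.

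\emph{Key estimate.} Fix $\rho$ and $|t|<a(\rho_0-\rho)$. For $\tau$ between $0$ and $t$ we take the intermediate index
\[
\rho(\tau):=\tfrac12\bigl(\rho+\rho_0-|\tau|/a\bigr),
\]
the midpoint between $\rho$ and the largest admissible index at time $\tau$. Then
\[
\rho(\tau)-\rho=\rho_0-\rho(\tau)-|\tau|/a=\tfrac12\bigl(\rho_0-\rho-|\tau|/a\bigr).
\]
Apply (ii) with the pair $\rho<\rho(\tau)$ and the bound $\|v(\tau)\|_{\rho(\tau)}\le M_a[v]\,\bigl(1-|\tau|/(a(\rho_0-\rho(\tau)))\bigr)^{-1}$. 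Both denominators are comparable to $\rho_0-\rho-|\tau|/a$, and the integrand is essentially $C M_a[v]\,a(\rho_0-\rho)/(\rho_0-\rho-|\tau|/a)^2$. An explicit integration in $\tau$ then gives
\[
\Bigl\|\int_0^t \bigl(F(w_{k+1},\tau)-F(w_k,\tau)\bigr)\dd\tau\Bigr\|_{Y_\rho}\le \frac{c_0\, aC\, M_a[v_k]}{1-|t|/(a(\rho_0-\rho))},
\]
with $c_0$ an absolute constant (around $8$). Hence $M_a[v_{k+1}]\le c_0 aC\,M_a[v_k]$, and we choose $a\le 1/(2c_0C)$.

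\emph{Base case and staying in the ball.} By (iii), $\|w_1(t)-w^0\|_{Y_\rho}\le K|t|/(\rho_0-\rho)\le Ka$, so $M_a[v_0]\le Ka$. Summing the geometric series gives
\[
\|w_k(t)\|_{Y_\rho}\le R_0+2Ka\Bigl(1-\frac{|t|}{a(\rho_0-\rho)}\Bigr)^{-1}.
\]
This blows up at the edge of the cone, so we shrink the time interval: restrict to $|t|<\tfrac12 a(\rho_0-\rho)$, where the bound is $R_0+4Ka$. Taking $a$ with $4Ka<R-R_0$ keeps all iterates in $B^R_\rho$, which justifies applying (ii) at every step. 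The main obstacle is this bookkeeping. To use (ii) at level $\rho(\tau)$ we need $w_k(\tau)\in B^R_{\rho(\tau)}$, but there the weight $1-|\tau|/(a(\rho_0-\rho(\tau)))$ is only $\tfrac12\bigl(1-|\tau|/(a(\rho_0-\rho))\bigr)$-like. We would handle this by running the induction with the uniform bound on the half-cone and choosing $\rho(\tau)$ inside it, at the cost of a worse absolute constant, and hence setting $\delta_0=a/2$ with $a$ depending only on $R_0,R,C,K$.

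\emph{Conclusion.} The series $\sum v_k$ converges in $C\bigl((-\delta_0(\rho_0-\rho),\delta_0(\rho_0-\rho)),Y_\rho\bigr)$ for every $\rho$, so $w_k\to w$. Continuity of $F$ from (i), together with the Lipschitz bound, lets us pass to the limit in the integral equation. The integrand $\tau\mapsto F(w(\tau),\tau)$ is continuous into $Y_{\rho}$ (using a slightly larger $\rho'$ and (i)), so $w$ is $C^1$ in $t$ and solves \eqref{E.Nishida}. For uniqueness, apply the same key estimate to the difference of two solutions $w,\tilde w$ with values in the balls. This gives $M_a[w-\tilde w]\le c_0aC\,M_a[w-\tilde w]$, hence $w=\tilde w$.
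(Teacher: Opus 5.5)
Your contraction estimate is correct. With $A=a(\rho_0-\rho)$ and the midpoint index, $1-|\tau|/(a(\rho_0-\rho(\tau)))=(A-|\tau|)/(A+|\tau|)$, and integrating gives $M_a[v_{k+1}]\le 4aC\,M_a[v_k]$. The paper does not prove this theorem (it only cites Nishida), so the question is whether your argument is complete.

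It is not, and the gap is at the step you call bookkeeping. The contraction on $D_a:=\{|t|<a(\rho_0-\rho)\}$ applies (ii) at points $(\rho(\tau),\tau)$ that approach the edge of $D_a$ when $(\rho,t)$ does. It therefore needs $w_k(\tau)\in B^R_{\rho(\tau)}$ there, but your weighted bounds only place the iterates in the ball on a smaller cone. The sentence ``taking $4Ka<R-R_0$ keeps all iterates in $B^R_\rho$, which justifies applying (ii) at every step'' is thus circular.

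Your repair does not remove the circularity. Suppose you run the induction on $D_{a/2}$ and require $(\rho(\tau),\tau)\in D_{a/2}$. Then you must take $\rho(\tau)-\rho<\rho_0-\rho-2|\tau|/a$. With a uniform bound $M$ for $v_k$ on $D_{a/2}$, the estimate furnished by (ii) is
\[
C M\int_0^{|t|}\frac{\mathrm{d}\tau}{\rho(\tau)-\rho}\;\geq\;\frac{aCM}{2}\,\log\frac{a(\rho_0-\rho)}{a(\rho_0-\rho)-2|t|}.
\]
This is unbounded at the edge of $D_{a/2}$, whereas the bound you need to reproduce is uniform there. So this is not a matter of a worse constant: the induction does not close. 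Any weight that absorbs the logarithm must blow up at the edge of $D_{a/2}$, which loses the ball condition near that edge again. You get the same circularity one cone lower.

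What works is to let the cone shrink with the iteration index, by summable amounts that the geometric decay of the differences outpaces.

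\begin{itemize}
\item[(1)] Set $a_k=\frac a2(1+2^{-k})$, $D_k=\{|t|<a_k(\rho_0-\rho)\}$ and $\omega_k=1-|t|/(a_k(\rho_0-\rho))$.
\item[(2)] Prove by induction that $w_k(t)\in B^R_\rho$ on $D_k$ and that $\|v_k(t)\|_{Y_\rho}\le M_k/\omega_k$ on $D_{k+1}$, with $M_0=Ka$.
\item[(3)] To estimate $v_{k+1}$ on $D_{k+2}$, use the midpoint index relative to $D_{k+1}$. Those points lie in $D_{k+1}$, where $w_{k+1}$ and $w_k$ are already known to be in the balls and where $\omega_k\ge\omega_{k+1}$. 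Your computation then gives $M_{k+1}\le 4aC M_k$.
\item[(4)] On $D_{k+1}$ one has $\omega_j\ge (a_j-a_{j+1})/a_j\ge 2^{-j-2}$ for all $j\le k$. Hence $\|w_{k+1}(t)-w^0\|_{Y_\rho}\le 4Ka\sum_j(8aC)^j\le 8Ka<R-R_0$, once $a$ is small in terms of $C$, $K$, $R-R_0$ (and $a\rho_0<\delta$).
\end{itemize}

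The iterates then converge uniformly on $D_{a/2}$, which gives $\delta_0=a/2$. Your passage to the limit, the $C^1$ regularity and the uniqueness argument then go through as you describe. Uniqueness has no circularity, since both solutions are assumed to lie in the balls on the whole cone.
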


We aim to show that this theorem is applicable to the effective system \eqref{eq:3.2}--\eqref{eq:3.5}. To that end, we first set 
$$
\nu_\ep[l,s](t):= \nu_{\ep,l}(s,t)\,, \quad \theta_{\ep}[l,s](t) := \theta_{\ep,l}(s,t)\,, \quad \textup{and} \quad \varpi_{\ep}[l,s](t):= \varpi_{\ep,l}(s,t):=\eps(1+\theta_{\eps,l})\widetilde{\omega}_{\eps,l}(s,t)\,,
$$
and define
\begin{equation} \label{E.defw}
\begin{aligned}
    w[l,s](t) & = \big(w_1[l,s](t),\, w_2[l,s](t),\, w_3[l,s](t),\, w_4[l,s](t),\, w_{5-7}[l,s](t)\big) \\
    &:=\big(\nu_\ep[l,s](t),\, \pd_{s_1} \nu_\ep[l,s](t),\, \pd_{s_2} \nu_\ep[l,s](t),\, 1+\theta_\ep[l,s](t),\, \varpi_{\ep}[l,s](t) \big)\,.
\end{aligned}
\end{equation}
Here, and throughout the paper, we use the shortened notation $w_{5-7} = (w_5,w_6,w_7)$. 

In order to use Theorem \ref{Nishida} to solve the effective system, we use the following scale of Banach spaces for $k \in \NN$ and $\rho\geq 0$:
$$
\cY_\rho^k:=\bigg\{f: [-1,1] \times \T^2_\rho \rightarrow \C^k\bigg|\ l\mapsto f_j(l, \cdot)\in L^\infty([-1,1],\cX_\rho)\, \textup{ \& }\sup_{|l| \leq 1} \|f_j(l, \cdot)\|_\rho 
 <\infty\,,\ 1\leq j\leq k\bigg\}\,,
$$
endowed with the norm
$$
\|f\|_{\cY_\rho^k} := \sum_{j=1}^k \, \sup_{|l| \leq 1} \|f_j(l, \cdot)\|_\rho \,.
$$
We understand $L^\infty([-1,1],\cX_\rho)$ as the space of functions which are measurable with respect to the joint variable in $[-1,1] \times \TT_\rho^2 $ and for which $\sup_{|l| \leq 1} \norm{f(l,\cdot)}_{\cX_\rho}<\infty$. Note that this is different from the Bochner space due to the lack of separability. However, this kind of measurability is enough to ensure that all the integrals in the paper exist.

Of course, the variables $(w_1,\dots, w_7)$ in \eqref{E.defw} are not independent, and we cannot expect the effective system to behave well for arbitrary data in $\cY^7_\rho$. We introduce the set of admissible data 
\begin{align*}
&\mathcal{Z}_\rho:=\Bigg\{ w\in\cY^7_\rho\,\bigg|\, \nabla_s w_1[l,\cdot]=(w_2[l,\cdot],w_3[l,\cdot]) \textup{ and } w_1[l,\cdot]-w_1[\ell,\cdot]=\eps \int_\ell^l w_4[\mu,\cdot] \dd \mu,\  \forall\,  l,\ell \Bigg\}\,.
\end{align*}
Note that for every $\rho \geq 0$, $\cZ_\rho$ is a Banach subspace of $\cY_\rho^7$ endowed with the same norm as $\cY_\rho^7$. We will also use the semi-norm
\begin{align}
\norm{w}_{\cZ_\rho^{\rm Im}}:=\sum_{j=2}^4\sup_{|l|\leq 1}\sup_{|\beta|\leq \rho}\norm{\Im w_j(l,\cdot+i\beta)}_{L^\infty(\T^2)}\,.\label{we norm 0}
\end{align}

At this point, our first goal is to reformulate the effective system \eqref{eq:3.2}--\eqref{eq:3.5} as an initial value problem of the form \eqref{E.Nishida} in the scale of Banach spaces $\cZ_\rho$. To that end, using the notation introduced in Section \ref{S.effective}, we decompose the velocity $\tul(s,t)$ (see \eqref{E.tul}) into the contributions of the individual sheets in \eqref{E.ansatzomega}. First, note that
\begin{align*}
    \tul(s,t) & = \BS [\omega_\ep](\gamma_{\ep,l}(s,t),t) = \frac{1}{4\pi} \sum_{k \in \ZZ^2 \times \{0\}} \int_{[-\frac12,\frac12]^2 \times \R}  \omega_\ep(y,t) \times \frac{\gamma_{\ep,l}(s,t) - y + k}{|\gamma_{\ep,l}(s,t) - y + k|^3}\dd y \\
    & = \frac{1}{4\pi}  \sum_{k \in \ZZ^2 \times \{0\}} \int_{-1}^1 \int_{[-\frac12,\frac12]^2}  \varpi_{\ep,\ell}(s-\varsigma,t) \times \frac{\gamma_{\ep,l}(s,t) - \gamma_{\ep,\ell}(s-\varsigma,t) + k}{|\gamma_{\ep,l}(s,t) - \gamma_{\ep,\ell}(s-\varsigma,t) + k|^3}  \dd \varsigma \dd\ell \\
    & = \frac{1}{4\pi} \int_{-1}^1 \int_{[-\frac12,\frac12]^2} \varpi_{\ep,\ell}(s-\varsigma,t) \times \frac{\gamma_{\ep,l}(s,t) - \gamma_{\ep,\ell}(s-\varsigma,t)}{|\gamma_{\ep,l}(s,t) - \gamma_{\ep,\ell}(s-\varsigma,t)|^3} \dd \varsigma \dd\ell \\ 
    & \quad + \sum_{k \in (\ZZ^2 \times \{0\}) \setminus \{0\}} \int_{-1}^1 \int_{[-\frac12,\frac12]^2}  \varpi_{\ep,\ell}(s-\varsigma,t) \times \frac{\gamma_{\ep,l}(s,t) - \gamma_{\ep,\ell}(s-\varsigma,t) + k}{|\gamma_{\ep,l}(s,t) - \gamma_{\ep,\ell}(s-\varsigma,t) + k|^3}  \dd \varsigma \dd\ell\,.
\end{align*}
Also, let us denote by $\phi$ a smooth, non-negative, radially symmetric cutoff function such that
\begin{equation} \label{E.phicutoff}
\phi \equiv 1 \quad \textup{in } B_{\frac18}(0)\,, \quad \textup{ and } \quad \phi \equiv 0 \quad \textup{in } [-\tfrac12,\tfrac12]^2 \setminus B_{\frac14}(0)\,.
\end{equation}
Then, we have that
\begin{align*}
    \tul(s,t)  & = \frac{1}{4\pi} \int_{-1}^1 \int_{[-\frac12,\frac12]^2} \phi(\varsigma) \varpi_{\ep,\ell}(s-\varsigma,t) \times \frac{\gamma_{\ep,l}(s,t) - \gamma_{\ep,\ell}(s-\varsigma,t)}{|\gamma_{\ep,l}(s,t) - \gamma_{\ep,\ell}(s-\varsigma,t)|^3}  \dd \varsigma \dd\ell \\ 
    & \quad + 
    \frac{1}{4\pi} \int_{-1}^1 \int_{[-\frac12,\frac12]^2} (1-\phi(\varsigma)) \varpi_{\ep,\ell}(s-\varsigma,t) \times \frac{\gamma_{\ep,l}(s,t) - \gamma_{\ep,\ell}(s-\varsigma,t)}{|\gamma_{\ep,l}(s,t)  - \gamma_{\ep,\ell}(s-\varsigma,t)|^3}  \dd \varsigma \dd\ell\\
    & \quad + \sum_{k \in (\ZZ^2 \times \{0\}) \setminus \{0\}} \int_{-1}^1 \int_{[-\frac12,\frac12]^2}  \varpi_{\ep,\ell}(s-\varsigma,t) \times \frac{\gamma_{\ep,l}(s,t) - \gamma_{\ep,\ell}(s-\varsigma,t) + k}{|\gamma_{\ep,l}(s,t) - \gamma_{\ep,\ell}(s-\varsigma,t) + k|^3} \dd \varsigma \dd\ell\,.
\end{align*}
Also, we set
\begin{equation}
    K_\ep^{c}[\nu,g,\varpi](s) := \frac{1}{4\pi} \int_{[-\frac12,\frac12]^2} \phi(\varsigma)\varpi(s-\varsigma) \times\frac{\Xi^\ep[\nu,g](s,\varsigma)}{|\Xi^\ep[\nu,g](s,\varsigma)|^3} \dd\varsigma\,,
\end{equation}
where 
\begin{gather*}
\Xi^\ep[\nu,g](s,\varsigma):= \varsigma
    +  [\nu(s)-\ep g(s)]\,e_3-\nu(s-\varsigma)e_3\,, \quad \textup{for } s \in \TT^2 \textup{ and } \varsigma \in \Big[-\frac12,\frac12\Big]^2\,,
\end{gather*}
and 
\begin{equation}\begin{aligned}
K_\eps^f[v,w](l,s):=\ & \frac{1}{4\pi}\int_{-1}^1\int_{-[\frac{1}{2},\frac{1}{2}]^2} \big(1-\phi(\varsigma)\big)w(s-\varsigma,\ell) \times \frac{\varsigma+v(s,l)e_3-v(s-\varsigma,\ell)e_3}{|\varsigma+v(s,l)e_3-v(s-\varsigma,\ell)e_3|^3} \dd\varsigma\dd\ell\\
&+ \frac{1}{8\pi}\sum_{k\in \Z^2 \setminus \{0\}}\int_{[-\frac{1}{2},\frac{1}{2}]^2} w(s-\varsigma,\ell)\times \bigg(\frac{\varsigma-k+v(s,l)e_3-v(s-\varsigma,\ell)e_3}{|\varsigma-k+v(s,l)e_3-v(s-\varsigma,\ell)e_3|^3}\\
&+\frac{\varsigma+k+v(s,l)e_3-v(s-\varsigma,\ell)e_3}{|\varsigma+k+v(s,l)e_3-v(s-\varsigma,\ell)e_3|^3}\bigg)\dd\varsigma\dd\ell \label{def kfar}\,.
\end{aligned}\end{equation}
Having this notation at hand, we can finally decompose $\tul(s,t)$ as 
\begin{equation} \label{E.tulDecomposition}
    \tul(s,t) = \int_{-1}^1 K_\ep^c\Big[\nu_{\ep,\ell},\,  \int_{l}^\ell (1+\theta_{\ep,\mu} ) \dd \mu, \, \varpi_{\ep,\ell}\Big](s,t) \dd \ell + K_\ep^f[\nu_\ep, \varpi_{\ep}](s,l)\,.
\end{equation}

Now, extending the action of these operators to functions that depend on $z \in \T_\rho^2$ in the obvious way, we set
\begin{align}
U_\ep(w)(l,z):=  \int_{-1}^1 K_\ep^c\Big[w_1[\ell,\cdot],\,  \int_{l}^\ell w_4[\mu,\cdot] \dd \mu, w_{5-7}[\ell,\cdot] \Big](z) \dd\ell + K_\ep^f[w_1, w_{5-7}](l,z)\,. \label{velo split}
\end{align}
Note that we will use superscripts to denote the components of this velocity. In addition, we denote by $U_\ep' = \Pi(U_\ep)$ its projection to the horizontal plane. We then consider the initial value problem
\begin{equation} \label{E.initialValueProblem}
w'(t) = F(w(t))\,, \quad 
    w(0) = w^0\,,
\end{equation}
where
\begin{equation}
\begin{aligned}
    & F_1(w) := U_\ep^{3}(w) - U_\ep^{1}(w) w_2-U_\eps^2(w)w_3\,, \\
    & F_2(w) := \pd_{s_1} \big(U_\ep^{3}(w) - U_\ep^{1}(w) w_2-U_\eps^2(w)w_3 \big) \,,  \\
    & F_3(w) := \pd_{s_2} \big(U_\ep^{3}(w) - U_\ep^{1}(w) w_2-U_\eps^2(w)w_3 \big) \,,  \\
    & F_4(w) :=  - \nabla_s\cdot\Big(U_\ep'(w) w_4\Big)\,,\\
    & F_{5-7}(w) := - \pd_{s_1} \Big( U_\ep^{1}(w) w_{5-7} \Big)-\pd_{s_2} \Big( U_\ep^{2}(w) w_{5-7} \Big)+ A_\ep(w) w_{5-7}\,.\label{def F}
\end{aligned}
\end{equation}
and
$$
w^0[l,s] = (\nu_{\ep,l}^0(s),\, \pd_{s_1} \nu_{\ep,l}^0(s),\, \pd_{s_2}\nu_{\ep,l}^0(s),\, 1+\theta_{\ep,l}^0(s),\, \varpi_{\ep,l}^0(s))\,.
$$
Here, as in Lemma \ref{eff system}, 
\begin{align}
A_\ep(w) w_{5-7} = (w_{5-7} \cdot \tau_1) D_{\tau_1} U_\ep + (w_{5-7} \cdot \tau_2) D_{\tau_2} U_\ep\,,\label{def Aeps}
\end{align}
with
\begin{equation}\label{def taus}
\hat{\tau}_1 := (1,0,w_2)^{\rm T}\,, \quad \tau_1 := \frac{\hat{\tau}_1}{|\hat{\tau}_1| } \,, \quad \hat{\tau}_2 = (0,1,w_3)^{\rm T}\,, \quad\textup{and}\quad \tau_2 := \frac{\hat{\tau}_2|\hat{\tau}_1|^2- \hat{\tau}_1 (\hat{\tau}_1\cdot\hat{\tau}_2)}{|\hat{\tau}_2|\hat{\tau}_1|^2- \hat{\tau}_1 (\hat{\tau}_1\cdot\hat{\tau}_2)|}\,.
\end{equation}
and
$$
D_{\tau_1} U_\ep := \frac{1}{|\hat{\tau_1}|} \pd_{s_1} U_\ep \,, \quad D_{\tau_2} U_\ep := \frac{1}{|\hat{\tau}_2 - (\hat{\tau}_2 \cdot \tau_1) \tau_1|} \left( \pd_{s_2} U_\ep - \frac{\hat{\tau_2}\cdot \tau_1}{|\hat{\tau}_1|} \pd_{s_1} U_\ep \right)\,.
$$

We should first stress that \eqref{E.initialValueProblem} is essentially a reformulation of the effective system we derived in Lemma \ref{eff system}, now in terms of the new unknown $w$. Likewise, note that, if $w \in \cZ_\rho$ and $F(w) \in \cY^7_{\rho'}$ for some $0 < \rho' < \rho$, then $F(w)\in \mathcal{Z}_{\rho'}$. Indeed, this is trivial for the coupling between $w_1$ and  $w_2,w_3$, and for the coupling between $w_1$ and $w_4$, it follows from the derivation of \eqref{eq:3.3}. 

Having this functional setting at hand, in the next results, whose proofs are postponed to Section \ref{S.Kernel}, we prove the required estimates to verify that \eqref{E.initialValueProblem} fits the framework of Theorem \ref{Nishida}.

\begin{proposition} \label{main est}
For every $0 \leq \rho \leq \rho_0$, if $w^1, w^2 \in \cZ_\rho$ satisfy 
\begin{equation} \label{E.cC1} 
    \mathcal{C}_1 := \max \left\{ 1,\ \norm{w^1}_{\cY_\rho^7},\ \norm{w^2}_{\cY_\rho^7} \right\} < \infty\,, 
\end{equation}
as well as
\begin{equation} \label{E.smallness main est} 
    \max\left\{\norm{w^1}_{\cZ_\rho^{\Im}},\,\norm{w^2}_{\cZ_\rho^{\Im}}\right\} \leq \frac{1}{300 \mathcal{C}_1^4}\,,
\end{equation}
and    
\begin{equation} \label{E.smallness main est 2} 
     \sup_{|l| \leq 1} \sup_{|\be| \leq \rho} \max\left\{\norm{\Re w_4^1[l, \cdot + i\be]-1}_{L^\infty(\TT^2)},\, \norm{\Re w_4^2[l, \cdot + i\be]-1}_{L^\infty(\TT^2)}\right\} \leq \frac23\,,
\end{equation}
then
\begin{align}\label{bd K}
    \sup_{l,\, \ell \in [-1,1]} \, \bigg\|K_{\ep}^c\bigg[w_1^i[\ell,\cdot],\,\int_l^\ell w_4^i[\mu,\cdot] \dd \mu,\,w_{5-7}^i[\ell,\cdot]\bigg](\cdot) \bigg\|_{\cX_\rho^3}
    \lesssim 1 \,, \quad \textup{\textit{for} } i \in \{1,2\}\,,
\end{align}
and
\begin{align}\begin{aligned} \label{bd K lip}
     \sup_{l,\, \ell \in [-1,1]}  \, \bigg\| & K_{\ep}^c\bigg[w_1^1[\ell,\cdot],\,\int_l^\ell w_4^1[\mu,\cdot] \dd \mu,\,w_{5-7}^1[\ell,\cdot]\bigg](\cdot)\\
     &  -K_{\ep}^c\bigg[w_1^2[\ell,\cdot],\,\int_l^\ell w_4^2[\mu,\cdot] \dd \mu,\,w_{5-7}^2[\ell,\cdot]\bigg](\cdot) \bigg\|_{\cX_\rho^3}
\lesssim \norm{w^1-w^2}_{\cY_\rho^7} \,, 
\end{aligned}\end{align}
with implicit constants that depend only on $\cC_1$ but not on $\rho$ or $\ep$. 
\end{proposition}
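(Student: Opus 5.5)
The plan is to view $K_\ep^c[\nu,g,\varpi]$ as a near-sheet Birkhoff--Rott operator. Here $\nu=w_1[\ell,\cdot]$, $\varpi=w_{5-7}[\ell,\cdot]$, and the vertical offset is $\ep g$ with $g=\int_l^\ell w_4\,\dd\mu$. This offset $\ep g$ is exactly the signed distance $\nu_{\ep,\ell}-\nu_{\ep,l}$ between the two leaves. By admissibility, $\Re g$ has the sign of $\ell-l$ and satisfies $|\Re g|\geq \tfrac13|\ell-l|$, using \eqref{E.smallness main est 2}.

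\textbf{Complexified denominator.} Write $\nu(z)-\nu(z-\varsigma)=\varsigma\cdot M(z,\varsigma)$ with $M=\int_0^1\nabla\nu(z-\tau\varsigma)\dd\tau$. Then
\[
\Xi^\ep=(\varsigma,\ \varsigma\cdot M-\ep g)\,,\qquad |\Xi^\ep|^2=|\varsigma|^2+(\varsigma\cdot M-\ep g)^2 .
\]
The first step is to show, using \eqref{E.smallness main est} (smallness of $\Im w_2,\Im w_3,\Im w_4$ relative to $\cC_1^{-4}$), that
\[
\Re|\Xi^\ep|^2\geq c(\cC_1)\bigl(|\varsigma|^2+\ep^2|g|^2\bigr)\,,\qquad \bigl|\arg|\Xi^\ep|^2\bigr|\leq\pi/4\,,
\]
uniformly for $z=s+i\beta$ with $|\beta|\le\rho$. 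This makes $|\Xi^\ep|^{-3}$ a well-defined holomorphic function bounded by $(|\varsigma|^2+\ep^2|g|^2)^{-3/2}$. The cross term $(\varsigma\cdot\Re M)\,\ep\Re g$ is handled by the elementary inequality $|\varsigma|^2+(a-b)^2\geq c(1+|\Re M|^2)^{-1}(|\varsigma|^2+b^2)$, which is where the powers of $\cC_1$ enter. Since the imaginary parts of $M$ and of $\ep g$ are small, they only perturb this.

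\textbf{Bound \eqref{bd K}.} Split $K^c_\ep$ into a principal part, where $\varpi(z-\varsigma)$ and $M$ are frozen at $\varsigma=0$, and a remainder. The remainder kernel is $O\bigl(|\varsigma|^{1/2}(|\varsigma|^2+\ep^2g^2)^{-1}\bigr)$ by the $C^{1/2}$ control of $\varpi$ and $\nabla\nu$. It is therefore integrable in $\varsigma\in\R^2$ uniformly in $\ep g$, and its $C^{1/2}$ seminorm in $z$ is controlled by the standard Hölder--singular integral argument: split into $|\varsigma|<2|z-z'|$ and $|\varsigma|>2|z-z'|$. For the principal part, after the linear change of variables that turns the tangent plane into a horizontal plane, the kernel $(\varsigma,\varsigma\cdot M-\ep g)/|\cdot|^3$ has its component odd in $\varsigma$ cancelling on the symmetric cutoff $\phi$. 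The remaining even piece is the vertical component, $-\ep g\,(\ldots)^{-3/2}$. It integrates to a bounded quantity, since $\int_{\R^2}\ep|g|(|\varsigma|^2+\ep^2g^2)^{-3/2}\dd\varsigma=2\pi$, and it is crossed with $\varpi$, which is tangent only approximately. The tangency is not needed for boundedness here. Hölder regularity in $z$ of the frozen coefficients then gives the $\cX_\rho$ bound. All constants depend only on $\cC_1$ and not on $\ep$ or on $|\ell-l|$. This uniformity, even as $\ep g\to0$, is precisely the statement that the approximate Poisson kernel stays bounded.

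\textbf{Lipschitz bound \eqref{bd K lip}.} Write the difference as a telescoping sum: first vary $\varpi$, which is linear and handled by \eqref{bd K}, and then vary $(\nu,g)$ along the segment $w^\theta=\theta w^1+(1-\theta)w^2$. The conditions \eqref{E.smallness main est}--\eqref{E.smallness main est 2} are convex, so they remain valid along this segment. The derivative in $\theta$ brings down kernels of the form $\partial_{\Xi}(\Xi/|\Xi|^3)\cdot(\delta\nu(z)-\delta\nu(z-\varsigma)-\ep\,\delta g)$. Here $\delta\nu(z)-\delta\nu(z-\varsigma)=\varsigma\cdot\int\nabla\delta\nu$, with $\nabla\delta\nu=(\delta w_2,\delta w_3)$, and $|\ep\,\delta g|\le \ep|\ell-l|\,\|\delta w_4\|\lesssim \ep|g|\,\|\delta w_4\|$. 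In both cases the kernel has the same homogeneity as before, so the argument for \eqref{bd K} applies verbatim.

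\textbf{Main obstacle.} The delicate point is the $C^{1/2}$ seminorm of the principal part uniformly in the scale $\ep|g|$, which may be comparable to $|z-z'|$. Here I would estimate the difference of frozen-coefficient integrals explicitly, using homogeneity and the cancellation of odd terms on the symmetric cutoff, and control the resulting Lipschitz dependence on $M(z)$ and $M(z')$ by $|z-z'|^{1/2}$.
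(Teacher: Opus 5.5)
Your architecture is sound and close to the paper's. The lower bound on $\Re|\Xi^\ep|_{\C}^2$ is Lemma~\ref{bds d}~c), freezing the tangent plane is the $J_1/J_2$ splitting, and the picture of odd cancellation plus a bounded Poisson-type vertical term is what Lemma~\ref{SingInt} encodes. Your Lipschitz argument, differentiating along $w^\theta=\theta w^1+(1-\theta)w^2$, is legitimate because $\cZ_\rho$ is linear and all smallness conditions are convex. It is arguably cleaner than the paper's eight-point double differences (Lemma~\ref{Lem double diff}~b), Lemma~\ref{L.FTC}).

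\textbf{The gap.} It lies in the step you call standard: the $C^{1/2}$ seminorm of the remainder. Integrability of the remainder kernel only gives the $L^\infty$ bound. Write $y=z-\varsigma$ for the source point, $k=\Xi^\ep/|\Xi^\ep|_{\C}^3$, $a=\nabla\nu(z)$, and $\tilde\Xi$ for the frozen chord. On the far region $|z-y|\geq 2|z-z'|$, the $z$-variation of your remainder contains terms of exactly critical size $|z-z'|^{1/2}|z-y|^{-2}$:
\begin{itemize}
\item From freezing $\varpi$, the term $(\varpi(z')-\varpi(z))\times\int_{|z-y|\geq 2|z-z'|}\phi(z'-y)\,k(z',z'-y)\,\dd y$.
\item From the Taylor remainder of $\nu$, the term $\bigl(\nabla\nu(z)-\nabla\nu(z')\bigr)\cdot(z-y)\,e_3/|\tilde\Xi|_{\C}^3$, together with its companion $3\tilde\Xi\langle\tilde\Xi,\cdot\rangle_{\C}/|\tilde\Xi|_{\C}^5$.
\end{itemize}
Bounded in absolute value, these give $|z-z'|^{1/2}\log(1/|z-z'|)$. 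No size bound can do better: a kernel of the same size without cancellation, $|\varsigma|^{-2}$, yields $\varpi\mapsto\int\phi(\varsigma)|\varsigma|^{-2}(\varpi(z-\varsigma)-\varpi(z))\,\dd\varsigma$. That is a logarithmic multiplier and is unbounded on $C^{1/2}$.

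What closes the estimate is a \emph{truncated} cancellation bound, $\bigl|\int_{|\varsigma|\geq c}\phi(\varsigma)\,\varsigma_j\,|\tilde\Xi|_{\C}^{-3}\,\dd\varsigma\bigr|\lesssim 1$, plus its analogue for the companion kernel. It must hold uniformly in $c$ \emph{and} in $\ep|g|$, including the regime $c\simeq\ep|g|$. This is \eqref{sing 3} in Lemma~\ref{SingInt} (Cases 2.1--2.3, used with $c=3|s^1-s^2|$). Isolating exactly these critical terms from a genuinely integrable error, with the expansion done in the source variable, is the role of Lemma~\ref{Lem double diff}~a) in the proof of Lemma~\ref{L.J2}. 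Your sketch supplies neither ingredient, since it treats the remainder as harmless because it is integrable. The Lipschitz part (``applies verbatim'') inherits the same gap.

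\textbf{The misplaced obstacle.} In your decomposition, the obstacle you flag is actually the easy piece. The frozen principal part is a pointwise function $\Phi(\nabla\nu(z),\ep g(z))$. It is Lipschitz in the first slot, but in the second only $|\partial_h\Phi|\lesssim|h|^{-1}$ holds. Its Hölder continuity therefore needs the \emph{relative} bound $|g(z)-g(z')|\lesssim|l-\ell|\,|z-z'|^{1/2}\lesssim|g(z)|\,|z-z'|^{1/2}$, which is the paper's normalization $b=\zeta/|l-\ell|$ with $|b|\approx 1$. Mere Hölder continuity of $\ep g$ does not suffice, and you should state this.

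\textbf{The change of variables.} Flattening the tangent plane does not make the integrand odd with respect to the symmetric cutoff: for real $a$, the foot of the perpendicular sits at $\varsigma_0=\ep g\,a/(1+|a|^2)$. For complex $a$, no real flattening exists at all. What works is direct symmetrization $f(\varsigma)+f(-\varsigma)$. The identity $|\tilde\Xi(\varsigma)|_{\C}^2-|\tilde\Xi(-\varsigma)|_{\C}^2=-4\ep g\,\varsigma\cdot a$ then produces an integrable error, as in Case 2.2 of the proof of Lemma~\ref{SingInt}.
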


\begin{proposition} \label{est kfar}
    For every $0 \leq \rho \leq \rho_0$, if $w^1, w^2 \in \cZ_\rho$ satisfy \eqref{E.cC1} and
    \begin{align}
    \sup_{|l| \leq 1} \sup_{|\beta|\leq \rho} \max \left\{ \norm{\Im w_1^1[l,\cdot+i\beta]}_{L^\infty(\TT^2)}, \ \norm{\Im w_1^2[l,\cdot+i\beta]}_{L^\infty(\TT^2)}  \right\} \leq \frac{1}{8}\,,\label{im small2}
    \end{align}
then
    \begin{align}\label{bd Kfar}
\big\|K_{\ep}^f\big[w_1^i,w_{5-7}^i\big] \big\|_{\cY_\rho^3} \lesssim 1\,,  \quad \textup{\textit{for} } i \in \{1,2\}\,,
\end{align}
and
    \begin{align}\label{bd Kfar lip}
 \big\|K_{\ep}^f\big[w_1^1,w_{5-7}^1\big]-K_{\ep}^f\big[w_1^2,w_{5-7}^2\big]\big\|_{\cY_\rho^3} \lesssim \norm{w^1-w^2}_{\cY_\rho^7}\,,  
\end{align}
with implicit constants that depend only on $\mathcal{C}_1$ given in \eqref{E.cC1} but not on $\rho$ or $\eps$. 
\end{proposition}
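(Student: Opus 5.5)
The statement to prove is Proposition~\ref{est kfar}, the bound for the far-field operator $K_\ep^f$. The point is that its kernel is never singular. In the first integral the cutoff $1-\phi(\varsigma)$ vanishes on $B_{1/8}(0)$, so $|\varsigma|\geq 1/8$ on its support. In the lattice sum every $k\neq0$ gives $|\varsigma\pm k|\geq 1/2$ for $\varsigma\in[-\tfrac12,\tfrac12]^2$.

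We work with complexified arguments $z=s+i\beta$, $|\beta|\leq\rho$, and write the denominators via $Q(a):=(a\cdot a)^{3/2}$, the holomorphic extension of $|a|^3$. For
\[
a=\varsigma\pm k+\big(w_1[l,z]-w_1[\ell,z-\varsigma]\big)e_3 ,
\]
the horizontal part of $a$ is real, because we shift $s$ and not $\varsigma$; we take $\varsigma$ real and shift only $s$. The imaginary part is $\Im\big(w_1[l,z]-w_1[\ell,z-\varsigma]\big)e_3$, which by \eqref{im small2} has modulus at most $1/4$. Hence
\[
\Re(a\cdot a)\;\geq\; |\varsigma\pm k|^2 - \tfrac1{16}\;\geq\; c\big(1+|\varsigma\pm k|^2\big)
\]
for some $c>0$. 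This lower bound uses only the horizontal separation, so the vertical spacing of order $\ep$ between leaves plays no role and no $\ep$-dependence can appear. The principal branch of the $3/2$ power is then holomorphic, and $|Q(a)|\gtrsim (1+|\varsigma\pm k|)^{3}$ uniformly.

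Next we bound the summand term by term in $C^{1/2}$ in $s$ for each fixed $\beta$. The numerator $\varsigma\pm k+(\dots)e_3$ is bounded by $|k|+C\,\mathcal C_1$. The derivative of the integrand in $s$ is controlled by $\|\nabla w_1\|$, which is $\|(w_2,w_3)\|\leq\mathcal C_1$ thanks to the constraint in $\cZ_\rho$, together with the $C^{1/2}$ norm of $w_{5-7}$. So each term is bounded in $C^{1/2}$ by $C(\mathcal C_1)|k|^{-2}$.

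The lattice sum of $|k|^{-2}$ over $\Z^2$ diverges logarithmically, and this is the main obstacle. This is why the sum is symmetrised in $\pm k$. Expanding $G(\varsigma\pm k+\eta)$ with $G(a)=a/Q(a)$ around $\pm k$, the odd leading terms cancel between $+k$ and $-k$. The symmetrised kernel is therefore $O(|k|^{-3})$, with bounded $s$-derivatives and Hölder seminorms of the same order. We justify this by a second-order Taylor bound on $G$ along the segment joining the two points, which lies in the region where $\Re(a\cdot a)\gtrsim|k|^2$. The resulting sum converges uniformly in $l$, $\ell$, $z$ and $\ep$. Integrating over $\varsigma$ and over $\ell\in[-1,1]$ then gives \eqref{bd Kfar}. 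The holomorphy of the resulting function of $z$ and its periodicity follow from locally uniform convergence of holomorphic integrands, and the restriction to real $z$ is real-valued.

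For the Lipschitz bound \eqref{bd Kfar lip}, we write the difference by the fundamental theorem of calculus along $w^\tau=w^2+\tau(w^1-w^2)$. Along this path \eqref{im small2} still holds, since it is convex, and the norms stay at most $\mathcal C_1$. The $w_{5-7}$ dependence is linear, so that part is immediate. The $w_1$-part produces $\mathrm{D}G$ applied to $(w_1^1-w_1^2)[l,z]-(w_1^1-w_1^2)[\ell,z-\varsigma]$. This has the same decay properties as $G$ after symmetrisation, one order better in fact, and the same summation argument bounds it by $\|w^1-w^2\|_{\cY^7_\rho}$. Hölder seminorms in $s$ are handled identically, using $\nabla w_1=(w_2,w_3)$ to bound derivatives of the difference.
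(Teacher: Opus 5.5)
For the lattice sum your argument is the paper's. The paper likewise pairs $k$ with $-k$, through its explicit identity for $K_W$, to turn the termwise $O(|k|^{-2})$ into a summable $O(|k|^{-3})$. Your mean-value bound for the odd function $G$ along $k+\tau x$ does stay in the region $\Re(a\cdot a)\gtrsim|k|^2$, because $|k+\tau\varsigma|\geq\tfrac12$ for $\tau\in[-1,1]$. Combined with the fundamental theorem of calculus along $w^\tau$, it is a lighter substitute for the paper's use of Lemma~\ref{Lem double diff}. Since the resulting $DG$-kernels are already $O(|k|^{-3})$ termwise, your H\"older and Lipschitz parts go through as stated.

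The step that fails as written is the claim that the kernel is never singular in the non-lattice term ($k=0$, weight $1-\phi(\varsigma)$). Set $\eta:=w_1[l,z]-w_1[\ell,z-\varsigma]$. Hypothesis \eqref{im small2} only gives $|\Im\eta|\leq\tfrac14$, while on the support of $1-\phi$ one can have $|\varsigma|$ as small as $\tfrac18$, so your lower bound $|\varsigma|^2-\tfrac1{16}$ is negative there. Moreover, $a\cdot a$ can actually vanish. Take $w_1[l,z]=A\sin(4\pi z_1)+\ep l$ (so $w_4\equiv1$) with $A\sinh(4\pi\rho)=\tfrac18$. At $l=\ell$, $z=(i\rho,0)$ and $\varsigma=(\tfrac14,0)$ this gives $a=(\tfrac14,0,\tfrac i4)$, hence $a\cdot a=0$. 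So this term needs more than \eqref{im small2}.

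The natural fix is to use the $\cZ_\rho$ constraint to write $\eta=\ep\int_\ell^l w_4[\mu,z]\,d\mu+\big(w_1[\ell,z]-w_1[\ell,z-\varsigma]\big)$. This gives $|\Im\eta|\leq 2\ep\sup|\Im w_4|+|\varsigma|\sup\big(|\Im w_2|+|\Im w_3|\big)$, which is $\ll|\varsigma|$ under \eqref{E.smallness main est}. That hypothesis is in force wherever the proposition is actually used (Proposition~\ref{bd F}). Alternatively, lower the constant $\tfrac18$ in \eqref{im small2} to, say, $\tfrac1{32}$.

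The paper's own proof also passes over this term (``easier and can be handled similarly''), so the omission is inherited from the source. Your write-up should still state the extra hypothesis explicitly.
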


\medbreak

By the next result, $F$ fulfills the assumptions in Theorem \ref{Nishida} if the smallness assumptions in the two previous propositions hold. The proof of this result can be found in Section \ref{S.prop44}.

\begin{proposition}
    \label{bd F}
Assume that, for some $0 < \rho \leq \rho_0$, $w^1,w^2\in \cZ_\rho$ satisfy \eqref{E.cC1}-\eqref{E.smallness main est 2} and \eqref{im small2}. Then, it follows that
\begin{align*}
&\norm{F(w^i)}_{\cY_{\rho'}^7}\lesssim \frac{1}{\rho-\rho'}\,, \quad \textup{\it for } i \in \{1,2\}\,,\\
\end{align*}
and
$$
\norm{F(w^1)-F(w^2)}_{\cY_{\rho'}^7}\lesssim \frac{1}{\rho-\rho'} \norm{w^1-w^2}_{\cY_\rho^7}\,,
$$
uniformly in $0 \leq \rho' < \rho \leq \rho_0$. Furthermore, it holds that 
\begin{align}
\norm{U_\eps(w^i)}_{L^\infty(\TT^2)}\lesssim 1\,, \quad \textup{for } i \in \{1,2\}\,. \label{u bd}
\end{align}
Here, $F$ is as in \eqref{def F}, and the implicit constants depend on $\mathcal{C}_1$ given in \eqref{E.cC1} but not on $\rho,\rho'$ or $\ep$. 
\end{proposition}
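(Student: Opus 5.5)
The plan is to assemble Proposition \ref{bd F} from Propositions \ref{main est} and \ref{est kfar}, using the Cauchy estimate for holomorphic functions on shrinking strips to absorb the single derivative in each component of $F$.

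\textbf{Velocity bounds in $\cY_\rho^3$.} First I bound $U_\ep(w^i)$ itself in $\cY_\rho^3$, not merely in $L^\infty$. From \eqref{velo split}, the near-field part is an integral over $\ell\in[-1,1]$ of the operators controlled in \eqref{bd K}. Taking the supremum over $\ell$ inside the integral gives $\sup_l\|\int_{-1}^1 K^c_\ep[\dots]\dd\ell\|_\rho\lesssim 1$. The far-field part is bounded by \eqref{bd Kfar}. Hence $\|U_\ep(w^i)\|_{\cY^3_\rho}\lesssim1$, which in particular gives \eqref{u bd}. The same argument, with \eqref{bd K lip} and \eqref{bd Kfar lip} in place of \eqref{bd K} and \eqref{bd Kfar}, yields
\[
\|U_\ep(w^1)-U_\ep(w^2)\|_{\cY^3_\rho}\lesssim\|w^1-w^2\|_{\cY^7_\rho}.
\]
The constants depend only on $\cC_1$.

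\textbf{Algebra and Cauchy estimates.} Next I use that $\|\cdot\|_\rho$, the supremum over $|\beta|\le\rho$ of $C^{1/2}$ norms, is a Banach algebra norm up to a constant. I also use the Cauchy-type estimate
\[
\|\pd_{s_j}f\|_{\rho'}\lesssim(\rho-\rho')^{-1}\|f\|_\rho .
\]
This follows by writing $\pd_{s_j}f(z)$ as a Cauchy integral over a circle of radius $\rho-\rho'$ in the complex variable $z_j$. Since the $C^{1/2}$ seminorm in $s$ commutes with this averaging, the estimate holds in the Hölder norm as well. With these two tools the component bounds are as follows.

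\begin{itemize}
\item $F_1=U^3-U^1w_2-U^2w_3$ is bounded in $\cY_\rho$, hence in $\cY_{\rho'}$, without loss, since the loss factor $(\rho-\rho')^{-1}\ge\rho_0^{-1}$ anyway.
\item $F_2$, $F_3$ and $F_4$ are single $s$-derivatives of products of $U$ and $w$, each bounded in $\cY_\rho$ by $\cC_1$, so the Cauchy estimate gives the $(\rho-\rho')^{-1}$ bound.
\item The transport part of $F_{5-7}$ is handled the same way.
\end{itemize}

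\textbf{Stretching term $A_\ep(w)w_{5-7}$.} Here $D_{\tau_j}U_\ep$ again involves only one $s$-derivative of $U_\ep$, so it is bounded by $(\rho-\rho')^{-1}$ in $\cX_{\rho'}$. It is multiplied by the coefficients $\tau_j$ and $|\hat\tau_j|^{-1}$. These are holomorphic functions of $(w_2,w_3)$ built from the square roots of $1+w_2^2$, $1+w_3^2$ and of the Gram determinant of $\hat\tau_1,\hat\tau_2$.

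This is the main obstacle: one must ensure these square roots and denominators stay away from zero and from the branch cut on the complex strip. This is where the smallness of the imaginary parts in \eqref{E.smallness main est} enters. For real arguments, $1+w_2^2\ge1$ and the Gram determinant is $1+w_2^2+w_3^2\ge1$. Because $\|\Im w_{2,3}\|_{L^\infty}\le(300\cC_1^4)^{-1}$ while $|\Re w_{2,3}|\le\cC_1$, the imaginary parts of these expressions are at most about $2\cC_1\cdot(300\cC_1^4)^{-1}$ and their squared-imaginary corrections are negligible. So their real parts stay above $1/2$, and the principal square roots are holomorphic with moduli bounded below.

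Composition with such holomorphic maps preserves the $C^{1/2}$-sup norms, with constants depending on $\cC_1$. Therefore $A_\ep(w)$ is bounded by $(\rho-\rho')^{-1}$ and is Lipschitz in $(w_2,w_3)$ on the admissible set.

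\textbf{Lipschitz estimate.} Finally, for $F(w^1)-F(w^2)$ I telescope each product, for instance $U(w^1)w^1_2-U(w^2)w^2_2=(U(w^1)-U(w^2))w^1_2+U(w^2)(w^1_2-w^2_2)$. I then apply the algebra property, the velocity Lipschitz bound from the first step, the Lipschitz property of the tangent-frame coefficients, and the Cauchy estimate once. This yields
\[
\|F(w^1)-F(w^2)\|_{\cY^7_{\rho'}}\lesssim(\rho-\rho')^{-1}\|w^1-w^2\|_{\cY^7_\rho},
\]
uniformly in $\ep$, since no constant used above depends on $\ep$.
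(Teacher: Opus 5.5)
Your proposal is correct and follows the paper's proof essentially step by step: you bound $U_\ep$ in $\cY^3_\rho$ (and its Lipschitz dependence) via Propositions \ref{main est} and \ref{est kfar}, use the Cauchy estimate of Lemma \ref{cauchy est} to absorb the single derivative in each component of $F$, obtain lower bounds on the complexified moduli from the smallness of $\Im w_2,\Im w_3$, and telescope products for the Lipschitz bound. The only difference is that you derive the $C^{1/2}$ boundedness and Lipschitz dependence of the frame coefficients from a general composition principle for holomorphic maps on the (convex) admissible set, whereas the paper does it by hand in Lemma \ref{lem taus} via the double-difference estimate of Lemma \ref{lemma abcd}; your reduction of the denominators to $\sqrt{1+w_2^2}$ and the Gram determinant $1+w_2^2+w_3^2$ makes it especially transparent where the smallness assumption enters.
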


\subsection{Proof of Theorem \ref{main thm}}

We now have all the ingredients to prove Theorem \ref{main thm}. 

Having Proposition \ref{bd F} at hand, the proof of the existence part of Theorem \ref{main thm} is straightforward.

\begin{proof}[Proof of Theorem \ref{main thm} (i) and (ii)] We set
\begin{equation} \label{E.changetov}
v := w - w^0\,,
\end{equation}
and consider the initial value problem
\begin{equation} \label{E.ODEv}
\pd_t v = F (v+w^0) =: G(v)\,, \quad v(0) = 0\,.
\end{equation}
Moreover, we observe that for all $R \leq 1$, if $v \in B_\rho^R$,  then, with $C_0$ as defined in Theorem \ref{main thm}
$$
\|w\|_{\cY_\rho^7} = \|v+w^0\|_{\cY_\rho^7} < R + \|w^0\|_{\cY_\rho^7} < 4(C_0 + 1)\,.
$$
Hence, we choose and fix $\mathcal{C}_1 = 4(C_0 +1)$ in \eqref{E.cC1}.

Having this choice of $\cC_1$ at hand, one can easily check that \eqref{E.smallness main est}, \eqref{E.smallness main est 2} and \eqref{im small2} hold provided that $v^1, v^2 \in B_\rho^R$ for some 
$$
R \leq \frac{1}{600 \cC_1^4}\,,
$$
and that, in \eqref{im smal}, we choose 
$$
B \leq \frac{1}{600 \cC_1^4}\,.
$$

Since $v(0) = 0$, taking into account Proposition \ref{bd F}, we infer that Theorem \ref{Nishida} can be applied to \eqref{E.ODEv}. Undoing the change of unknown in \eqref{E.changetov}, we get the existence result for \eqref{E.initialValueProblem} we were looking for, with a lifespan depending only on $C_0 > 0$, and not on $\ep$. At this point, using Lemma \ref{weak sol}, we get the existence part of Theorem \ref{main thm}, namely Theorem \ref{main thm} (i).

Concerning Theorem \ref{main thm} (ii), it also follows from the construction. Indeed, it follows from \eqref{E.omegaeback} and the fact that
$$\norm{\gamma_{\eps,1}-\gamma_{\eps,-1}}_{L^\infty(\T^2)}\leq \eps\norm{\int_{-1}^1(1+\theta_{\eps,\mu})\dd\mu}_{L^\infty(\T^2)}\lesssim \eps\,.$$ 
Note that in the last inequality we are using \eqref{E.smallness main est 2}. 
\end{proof}

Finally, we prove the distributional convergence of the constructed solutions to the solution to the Birkhoff-Rott equation. 

\begin{proof}[Proof of Theorem \ref{main thm} (iii)]
Let $\delta > 0$ be fixed but arbitrary. Combining Lemma \ref{lem lsc} with the Arzel\`a-Ascoli theorem, we get that, up to a subsequence, there exists $h \in \cX_{\rho'}$ such that
$$
\nu_{\ep,0} \to h \quad \textup{in } L^{\infty}([0,T-\delta], \cX_{\rho'})\,, \quad \textup{as } \ep \to 0^{+}\,,
$$
where $0 < \rho' < \rho_0$ depends on $\delta$. Arguing similarly, we get $\varpi \in \cX_{\rho'}^3$ such that, up to a subsequence,
$$
\int_{-1}^1 \varpi_{\ep,l} \dd l \to \varpi \quad  \textup{in } L^{\infty}([0,T-\delta], \cX_{\rho'})\,, \quad \textup{as } \ep \to 0^{+}\,.
$$
Note as well that, for all $l \in [-1,1]$ we have that, up to a subsequence, 
$$
\nu_{\ep,l} \to h \quad \textup{in } L^{\infty}([0,T-\delta], \cX_{\rho'})\,, \quad \textup{as } \ep \to 0^{+}\,.
$$
This immediately follows from the following chain of inequalities:
$$
\|\nu_{\ep,l}(\cdot,t) - \nu_{\ep,0}(\cdot,t)\|_{\cX_{\rho'}} \leq \ep \norm{\int_0^l (1+\theta_{\eps,\mu}(\cdot,t))\dd\mu}_{\cX_{\rho'}}\lesssim \eps\,, \quad \textup{for all } t \in [0,T-\delta]\,.
$$

 At this point, taking $T > 0$ slightly smaller, we get convergence up to a subsequence on the full interval and conclude that Theorem \ref{main thm} (iii) (a) and (b) hold up to a subsequence, once we have shown that $\varpi,h$ are indeed the solution to the Birkhoff-Rott equations.  

Setting $\ga_0(s,t) = (s, h(s,t))$, and
 $$
 \scalar{\omega(\cdot,t)}{\Phi} := \int_{\T^2} \varpi(s,t) \cdot \Phi(\gamma_0(s,t)) \dd s \,, 
 $$
 one can check (see \eqref{int form varpi}) that $\om(\cdot,t)$ is a weak-$*$ limit of $\om_\ep(\cdot,t)$ in the space of bounded $\R^3$-valued Radon measures. Moreover, it is a weak solution to 3D Euler supported on $\gamma_0(\cdot,t)$. Indeed, this can be seen as follows: since $(\om_\ep(\cdot,t))_\ep$ is bounded in $L^1(\bD)$, and the Biot-Savart operator maps $L^1(\bD)$ compactly to $L_{loc}^1(\bD)$, we get that $(u_\ep(\cdot,t))_\ep$ converges strongly in $L_{loc}^1(\bD)$. Moreover, $u_\ep(\cdot,t)$ is uniformly bounded on $\supp \omega_\eps(\cdot,t)$ by \eqref{u bd}. In $ \bD \setminus \supp \omega_\eps(\cdot,t)$, we have that $u_\ep(\cdot,t)$ is div- and curl-free (therefore in particular harmonic), and decaying at $\infty$. Hence, it follows from the maximum principle that each component of $u_\ep(\cdot,t)$ attains its maximum modulus on $\supp\omega_\eps (\cdot,t)$. Then, we have that $(u_\ep(\cdot,t))_\ep$ is bounded in $L^{\infty}(\bD)$. At this point, by interpolation, we can pass to the limit in the weak formulation of 3D Euler and get that $\omega(\cdot,t)$ is a weak solution to 3D Euler supported on $\gamma_0(\cdot,t)$.  By the uniqueness of vortex sheets in analytic spaces \cite{SS}, we then conclude that $\omega = \omega^{\rm sing}$, and the result follows. Note that the convergences along the whole sequences, instead of subsequences, follow from the uniqueness of the limit. 
\end{proof}

\section{Kernel estimates} \label{S.Kernel}
This section is devoted to the proofs of the Propositions \ref{main est} and \ref{est kfar}. We split the proof into several parts, but first, we introduce some notation. For $z=(z_1,z_2,z_3)\in\CC^3$, we denote its modulus by
\[
|z|:=\sqrt{|z_1|^2+|z_2|^2+|z_3|^2}\in[0,\infty)\,.
\]
Also, when $\Re\{ z_1^2+z_2^2+z_3^2\}>0$ (as will be the case throughout the paper), we use the main branch of the square root on $\C\backslash (-\infty,0)$ to define
\begin{align} \label{E.complexificationModulus}
&|z|_{\C}:=\sqrt{z_1^2+z_2^2+z_3^2}\in \C\,,
\end{align}
and denote the corresponding quadratic form by \begin{align}
\scalar{z}{z'}_\C:=z_1z_1'+z_2z_2'+z_3z_3'\in \C\,.
\end{align}
Let us stress that \eqref{E.complexificationModulus} is not a norm. Moreover, a direct calculation shows that \begin{align} \label{E.complex-modulus}
||z|_{\C}|\leq |z|\,.
\end{align}
For further reference, we also note that, whenever $\min\{\Re |z|_{\C},\Re |z'|_{\C}\}>0$, we can estimate \begin{align}
||z|_{\C}-|z'|_{\C}|= \frac{\left||z|_{\C}^2-|z'|_{\C}^2\right|}{||z|_{\C}+|z'|_{\C}|}=\frac{\left|\scalar{z-z'}{z+z'}_{_{\C}}\right|}{||z|_{\C}+|z'|_{\C}|}\lesssim |z-z'|\frac{|z|+|z'|}{\min(\Re |z|_{\C},\Re |z'|_{\C})}\,, \label{bas est}
\end{align}
and \begin{align}
||z|_{\C}^3-|z'|_{\C}^3|=\frac{\left|\scalar{z-z'}{z+z'}_{\C}(|z|_{\C}^2+|z|_{\C}|z'|_{\C}+|z'|_{\C}^2)\right|}{||z|_{\C}+|z'|_{\C}|}\lesssim |z-z'|\frac{|z|^3+|z'|^3}{\min(\Re |z|_{\C},\Re |z'|_{\C})}\,.\label{bas est2}
\end{align}

Having this notation at hand, in the next subsection, we prove several technical lemmas which will be key to prove Propositions \ref{main est} and \ref{est kfar}.

\subsection{Preliminary lemmas} \label{S.preliminaryLemmas} We first provide a technical result whose proof is direct calculus.  

\begin{lemma}\label{Lem double diff} $ $
\begin{itemize}
\item[a)] Let $p,q,r,t\in \C^3 \setminus \{0\}$ be such that $\min\{\Re|p|_{\C}, \Re|q|_{\C},\Re|r|_{\C},\Re|t|_{\C}\}>0$, then \begin{equation}\begin{aligned}
&\left|\frac{p}{|p|_{\C}^3}-\frac{q}{|q|_{\C}^3}-\frac{r}{|r|_{\C}^3}+\frac{t}{|t|_{\C}^3}-\frac{p-q-r+t}{|p|_{\C}^3}-\frac{3p\scalar{p}{q-p+r-t}_{\C}}{|p|_{\C}^5}\right|\\
& \qquad \lesssim \frac{\max\{|p|,|q|,|r|,|t|\}^7}{\min\{\Re|p|_{\C},\Re|q|_{\C},\Re|r|_{\C},\Re|t|_{\C}\}^{11}}\big(|p-q|+|r-t|\big)\big(|p-r|+|q-t|\big)\,.\label{dd est}
\end{aligned}\end{equation}

\item[b)] Also, let $p',q',r',t'\in \C^3 \setminus \{0\}$ be such that $\min\{\Re|p'|_{\C}, \Re|q'|_{\C},\Re|r'|_{\C},\Re|t'|_{\C}\}>0$, then \begin{equation}\begin{aligned}
&\Bigg|\bigg(\frac{p}{|p|_{\C}^3}-\frac{q}{|q|_{\C}^3}-\frac{r}{|r|_{\C}^3}+\frac{t}{|t|_{\C}^3}-\frac{p-q-r+t}{|p|_{\C}^3}-\frac{3p\scalar{p}{q-p+r-t}_{\C}}{|p|_{\C}^5}\bigg)\\
&\quad-\bigg(\frac{p'}{|p'|_{\C}^3}-\frac{q'}{|q'|_{\C}^3}-\frac{r'}{|r'|_{\C}^3}+\frac{t'}{|t'|_{\C}^3}-\frac{p'-q'-r'+t'}{|p'|_{\C}^3}-\frac{3p'\scalar{p'}{q'-p'+r'-t'}_{\C}}{|p'|_{\C}^5}\bigg)\Bigg|\\
&  \lesssim \frac{\max\{|p|,|q|,|r|,|t|,|p'|,|q'|,|r'|,|t'|\}^{13}}{\min\{\Re|p|_{\C},\Re|q|_{\C},\Re|r|_{\C},\Re|t|_{\C},\Re|p'|_{\C},\Re|q'|_{\C},\Re|r'|_{\C},\Re|t'|_{\C}\}^{17}}\\
&  \quad \boldsymbol{\cdot}\Big(\big(|p-p'-q+q'|+|r-r'-t+t'|\big)\big(|p-r|+|q-t|\big)\\
& \qquad  +\big(|p-q|+|r-t|\big)\big(|p-p'-r+r'|+|q-q'-t+t'|\big)\Big)\\
& \quad+\frac{\max\{|p|,|q|,|r|,|t|,|p'|,|q'|,|r'|,|t'|\}^{12}}{\min\{\Re|p|_{\C},\Re|q|_{\C},\Re|r|_{\C},\Re|t|_{\C},\Re|p'|_{\C},\Re|q'|_{\C},\Re|r'|_{\C},\Re|t'|_{\C}\}^{17}}\\
& \qquad\boldsymbol{\cdot} \big(|p-p'|+|q-q'|+|r-r'|+|t-t'|\big)\big(|p-q|+|r-t|\big)\big(|p-r|+|q-t|\big)\,.\label{td est}
\end{aligned}\end{equation}
\end{itemize}
\end{lemma}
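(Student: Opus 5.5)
We treat both parts through the function $G(z):=z/|z|_{\C}^3$. It is holomorphic on the open set $\Omega:=\{z\in\C^3:\Re|z|_{\C}>0\}$ and homogeneous of degree $-2$. Its derivative is
\[
{\rm D}G(z)h=\frac{h}{|z|_{\C}^3}-\frac{3z\scalar{z}{h}_{\C}}{|z|_{\C}^5}.
\]
Hence the quantity inside the absolute value in \eqref{dd est} equals
\[
G(p)-G(q)-G(r)+G(t)-{\rm D}G(p)(p-q-r+t),
\]
a second difference with its linearization at $p$ removed. The plan has three steps.

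\textbf{Step 1: bounds on derivatives of $G$ along segments.} Use \eqref{E.complex-modulus}, the bound $\Re|z|_{\C}\le||z|_{\C}|$, and the structure of $G$. Each derivative ${\rm D}^kG$ is a sum of terms of the form (polynomial of degree $m$ in $z$)$/|z|_{\C}^{3+2j}$. This gives $|{\rm D}^kG(z)|\lesssim |z|^{a_k}/(\Re|z|_{\C})^{b_k}$ with explicit exponents.

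The difficulty is that $\Omega$ is not convex. Segments such as $[q,p]$ may leave $\Omega$, or come close to where $\Re|z|_{\C}$ vanishes. I would avoid this as follows. Along a segment $z_\lambda=(1-\lambda)q+\lambda p$, the map $\lambda\mapsto|z_\lambda|_{\C}^2$ is a quadratic polynomial. One can write $|z_\lambda|_{\C}^2=(1-\lambda)^2|q|^2_{\C}+\lambda^2|p|^2_{\C}+2\lambda(1-\lambda)\scalar{p}{q}_{\C}$.

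The real parts are not controlled directly, so I would instead use the crude splitting below, which is presumably why the powers are so large.
\begin{itemize}
\item[(i)] If $|p-q|\ge c\min\Re|\cdot|_{\C}$, bound everything trivially. Then $|G(x)|\le |x|/(\Re|x|_{\C})^3$, which the paper's large exponents can absorb, with factors $(|p-q|/\min\Re)$ inserted for free.
\item[(ii)] Otherwise the segment stays in a neighbourhood where, by \eqref{bas est}, $\Re|z_\lambda|_{\C}\ge \tfrac12\min\Re$. Here the calculus applies.
\end{itemize}
The same dichotomy applies to each pair of points that is differenced.

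\textbf{Step 2: prove \eqref{dd est}.} Write the second difference as
\[
[G(p)-G(q)]-[G(r)-G(t)]=\int_0^1 {\rm D}G(q+\lambda(p-q))(p-q)\,d\lambda-\int_0^1 {\rm D}G(t+\lambda(r-t))(r-t)\,d\lambda .
\]
Subtract ${\rm D}G(p)[(p-q)-(r-t)]$. This leaves the terms
\[
\int_0^1[{\rm D}G(q_\lambda)-{\rm D}G(p)](p-q)\,d\lambda \quad\text{and}\quad \int_0^1[{\rm D}G(t_\lambda)-{\rm D}G(p)](r-t)\,d\lambda .
\]
For the first, apply the mean value inequality once more. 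Since $|q_\lambda-p|\le|p-q|$, its size is $|{\rm D}^2G|\,|p-q|^2$. Products like $|p-q|^2$ are not of the required form $(|p-q|+|r-t|)(|p-r|+|q-t|)$, so the terms have to be regrouped before estimating. I would regroup as
\[
\int_0^1\big({\rm D}G(q_\lambda)-{\rm D}G(t_\lambda)\big)(p-q)\,d\lambda+\int_0^1\big({\rm D}G(t_\lambda)-{\rm D}G(p)\big)\big((p-q)-(r-t)\big)\,d\lambda+\int_0^1\big({\rm D}G(t_\lambda)-{\rm D}G(p)\big)(r-t)\,d\lambda .
\]
The first term is bounded by $\sup|{\rm D}^2G|\,(|q-t|+|p-r|)\,|p-q|$, using $|q_\lambda-t_\lambda|\le |q-t|+|p-r|$.

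The remaining terms are handled the same way, after noting that $|t_\lambda-p|\lesssim |p-r|+|r-t|$ and that $|(p-q)-(r-t)|\le |p-r|+|q-t|$. The term $|r-t|\,(|p-r|+|r-t|)$ still contains $|r-t|^2$. I expect this to be the main obstacle. I would rewrite $r-t=(r-p)+(p-q)+(q-t)$ to reduce it to admissible products, which works since $|r-t|\le |p-q|+|p-r|+|q-t|$, up to a symmetric term that needs a further symmetrization argument. Collecting exponents gives $\max^7/\min^{11}$ generously, after using $\min\Re\le\max$ to equalize powers.

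\textbf{Step 3: prove \eqref{td est}.} Write the expression as $\Phi(p,q,r,t)-\Phi(p',q',r',t')$. Interpolate along $x_\mu=x'+\mu(x-x')$ for all four points simultaneously, and differentiate in $\mu$. The $\mu$-derivative has the same second-difference structure, now involving ${\rm D}G$ and ${\rm D}^2G$ applied to the differences $x-x'$.

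Applying the Step 2 argument one order higher yields the first group of terms. These carry mixed differences such as $p-p'-q+q'$, coming from the derivative of the differences themselves. It also yields the second group, which carries $|x-x'|$ times the product of two differences, coming from ${\rm D}^3G$. The case (i)/(ii) dichotomy is applied to the interpolated points as well, and accounts for the exponents $13$ and $17$.
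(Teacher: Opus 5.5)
Your Step 2 contains an algebraic slip, and that slip is what creates your ``main obstacle''. The three terms of your regrouping add up to $\int_0^1({\rm D}G(q_\lambda)-{\rm D}G(p))(p-q)\,d\lambda$ alone. Their third term is exactly cancelled by the second leftover term, which enters with a minus sign: $-\int_0^1({\rm D}G(t_\lambda)-{\rm D}G(p))(r-t)\,d\lambda$. Hence the whole remainder is
\[
\int_0^1\big({\rm D}G(q_\lambda)-{\rm D}G(t_\lambda)\big)(p-q)\,d\lambda+\int_0^1\big({\rm D}G(t_\lambda)-{\rm D}G(p)\big)(p-q-r+t)\,d\lambda ,
\]
and both pieces are admissible at once. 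For the first, $|q_\lambda-t_\lambda|\le|p-r|+|q-t|$. For the second, $|t_\lambda-p|\le|p-r|+|r-t|$; pair $|p-r|$ with $|p-q-r+t|\le|p-q|+|r-t|$, and pair $|r-t|$ with $|p-q-r+t|\le|p-r|+|q-t|$. The repair you propose cannot work, because $\int_0^1({\rm D}G(t_\lambda)-{\rm D}G(p))(r-t)\,d\lambda$ is not controlled by the right-hand side of \eqref{dd est}: at $p=r$, $q=t$ it equals $G(p)-G(q)-{\rm D}G(p)(p-q)\neq0$, while the right-hand side vanishes.

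Your treatment of non-convexity also needs correcting. The condition $|p-q|<c\min\Re|\cdot|_{\C}$ does not keep $[q,p]$ inside $\{\Re|z|_{\C}>0\}$, and \eqref{bas est} cannot be invoked along the segment, since it presupposes exactly that. For example, take $Y>\alpha>0$, $A\gg Y$, $p=(A,iA,\alpha+iY)$ and $q=p-\tfrac{i\alpha Y}{A}(1,-i,0)$. Then $|p|_{\C}=\alpha+iY$, $|q|_{\C}=\alpha-iY$ and $|p-q|=\sqrt2\,\alpha Y/A\ll\alpha$, yet $|\tfrac{p+q}{2}|_{\C}^2=\alpha^2-Y^2<0$. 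The right threshold is $c\,m_-^2/m_+$, where $m_-$ is the minimum of the $\Re|\cdot|_{\C}$ and $m_+$ the maximum of the moduli. Since $\operatorname{dist}(|p|_{\C}^2,(-\infty,0])\ge(\Re|p|_{\C})^2$ and $\big||z|_{\C}^2-|p|_{\C}^2\big|\le|z-p|(|z|+|p|)$, the segment then avoids the branch cut. Moreover $\big||z|_{\C}-|p|_{\C}\big|\le\big||z|_{\C}^2-|p|_{\C}^2\big|/\Re|p|_{\C}$ gives $\Re|z|_{\C}\ge m_-/2$, and the large exponents absorb the loss. Finally, the trivial bound applies only when both $|p-q|+|r-t|$ and $|p-r|+|q-t|$ exceed the threshold. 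If only one does, you still need a first-order expansion in the small direction. For instance, when $|p-r|+|q-t|$ is small, write the expression as $[G(p)-G(r)-{\rm D}G(p)(p-r)]-[G(q)-G(t)-{\rm D}G(p)(q-t)]$.

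For (b), no argument can give \eqref{td est} as printed. If $p=q=r=t$, its right-hand side vanishes, whereas the left-hand side is $|\Phi(p',q',r',t')|$, which is nonzero in general. For example, $p=q=r=t=p'=e_1$, $q'=r'=(1+\eta)e_1$, $t'=(1+2\eta)e_1$ gives about $6\eta^2$. Your interpolation, carried out correctly, shows what is true. Along $x_\mu$ you only have $|p_\mu-q_\mu|+|r_\mu-t_\mu|\le|p-q|+|r-t|+|p-p'-q+q'|+|r-r'-t+t'|$, and similarly for the other pair. So besides the printed groups you also get $\frac{m_+^{13}}{m_-^{17}}\big(|p-p'-q+q'|+|r-r'-t+t'|\big)\big(|p-p'-r+r'|+|q-q'-t+t'|\big)$, with $m_\pm$ now taken over all eight points. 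This term is harmless in the two places where the paper applies (b), so you should state it and carry it. The interpolated points also need the threshold treatment above.

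As for the route: the paper never integrates along segments. It proves (a) through exact algebraic identities (\eqref{2 dd est}, \eqref{est dd norms}) and bounds every piece by \eqref{bas est}--\eqref{bas est2}, which involve only the given endpoints, so the non-convexity issue never arises. For (b) it offers only a one-line sketch, which does not address the missing term. Your Taylor-formula approach shows more transparently where the powers of $m_\pm$ come from, but it pays for this with the case analysis described above.
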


\begin{proof}
\textbf{a)} First of all, note that
\begin{equation} \label{2 dd est}
\begin{aligned}
&\frac{p}{|p|_{\C}^3}-\frac{q}{|q|_{\C}^3}-\frac{r}{|r|_{\C}^3}+\frac{t}{|t|_{\C}^3}=\frac{p-q}{|p|_{\C}^3}-\frac{r-t}{|r|_{\C}^3}+q\frac{|q|_{\C}^3-|p|_{\C}^3}{|p|_{\C}^3|q|_{\C}^3}-t\frac{|t|_{\C}^3-|r|_{\C}^3}{|r|_{\C}^3|t|_{\C}^3}\\
& \quad =\frac{p-q-r+t}{|p|_{\C}^3}-(r-t)\frac{|p|_{\C}^3-|r|_{\C}^3}{|p|_{\C}^3|r|_{\C}^3}+q\frac{|q|_\C^3-|p|_{\C}^3}{|p|_\C^3|q|_{\C}^3}-t\frac{|t|_\C^3-|r|_{\C}^3}{|t|_\C^3|r|_{\C}^3}\,.
\end{aligned}
\end{equation}
Using \eqref{bas est2}, the second term on the right-hand side can be estimated by the right-hand side in \eqref{dd est}.

Likewise, we can further rewrite the last term as \begin{align*}
t\frac{|t|_\C^3-|r|_{\C}^3}{|t|_\C^3|r|_{\C}^3}=q\frac{|t|_\C^3-|r|_{\C}^3}{|p|_\C^3|q|_{\C}^3}+(t-q)\frac{|t|_\C^3-|r|_{\C}^3}{|t|_\C^3|r|_{\C}^3}+q(|t|_\C^3-|r|_{\C}^3)\left(\frac{1}{|r|^3_\C |t|^3_\C}  - \frac{1}{|p|^3_\C |q|^3_\C}\right)\,,
\end{align*}
and stress that, again using \eqref{bas est2}, the last two terms on the right-hand side can be estimated by the right-hand side in \eqref{dd est}. 

Next, observe that 
\begin{align*}
& q\frac{|q|_\C^3-|p|_{\C}^3}{|p|_\C^3|q|_{\C}^3} - q\frac{|t|_\C^3-|r|_{\C}^3}{|p|_\C^3|q|_{\C}^3} = -q\frac{|p|_{\C}^3-|q|_{\C}^3-|r|_{\C}^3+|t|_{\C}^3}{|p|_\C^3|q|_{\C}^3} \\
& \quad =-p\frac{|p|_{\C}^3-|q|_{\C}^3-|r|_{\C}^3+|t|_{\C}^3}{|p|_{\C}^6}+(p-q)\frac{|p|_{\C}^3-|q|_{\C}^3-|r|_{\C}^3+|t|_{\C}^3}{|p|_\C^3|q|_{\C}^3}\\
&\qquad-p\frac{|p|_{\C}^3-|q|_{\C}^3-|r|_{\C}^3+|t|_{\C}^3}{|p|_{\C}^3}\left(\frac{1}{|q|_{\C}^3}-\frac{1}{|p|_{\C}^3}\right)\,.
\end{align*}
Arguing exactly as we did before, we can estimate the last two terms by the right-hand side in \eqref{dd est}. We then focus on the first term on the right-hand side. We rewrite the numerator as 
\begin{align}\begin{aligned}\label{est dd norms}
& |p|_{\C}^3-|q|_{\C}^3-|r|_{\C}^3+|t|_{\C}^3=\frac{\scalar{p-q}{p+q}_{\C}(|p|_{\C}^2+|p|_{\C}|q|_{\C}+|q|_{\C}^2)}{|p|_{\C}+|q|_{\C}} \\
& \qquad-\frac{\scalar{r-t}{r+t}_{\C}(|r|_{\C}^2+|r|_{\C}|t|_{\C}+|t|_{\C}^2)}{|r|_{\C}+|t|_{\C}}\\
&\quad =\frac{\scalar{p-q-r+t}{p+q}_{\C}(|p|_{\C}^2+|p|_{\C}|q|_{\C}+|q|_{\C}^2)}{|p|_{\C}+|q|_{\C}}\\
& \qquad-\frac{\scalar{r-t}{r+t}_{\C}\left((|r|_{\C}^2+|r|_{\C}|t|_{\C}+|t|_{\C}^2)-(|p|_{\C}^2+|p|_{\C}|q|_{\C}+|q|_{\C}^2)\right)}{|r|_{\C}+|t|_{\C}}\\
&\qquad-\scalar{r-t}{r+t}_{\C}(|p|_{\C}^2+|p|_{\C}|q|_{\C}+|q|_{\C}^2)\left(\frac{1}{|r|_{\C}+|t|_{\C}}-\frac{1}{|p|_{\C}+|q|_{\C}}\right)\\
&\qquad -\frac{\scalar{r-t}{r+t-p-q}_{\C}(|p|_{\C}^2+|p|_{\C}|q|_{\C}+|q|_{\C}^2)}{|p|_{\C}+|q|_{\C}}.
\end{aligned}
\end{align}
The resulting terms from all summands here except the first can be estimated against the right-hand side in \eqref{dd est} using \eqref{bas est}.

Finally, concerning the first one, observe that
\begin{align*} 
    & \frac{\scalar{p-q-r+t}{p+q}_{\C}(|p|_{\C}^2+|p|_{\C}|q|_{\C}+|q|_{\C}^2)}{|p|_{\C}+|q|_{\C}} = 2|p|_{\C} \scalar{p-q-r+t}{p}_{\C} + \frac{2|q|_{\C}^2 \scalar{p-q-r+t}{p}_{\C}}{|p|_{\C} + |q|_{\C}} \\
    & \qquad + \frac{\scalar{p-q-r+t}{q-p}_{\C}(|p|_{\C}^2+|p|_{\C}|q|_{\C}+|q|_{\C}^2)}{|p|_{\C}+ |q|_{\C}} \\
    & \quad = 3 |p|_{\C}  \scalar{p-q-r+t}{p}_{\C} + \frac{\scalar{p-q-r+t}{p}_{\C}(|q|_{\C}^2-|p|_{\C}^2)}{|p|_{\C}+|q|_{\C}} + \frac{\scalar{p-q-r+t}{p}_{\C} |q|_{\C}(|q|_{\C}-|p|_{\C})}{|p|_{\C}+|q|_{\C}}\\
    & \qquad + \frac{\scalar{p-q-r+t}{q-p}_{\C}(|p|_{\C}^2+|p|_{\C}|q|_{\C}+|q|_{\C}^2)}{|p|_{\C}+ |q|_{\C}}\,.
\end{align*}
The corresponding terms from the last three summands can be estimated by the right-hand side in \eqref{dd est} using again \eqref{bas est}. At this point, a) follows by combining all these estimates.

\medbreak
\textbf{b)} This can be shown by splitting the double differences as earlier and showing that all the error terms contain a difference between the original variables and the variables with a $'$. For instance, the difference of the error terms in \eqref{2 dd est} can be split as \begin{align*}
\mel(r-t)\frac{|p|_{\C}^3-|r|_{\C}^3}{|p|_{\C}^3|r|_{\C}^3}-(r'-t')\frac{|p'|_{\C}^3-|r'|_{\C}^3}{|p'|_{\C}^3|r'|_{\C}^3}\\
&=(r-r'-t+t')\frac{|p|_{\C}^3-|r|_{\C}^3}{|p|_{\C}^3|r|_{\C}^3}+(r'-t')\frac{|p|_{\C}^3-|r|_{\C}^3-|p'|_{\C}^3+|r'|_{\C}^3}{|p|_{\C}^3|r|_{\C}^3}\\
&\quad+(r'-t')\frac{|p'|_{\C}^3-|r'|_{\C}^3(|p|_{\C}^3-|p'|_{\C}^3)|r|_{\C}^3+|p'|_{\C}^3(|r|_{\C}^3-|r'|_{\C}^3)}{|p|_{\C}^3|r|_{\C}^3|p'|_{\C}^3|q'|_{\C}^3}\,.
\end{align*}
The first and the last term can easily be estimated against the right-hand side of \eqref{td est} by using \eqref{E.complex-modulus} and \eqref{bas est2}. The numerator of the middle term can be rewritten in the same manner as in \eqref{est dd norms}, which yields the desired estimate. We omit the details for the remaining terms.
\end{proof}

Next, using classical Fourier multiplier theorems, we estimate some singular integrals in $C^\frac12(\T^2,\C)$. We deal with a general Calder\'on-Zygmund kernel depending on a complex matrix $M$ and a complex parameter $b$. Note that $\phi$ in the statement is the cutoff function defined in \eqref{E.phicutoff}.

\begin{lemma}\label{SingInt}
Let \begin{align*}
M:=\begin{pmatrix} 1 & 0\\ 0 & 1 \\ a_1 & a_2\end{pmatrix}\in \C^{3\times 2} \,,
\end{align*}
with $a_1,a_2\in \C$, and $b\in \C$. Assume that, for some  real number $A_0 \geq 1$, 
\begin{equation}\begin{aligned}
|\Im a_1|+|\Im a_2|\leq \frac{1}{100}\,, \quad 0\leq |\Im b|\leq \frac{1}{100A_0}|\Re b| \,,\quad\mathrm{and}\quad |a_1|+|a_2|\leq  A_0\,, \label{arg comp modulus}
\end{aligned}\end{equation}
and consider convolution kernels $f: \T^2 \to \C$ defined for $x \in [-\tfrac12,\tfrac12]^2$ by
 \begin{align*}
f(x):=\frac{x_1^{m_1}x_2^{m_2}\delta^r}{|Mx+\delta be_3|_{\C}^{r+m_1+m_2+2}}\phi(x)\, \mathds{1}_{c\leq |x|\leq \frac12}\,,
\end{align*}
with $\delta \in (0,1)$, $c\in [0,1/2]$, and $r,m_1,m_2\in \N \cup \{0\}$. Also, assume that  $r\neq 0$ or that $m_1+m_2$ is odd. Then, it follows that
\begin{equation} \label{E.convolutionHolder}
\norm{f *g}_{C^{\frac12}(\T^2)} \lesssim |b|^{-r}\, \norm{g}_{C^{\frac12}(\T^2)} \,, \quad \textup{for all } g \in C^{\frac12}(\T^2)\,.
\end{equation}
Moreover, for $k_1, k_2, k_3 \in \N \cup \{0\}$, the derivatives of $f$ with respect to the parameters $a_1,a_2$ and $b$ satisfy that
\begin{equation} \label{E.convolutionDerivativeHolder}
\norm{({\rm D}^{k_1}_{a_1}{\rm D}^{k_2}_{a_2} {\rm D}_b^{k_3} f) * g}_{C^{\frac12}(\T^2)}  \lesssim |b|^{-r-k_3}\, \norm{g}_{C^{\frac12}(\T^2)} \,, \quad \textup{for all } g \in C^{\frac12}(\T^2)\,.
\end{equation}
The implicit constants here depend only on $k_1, k_2, r,m_1,m_2$ and $A_0$, but not on $\delta$.
\end{lemma}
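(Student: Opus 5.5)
We will prove the estimate by showing that $f$ (and its parameter derivatives) are Fourier multipliers on $\T^2$ with uniformly bounded symbol and symbol derivatives. By the classical Marcinkiewicz/Mikhlin-type multiplier theorems on $C^{\frac12}(\T^2)$ (Hölder–Zygmund spaces are preserved by multipliers $m(k)$ satisfying $|k|^{|\alpha|}|\Delta^\alpha m(k)|\lesssim 1$ for $|\alpha|\le 3$), this gives \eqref{E.convolutionHolder}. Equivalently, and perhaps more robustly, we can work in physical space. We show that $f$ is a Calderón–Zygmund kernel on $\T^2$:
\[
|f(x)|\lesssim |b|^{-r}|x|^{-2},\qquad |\nabla f(x)|\lesssim |b|^{-r}|x|^{-3},
\]
uniformly in $\delta$ and $c$, together with the cancellation condition $\bigl|\int_{\rho_1\le|x|\le\rho_2} f\bigr|\lesssim |b|^{-r}$ uniformly in $\rho_1<\rho_2$. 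With these in hand, the standard Privalov/Hölder estimate for singular integrals gives boundedness on $C^{\frac12}$.

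The first step is to control the denominator. We write $Mx+\delta b e_3=(x_1,x_2,a\cdot x+\delta b)$, so that
\[
|Mx+\delta be_3|_\C^2=|x|^2+(a\cdot x+\delta b)^2.
\]
Using \eqref{arg comp modulus}, $\Im(a\cdot x+\delta b)$ is at most $\frac1{100}|x|+\frac{1}{100A_0}\delta|\Re b|$, while $|a\cdot x+\delta\Re b|\le A_0|x|+\delta|\Re b|$. A case split, according to whether $\delta|\Re b|\lesssim A_0|x|$ or not, then shows
\[
\Re\bigl(|Mx+\delta be_3|^2_\C\bigr)\gtrsim_{A_0} |x|^2+\delta^2|b|^2 .
\]
Hence the complex square root stays in a sector and $\bigl||Mx+\delta be_3|_\C\bigr|\sim |x|+\delta|b|$. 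This is the step I expect to require the most care, since the imaginary parts must be absorbed using exactly the constants in \eqref{arg comp modulus}. Given this, we obtain
\[
|f(x)|\lesssim \frac{|x|^{m_1+m_2}\delta^r}{(|x|+\delta|b|)^{r+m_1+m_2+2}}\le |b|^{-r}|x|^{-2},
\]
and similarly for the gradient; the cutoff $\phi$ and the indicator $\mathds 1_{c\le|x|}$ are harmless away from the origin. The jump at $|x|=c$ is handled by treating truncations uniformly, as in the standard theory.

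The cancellation is the main obstacle. If $m_1+m_2$ is odd and $r=0$, the kernel is \emph{not} odd once $a\neq0$, since the third component $a\cdot x$ flips sign but enters squared; indeed, with $\delta b$ absent, the denominator is even in $x$, so the kernel is odd and the cancellation is exact. If $r\ge1$, then for $|x|\gg\delta|b|$ the kernel is bounded by $\delta^r|b|^0|x|^{-2-r}$, which is integrable away from scale $\delta|b|$, and on $|x|\lesssim\delta|b|$ it is bounded by $(\delta|b|)^{-2}|b|^{-r}$. Integrating gives $\int|f|\lesssim|b|^{-r}$, so no cancellation is needed at all: $f$ is effectively an $L^1$ kernel with norm $\lesssim|b|^{-r}$, and convolution with it is bounded on $C^{\frac12}$ trivially. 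For the odd case with $r=0$, the function $f$ is exactly odd, and the Calderón–Zygmund–Hölder theorem applies.

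Finally, derivatives in $a_1,a_2$ produce kernels of the same type: each extra factor $x_j(a\cdot x+\delta b)$ appears over two more powers of the modulus, and expanding $a\cdot x+\delta b$ yields terms of the same form with parity or $r$ preserved. Derivatives in $b$ produce the factor $\delta(a\cdot x+\delta b)$, raising $r$ by one at the cost of $|b|^{-1}$, which gives \eqref{E.convolutionDerivativeHolder} by the same argument. Throughout, we will track that the constants depend only on $A_0$, $k_1$, $k_2$, $r$, $m_1$, $m_2$.
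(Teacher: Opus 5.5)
Your framework is the paper's: the lower bound $\Re|Mx+\delta be_3|_\C^2\gtrsim|x|^2+\delta^2|b|^2$, the $L^1$/Young argument for $r\ge1$, reduction of the case $r=0$ to size, H\"ormander-type smoothness and cancellation conditions, and kernels of the same type for the parameter derivatives (with $b$-derivatives always landing in the $r\ge1$ case). The gap is the cancellation condition for $r=0$, which is the only genuinely singular case.

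You conclude that for $r=0$ and $m_1+m_2$ odd the kernel is exactly odd. That is false. The exponent $r$ only controls the power of $\delta$ in the numerator, and the shift $\delta b e_3$ stays in the denominator regardless. In fact
\[
|Mx+\delta be_3|_\C^2-|-Mx+\delta be_3|_\C^2=4\delta b\,(a_1x_1+a_2x_2),
\]
so $f$ is odd only if $a=0$ or $\delta b=0$; your own preceding sentence essentially says this. In the application $a=\nabla_s w_1$ and $\delta=\ep|l-\ell|>0$, so non-oddness is the generic situation. The real content of the lemma is that the annulus integrals $\int_{R_1\le|x|\le R_2}f$ are bounded uniformly in $\delta$, and exact oddness does not deliver this.

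What is missing is a quantitative near-oddness argument at the scale $\delta|b|$, which is how the paper proceeds.
\begin{itemize}
\item For $R_2\le10\delta|b|$, use the size bound $|f(x)|\lesssim|x|^{m_1+m_2}(\delta|b|)^{-m_1-m_2-2}$. This gives $\int_{|x|\le10\delta|b|}|f|\lesssim1$.
\item For $R_1\ge10\delta|b|$, symmetrize: since $x_1^{m_1}x_2^{m_2}$ is odd and $\phi$ and the indicator are even,
\[
\int_{R_1\le|x|\le R_2}f=\tfrac12\int_{R_1\le|x|\le R_2}\bigl(f(x)+f(-x)\bigr)\,dx.
\]
The identity above, together with the two-sided bound on the complex modulus, gives $|f(x)+f(-x)|\lesssim\delta|b|\,|x|^{m_1+m_2+1}(|x|+\delta|b|)^{-m_1-m_2-4}\le\delta|b|\,|x|^{-3}$. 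This integrates to $O(1)$ over $|x|\ge10\delta|b|$.
\item The mixed case follows by splitting the annulus at $10\delta|b|$.
\end{itemize}

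A smaller point concerns the sharp cutoff $\mathds{1}_{|x|\ge c}$. Your pointwise bound $|\nabla f|\lesssim|x|^{-3}$ fails across $|x|=c$, so you must verify the integrated H\"ormander condition uniformly in $c$. The paper does this by treating $|x|\le2c$ with the size bound, which costs only $\log 3$, and $|x|\ge2c$ with the mean value theorem. This is standard, but it should be carried out rather than deferred to ``the standard theory''.
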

 
\begin{proof}
We start by proving a basic lower bound on the denominator. First, note that
\begin{align*}
& \Re|Mx+\delta be_3|_{\C}^2
=x_1^2+x_2^2+(\delta\Re b+\Re a_1 x_1+\Re a_2 x_2)^2-(\delta\Im b+\Im a_1 x_1+\Im a_2 x_2)^2\\
& \quad \geq |x|^2+(\delta\Re b+\Re a_1 x_1+\Re a_2 x_2)^2- 3(\delta^2 |\Im b|^2+|\Im a_1|^2x_1^2+|\Im a_2|^2x_2^2)\,.
\end{align*}
Also, combining Cauchy's product inequality with the last assumption in \eqref{arg comp modulus}, we see that
$$
|x|^2 + (\delta\Re b+\Re a_1 x_1+\Re a_2 x_2)^2 \geq \frac{|x|^2}{2} + \frac{1}{1+2A_0^2}\, \delta^2 (\Re b)^2\,.
$$
By the first two assumptions in \eqref{arg comp modulus}, we then get that
\begin{equation}\label{lower bd}
    \Re|Mx+\delta be_3|_{\C}^2 \geq \frac{|x|^2}{4} + \frac{1}{4A_0^2} \,\delta^2 (\Re b)^2 \geq \frac{1}{4A_0^2} (|x|^2 + \delta^2 (\Re b)^2)\,.
\end{equation}
This implies in particular that
$$
||Mx + \delta b e_3|_\C| \gtrsim|x| + \delta |b|\,.
$$

We now deal separately with the cases $r > 0$ and $r =0$, the first being simpler.

\medbreak 
\noindent \textit{Case 1: $r > 0$}. Note that \eqref{lower bd} implies
\begin{equation} \label{E.pointwiseF}
|f(x)| \lesssim \frac{\delta^r |x|^{m_1+m_2}}{|x|^{r+m_1+m_2+2} + (\delta|b|)^{r+m_1+m_2+2}}\,, \quad \textup{for all } x \in [-\tfrac12,\tfrac12]^2 \textup{ and all } r \geq 0\,.
\end{equation}
Hence, since we are dealing with the case where $r > 0$, we get, using polar coordinates to estimate the integral, that
\begin{align*}
    & \int_{B_{\frac{1}{2}}(0)} |f(x)| \dd x\lesssim \delta^r  \int_{B_{\frac{1}{2}}(0)} \frac{|x|^{m_1+m_2}}{|x|^{r+m_1+m_2+2} + (\delta|b|)^{r+m_1+m_2+2}} \dd x \\
    & \qquad \lesssim \frac{1}{\delta |b|^{r+1}} \int_0^{\frac12}  \frac{\big(\frac{s}{\delta|b|}\big)^{m_1+m_2+1}}{1+\big(\frac{s}{\delta|b|}\big)^{r+m_1+m_2+2}} \dd s  \leq \frac{1}{|b|^r} \int_0^{\infty} \frac{\rho^{m_1+m_2+1}}{1+\rho^{r+m_1+m_2+2}} \dd \rho \lesssim|b|^{-r}\,.
\end{align*}

In particular, the convolution $f *g$ is always well-defined for $g \in L^{\infty}(\T^2,\C)$. Moreover, by Young's convolution inequality, it follows that
\begin{align*}
&\norm{g*f}_{L^\infty(\T^2) }\lesssim |b|^{-r}\norm{g}_{L^\infty(\T^2)}\,,\\
&\norm{g*f(\cdot)-g*f(\cdot+y)}_{L^\infty(\T^2)} \lesssim |b|^{-r} \norm{g(\cdot)-g(\cdot+y)}_{L^\infty(\T^2)}\lesssim |b|^{-r}|y|^\frac{1}{2}\norm{g}_{C^\frac{1}{2}(\T^2)},
\end{align*}
which shows \eqref{E.convolutionHolder} in the case where $r > 0$.

\medbreak
\noindent \textit{Case 2: $r=0$.} In this case, the convolution needs to be estimated as a singular integral. Using that singular integrals are bounded on Hölder spaces (see for instance \cite[Corollary \ 6.7.2 and Remark \ 6.5.2]{Grafakos2} and \cite[Theorem 4.4.1]{Grafakos1}), it suffices to show that \begin{align}
\sup_{0<R\leq \frac12}\frac{1}{R} \int_{B_R(0)}|f(x)||x|\dx &\lesssim 1 \,,\label{sing 1}\\
\sup_{y\neq 0}\int_{|x|\geq 2|y|}|f(x-y)-f(x)|\dx&\lesssim 1 \,, \label{sing 2}\\
\sup_{0<R_1 < R_2\leq \frac12}\left|\int_{R_1 \leq |x| \leq R_2}f(x)\dx\right|&\lesssim 1\,.\label{sing 3}
\end{align}
Arguing as in the case where $r > 0$, \eqref{sing 1} immediately follows from \eqref{E.pointwiseF}. Hence, we focus on \eqref{sing 2} and \eqref{sing 3}.

We first deal with \eqref{sing 2}. Here, we consider separately the cases $|y|\geq c$ and $|y|\leq c$. Note that in the first case
\begin{align*}
    \int_{|x| \geq 2|y|} |f(x-y) - f(x)| \dd x = \int_{|x| \geq 2|y|} \left| \int_{-1}^0 \nabla f(x+\tau y)\cdot y \dd \tau \right| \dd x\,.
\end{align*}
Thus, using that
$$
|x+\tau y| \geq |x|-|y| \geq |y|\,, \quad \textup{for all } x,y \in B_{\frac{1}{2}}(0) \textup{ with }|x| \geq 2|y|\,,
$$
we get that
\begin{align*}
      \int_{|x| \geq 2|y|} |f(x-y) - f(x)| \dd x \leq \int_{|\xi| \geq |y|} |y| |\nabla f(\xi)| \dd \xi\,.
\end{align*}
Also, combining \eqref{arg comp modulus} and \eqref{lower bd}, we get that
$$
|\nabla f(\xi)| \lesssim \frac{|\xi|^{m_1+m_2-1}}{|\xi|^{m_1+m_2+2}+(\delta|b|)^{m_1+m_2+2}} + \frac{|\xi|^{m_1+m_2}}{|\xi|^{m_1+m_2+2}+(\delta|b|)^{m_1+m_2+2}} + \frac{|\xi|^{m_1+m_2}}{|\xi|^{m_1+m_2+3}+(\delta|b|)^{m_1+m_2+3}}\,,
$$
and so that
\begin{align*}
         \int_{|x| \geq 2|y|} |f(x-y) - f(x)| \dd x \lesssim |y| \int_{B_\frac{1}{2}(0)\backslash B_{|y|}(0)} \frac{1}{|\xi|^3} \dd \xi \lesssim 1\,.
\end{align*}
This immediately gives \eqref{sing 2} in the case where $|y|\geq c$.

In the case where $|y| \leq c$, we need to analyze separately the cases where $|x|  \geq 2c$ and $|x| \leq 2c$. Note that here we are implicitly assuming that $c > 0$. If $|x| \leq 2c$, then $|x-y| \leq 3c$ and hence, using \eqref{E.pointwiseF}, we can estimate \begin{align*}
    \int_{2c\geq|x|\geq 2|y|}|f(x-y)-f(x)|\dd x\leq 2\int_{|x|\in [c,3c]}|f(x)|\dx\lesssim \int_{B_{3c}(0)\backslash B_c(0)}\frac{1}{|x|^2}\dd x\lesssim \log(3c)-\log(c)\lesssim 1\,,
\end{align*}
showing \eqref{sing 2}. On the other hand, if $|x| \geq 2c$, we have that
$$
|x + \tau y| \geq |x|-|y| \geq c\,,
$$
and we can argue exactly as we did when $|y| \geq c$ and again obtain \eqref{sing 2}.

We finally deal with \eqref{sing 3}. Let $0 < R_1 < R_2 \leq 1/2$. We consider three different cases separately.

\noindent \textit{Case 2.1: $R_2 \leq 10\delta|b|$.} In this case, using once again \eqref{E.pointwiseF}, we immediately get that
\begin{align*}
    \left|\int_{R_1 \leq |x| \leq R_2}f(x)\dx\right| \leq \int_{B_{10\delta|b|}(0)}|f(x)| dx \lesssim \int_{0}^{10} \frac{\rho^{m_1+m_2+1}}{1+\rho^{m_1+m_2+2}} \dd \rho \lesssim 1\,.
\end{align*}

\noindent \textit{Case 2.2: $R_1 \geq 10\delta |b|$.} Since $m_1+m_2$ is odd, it follows that
\begin{align*}
\mel\left|\int_{R_1 \leq |x| \leq R_2}f(x)\dx\right| =\frac{1}{2} \left| \int_{B_{R_2}(0)\backslash B_{R_1}(0)} (f(x)+f(-x)) \dd x \right| \\
&\leq \int_{R_1 \leq |x| \leq R_2} |x_1|^{m_1}|x_2|^{m_2} \left| \frac{1}{|Mx+\delta b e_3|_\C^{m_1+m_2+2}} - \frac{1}{|-Mx+\delta b e_3|_\C^{m_1+m_2+2}} \right| \dd x \,.
\end{align*}
Moreover, using \eqref{arg comp modulus}, \eqref{lower bd}, and the polarization identity
$$
||Mx+\delta b e_3|_\C^2 - |-Mx+\delta b e_3|_\C^2| = 4 |  \scalar{Mx}{\delta be_3}_\C|\,,
$$
we get that
\begin{align*}
&\left| \frac{1}{|Mx+\delta b e_3|_\C^{m_1+m_2+2}} - \frac{1}{|-Mx+\delta b e_3|_\C^{m_1+m_2+2}} \right| \lesssim \frac{| \scalar{Mx}{\delta b e_3}_\C|}{|x|^{m_1+m_2+4}+(\delta|b|)^{m_1+m_2+4}}\\
&\lesssim \frac{|x|\delta|b|}{|x|^{m_1+m_2+4}+(\delta|b|)^{m_1+m_2+4}}\,, \quad \textup{for } |x| \in [R_1,R_2]\,.
\end{align*}
Thus, we infer that
$$
   \left|\int_{R_1 \leq |x| \leq R_2}f(x)\dx\right| \lesssim \frac{1}{\delta|b|} \int_{10\delta |b|}^{\frac12}  \frac{\big(\frac{s}{\delta|b|}\big)^{m_1+m_2+2}}{1+\big(\frac{s}{\delta|b|}\big)^{m_1+m_2+4}}\,\dd s \lesssim 1\,.
$$

\noindent \textit{Case 2.3: $R_1 < 10\delta|b| < R_2$.} Note that, in this case
$$
   \left|\int_{R_1 \leq |x| \leq R_2}f(x)\dx\right| \leq    \left|\int_{R_1 \leq |x| \leq 10\delta|b|}f(x)\dx\right| +    \left|\int_{10\delta|b| \leq |x| \leq R_2}f(x)\dx\right|\,.
$$
Hence, combining the previous cases, we get that
$$
 \left|\int_{R_1 \leq |x| \leq R_2}f(x)\dx\right| \lesssim 1\,.
$$

We have now proved \eqref{sing 1}--\eqref{sing 3}. Hence, by \cite[Corollary \ 6.7.2 and Remark \ 6.5.2]{Grafakos2} and \cite[Theorem 4.4.1]{Grafakos1}, we conclude that \eqref{E.convolutionHolder} holds. 

Concerning \eqref{E.convolutionDerivativeHolder}, we simply note that the derivatives of $f$ with respect to $a_1$, $a_2$, and $b$ are a linear combination of terms with the same structure as $f$, the only difference being that each derivative with respect to~$b$ introduces a factor~$\delta$. Each additional~$\delta$ can be absorbed in the estimates by picking an additional factor~$|b|$, with the same argument as above. This is the content of the estimate~\eqref{E.convolutionDerivativeHolder}.
\end{proof}

\subsection{Proof of  (\ref{bd K})} Having Lemmas \ref{Lem double diff} and \ref{SingInt} at hand, we can now prove the first conclusion in Proposition \ref{main est}, namely \eqref{bd K}. We start by introducing some more notation. Throughout this whole section, let $|\beta|\leq \rho$. First of all, we set
\begin{align*}
& d_{l,\ell}^{\eps}(s+i\beta,\varsigma+i\beta)=s+i\beta - (\varsigma+i\beta)  +  w_1[\ell,s+i\beta]e_3-w_1[\ell,\varsigma+i\beta]e_3 \\
&\quad- \ep\left( \int_l^{\ell}  w_4[\mu,s+i\beta]  \dd \mu \right) e_3= \digamma(s+i\beta)-\digamma(\varsigma+i\beta)+\eps\zeta(s+i\beta)\,,
\end{align*}
with
\begin{equation} \label{E.digamma}
\digamma(a+ib) := a+ib + w_1[\ell,a+ib]\, e_3\,,
\end{equation}
and
\begin{equation} \label{E.zeta}
 \zeta(a+ib) := - \bigg( \int_l^{\ell}  w_4[\mu,a+ib]  \dd \mu \bigg) e_3\,.
\end{equation}
Then, we have that
\begin{equation} \label{E.holomorphicKepsilon}
\begin{aligned}
    &K_\ep^c\left[w_1[\ell,\cdot],\,\int_l^\ell w_4[\mu,\cdot] \dd \mu,\, w_{5-7}[\ell,\cdot]\right](s+i\be) \\
    & \qquad =\frac{1}{4\pi} \int_{\T^2} w_{5-7}[\ell,\varsigma+i\beta] \times  \frac{d_{l,\ell}^{\eps}(s+i\beta,\varsigma+i\beta)}{\big|d_{l,\ell}^{\eps}(s+i\beta,\varsigma+i\beta)\big|_{\C}^3} \phi(s-\varsigma)\dd \varsigma\,,
    \end{aligned}
\end{equation}
where $\phi$ is the periodic extension to $\T^2$ of the cutoff function introduced in \eqref{E.phicutoff}.

Before going further, let us point out that both $\digamma$ and $\zeta$ depend on $w$, $l$ and $\ell$, but we do not reflect this in the notation. Moreover, taking into account the notation introduced in Section \ref{S.localExistence}, one should notice that $\digamma|_{\T^2} = \ga_{\ep,\ell}$.

Let us also stress that through the rest of this section, we consider $0 \leq \rho < \rho_0$ and $w \in \cY^7_\rho$ satisfying the following:
\begin{align} \label{E.Zrho1}
    & \pd_{s_j} w_1[l,\cdot] = w_{1+j}[l,\cdot]\,, \quad \textup{for all } |l| \leq 1\text{ and $j\in \{1,2\}$}\,, \\
    & w_1[l,\cdot] - w_1[\ell,\cdot] = \ep \int_\ell^l w_4[\mu,\cdot]  \dd \mu \,, \quad \textup{for all } l,\ell \in [-1,1]\,. \label{E.Zrho2}
\end{align}
Moreover, we assume in all the proofs that $l \neq \ell$ (which is not restrictive as $\{l-\ell\}$ is a zero set). At this point, taking into account \eqref{E.Zrho1} and \eqref{E.Zrho2}, one can easily check that
 \begin{equation}\label{direct est}
 \begin{aligned}
& \sup_{|\be| \leq \rho} \Big\{\norm{\digamma(\cdot +i\beta)}_{C^\frac{3}{2}(\T^2)}+ \frac{1}{|l-\ell|}\norm{\zeta(\cdot+i\beta)}_{C^\frac{1}{2}(\T^2)}+\eps\norm{\zeta(\cdot+i\beta)}_{C^\frac{3}{2}(\T^2)} \Big\} \\
& \qquad \lesssim 1+\sum_{j=1}^4 \sup_{|l| \leq 1} \norm{w_j[l,\cdot]}_\rho.
\end{aligned}
\end{equation}

Finally, we set
$$
\tilde{d}(v,s+i\beta) :=  {\rm{D}}_s \digamma(s+i\beta)\cdot v +\eps\zeta(s+i\beta)\,, 
$$
and split the kernel in \eqref{E.holomorphicKepsilon} as \begin{equation}\begin{aligned}
& \frac{d_{l,\ell}^{\eps}(s+i\beta,\varsigma+i\beta)}{\big|d_{l,\ell}^{\eps}(s+i\beta,\varsigma+i\beta)\big|_{\C}^3}\, \phi(s-\varsigma)= J_1(s-\varsigma,s+i\beta)+J_2(s+i\beta,\varsigma+i\beta)\,,\label{splitting J}
\end{aligned}\end{equation}
with
\begin{align*}
    & J_1(s-\varsigma, s+i\be) := \frac{\tilde{d}(s-\varsigma,s+i\beta)}{\big|\tilde{d}(s-\varsigma,s+i\beta)\big|_{\C}^3}\phi(s-\varsigma)\,,\\
    & J_2(s+i\beta,\varsigma+i\beta) := \left(\frac{d_{l,\ell}^{\eps}(s+i\beta,\varsigma+i\beta)}{\big|d_{l,\ell}^{\eps}(s+i\beta,\varsigma+i\beta)\big|_{\C}^3}-\frac{\tilde{d}(s-\varsigma,s+i\beta)}{\big|\tilde{d}(s-\varsigma,s+i\beta)\big|_{\C}^3}\right)\phi(s-\varsigma)\,.
\end{align*}

Having  \eqref{splitting J} at hand,  we analyze the part corresponding to each kernel $J_j$ separately. Before doing so, we prove some elementary estimates that we will need. 

\begin{lemma} \label{L.ab}
    Let \begin{equation*}(a_1,a_2)= a(s+i\be) :=\nabla_s w_1[\ell,s+i\be] \in \C^2\quad \text{and} \quad b e_3 = b(s+i\be)e_3:= |l-\ell|^{-1}\ze(s+i\be) \in \C^3.\end{equation*} If \eqref{E.smallness main est} and \eqref{E.smallness main est 2} hold, then $a_1, a_2$ and $b$ fulfill \eqref{arg comp modulus} with $A_0 = \mathcal{C}_1$ given in \eqref{E.cC1}, and it holds that $|b|\approx 1$ uniformly in the other parameters.  
\end{lemma}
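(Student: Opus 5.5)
Everything here follows directly from the definitions and the constraints \eqref{E.Zrho1}--\eqref{E.Zrho2}, so I will verify the three conditions in \eqref{arg comp modulus} and the claim $|b|\approx 1$ one at a time.

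\textbf{The $a$-conditions.} By \eqref{E.Zrho1}, $a_j(s+i\be)=w_{1+j}[\ell,s+i\be]$ for $j=1,2$. The sup norm is controlled by the $\cX_\rho$ norm, so
\[
|a_1|+|a_2|\leq \norm{w_2}_{\cY^1_\rho}+\norm{w_3}_{\cY^1_\rho}\leq \norm{w}_{\cY^7_\rho}\leq \cC_1,
\]
and we may take $A_0=\cC_1\geq 1$. Since $|\Im w_{1+j}|\leq \norm{w}_{\cZ^{\Im}_\rho}$, assumption \eqref{E.smallness main est} gives
\[
|\Im a_1|+|\Im a_2|\leq 2\cdot\frac{1}{300\cC_1^4}<\frac1{100}.
\]

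\textbf{Comparability of $b$.} By \eqref{E.zeta}, $b=-|l-\ell|^{-1}\int_l^\ell w_4[\mu,s+i\be]\dd\mu$, which up to a sign $\pm1$ (depending on whether $\ell>l$) is the average of $w_4[\mu,s+i\be]$ over the segment between $l$ and $\ell$. Taking real and imaginary parts of this average:
\begin{itemize}
\item[(a)] By \eqref{E.smallness main est 2}, $\Re w_4\in[\tfrac13,\tfrac53]$ pointwise, so $|\Re b|\in[\tfrac13,\tfrac53]$.
\item[(b)] By \eqref{E.smallness main est}, $|\Im w_4|\leq \frac{1}{300\cC_1^4}$, so $|\Im b|\leq \frac1{300\cC_1^4}$.
\end{itemize}
Then
\[
|\Im b|\leq \frac{1}{300\cC_1^4}\leq \frac{1}{100\cC_1}\cdot\frac13\leq \frac1{100A_0}|\Re b|,
\]
using $\cC_1\geq1$. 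This is the second condition. Moreover $\tfrac13\leq|b|\leq \tfrac53+1$, hence $|b|\approx1$ with absolute constants, independent of $\ep,\rho,l,\ell,s,\be$.

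\textbf{Expected difficulty.} I expect no real obstacle. The only points needing care are the orientation sign in the definition of $b$ and checking that the numerical constants are compatible, which uses $\cC_1\geq 1$ and absorbs powers of $\cC_1$. The case $l=\ell$ is excluded, as agreed earlier in the section.
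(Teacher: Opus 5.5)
Your proof is correct and follows essentially the same route as the paper. You get $|a_1|+|a_2|\leq\mathcal{C}_1$ and the smallness of $\Im a_j$ from \eqref{E.Zrho1} and \eqref{E.smallness main est}, bound $|\Re b|$ from below via \eqref{E.smallness main est 2} and $|\Im b|$ from above via the imaginary-part seminorm, and use $\mathcal{C}_1\geq 1$ to compare constants. Your observation that $b$ is, up to sign, the average of $w_4[\mu,\cdot]$ over the segment between $l$ and $\ell$ is exactly what underlies the paper's inequalities for $\Re b$ and $\Im b$, and it also makes the upper bound $|b|\lesssim 1$ explicit.
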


\begin{remark}
    Here and in the rest of this subsection, by abuse of notation, we say that $w \in \cY_\rho^7$ satisfies \eqref{E.smallness main est} and \eqref{E.smallness main est 2} if they both hold with $w = w^1 = w^2 \in \cY_\rho^7$. 
\end{remark}
 
\begin{proof}
    First of all, note that
    $$
    |a_1| + |a_2| = |w_2[\ell,s+i\be]| + |w_3[\ell,s+i\be]| \leq \mathcal{C}_1\,.
    $$
    Similarly, using \eqref{E.smallness main est} and the fact that $\mathcal{C}_1 \geq 1$, it is immediate to check that
    $$
    |\Im a_1| + |\Im a_2| \leq \norm{w}_{\cZ_\rho^{\Im}} \leq \frac{1}{300 \mathcal{C}_1} \leq \frac{1}{300}\,.
    $$
    Next, observe that
    \begin{equation} \label{E.lowerboundb}
    |\Re b| \geq \Big(1-\sup_{|l|\leq 1} \sup_{|\be| \leq \rho} \norm{\Re w_4[l, \cdot + i\be]-1}_{L^\infty(\TT^2)} \Big)\,,
    \end{equation}
    and that
    $$
    |\Im b| \leq  \sup_{|\be| \leq \rho} \norm{\Im w_4[l, \cdot + i\be]}_{L^\infty(\TT^2)}\,.
    $$
    Combining these inequalities, it is straightforward to check that, if the following inequality holds for all $l \in [-1,1]$,
    \begin{equation} \label{E.conclusion mini lemma}
     100 A_0 \sup_{|\be| \leq \rho} \norm{\Im w_4[l, \cdot + i\be]}_{L^\infty(\TT^2)} +  \sup_{|\be| \leq \rho} \norm{\Re w_4[l, \cdot + i\be]-1}_{L^\infty(\TT^2)} \leq 1 \,,
    \end{equation}
    then
    \begin{equation} \label{E.upperboundb}
    0 \leq |\Im b| \leq \frac{1}{100 A_0} |\Re b|\,.
    \end{equation}
    Taking into account that \eqref{E.conclusion mini lemma} with $A_0 = \mathcal{C}_1$ is an immediate consequence of \eqref{E.smallness main est}, the previous inequalities hold. Finally, note that the bounds on $b$ follow from \eqref{E.smallness main est}, \eqref{E.lowerboundb} and \eqref{E.upperboundb}.
\end{proof}

\begin{lemma}\label{bds d}
Let $\mathcal{C}_1$ be as in Proposition \ref{main est}.  If $w \in \cY^7_\rho$ satisfies \eqref{E.smallness main est}, \eqref{E.smallness main est 2}, \eqref{E.Zrho1} and \eqref{E.Zrho2}, it follows that
 \begin{align}
& \textup{\rm a) }\ |d_{l,\ell}^\eps(s+i\beta,\varsigma+i\beta)|+|\tilde{d}(s-\varsigma,s+i\beta)|\lesssim |s-\varsigma|+\eps|l-\ell|\,, \label{bd d up} \\
& \textup{\rm b) }\  |d_{l,\ell}^\eps(s+i\beta,\varsigma+i\beta)-\tilde{d}(s-\varsigma,s+i\beta)|\lesssim |s-\varsigma|^\frac{3}{2}\,, \label{bd diff} \\
&  \textup{\rm c) }\  \min \Big\{\Re |d_{l,\ell}^\eps(s+i\beta,\varsigma+i\beta)|_\C^2\,, \, \Re |\tilde{d}(s-\varsigma,s+i\beta)|_\C^2 \Big\} \gtrsim |s-\varsigma|^2+\eps^2|l-\ell|^2\,,\label{bd d low} 
\end{align}
with implicit constants that depend only on $\mathcal{C}_1$ but not on $\rho$ or $\ep$.
\end{lemma}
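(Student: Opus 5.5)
We would derive all three bounds from a first-order Taylor expansion of $\digamma$ around the point $s+i\beta$, together with the estimate \eqref{direct est} and Lemma~\ref{L.ab}.

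Write $v:=s-\varsigma$. Then
\[
d^\ep_{l,\ell}(s+i\beta,\varsigma+i\beta)=\digamma(s+i\beta)-\digamma(\varsigma+i\beta)+\ep\zeta(s+i\beta),
\qquad
\tilde d(v,s+i\beta)={\rm D}_s\digamma(s+i\beta)\,v+\ep\zeta(s+i\beta).
\]

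\textbf{Part a).} By \eqref{direct est}, the Lipschitz bound $\|\digamma\|_{C^1}\lesssim 1$ gives $|\digamma(s+i\beta)-\digamma(\varsigma+i\beta)|\lesssim |v|$. The same estimate gives $|\zeta|\lesssim |l-\ell|$. Together these yield \eqref{bd d up} for $d^\ep_{l,\ell}$, and the argument for $\tilde d$ is identical.

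\textbf{Part b).} The two quantities differ only in the $\digamma$ part, since the $\ep\zeta$ terms cancel. We write
\[
\digamma(s)-\digamma(\varsigma)-{\rm D}\digamma(s)(s-\varsigma)=\int_0^1\big({\rm D}\digamma(\varsigma+\tau v)-{\rm D}\digamma(s)\big)v\,\dd\tau .
\]
The $C^{3/2}$ bound in \eqref{direct est} controls the integrand by $|v|^{1/2}|v|$, which gives \eqref{bd diff}. All evaluations are at imaginary shift $\beta$, and this is legitimate because the periodic holomorphic functions restricted to $\cdot+i\beta$ are $C^{3/2}$ on $\T^2$.

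\textbf{Part c).} This is the main step. For $\tilde d$, note that ${\rm D}_s\digamma$ has exactly the structure of the matrix $M$ of Lemma~\ref{SingInt}, with $(a_1,a_2)=\nabla_s w_1[\ell,\cdot]$. Moreover $\ep\zeta=\ep|l-\ell|\,b\,e_3$ with $\delta=\ep|l-\ell|$. Lemma~\ref{L.ab} shows that these parameters satisfy \eqref{arg comp modulus} with $A_0=\cC_1$ and that $|b|\approx1$. Hence the lower bound \eqref{lower bd} applies directly and gives
\[
\Re|\tilde d|_\C^2\gtrsim |v|^2+\ep^2|l-\ell|^2(\Re b)^2\gtrsim |v|^2+\ep^2|l-\ell|^2 .
\]

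For $d^\ep_{l,\ell}$ we cannot simply perturb off $\tilde d$ using b), since the error $|v|^{3/2}$ is not small compared with $|v|$ unless $|v|$ is small. Instead we redo the computation of \eqref{lower bd} directly. By the mean value theorem, the horizontal components of $d$ equal $v$ exactly. The vertical component is
\[
\Re\big(w_1[\ell,s+i\beta]-w_1[\ell,\varsigma+i\beta]\big)+\ep|l-\ell|\Re b
\]
minus an imaginary part. Writing the $w_1$ difference as $\int_0^1\nabla w_1(\varsigma+\tau v+i\beta)\cdot v\,\dd\tau$, its real part is bounded by $\cC_1|v|$ and its imaginary part by $\|w\|_{\cZ^{\Im}_\rho}|v|\le |v|/(300\cC_1^4)$, using \eqref{E.smallness main est}. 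We then have
\[
\Re|d|_\C^2=|v|^2+X^2-Y^2 ,
\]
where $X$ is the real vertical part, of the form $\ep|l-\ell|\Re b+O(\cC_1|v|)$, and $|Y|\lesssim |v|/(300\cC_1^4)+\ep|l-\ell||\Im b|$. The Cauchy inequality argument from the proof of Lemma~\ref{SingInt} then gives $|v|^2+X^2\ge |v|^2/2+\ep^2|l-\ell|^2(\Re b)^2/(1+2\cC_1^2)$. Finally, $Y^2$ is absorbed thanks to the smallness constants $1/300$ and $1/(100A_0)$, which yields \eqref{bd d low}.

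The only delicate point is tracking these constants so that the absorption of $Y^2$ works uniformly in $\ep$ and $\rho$. All constants depend only on $\cC_1$, which is what the statement requires.
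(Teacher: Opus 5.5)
Your proposal is correct and follows essentially the same route as the paper: a) from \eqref{direct est}, b) from the mean value theorem with the $C^{3/2}$ bound on $\digamma$, and c) by invoking \eqref{lower bd} through Lemma~\ref{L.ab} for $\tilde d$ and redoing that computation directly for $d^\ep_{l,\ell}$. Your treatment of c) is more careful than the paper's condensed chain of inequalities: you handle the real vertical part $X$ with the Cauchy inequality and absorb $Y^2$ with explicit constants.
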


\begin{proof}
First, let us recall that
$$
d_{l,\ell}^\ep (s+i\be, \varsigma+i\be) = \digamma(s+i\be) - \digamma(\varsigma+i\be) + \ep \ze(s+i\be)\,, 
$$
and that
$$
\tilde{d}(s-\varsigma,s+i\be) = {\rm D}_s \digamma(s+i\be) (s-\varsigma) + \ep\ze(s+i\be)\,.
$$
We prove each estimate separately. Since the upper bounds in a) are a straightforward consequence of \eqref{direct est}, we focus on b) and c). Using again \eqref{direct est}, and the mean value theorem, we get that
\begin{align*}
& |d_{l,\ell}^\eps(s+i\beta,\varsigma+i\beta)-\tilde{d}(s-\varsigma,s+i\beta)| \\
& = \left|\digamma(s+i\beta)-\digamma(\varsigma+i\beta)-\mathrm{D}_s\digamma(s+i\beta)(s-\varsigma)\right| \lesssim |s-\varsigma|^\frac{3}{2}\norm{\digamma(\cdot+i\beta)}_{C^\frac{3}{2}(\T^2)}\lesssim |s-\varsigma|^\frac{3}{2}\,.
\end{align*}

Finally, we prove the lower bounds in c). For $M$ and $b$ as in Lemma \ref{L.ab}, it follows that
$$
|\tilde{d}(s-\varsigma, \varsigma+i\be)|_\C^2 = |M(s-\varsigma) + \ep |l-\ell| be_3|_\C^2\,.
$$
On the one hand, using \eqref{lower bd}, we get that
$$
\Re |\tilde{d}(s-\varsigma, \varsigma+i\be)|_\C^2 \gtrsim  |s-\varsigma|^2 + \ep^2 |l-\ell|^2 \,.
$$
On the other hand, arguing as in \eqref{lower bd}, we infer that
\begin{align*}
\mel\Re |d_{l,\ell}^\eps(s+i\beta,\varsigma)|_{\C}^2= |s-\varsigma|^2+ \Re\left((\eps\zeta(s+i\be)+w_1[\ell,s+i\be]-w_1[\ell,\varsigma+i\be])^2\right) \\
&\geq |s-\varsigma|^2+(\eps\Re \zeta(s+i\be)+\Re w_1[\ell,s+i\be]-\Re w_1[\ell,\varsigma+i\be])^2\\
&\quad-3\left(\eps^2|\Im \zeta(s+i\be)|^2+|\Im w_1[\ell,s+i\be]-\Im w_1[\ell,\varsigma+i\be]|^2\right)\\
&\gtrsim |s-\varsigma|^2(1-2\norm{\Im \nabla w_1[\ell,\cdot+i\be]}_{L^\infty(\T^2)}^2)+\eps^2 (|\Re \zeta(s+i\beta)|^2-2|\Im \zeta(s+i\beta)|)\,.
\end{align*}
At this point, using Lemma \ref{L.ab}, the result immediately follows from \eqref{arg comp modulus}.
\end{proof}

Having this lemma at hand, and before going any further, let us point out that the fact that $K_\ep^c$ is holomorphic for $l \neq \ell$ is a straightforward consequence of the lower bound in c). More precisely, we have the following:

\begin{lemma} \label{L.holomorphicExtension}
Let $\mathcal{C}_1$ be as in Proposition \ref{main est}.  If $w \in \cY^7_\rho$ satisfies \eqref{E.smallness main est}, \eqref{E.smallness main est 2}, \eqref{E.Zrho1} and \eqref{E.Zrho2}, then for all $\eps\in (0,1)$ and $l\neq \ell$, it holds that
$$
K_\ep^c\bigg[w_1[\ell,\cdot],\int_l^\ell w_4[\mu,\cdot]\dd\mu, w_{5-7}[\ell,\cdot]\bigg] \in \cX_\rho^3\,.
$$
\end{lemma}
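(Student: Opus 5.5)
The plan is to show two things separately: the integral \eqref{E.holomorphicKepsilon} defines a holomorphic, periodic function of $z=s+i\beta$ on the strip $|\beta|\le\rho$; it is real on $\T^2$; and its $\|\cdot\|_\rho$ norm is finite. The key point is that for $l\neq\ell$ the kernel is \emph{not singular}. By Lemma~\ref{bds d}\,c), for all $\varsigma$ in the support of $\phi(s-\varsigma)$ and all $|\beta|\le\rho$,
\[
\Re\big|d^\ep_{l,\ell}(s+i\beta,\varsigma+i\beta)\big|_\C^2\gtrsim |s-\varsigma|^2+\ep^2|l-\ell|^2\geq \ep^2|l-\ell|^2>0 .
\]
So the principal branch of the square root in \eqref{E.complexificationModulus} is well defined and holomorphic in the entries of $d^\ep_{l,\ell}$, with values in a region where $\Re|d|_\C>0$. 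Moreover $|d|_\C^{-3}$ is bounded by a constant depending on $\ep|l-\ell|$. Since these bounds are only needed for fixed $\ep$ and $l\neq\ell$, no uniformity in $\ep$ is required here.

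First I rewrite the integral so that the integration variable is real and the complex shift enters only through holomorphic functions. I change variables to $\varsigma\mapsto s-\varsigma$, so the integrand becomes
\[
w_{5-7}[\ell,z-\varsigma]\times \frac{d(z,\varsigma)}{|d(z,\varsigma)|_\C^3}\,\phi(\varsigma),\qquad d(z,\varsigma)=\varsigma+\big(w_1[\ell,z]-w_1[\ell,z-\varsigma]\big)e_3+\ep\zeta(z).
\]
For each fixed real $\varsigma$, this is a composition of holomorphic functions of $z$ in the interior of the strip: $w_1[\ell,\cdot]$, $w_4[\mu,\cdot]$ (and hence $\zeta$, via the $\mu$-integral of holomorphic functions that are uniformly bounded in $\mu$), $w_{5-7}[\ell,\cdot]$, and the square root away from its branch cut. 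It is also $1$-periodic in each $z_j$. The integrand is jointly continuous and uniformly bounded on compact subsets of $\{|\Im z|\le\rho\}\times[-\tfrac12,\tfrac12]^2$, using \eqref{bd d up} and the lower bound above. Holomorphy of the integral then follows by differentiation under the integral sign, or by Morera's theorem together with Fubini in each variable $z_j$. Continuity up to the boundary $|\beta|=\rho$ follows from dominated convergence.

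Next I check reality on $\T^2$. When $\beta=0$, every $w_j$ is real, so $d$ is real and $|d|_\C=|d|$; hence the integral is real. Because $w_j\in\cY_\rho^7$, the measurability in $\mu$ needed to define $\zeta$ is guaranteed, as discussed after the definition of $\cY^k_\rho$.

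Finally I bound the $C^{1/2}$ norm for each $|\beta|\le\rho$. For the $L^\infty$ bound, the integrand is bounded by $C\,\|w_{5-7}\|\,(|\varsigma|+\ep|l-\ell|)^{-2}$, which is integrable. For the Hölder seminorm, I write the difference at $s$ and $s'$ and use the $C^{1/2}$ regularity of $w_{5-7}$, $w_1$ and $\zeta$ from \eqref{direct est}. I control the difference of the kernels through \eqref{bas est2}, with the denominators bounded below by $(\ep|l-\ell|)^3$. This gives a finite constant, possibly depending on $\ep|l-\ell|$, which is all the statement requires.

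The main obstacle is ensuring that the branch choice stays consistent. The lower bound c) gives $\Re|d|_\C^2>0$, so $|d|_\C^2$ avoids $(-\infty,0]$, and therefore the principal square root is holomorphic along the whole strip.
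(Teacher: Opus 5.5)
Your proposal is correct and follows the same route as the paper. The lower bound in Lemma~\ref{bds d}\,c) gives $\Re|d|_\C^2\gtrsim \ep^2|l-\ell|^2>0$ for $l\neq\ell$, so the denominator never vanishes; the integrand is then holomorphic, periodic and real on $\T^2$, and the integral inherits these properties. The paper states this in one line, and your write-up supplies the details it leaves implicit: the change to a real integration variable, holomorphy via Morera or differentiation under the integral, reality at $\beta=0$, and a finite (though $\ep|l-\ell|$-dependent) $C^{1/2}$ bound.
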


\begin{proof} First of all, let us recall that
\begin{align*}
K_\ep^c&\bigg[w_1[\ell,\cdot],\int_l^\ell w_4[\mu,\cdot]\dd\mu,w_{5-7}[\ell,\cdot]\bigg]  \\
    & = \frac{1}{4\pi} \int_{\TT^2}  w_{5-7}[\ell, s+i\be + \varsigma] \times \phi(s-\varsigma)\frac{\digamma(s+i\beta)-\digamma(s+i\beta+ \varsigma)+\eps\zeta(s+i\beta)}{|\digamma(s+i\beta)-\digamma(s+i\beta+\varsigma)+\eps\zeta(s+i\beta)|_\C^3}  \dd \varsigma 
\end{align*}
Moreover, by Lemma \ref{bds d} c) we know that
$$
\Re |\digamma(s+i\beta)-\digamma(s+i\beta+ \varsigma)+\eps\zeta(s+i\beta)|_\C^2 \gtrsim \varsigma^2 + \ep^2 |l-\ell|^2\,,
$$
which implies that the denominator is always nonzero when $l\neq \ell$, and implies the result, since the integrand is in $\cX_\rho^3$.
\end{proof}

We can now deal with the part corresponding to $J_1$. We treat it as a convolution operator on $\T^2$ with an extra dependence on $s + i \be \in \TT^2_\rho$.

\begin{lemma} \label{L.J1}
Let $\mathcal{C}_1$ be as in Proposition \ref{main est}. If $w \in \cY^7_\rho$ satisfies \eqref{E.smallness main est}, \eqref{E.smallness main est 2}, \eqref{E.Zrho1} and \eqref{E.Zrho2}, then
$$
\sup_{l,\ell \in [-1,1]} \left\{ \sup_{|\be| \leq \rho} \norm{ \int_{\T^2} J_1(\cdot-\varsigma,\cdot+i\be) \wedge w_{5-7}[\ell,\varsigma+i\be] \dd \varsigma}_{C^{\frac12}(\T^2)} \right\} \lesssim\, 1 \,, 
$$
with an implicit constant that depends only on $\mathcal{C}_1$, but not on $\rho$ or $\ep$.
\end{lemma}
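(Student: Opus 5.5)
The plan is to reduce the statement to Lemma~\ref{SingInt}, treating $J_1$ as a convolution kernel in the variable $x=s-\varsigma$ whose coefficients depend on the ``frozen'' point $z=s+i\beta$. Writing $\tilde d(x,z)=M(z)x+\ep|l-\ell|\,b(z)e_3$, with $M(z)$ and $b(z)$ as in Lemma~\ref{L.ab}, each component of $J_1(x,z)$ is a linear combination of kernels
\[
f_{j}(x;a,b)=\frac{x_j}{|Mx+\delta b e_3|_\C^{3}}\phi(x)\,,\qquad f_{3}(x;a,b)=\frac{\delta}{|Mx+\delta be_3|_\C^{3}}\phi(x)\,,
\]
with $\delta=\ep|l-\ell|$, times coefficients $1$, $a_1$, $a_2$, or $b$ (the third component of $Mx$ is $a_1x_1+a_2x_2$). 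These have exactly the form required in Lemma~\ref{SingInt}: either $m_1+m_2=1$ odd with $r=0$, or $r=1$, and $c=0$. Lemma~\ref{L.ab} gives \eqref{arg comp modulus} with $A_0=\mathcal C_1$ and $|b|\approx1$, so the factors $|b|^{-r-k_3}$ are harmless. If $l=\ell$, the set is null and is ignored, or the case is handled by taking $\delta\to0$; in any event the estimates are uniform in $\delta\in(0,1)$.

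The main obstacle is that the coefficients $a(z)$ and $b(z)$ vary with the output point $s$, so the operator is not a true convolution. I would handle this in the standard way via a Taylor expansion in the parameters. Define the operator
\[
T_{a,b}g(s)=\int f(s-\varsigma;a,b)\,g(\varsigma)\dd\varsigma\,,
\]
where the parameters are frozen at $(a,b)$. Then the quantity to bound is $\Phi(s):=T_{a(s),b(s)}g(s)$, with $g=w_{5-7}[\ell,\cdot+i\beta]$.

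For the $L^\infty$ bound, it suffices to apply \eqref{E.convolutionHolder} pointwise for each frozen parameter. For the Hölder seminorm, I would write
\[
\Phi(s)-\Phi(s')=\bigl(T_{a(s),b(s)}g(s)-T_{a(s),b(s)}g(s')\bigr)+\bigl(T_{a(s),b(s)}-T_{a(s'),b(s')}\bigr)g(s')\,.
\]
The first term is controlled by \eqref{E.convolutionHolder}. For the second term, I would write the difference of operators as an integral over the segment joining the parameters, using the derivative kernels ${\rm D}_{a_1}f$, ${\rm D}_{a_2}f$, and ${\rm D}_bf$. These are bounded in sup norm on $C^{1/2}$ by \eqref{E.convolutionDerivativeHolder}. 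This yields a bound of order $|a(s)-a(s')|+|b(s)-b(s')|$.

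By \eqref{direct est} together with \eqref{E.Zrho1}, \eqref{E.Zrho2}, the functions $a=\nabla w_1$ and $b=\zeta/|l-\ell|$ lie in $C^{1/2}$ with norm bounded by a constant depending on $\mathcal C_1$. The difference is therefore $\lesssim|s-s'|^{1/2}$.

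One must check that the segment between parameter values still satisfies \eqref{arg comp modulus}. This holds because the conditions are convex: imaginary parts stay small, and $|\Im b|\le|\Re b|/(100A_0)$ is a convex cone condition. If needed, I would slightly enlarge the constants. Finally, the coefficients $a_j$ and $b$ multiplying the kernels are $C^{1/2}$ functions of $s$, and products with $C^{1/2}$ functions preserve the bound. Summing over components gives the claim uniformly in $l,\ell,\beta,\rho,\ep$.
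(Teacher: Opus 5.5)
Your proposal is correct and follows the paper's proof essentially step by step: write $J_1$ in terms of $M$, $b$ and $\delta=\ep|l-\ell|$, obtain the $L^\infty$ bound from Lemma~\ref{SingInt}, and split the H\"older difference into a frozen-coefficient term (again Lemma~\ref{SingInt}) and a coefficient-variation term, which is controlled by \eqref{E.convolutionDerivativeHolder} together with $|a(s^1)-a(s^2)|+|b(s^1)-b(s^2)|\lesssim|s^1-s^2|^{1/2}$ from \eqref{direct est}. One small precision: $\{|\Im b|\le|\Re b|/(100A_0)\}$ is a double cone rather than a convex set, but for fixed $l\neq\ell$ the sign of $\Re b$ is fixed and $|\Re b|\geq 1/3$ by \eqref{E.smallness main est 2}, so the parameter segment stays in one convex half-cone with $|b|\gtrsim 1$ (the paper uses the same case distinction $l<\ell$ versus $l>\ell$ in the proof of Lemma~\ref{L.J1LipschitzHolder}).
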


\begin{proof}
For $a_1,a_2$ and $b$ as in Lemma \ref{L.ab}, and the corresponding $M$ as in Lemma \ref{SingInt} observe that
$$
J_1(x,s+i\be) =  \frac{Mx}{|Mx+\ep|l-\ell| be_3|_\C^3} + \frac{\ep |l-\ell|b e_3}{|Mx+\ep|l-\ell| be_3|_\C^3}\,.
$$
We further split the numerator of the first summand into the contributions of $x_1$ and $x_2$ to see with Lemmas \ref{SingInt}, \ref{L.ab} and $\delta=\eps|l-\ell|$ that
\begin{align} \label{E.J1bounded}
\norm{\int_{\T^2} J_1(\cdot-\varsigma,\cdot+i\beta)\wedge w_{5-7}[\ell,\varsigma+i\beta]\dd\varsigma}_{L^{\infty}(\T^2)} \lesssim \norm{w_{5-7}[\ell,\cdot+i\beta]}_{C^\frac{1}{2}(\T^2)}\,, \quad \textup{for } \be \in B_\rho(0)\,.
\end{align}

Now, observe that for all $s^1,s^2 \in \T^2$  we have
\begin{align*}
& \left|\int_{\T^2} \big( J_1(s^1-\varsigma,s^1+i\beta)\wedge w_{5-7}[\ell,\varsigma+i\beta]-J_1(s^2-\varsigma,s^2+i\beta)\wedge w_{5-7}[\ell,\varsigma+i\beta] \big) \dd\varsigma\right|\\
& \quad \leq \left| \int_{\T^2} J_1(s^1-\varsigma,s^1+i\beta)\wedge w_{5-7}[\ell,\varsigma+i\beta] \dd \varsigma - \int_{\T^2} J_1(s^2-\varsigma,s^1+i\beta)\wedge w_{5-7}[\ell,\varsigma+i\beta] \dd \varsigma \right| \\
&\qquad+\left|\int_{\T^2} \big(
J_1(s^2-\varsigma,s^1+i\beta)-J_1(s^2-\varsigma,s^2+i\beta)\big)\wedge w_{5-7}[\ell,\varsigma+i\beta]\dd\varsigma\right| =: {\rm I}_1 + {\rm I}_2\,.
\end{align*}
Arguing as in the proof of \eqref{E.J1bounded}, using again Lemma \ref{SingInt}, we get that
\begin{equation} \label{E.conclussionI1J1}
     {\rm I}_1 \lesssim |s^1-s^2|^{\frac12} \norm{w_{5-7}[\ell,\cdot+i\beta]}_{C^\frac{1}{2}(\T^2)}\,, \quad \textup{for }  \be \in B_\rho(0)\,.
\end{equation}

Next, we deal with ${\rm I}_2$. For $s^1, s^2 \in \TT^2$ and $a$ and $b$ as in Lemma \ref{L.ab}, we use the notation $a_k^j := a_k(s^j+i\be)$ and $b^j:= b(s^j+i\be)$ with $j,k \in \{1,2\}$, and observe that by \eqref{E.convolutionDerivativeHolder}, the convolution with $J_1$ is Lipschitz in these coefficients (with respect to the $L(C^\frac{1}{2},C^\frac{1}{2})$-topology). Hence, by Lemma \ref{SingInt}, it follows that
\begin{align*}
{\rm I}_2\lesssim \left( |a_1^1-a_1^2|+|a_2^1-a_2^2|+\max_{t\in [0,1]}\frac{|b^1-b^2|}{|b(s^1t+(1-t)s^2+i\be)|} \right) \norm{w_{5-7}[\ell,\cdot+i\beta]}_{C^\frac{1}{2}(\T^2)}, \  \textup{for }  \be \in B_\rho(0)\,.
\end{align*}
Also, observe that, by \eqref{direct est},
\begin{equation}  \label{E.a1a2diff}
|a^1-a^2| = |{\rm D}_s \digamma(s^1+i\be) - {\rm D}_s \digamma(s^2+i\be)| \lesssim |s^1-s^2|^{\frac12}\,,
\end{equation}
and
$$
|b^1-b^2| = |\zeta(s^1+i\be) - \zeta(s^2+i\be)| \lesssim  |s^1-s^2|^{\frac12}\,.
$$
Finally, taking into account \eqref{E.complex-modulus} and the definition of $b$, we infer that
\begin{equation} \label{E.lowerbll}
|b(s^1t+(1-t)s^2+i\be)| \gtrsim  1\,,
\end{equation}
uniformly in $t \in [0,1]$. Combining these estimates, we conclude that
\begin{equation} \label{E.conclussionI2J1}
     {\rm I}_2 \lesssim |s^1-s^2|^{\frac12} \norm{w_{5-7}[\ell,\cdot+i\beta]}_{C^\frac{1}{2}(\T^2)}\,, \quad \textup{for } \be \in B_\rho(0)\,.
\end{equation}
The result follows from \eqref{E.J1bounded}, \eqref{E.conclussionI1J1} and \eqref{E.conclussionI2J1}.
\end{proof}

Next, we deal with the part corresponding to $J_2$.

\begin{lemma} \label{L.J2}
Let $\mathcal{C}_1$ be as in Proposition \ref{main est}. If $w \in \cY^7_\rho$ satisfies \eqref{E.smallness main est}, \eqref{E.smallness main est 2}, \eqref{E.Zrho1} and \eqref{E.Zrho2}, then
$$
\sup_{l,\ell \in [-1,1]} \left\{ \sup_{|\be| \leq \rho} \norm{ \int_{\T^2} J_2(\cdot-\varsigma,\cdot+i\be)\wedge w_{5-7}[\ell,\varsigma+i\be] \dd \varsigma}_{C^{\frac12}(\T^2)} \right\} \lesssim 1\,, 
$$
with an implicit constant that depends only on $\mathcal{C}_1$ but not on $\rho$ or $\eps$.
\end{lemma}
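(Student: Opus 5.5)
The plan is to treat $J_2$ as a \emph{non-singular} (weakly singular) kernel and prove the $C^{\frac12}$ bound directly, without singular integral theory. There are two ingredients: a pointwise bound of order $|s-\varsigma|^{-3/2}$ for $J_2$, which is integrable on $\T^2$, and a Hölder-type difference estimate in the variable $s$.

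\textbf{Step 1: pointwise bound.} Write $d=d^\eps_{l,\ell}(s+i\beta,\varsigma+i\beta)$ and $\tilde d=\tilde d(s-\varsigma,s+i\beta)$. Expand
\[
\frac{d}{|d|_\C^3}-\frac{\tilde d}{|\tilde d|_\C^3}=\frac{d-\tilde d}{|d|_\C^3}+\tilde d\,\frac{|\tilde d|_\C^3-|d|_\C^3}{|d|_\C^3|\tilde d|_\C^3}.
\]
Apply \eqref{bas est2} together with Lemma~\ref{bds d}. Part a) gives $|d|,|\tilde d|\lesssim |s-\varsigma|+\eps|l-\ell|$, part b) gives $|d-\tilde d|\lesssim|s-\varsigma|^{3/2}$, and part c) bounds $\Re|d|_\C$ and $\Re|\tilde d|_\C$ from below; for the latter we use that $\Re\sqrt{z}\geq \sqrt{\Re z}$ when $\Re z>0$. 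This yields
\[
|J_2(s+i\beta,\varsigma+i\beta)|\lesssim \frac{|s-\varsigma|^{3/2}}{(|s-\varsigma|+\eps|l-\ell|)^{3}}\leq |s-\varsigma|^{-3/2},
\]
uniformly in $\eps$, $l$, $\ell$ and $\beta$. Integrating against $w_{5-7}$ gives the $L^\infty$ bound $\lesssim \|w_{5-7}[\ell,\cdot+i\beta]\|_{L^\infty}$.

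\textbf{Step 2: Hölder seminorm.} Fix $s^1,s^2$ and set $h=|s^1-s^2|$. Split $\T^2$ into a near region $|s^1-\varsigma|\leq 2h$ and a far region. On the near region, Step 1 bounds each term by $\int_{|x|\leq 3h}|x|^{-3/2}\,\dd x\lesssim h^{1/2}$. On the far region we need a derivative bound, namely
\[
|\nabla_s J_2(s+i\beta,\varsigma+i\beta)|\lesssim |s-\varsigma|^{-5/2}.
\]
Integrating this over $|x|\geq h$ against $h$ gives $h\cdot h^{-1/2}=h^{1/2}$. To prove the derivative bound, differentiate in $s$. The $s$-derivatives of $d$ and $\tilde d$ involve ${\rm D}_s\digamma(s+i\beta)-{\rm D}_s\digamma(\varsigma+i\beta)$ and ${\rm D}^2_s\digamma(s+i\beta)(s-\varsigma)$, along with $\eps\nabla\zeta$.

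\textbf{Main obstacle.} Only $C^{3/2}$ control of $\digamma$ is available from \eqref{direct est}, so second derivatives are not bounded pointwise. The remedy is holomorphy. Since $\rho'<\rho$ is not available here, I would instead avoid second derivatives altogether. The idea is to estimate the difference $J_2(s^1,\varsigma)-J_2(s^2,\varsigma)$ directly in the far region, using the Taylor remainder structure. The relevant difference
\[
[d(s^1)-\tilde d(s^1)]-[d(s^2)-\tilde d(s^2)]
\]
is bounded by $h^{1/2}|s-\varsigma|$, using that ${\rm D}_s\digamma$ is $C^{1/2}$ and the mean value theorem. Combined with $|d(s^1)-d(s^2)|,\ |\tilde d(s^1)-\tilde d(s^2)|\lesssim h$ (the $\eps\zeta$ part being controlled by $\eps\|\zeta\|_{C^{3/2}}$), this gives a kernel difference of order
\[
\frac{h^{1/2}|s-\varsigma|}{|s-\varsigma|^{3}}+\frac{h\,|s-\varsigma|^{3/2}}{|s-\varsigma|^{4}}.
\]
The cutoff factor $\phi(s-\varsigma)$ contributes a harmless smooth difference. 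Integrating over $|s-\varsigma|\geq 2h$ then produces $h^{1/2}\log(1/h)+h^{1/2}$.

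The logarithm is the real difficulty. To remove it, I would exploit the odd symmetry of the leading term $\frac{(d-\tilde d)}{|\tilde d|^3}$ at second order, or else control it by using the $w_{5-7}$ Hölder norm as in Lemma~\ref{SingInt}. Subtracting $w_{5-7}[\ell,s+i\beta]$ provides an extra factor $|s-\varsigma|^{1/2}$, which makes the far integral convergent without the log. The leftover constant term $w_{5-7}[\ell,s]\int J_2\,\dd\varsigma$ would then be handled through the $s$-regularity of $\int J_2\,\dd\varsigma$.
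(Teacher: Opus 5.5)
Your Step 1 and the near-region part of Step 2 are correct and match the paper. You also correctly see that a crude far-region estimate only gives $h^{1/2}\log(1/h)$. But the proposal stops exactly where the content of the lemma lies. Neither suggested fix removes the logarithm, because the cancellation that does so is never identified or proved.

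The divergence comes from one specific term. Set $p=\tilde d(s^1-\varsigma,s^1+i\beta)$, $q=d^\eps_{l,\ell}(s^1+i\beta,\varsigma+i\beta)$, and let $r,t$ be the same objects at $s^2$. Lemma~\ref{Lem double diff}~a) reduces the kernel difference to two leading terms, $\frac{p-q-r+t}{|p|_\C^3}$ and $\frac{3p\langle p,q-p+r-t\rangle_\C}{|p|_\C^5}$. The remainder is bounded by $|s^1-\varsigma|^{-5/2}\bigl(|s^1-\varsigma|h^{1/2}+h\bigr)$, which integrates to $\lesssim h^{1/2}$ over $|s^1-\varsigma|\geq 3h$. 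Moreover, $p-q-r+t$ is the sum of two pieces:
\begin{itemize}
\item $-\bigl(\digamma(s^1+i\beta)-\digamma(s^2+i\beta)-{\rm D}_s\digamma(s^2+i\beta)(s^1-s^2)\bigr)=O(h^{3/2})$, which is harmless;
\item $\bigl({\rm D}_s\digamma(s^1+i\beta)-{\rm D}_s\digamma(s^2+i\beta)\bigr)(s^1-\varsigma)$.
\end{itemize}
Divided by $|p|_\C^3$, the second piece is a $\varsigma$-independent coefficient of size $h^{1/2}$ times the truncated kernel $\frac{x_j}{|Mx+\delta be_3|_\C^3}\phi(x)\mathds{1}_{|x|\geq 3h}$, with $x=s^1-\varsigma$ and $\delta=\eps|l-\ell|$. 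Its $L^1$ norm is of order $\log(1/h)$ when $\delta\lesssim h$. The paper bounds its convolution with $w_{5-7}$ by Lemma~\ref{SingInt} with $c=3h$, $r=0$, $m_1+m_2=1$. That bound is uniform in the truncation radius and in $\delta$, and it uses $\|w_{5-7}\|_{C^{1/2}}$, not just $L^\infty$.

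Option (a), as written, attaches the symmetry to $(d-\tilde d)/|\tilde d|_\C^3$. But $d-\tilde d$ is the first-order Taylor remainder of $\digamma$ and has no odd structure; for smooth $\digamma$ it is even to leading order. Exploiting it would require ${\rm D}_s^2\digamma$, which you rightly note is unavailable.

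Option (b) is circular. Subtracting $w_{5-7}[\ell,s^1+i\beta]$ does make the variable part converge. But the leftover requires $\bigl|\int_{|s^1-\varsigma|\geq 3h}\bigl(J_2(s^1,\varsigma)-J_2(s^2,\varsigma)\bigr)\dd\varsigma\bigr|\lesssim h^{1/2}$. That is the same logarithmic problem with $w_{5-7}$ constant, and ``the $s$-regularity of $\int J_2\,\dd\varsigma$'' is not supplied by anything you have established. Closing it requires exactly the cancellation estimate $\bigl|\int_{R_1\leq|x|\leq R_2}\frac{x_j\phi(x)}{|Mx+\delta be_3|_\C^3}\dd x\bigr|\lesssim 1$, uniformly in $R_1,R_2,\delta$, i.e.\ \eqref{sing 3}. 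In the complexified setting this is not plain oddness, because the $\delta b e_3$ term breaks the symmetry. It needs the polarization bound and the split at $|x|\sim\delta|b|$ from the proof of Lemma~\ref{SingInt}.

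There is also a smaller slip. The bound $|\tilde d(s^1-\varsigma,s^1+i\beta)-\tilde d(s^2-\varsigma,s^2+i\beta)|\lesssim h$ is false. The correct bound is $h+h^{1/2}|s^1-\varsigma|$, because ${\rm D}_s\digamma$ is only $C^{1/2}$. This only adds the integrable term $h^{1/2}|s^1-\varsigma|^{-3/2}$, so it does not affect the argument.
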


\begin{proof}
First of all, observe that
\begin{align*}
|J_2(s+i\beta,\varsigma+i\beta)|&= \left|\frac{d_{l,\ell}^{\eps}(s+i\beta,\varsigma+i\beta)}{|d_{l,\ell}^{\eps}(s+i\beta,\varsigma+i\beta)|_{\C}^3}-\frac{\tilde{d}(s-\varsigma,s+i\beta)}{|\tilde{d}(s-\varsigma,s+i\beta)|_{\C}^3}\right|\phi(s-\varsigma)\\
&\leq \left|\frac{d_{l,\ell}^{\eps}(s+i\beta,\varsigma+i\beta)-\tilde{d}(s-\varsigma,s+i\beta)}{|d_{l,\ell}^{\eps}(s+i\beta,\varsigma+i\beta)|_{\C}^3}\right|\phi(s-\varsigma)\\
&\quad+\left|\frac{\tilde{d}(s-\varsigma,s+i\beta)\big(|\tilde{d}(s-\varsigma,s+i\beta)|_{\C}^3-|d_{l,\ell}^{\eps}(s+i\beta,\varsigma+i\beta)|_{\C}^3\big)}{|d_{l,\ell}^{\eps}(s+i\beta,\varsigma+i\beta)|_{\C}^3|\tilde{d}(s-\varsigma,s+i\beta)|_{\C}^3}\right|\phi(s-\varsigma)\,.
\end{align*}

\noindent Using the estimate \eqref{bas est2} and the bounds from Lemma \ref{bds d}, we then get that 
\begin{align}
\left|J_2(s+i\beta,\varsigma+i\beta)\right|\lesssim  |s-\varsigma|^{-\frac{3}{2}}\,\mathds{1}_{|s-\varsigma|\leq\frac{1}{4}}\,, \quad \textup{for } \be \in B_\rho(0)\,, \label{bd J2}
\end{align}
and so that 
\begin{equation} \label{E.J2bounded}
\begin{aligned}
& \norm{\int_{\T^2} J_2(\cdot-\varsigma,\cdot+i\beta)\wedge w_{5-7}[\ell,\varsigma+i\beta]\dd\varsigma}_{L^{\infty}(\T^2)} \\ & \qquad  \lesssim \norm{w_{5-7}[\ell,\cdot+i\beta]}_{L^\infty(\T^2)} \int_{|s-\varsigma|\leq\frac{1}{2}} \frac{\dd\varsigma}{|s-\varsigma|^{\frac{3}{2}}}\lesssim \norm{w_{5-7}[\ell,\cdot+i\beta]}_{L^\infty(\T^2)}\,, \quad \textup{for } \be \in B_\rho(0)\,.
\end{aligned}
\end{equation}

Next, we bound the corresponding H\"older seminorm. For arbitrary $s^1, s^2 \in \TT^2$, we use that
\begin{align*}
&\left|\int_{\T^2} \big(J_2(s^1+i\beta,\varsigma+i\beta)-J_2(s^2+i\beta,\varsigma+i\beta)\big)\wedge w_{5-7}[\ell,\varsigma+i\beta]\dd\varsigma \right|\\
& \quad \leq\int_{|\varsigma-s^1|\leq 3|s^1-s^2|}\big(|J_2(s^1+i\beta,\varsigma+i\beta)|+|J_2(s^2+i\beta,\varsigma+i\beta)|\big)\left|w_{5-7}[\ell,\varsigma+i\beta] \right| \dd\varsigma\\
&\qquad+\left|\int_{|\varsigma-s^1|\geq 3|s^1-s^2|}\left(J_2(s^1+i\beta,\varsigma+i\beta)-J_2(s^2+i\beta,\varsigma+i\beta)\right)\wedge w_{5-7}[\ell,\varsigma+i\beta]\dd\varsigma\right| =: {\rm I}_1 + {\rm I}_2\,.
\end{align*}
On the one hand, using \eqref{bd J2} and arguing as in \eqref{E.J2bounded}, we get that
\begin{equation} \label{E.conclussionI1J2}
     {\rm I}_1 \lesssim |s^1-s^2|^{\frac12} \norm{w_{5-7}[\ell,\cdot+i\beta]}_{L^{\infty}(\T^2)}\,, \quad \textup{for } \be \in B_\rho(0)\,.
\end{equation}
On the other hand, to estimate ${\rm I}_2$, we will consider separately the cases where $|s^1 - s^2| \geq \frac{1}{32}$ and where $|s^1 - s^2| \leq \frac{1}{32}$.
 
\noindent \textit{Case 1:} $|s^1-s^2| \geq \frac1{32}$. Arguing as in \eqref{E.J2bounded}, one can easily get that
\begin{equation} \label{E.conclussionI2J2}
     {\rm I}_2 \lesssim |s^1-s^2|^{\frac12} \norm{w_{5-7}[\ell,\cdot+i\beta]}_{L^{\infty}(\T^2)}\,, \quad \textup{for } \be \in B_\rho(0)\,.
\end{equation}

\noindent \textit{Case 2:} $|s^1-s^2| \leq \frac{1}{32}$. First of all, we set
\begin{align*}
& p = p(\varsigma,s^1+i\be):= \tilde{d}(s^1-\varsigma,s^1+i\beta) \,,\\ 
& q = q(\varsigma, s^1+i\be) := d_{l,\ell}^\eps(s^1+i\beta,\varsigma+i\beta)\,,     \\
& r = r(\varsigma,s^2+i\be):= \tilde{d}(s^2-\varsigma,s^2+i\beta)\,,   \\
& t = t(\varsigma,s^2+i\be) := d_{l,\ell}^\eps(s^2+i\beta,\varsigma+i\beta)\,,
\end{align*}
and split the kernel in ${\rm I}_2$ as
\begin{align*}
& J_2(s^2+i\beta,\varsigma+i\beta)-J_2(s^1+i\beta,\varsigma+i\beta)=\left(\frac{p}{|p|_{\C}^3}-\frac{q}{|q|_{\C}^3}-\frac{r}{|r|_{\C}^3}+\frac{t}{|t|_{\C}^3}\right)\phi(s^1-\varsigma)\\
&\quad+\left(\frac{r}{|r|_{\C}^3}-\frac{t}{|t|_{\C}^3}\right)\big(\phi(s^1-\varsigma)-\phi(s^2-\varsigma)\big)\,.
\end{align*}
Then, by the mean value theorem, 
\begin{align*}
    \left|\frac{r}{|r|_{\C}^3}-\frac{t}{|t|_{\C}^3}\right|\big|\phi(s^1-\varsigma)-\phi(s^2-\varsigma)\big| \leq \left|\frac{r}{|r|_{\C}^3}-\frac{t}{|t|_{\C}^3}\right| \left( \int_0^1 |\nabla \phi(\la(s^1-s^2) + s^2-\varsigma)| \dd \la \right) |s^1-s^2|\,. 
\end{align*}
Moreover, observe that
\begin{equation} \label{E.supportPhiprime}
\nabla \phi(\la(s^1-s^2) + s^2-\varsigma) \neq 0 \ \Longrightarrow \ |s^1-\varsigma| \geq \frac{1}{16} \ \Longrightarrow\ |s^2-\varsigma| \geq \frac{1}{32}\,.
\end{equation}
Hence, by \eqref{bd d low}, it follows that
$$
   \left|\frac{r}{|r|_{\C}^3}-\frac{t}{|t|_{\C}^3}\right|\big|\phi(s^1-\varsigma)-\phi(s^2-\varsigma)\big|  \lesssim |s^1-s^2|\,,  \quad \textup{for } \be \in B_\rho(0)\,,
$$
and so that
\begin{equation*}
\begin{aligned}
    & \left| \int_{|\varsigma-s^1|\geq 3|s^1-s^2|}\left(\frac{r}{|r|_{\C}^3}-\frac{t}{|t|_{\C}^3}\right)\big(\phi(s^1-\varsigma)-\phi(s^2-\varsigma)\big)\wedge w_{5-7}[\ell,\varsigma+i\beta]\dd\varsigma \right| \\[0.4cm]
    & \qquad  \lesssim |s^1-s^2| \norm{w_{5-7}[\ell,\cdot+i\beta]}_{L^{\infty}(\T)^2} \lesssim |s^1-s^2|^{\frac12} \norm{w_{5-7}[\ell,\cdot+i\beta]}_{L^{\infty}(\T^2)}  \,, \quad \textup{for } \be \in B_\rho(0)\,.
    \end{aligned}
\end{equation*}

To estimate the remaining part, we rely on Lemma \ref{Lem double diff}. First, observe that
\begin{align}\begin{aligned}\label{fracc dd}
    &\frac{p-q-r+t}{|p|_{\C}^3} = - \frac{\digamma(s^1+i\beta)-\digamma(s^2+i\beta)-{\rm D}_s\digamma(s^2+i\beta)(s^1-s^2)}{|p|_\C^3} \\
    & \qquad +  \frac{1}{|p|_\C^3} \big( {\rm D}_s\digamma(s^1+i\be) - {\rm D}_s\digamma(s^2+i\be) \big)(s^1-\varsigma)\,.
\end{aligned}\end{align}
Now, on the one hand, combining \eqref{direct est} with \eqref{bd d low}, we get that
\begin{equation} \label{E.Digamma1}
\left|\frac{\digamma(s^1+i\beta)-\digamma(s^2+i\beta)-{\rm D}_s\digamma(s^2+i\beta)(s^1-s^2)}{|p|_\C^3}\right| \lesssim \frac{|s^1-s^2|^{\frac32}}{|s^1-\varsigma|^{3}}\,, \quad \textup{for } \be \in B_\rho(0)\,,
\end{equation}
and so that
\begin{align}
& \left| \int_{|\varsigma-s^1|\geq 3|s^1-s^2|} \frac{\digamma(s^1+i\beta)-\digamma(s^2+i\beta)-{\rm D}_s\digamma(s^2+i\beta)(s^1-s^2)}{|p|_\C^3} \phi(s^1-\varsigma) \wedge  w_{5-7}[\ell, \varsigma+i\be] \dd \varsigma \right| \nonumber \\ \label{E.Digamma2}
& \qquad \lesssim \norm{w_{5-7}[\ell,\cdot+i\beta]}_{L^{\infty}(\T^2)} |s^1-s^2|^{\frac32} \int_{|\varsigma-s^1|\geq 3|s^1-s^2|}  \frac{1}{|s^1-\varsigma|^{3}} \dd\varsigma \\ \nonumber
& \qquad \lesssim |s^1-s^2|^{\frac12} \norm{w_{5-7}[\ell,\cdot+i\beta]}_{L^{\infty}(\T^2)}\,, \qquad \textup{for } \be \in B_\rho(0)\,.
\end{align}
On the other hand, using Lemma \ref{SingInt} with $c = 3 |s^1-s^2|$, $r = 0$, $m_1 = 1$ and $m_2=0$ (resp.\ $m_1=0$ and $m_2=1)$, Lemma \ref{L.ab}, and \eqref{E.a1a2diff}, we get that  
\begin{align}
    & \left| \int_{|\varsigma-s^1|\geq 3|s^1-s^2|}  \frac{\big( {\rm D}_s \digamma(s^1+i\be) - {\rm D}_s \digamma(s^2+i\be) \big)(s^1-\varsigma)}{|p|_\C^3} \phi(s^1-\varsigma)  \wedge w_{5-7}[\ell, \varsigma+i\be] \dd \varsigma \right| \nonumber \\
    & \quad = \left| \int_{\T^2}  \frac{(s^1-\varsigma)}{|p|_\C^3} \phi(s^1-\varsigma) \mathds{1}_{|s^1-\varsigma| \geq 3|s^1-s^2|}\, \big( {\rm D}_s\digamma(s^1+i\be) - {\rm D}_s \digamma(s^2+i\be) \big) \wedge w_{5-7}[\ell, \varsigma+i\be] \dd \varsigma \right| \nonumber  \\[0.3cm]
    & \quad \lesssim |s^1-s^2|^{\frac12} \norm{w_{5-7}[\ell,\cdot+i\beta]}_{C^\frac{1}{2}(\T^2)}, \quad \textup{for } \be \in B_\rho(0)\,. \label{E.544}
\end{align}
Hence, we have that
\begin{equation} \label{1st sing}
\begin{aligned}
 & \left| \int_{|\varsigma-s^1|\geq 3|s^1-s^2|} \frac{p-q-r+t}{|p|_{\C}^3} \phi(s^1-\varsigma) \,  \wedge w_{5-7}[\ell, \varsigma+i\be] \dd \varsigma \right| \\[0.3cm]
 & \qquad \lesssim |s^1-s^2|^{\frac12} \norm{w_{5-7}[\ell,\cdot+i\beta]}_{C^\frac{1}{2}(\T^2)}\,, \quad \textup{for }\be \in B_\rho(0)\,.
\end{aligned}
\end{equation}

Next, observe that
\begin{equation}\begin{aligned}
    & \frac{3p\scalar{p}{p-q+r-t}_\C}{|p|_\C^5} =  \frac{3p\scalar{p}{\digamma(s^1+i\beta)-\digamma(s^2+i\beta)-{\rm D}_s\digamma(s^2+i\beta)(s^1-s^2)}_\C}{|p|_\C^5} \\
    & \quad + \frac{3p\scalar{p}{ ( {\rm D}_s\digamma(s^1+i\be) - {\rm D}_s\digamma(s^2+i\be))(s^1-\varsigma) }_\C}{|p|_\C^5}\,.\label{bad part}
\end{aligned}\end{equation}
Moreover, using \eqref{E.complex-modulus}, one can easily check that
\begin{align*}
    &\left|\frac{3p\scalar{p}{\digamma(s^1+i\beta)-\digamma(s^2+i\beta)-{\rm D}_s\digamma(s^2+i\beta)(s^1-s^2)}_\C}{|p|_\C^5}\right| \\
    & \qquad \lesssim \left|\frac{\digamma(s^1+i\beta)-\digamma(s^2+i\beta)-{\rm D}_s\digamma(s^2+i\beta)(s^1-s^2)}{|p|_\C^3}\right|, \quad \textup{for } \be \in B_\rho(0)\,.
\end{align*}
Hence, using \eqref{E.Digamma1} and \eqref{E.Digamma2}, we get that
\begin{equation} \label{2nd sing}
\begin{aligned}
 \Bigg|  \int_{|\varsigma-s^1|\geq 3|s^1-s^2|} &  \bigg( \frac{3p\scalar{p}{\digamma(s^1+i\beta)-\digamma(s^2+i\beta)-{\rm D}_s\digamma(s^2+i\beta)(s^1-s^2)}_\C}{|p|_\C^5}\, \phi(s^1-\varsigma) \\
& \wedge  w_{5-7}[\ell, \varsigma+i\be] \bigg) \dd \varsigma \Bigg| \lesssim |s^1-s^2|^{\frac12} \norm{w_{5-7}[\ell,\cdot+i\beta]}_{L^{\infty}(\T^2)}, \quad \textup{for } \be \in B_\rho(0)\,.
\end{aligned}
\end{equation}

Now, expanding the scalar product in the other fraction in \eqref{bad part} into the different components, one can check that Lemma \ref{SingInt} is applicable to it and see that 
\begin{equation} \label{3rd sing}
\begin{aligned}
\mel\left|\int_{|\varsigma-s^1|\geq 3|s^1-s^2|}\frac{3p\scalar{p}{ ( {\rm D}_s\digamma(s^1+i\be) - {\rm D}_s\digamma(s^2+i\be))(s^1-\varsigma)}_\C}{|p|_\C^5}\phi(s^1-\varsigma) \wedge w_{5-7}[\ell, \varsigma+i\be]\dd\varsigma\right|\\[0.3cm]
&\lesssim \left|{\rm D}_s\digamma(s^1+i\be) - {\rm D} _s\digamma(s^2+i\be)\right|\norm{w_{5-7}[\ell,\cdot+i\beta]}_{C^{\frac{1}{2}}(\T^2)}\\
&\lesssim|s^1-s^2|^\frac{1}{2}\norm{w_5[\ell,\cdot+i\beta]}_{C^{\frac{1}{2}}(\T^2)}, \quad \textup{for } \be \in B_\rho(0)\,.
\end{aligned}
\end{equation}
Note that in the last inequality we are again using \eqref{direct est}.

Finally, we estimate the integrals with kernels corresponding to the right-hand side in Lemma \ref{Lem double diff}. First, observe that, by \eqref{bd diff}, whenever $|s^1-\varsigma| \geq 3 |s^1-s^2|$, it follows that
\begin{align}
|r-t|+|p-q|\lesssim |s^1-\varsigma|^\frac{3}{2}\,, \quad \textup{for }\be  \in B_\rho(0)\,.\label{rt bd}
\end{align}
Likewise, using \eqref{direct est}, we get that
\begin{equation}\begin{aligned}
 |p-r| &\lesssim \left|{\rm D}_s\digamma(s^1+i\beta)(s^1-\varsigma)-{\rm D}_s\digamma(s^2+i\beta)(s^2-\varsigma)\right|+\eps\left|\zeta(s^1+i\beta)-\zeta(s^2+i\beta)\right| \\
&\lesssim |s^1-\varsigma||s^1-s^2|^\frac{1}{2}+|s^1-s^2|\,, \quad \textup{for }\be \in B_\rho(0)\,, \label{pr bd}
\end{aligned}\end{equation}
and that, for $\be \in B_\rho(0)$,
 \begin{equation}\begin{aligned}
|q-t|&\lesssim \left|\digamma(s^1+i\beta)-\digamma(s^2+i\beta)\right|+\eps\left|\zeta(s^1+i\beta)-\zeta(s^2+i\beta)\right| \lesssim |s^1-s^2|. \label{qt bd}
\end{aligned}\end{equation}

We also note, thanks to \eqref{bd d low} and the elementary estimate $\Re \sqrt{z}\geq \sqrt{\Re z}$, which holds for all $z\in \C$ with $\Re z>0$, that \begin{align}
\min(\Re |p|_{\C},\Re |q|_{\C},\Re |r|_{\C},\Re |t|_{\C})\gtrsim |s^1-\varsigma|+\eps|l-\ell|\gtrsim |s^1-\varsigma|.
\end{align}

\noindent Hence, using the shortened notation
\begin{equation} \label{E.Kernelpqrt}
{\rm K}(p,q,r,t) :=  \frac{\max\{|p|,|q|,|r|,|t|\}^7}{\min\{||p|_{\C}|,||q|_{\C}|,||r|_{\C}|,||t|_{\C}|\}^{11}}\big(|p-q|+|r-t|\big)\big(|p-r|+|q-t|\big)\,,
\end{equation}
we infer from  \eqref{bd d up}, \eqref{bd d low}, \eqref{rt bd}, \eqref{pr bd} and \eqref{qt bd} that
\begin{equation}
\begin{aligned} & \left|
\int_{|s^1-\varsigma|\geq 3|s^1-s^2|} {\rm K}(p,q,r,t) \,\phi(s^1-\varsigma) \left|w_{5-7}[\ell, \varsigma+i\be]\right| \dd \varsigma \right|\\
& \qquad \lesssim \norm{w_{5-7}[\ell,\cdot + i \be]}_{L^{\infty}(\T^2)}\, \int_{|s^1-\varsigma| \geq 3|s^1-s^2| }|s^1 - \varsigma|^{-\frac{5}{2}}\big(|s^1-\varsigma||s^1-s^2|^\frac{1}{2}+|s^1-s^2|\big)\dd \varsigma \\
& \qquad\lesssim |s^1-s^2|^\frac{1}{2}\norm{w_{5-7}[\ell,\cdot + i \be]}_{L^{\infty}(\T^2)} \,, \quad \textup{for } \be \in B_\rho(0)\,. \label{rhs bd}
\end{aligned}
\end{equation}
Combining \eqref{1st sing}, \eqref{2nd sing}, \eqref{3rd sing}, \eqref{rhs bd} and Lemma \ref{Lem double diff}, we get that 
\begin{equation} \label{E.conclussionI2J22}
     {\rm I}_2 \lesssim |s^1-s^2|^{\frac12} \norm{w_{5-7}[\ell,\cdot+i\beta]}_{C^{\frac12}(\T^2)}\,, \quad \textup{for } \be \in B_\rho(0)\,.
\end{equation}
The result follows from \eqref{E.J2bounded}, \eqref{E.conclussionI1J2}, \eqref{E.conclussionI2J2} and \eqref{E.conclussionI2J22}.
\end{proof}

Having the previous lemmas at hand, we can now conclude.

\begin{proof}[Proof of \eqref{bd K}]
Having at hand Lemma \ref{L.holomorphicExtension}, and the decomposition in \eqref{splitting J}, \eqref{bd K} immediately follows from Lemmas \ref{L.J1} and \ref{L.J2}.
\end{proof}

\subsection{Proof of (\ref{bd K lip})} We again only need to consider $l\neq \ell$, as the rest is a zero set. 

Since the proof is similar to the one of \eqref{bd K}, we will skip some details. First of all, taking into account \eqref{E.holomorphicKepsilon}, we see that
\begin{equation} \label{E.decompKLipschitz}
\begin{aligned}
    & K_{\ep}^c\left[w_1^1[\ell,\cdot],\, \int_l^\ell w_4^1[\mu,\cdot] \dd \mu,\,w_{5-7}^1[\ell,\cdot]\right](s+i\beta) \\
        & \qquad- K_{\ep}^c\left[w_1^2[\ell,\cdot],\,\int_l^\ell w_4^2[\mu,\cdot] \dd \mu,\,w_{5-7}^2[\ell,\cdot]\right](s+i\beta) \\
    & \quad = K_{\ep}^c\left[w_1^1[\ell,\cdot],\, \int_l^\ell w_4^1[\mu,\cdot] \dd \mu,\,w_{5-7}^1[\ell,\cdot] - w_{5-7}^2[\ell,\cdot]\right](s+i\beta) \\
    & \qquad + K_{\ep}^c\left[w_1^1[\ell,\cdot],\, \int_l^\ell w_4^1[\mu,\cdot] \dd \mu,\,w_{5-7}^2[\ell,\cdot]\right](s+i\beta) \\
    & \qquad- K_{\ep}^c\left[w_1^2[\ell,\cdot],\, \int_l^\ell w_4^2[\mu,\cdot] \dd \mu,\,w_{5-7}^2[\ell,\cdot]\right](s+i\beta)\,.
\end{aligned}
\end{equation}
The first term on the right-hand side can be directly estimated using the proof of \eqref{bd K}. Indeed, we have the following:

\begin{lemma} \label{L.LipschitzTrivial}
Let $\mathcal{C}_1$ be as in Proposition \ref{main est}. If $w^1, w^2 \in \cY^7_\rho$ satisfy \eqref{E.smallness main est}, \eqref{E.smallness main est 2}, \eqref{E.Zrho1} and \eqref{E.Zrho2}, then
$$
\begin{aligned}
\sup_{l,\ell \in [-1,1]} & \norm{K_{\ep}^c\left[w_1^1[\ell,\cdot],\, \int_l^\ell w_4^1[\mu,\cdot] \dd \mu,\,w_{5-7}^1[\ell,\cdot] - w_{5-7}^2[\ell,\cdot]\right]}_{\cX_\rho^3} \lesssim \norm{w_{5-7}^1-w_{5-7}^2}_{\cY_\rho^3},
\end{aligned}
$$
with an implicit constant that depends only on $\cC_1$ but not on $\rho$ or $\ep$.
\end{lemma}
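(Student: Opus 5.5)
The key observation is that $K_\ep^c[\nu,g,\varpi]$ is linear in its third argument $\varpi$, while the geometric data (the kernel) depend only on the first two arguments. Therefore the statement amounts to re-running the proof of \eqref{bd K} with $w_{5-7}^1[\ell,\cdot]$ replaced by the difference $w_{5-7}^1[\ell,\cdot]-w_{5-7}^2[\ell,\cdot]$, keeping the kernel built from $w^1$ fixed. I will check that every estimate in that proof was linear in the density and that all constants came only from the kernel.

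Concretely, fix $l\neq\ell$ and $|\be|\leq\rho$, and use the splitting \eqref{splitting J} of the kernel into $J_1+J_2$. Here $\digamma$, $\zeta$, $d^\ep_{l,\ell}$, $\tilde d$ and hence $a$, $b$ are all built from $w^1$. Since $w^1$ satisfies \eqref{E.smallness main est}, \eqref{E.smallness main est 2}, \eqref{E.Zrho1} and \eqref{E.Zrho2}, the following hold with constants depending only on $\cC_1$:
\begin{itemize}
\item Lemma \ref{L.ab} applies, with $A_0=\cC_1$ and $|b|\approx 1$;
\item Lemma \ref{bds d} gives the bounds \eqref{bd d up}, \eqref{bd diff} and \eqref{bd d low};
\item \eqref{direct est} is available.
\end{itemize}
Applying Lemma \ref{L.J1} and Lemma \ref{L.J2} with $w_{5-7}$ replaced by $\delta w:=w^1_{5-7}-w^2_{5-7}$ then yields
\[
\sup_{|\be|\leq\rho}\norm{K_\ep^c\Big[w_1^1[\ell,\cdot],\textstyle\int_l^\ell w_4^1[\mu,\cdot]\dd\mu,\delta w[\ell,\cdot]\Big](\cdot+i\be)}_{C^{\frac12}(\T^2)}\lesssim \sup_{|\be|\leq\rho}\norm{\delta w[\ell,\cdot+i\be]}_{C^{\frac12}(\T^2)}.
\]

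The one point to verify, and the only potential obstacle, is that the proofs of those two lemmas really produced bounds of the form $C(\cC_1)\norm{w_{5-7}[\ell,\cdot+i\be]}_{C^{1/2}}$, and never used the size bound $\norm{w_{5-7}}\leq\cC_1$ for the density itself. Inspecting them confirms this:
\begin{itemize}
\item \eqref{E.J1bounded}, \eqref{E.conclussionI1J1} and \eqref{E.conclussionI2J1} are linear in the density, via Lemma \ref{SingInt} and its parameter-Lipschitz version \eqref{E.convolutionDerivativeHolder};
\item \eqref{E.J2bounded}, \eqref{E.conclussionI1J2}, \eqref{E.conclussionI2J2} and \eqref{E.conclussionI2J22} are also linear in the density, via the pointwise bound \eqref{bd J2}, Lemma \ref{Lem double diff}(a), and Lemma \ref{SingInt} applied to the pieces \eqref{1st sing}, \eqref{2nd sing} and \eqref{3rd sing}.
\end{itemize}

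Holomorphy in $z\in\T^2_\rho$ for $l\neq\ell$ follows from Lemma \ref{L.holomorphicExtension} applied to $w^1$, since it depends only on the denominator lower bound. The same lemma shows real-valuedness on $\T^2$ whenever $\delta w$ is real there; in any case only the $\|\cdot\|_\rho$ norm is needed. Taking the supremum over $|\be|\leq\rho$ and then over $l,\ell$, and bounding $\sup_\ell\|\delta w[\ell,\cdot]\|_\rho$ by $\norm{w^1_{5-7}-w^2_{5-7}}_{\cY^3_\rho}$, gives the claim.
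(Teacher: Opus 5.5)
Your proposal is correct and matches the paper's argument: the paper gives no separate proof beyond the remark that this term ``can be directly estimated using the proof of \eqref{bd K}''. That remark rests on exactly what you check. $K_\ep^c$ is linear in its third argument, and the bounds obtained in the proofs of Lemmas~\ref{L.J1} and~\ref{L.J2} are linear in $\|w_{5-7}[\ell,\cdot+i\beta]\|_{C^{1/2}(\T^2)}$, with constants depending only on the kernel data built from $w^1_1,\dots,w^1_4$ and not on the size of the density.
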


We now focus on the other two terms and analyze them together. First, we prove the following:

\begin{lemma} \label{L.LipschitzLinfty}
Let $\mathcal{C}_1$ be as in Proposition \ref{main est}. If $w^1, w^2 \in \cY^7_\rho$ satisfy \eqref{E.smallness main est}, \eqref{E.smallness main est 2}, \eqref{E.Zrho1} and \eqref{E.Zrho2}, then
$$
\begin{aligned}
\sup_{l,\ell \in [-1,1]}  \bigg\{ \sup_{|\be| \leq \rho} \bigg\| &  K_{\ep}^c\left[w_1^1[\ell,\cdot+i\beta],\, \int_l^\ell w_4^1[\mu,\cdot+i\beta] \dd \mu,\,w_{5-7}^2[\ell,\cdot+i\beta]\right] \\
    & - K_{\ep}^c\left[w_1^2[\ell,\cdot+i\beta],\, \int_l^\ell w_4^2[\mu,\cdot+i\beta] \dd \mu,\,w_{5-7}^2[\ell,\cdot+i\beta]\right] \bigg\|_{L^{\infty}(\T^2)}
 \bigg\} \lesssim\, \|w^1-w^2\|_{\cY^7_\rho}\,,
\end{aligned}
$$
with an implicit constant that depends only on $\cC_1$ but not on $\rho$ or $\ep$. 
\end{lemma}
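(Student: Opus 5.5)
The plan is to run the same $J_1/J_2$ decomposition \eqref{splitting J} as in the proof of \eqref{bd K}, now for the difference of two configurations, and only at the level of $L^\infty$.

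\textbf{Setup.} Write $d^i:=d^{\ep,i}_{l,\ell}(s+i\be,\varsigma+i\be)$ and $\tilde d^i:=\tilde d^{\,i}(s-\varsigma,s+i\be)$ for the objects built from $w^i$, $i=1,2$, with $\digamma^i,\zeta^i$ as in \eqref{E.digamma}--\eqref{E.zeta}. The kernel difference then splits as
\[
\Bigl(\tfrac{\tilde d^1}{|\tilde d^1|_\C^3}-\tfrac{\tilde d^2}{|\tilde d^2|_\C^3}\Bigr)\phi
+\Bigl(J_2^1-J_2^2\Bigr),
\]
and both pieces are convolved (in the $\wedge$ sense) against the common density $w_{5-7}^2[\ell,\cdot+i\be]$.

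\textbf{Smooth part $J_2^1-J_2^2$.} Here no cancellation is needed, and a pointwise bound
\[
|J_2^1-J_2^2|\lesssim \|w^1-w^2\|_{\cY^7_\rho}\,|s-\varsigma|^{-3/2}\mathds 1_{|s-\varsigma|\le 1/4}
\]
suffices. To get it, view $J_2$ as $\frac{q}{|q|^3_\C}-\frac{p}{|p|^3_\C}$ with $p=\tilde d$, $q=d$, and apply a two-point analogue of Lemma \ref{Lem double diff} b), or expand directly with \eqref{bas est2}. The ingredients are:
\begin{itemize}
\item $|p^1-p^2|+|q^1-q^2|\lesssim \|w^1-w^2\|(|s-\varsigma|+\ep|l-\ell|)$. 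This uses \eqref{direct est} applied to $w^1-w^2$, which satisfies the linear constraints \eqref{E.Zrho1}--\eqref{E.Zrho2}.
\item $|(q^1-p^1)-(q^2-p^2)|\lesssim \|w^1-w^2\|\,|s-\varsigma|^{3/2}$, a Taylor remainder for $\digamma^1-\digamma^2$ with a $C^{3/2}$ bound.
\item The lower bound \eqref{bd d low} for both configurations.
\end{itemize}
Together these give $|s-\varsigma|^{-3/2}$ times the difference, which is integrable.

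\textbf{Main obstacle: the principal part $J_1^1-J_1^2$.} This kernel is only Calderón--Zygmund, so a pointwise estimate is useless. Use the representation of $J_1$ in the proof of Lemma \ref{L.J1}: $J_1$ is the kernel of Lemma \ref{SingInt} with parameters $a=\nabla w_1[\ell,s+i\be]$, $b=|l-\ell|^{-1}\zeta(s+i\be)$ and $\delta=\ep|l-\ell|$, frozen at the point $s$. The difference $J_1^1-J_1^2$ is then a difference of parameters $(a^1,b^1)$ versus $(a^2,b^2)$, with
\[
|a^1-a^2|+|b^1-b^2|\lesssim \|w^1-w^2\|_{\cY^7_\rho},
\]
since $|l-\ell|^{-1}|\zeta^1-\zeta^2|\le \sup|w_4^1-w_4^2|$. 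Interpolate linearly, $(a^\lambda,b^\lambda)$ for $\lambda\in[0,1]$. Condition \eqref{arg comp modulus} is convex in $(a,b)$ under the smallness assumptions \eqref{E.smallness main est}--\eqref{E.smallness main est 2}, so it persists along the path and $|b^\lambda|\approx1$. Then \eqref{E.convolutionDerivativeHolder} with $k_1+k_2+k_3=1$ bounds the parameter derivative of the convolution uniformly in $\delta$, and the fundamental theorem of calculus in $\lambda$ yields
\[
\|w^1-w^2\|\,\|w^2_{5-7}\|_{C^{1/2}}\lesssim \|w^1-w^2\|.
\]

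The delicate point is that the parameters depend on $s$ while Lemma \ref{SingInt} treats fixed parameters. This is handled as in Lemma \ref{L.J1}: for each fixed evaluation point $s$, apply the lemma with the frozen constants $a(s+i\be)$ and $b(s+i\be)$, and then evaluate the resulting convolution at $s$. Only the sup norm is needed here, which makes this freezing step straightforward.

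Summing the two parts and taking the supremum over $l,\ell,\be$ gives the claim.
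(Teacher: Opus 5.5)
Your proposal is correct and follows the paper's proof essentially step by step. The $J_1$ difference is controlled through the Lipschitz dependence of the Lemma~\ref{SingInt} convolution on the frozen parameters $(a,b)$, with $|\lambda b^1+(1-\lambda)b^2|\gtrsim 1$ along the segment. The $J_2$ difference is controlled through a pointwise bound $\lesssim \|w^1-w^2\|_{\cY^7_\rho}|s-\varsigma|^{-3/2}$, which comes from Lemma~\ref{Lem double diff}~a) (not b)) with $p=\tilde d^1$, $q=d^1$, $r=\tilde d^2$, $t=d^2$.

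Two ingredients are left implicit. First, the quadratic remainder in that double difference also needs $|q^i-p^i|\lesssim|s-\varsigma|^{3/2}$ from \eqref{bd diff}; your three listed bounds alone only give a non-integrable $|s-\varsigma|^{-2}$. Second, the convexity of \eqref{arg comp modulus} should be read on the half-cone where the sign of $\Re b$ is fixed. This applies here because $\Re b^1$ and $\Re b^2$ share the sign determined by whether $l<\ell$ or $l>\ell$.
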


\begin{proof}
Taking into account \eqref{splitting J}, we analyze the part corresponding to each kernel $J_j$ separately. With a slight abuse of notation, we write
\begin{equation} \label{E.JjwDependece}
\begin{aligned}
    & J_1(s-\varsigma, s+i\be;\, w) := \frac{\tilde{d}(s-\varsigma,s+i\beta;\, w)}{\big|\tilde{d}(s-\varsigma,s+i\beta;\, w)\big|_{\C}^3}\,\phi(s-\varsigma)\,,\\
    & J_2(s+i\beta,\varsigma+i\beta;\, w) := \left(\frac{d_{l,\ell}^{\eps}(s+i\beta,\varsigma+i\beta;\, w)}{\big|d_{l,\ell}^{\eps}(s+i\beta,\varsigma+i\beta;\, w)\big|_{\C}^3}-\frac{\tilde{d}(s-\varsigma,s+i\beta;\, w)}{\big|\tilde{d}(s-\varsigma,s+i\beta;\, w)\big|_{\C}^3}\right)\phi(s-\varsigma)\,,
\end{aligned}
\end{equation}
to emphasize the dependence on $w^1$ and $w^2$. Also, throughout the proof, we assume that $|\be| \leq \rho \leq \rho_0$.

We first analyze the part corresponding to $J_1$. To that end, we set
$$
a^j = a^j(s+i\be) := \nabla_s w_1^j [\ell,s+i\be] \quad \textup{and} \quad b^j e_3 = b^j(s+i\be)e_3 = |l-\ell|^{-1}\ze(s+i\be;\, w^j)\,.
$$
Recall that by Lemmas \ref{SingInt} and \ref{L.ab}, the convolution kernel $J_1$ is Lipschitz in these coefficients with respect to the $L(C^\frac{1}{2},C^\frac{1}{2})$-topology.  Hence, similarly as above in \eqref{E.a1a2diff}-\eqref{E.conclussionI2J1}, it follows from Lemmas \ref{SingInt} and \ref{L.ab} that
\begin{align*}
\mel\sup_{l,\ell \in [-1,1]} \norm{\int_{\T^2} \left(J_1(\cdot-\varsigma,\cdot+i\beta, w^1)-J_1(\cdot-\varsigma,\cdot+i\beta, w^2)\right)\times w_{5-7}^2[\ell,\varsigma+i\beta]\dd\varsigma}_{L^\infty(\TT^2)}\\
&\lesssim \left( |a^1-a^2|+\max_{t\in [0,1]}\frac{|b^1-b^2|}{|tb^1+(1-t)b^2|} \right) \norm{w_{5-7}[\ell,\cdot+i\beta]}_{C^\frac{1}{2}(\T^2)}\,.
\end{align*}
Now observe that, by the definitions \eqref{E.digamma} and \eqref{E.zeta} of $\digamma$ and $\zeta$, it holds that,
\begin{equation}  \label{E.a1a2diff2}
|a^1-a^2| = |{\rm D}_s \digamma(s+i\be, w^1) - {\rm D}_s \digamma(s+i\be, w^2)| \lesssim \norm{w^1-w^2}_{\cY_\rho^7}\,,
\end{equation}
and
$$
|b^1-b^2| = |l-\ell|^{-1}|\zeta(s+i\be, w^1) - \zeta(s+i\be, w^2)| \lesssim \norm{w^1-w^2}_{\cY_\rho^7}\,.
$$
Finally, taking into account \eqref{E.complex-modulus} and the definition of $b$, we infer that
\begin{equation} \label{E.lowerbll2}
|tb^1+(1-t)b^2| \gtrsim  1\,,
\end{equation}
uniformly in $t \in [0,1]$. Combining these estimates, we conclude that \begin{align}\begin{aligned}\label{lipbdj1}
\sup_{l,\ell \in [-1,1]} & \norm{\int_{\T^2} \left(J_1(\cdot-\varsigma,\cdot+i\beta, w^1)-J_1(\cdot-\varsigma,\cdot+i\beta, w^2)\right)\times w_{5-7}^2[\ell,\varsigma+i\beta]\dd\varsigma}_{L^\infty(\TT^2)}\\[0.3cm]
&\lesssim \norm{w^1-w^2}_{\cY_\rho^7}.
\end{aligned}\end{align}

Next, to analyze the part corresponding to $J_2$, we rely on Lemma \ref{Lem double diff} a). First of all, we set
\begin{align*}
    & p = p(\varsigma,s+i\be;\, w^1) := \tilde{d}(s-\varsigma, s+i\be;\, w^1)\,,\\
    & q = q(\varsigma, s+i\be;\, w^1) := d_{l,\ell}^\ep(s+i\be, \varsigma+i\be;\, w^1)\,,\\
    & r = r(\varsigma,s+i\be;\, w^2) :=  \tilde{d}(s-\varsigma, s+i\be;\, w^2)\,, \\
    & t = t(\varsigma,s+i\be;\, w^2) := d_{l,\ell}^\ep(s+i\be, \varsigma+i\be;\, w^2)\,,
\end{align*}
and observe that
\begin{align*}
    J_2(s-\varsigma, \varsigma+i\be;\, w^1)- J_2(s-\varsigma, \varsigma+i\be;\, w^2) = - \left( \frac{p}{|p|_\C^3} - \frac{q}{|q|_\C^3} - \frac{r}{|r|_\C^3} + \frac{t}{|t|_\C^3} \right) \phi(s-\varsigma)\,.
\end{align*}
Having this expression at hand, we argue as in the proof of Lemma \ref{L.J2}. First, observe that
$$
|p-q-r+t| \lesssim |s-\varsigma|^{\frac32} \|w^1-w^2\|_{\cY^7_\rho}\,.
$$
Then, by \eqref{bd d low} (see also the proof of \eqref{E.Digamma2}), we get that
\begin{align} \label{E.J2Lipschitz1}
    \left| \int_{\T^2}  \phi(s-\varsigma)\frac{p-q-r+t}{|p|_\C^3} \, \times w_{5-7}^2[\ell,\varsigma+i\be] \dd \varsigma \right|  \lesssim \|w^1-w^2\|_{\cY^7_\rho}\,.
\end{align}
Likewise, we have that
\begin{align}
   \left| \frac{3p\scalar{p}{p-q+r-t}_\C}{|p|_\C^5} \right| \lesssim \frac{\|w^1-w^2\|_{\cY^7_\rho}}{|s-\varsigma|^{\frac32}}\,,
\end{align}
and so that 
\begin{align} \label{E.J2Lipschitz2}
    \left| \int_{\T^2}  \frac{3p\scalar{p}{p-q+r-t}_\C}{|p|_\C^5} \, \phi(s-\varsigma) \times w_{5-7}^2[\ell,\varsigma+i\be] \dd \varsigma \right|  \lesssim \|w^1-w^2\|_{\cY^7_\rho}\,.
\end{align}

On the other hand, by \eqref{bd diff}, it follows that
$$
|p-q| + |r-t| \lesssim |s-\varsigma|^{\frac32}\,,
$$
and, using \eqref{direct est} together with the linearity of these expressions in $w^1,w^2$, we get that
$$
|q-t|+|p-r| \lesssim \big(|s-\varsigma|+\ep |l-\ell| \big) \|w^1-w^2\|_{\cY^7_\rho}\,.
$$
Using then the shortened notation introduced in \eqref{E.Kernelpqrt}, we conclude that
\begin{align} \label{E.J2Lipschitz3}
  \sup_{l,\ell \in [-1,1]}   \left\| \int_{\T^2} {\rm K}(p,q,r,t) \,\phi(\cdot-\varsigma) |w_{5-7}[\ell, \varsigma+i\be]| \dd \varsigma \right\|_{L^{\infty}(\TT^2)} \lesssim \|w^1-w^2\|_{\cY^7_\rho}.
\end{align}
Combining \eqref{E.J2Lipschitz1}, \eqref{E.J2Lipschitz2} and \eqref{E.J2Lipschitz3} with Lemma \ref{Lem double diff}, we then obtain that
\begin{equation} \label{E.J2LinftyLipschitz}
\begin{aligned}
    \sup_{l,\ell \in [-1,1]} &  \norm{ \int_{\T^2} \big( J_2(\cdot-\varsigma,\cdot+i\be;\, w^1) - J_2(\cdot-\varsigma,\cdot+i\be;\, w^2) \big) \times w_{5-7}^2[\ell,\varsigma+i\be] \dd \varsigma}_{L^{\infty}(\T^2)}  \\[0.3cm]
    &   \lesssim \|w^1-w^2\|_{\cY^7_\rho}.
\end{aligned}
\end{equation}
The result follows combining \eqref{lipbdj1} and \eqref{E.J2LinftyLipschitz}.
\end{proof}

We now turn towards the boundedness of the corresponding Hölder seminorm. Using the notation introduced in \eqref{E.JjwDependece}, we first prove the following:
 
\begin{lemma} \label{L.J1LipschitzHolder}
Let $\mathcal{C}_1$ be as in Proposition \ref{main est}. If $w^1, w^2 \in \cY^7_\rho$ satisfy \eqref{E.smallness main est}, \eqref{E.smallness main est 2}, \eqref{E.Zrho1} and \eqref{E.Zrho2}, then
\begin{equation*} 
\begin{aligned}
    \sup_{l,\ell \in [-1,1]} & \left\{ \sup_{|\be| \leq \rho} \bigg[ \int_{\T^2} \big( J_1(\cdot-\varsigma,\cdot+i\be;\, w^1) - J_1(\cdot-\varsigma,\cdot+i\be;\, w^2) \big) \times w_{5-7}^2[\ell,\varsigma+i\be] \dd \varsigma\bigg]_{C^{\frac12}(\T^2)} \right\} \\[0.3cm]
    & \lesssim \|w^1-w^2\|_{\cY^7_\rho}\,,
\end{aligned}
\end{equation*}
with an implicit constant that depends only on $\cC_1$ but not on $\rho$ or $\ep$.
\end{lemma}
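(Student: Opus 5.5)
We estimate the Hölder seminorm of the difference by reproducing the argument used for ${\rm I}_1$ and ${\rm I}_2$ in the proof of Lemma \ref{L.J1}, now carrying the dependence on $w^1,w^2$ through the coefficients. Write $J_1(x,s+i\be;w^j)$ as the kernel of Lemma \ref{SingInt} with $M=M(a^j(s+i\be))$, $b=b^j(s+i\be)$ and $\delta=\ep|l-\ell|$, where $a^j,b^j$ are as in the proof of Lemma \ref{L.LipschitzLinfty}. Interpolate between the two sets of coefficients by setting $a^\tau:=\tau a^1+(1-\tau)a^2$ and $b^\tau:=\tau b^1+(1-\tau)b^2$ for $\tau\in[0,1]$. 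Then
\[
J_1(x,s+i\be;w^1)-J_1(x,s+i\be;w^2)=\int_0^1\Big( ({\rm D}_a J_1)(x;a^\tau,b^\tau)\,(a^1-a^2)+({\rm D}_b J_1)(x;a^\tau,b^\tau)\,(b^1-b^2)\Big)\dd\tau ,
\]
where the coefficients $a^1-a^2$ and $b^1-b^2$ depend on $s$ only. The convex combinations $a^\tau,b^\tau$ still satisfy \eqref{arg comp modulus}, since these conditions are stable under convex combination. Moreover, $|b^\tau|\gtrsim1$ by \eqref{E.lowerbll2}, so Lemma \ref{L.ab} and Lemma \ref{SingInt} apply uniformly in $\tau$.

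For $s^1,s^2\in\T^2$ we split the difference of the two integrals into two pieces, exactly as in Lemma \ref{L.J1}.

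\emph{First piece.} Freeze the coefficient point at $s^1$ and vary only the convolution variable. Each ${\rm D}_{a}J_1$ and ${\rm D}_bJ_1$ is a finite combination of kernels of the form treated in Lemma \ref{SingInt}, and every ${\rm D}_b$ brings a factor $\delta$ absorbed by $|b^\tau|^{-1}\lesssim1$. Estimate \eqref{E.convolutionDerivativeHolder} therefore gives a $C^{1/2}$ bound on the convolution with $w^2_{5-7}$, with the scalar factor $|a^1-a^2|(s^1)+|b^1-b^2|(s^1)\lesssim\norm{w^1-w^2}_{\cY^7_\rho}$ by \eqref{E.a1a2diff2}. This yields a contribution $\lesssim|s^1-s^2|^{1/2}\norm{w^1-w^2}_{\cY^7_\rho}$.

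\emph{Second piece.} Fix the convolution point and vary the coefficient point from $s^1$ to $s^2$. We need the mixed quantity
\[
\Big[(a^1-a^2)\,{\rm D}_aJ_1(\cdot;a^\tau,b^\tau)\Big]_{s^1}^{s^2},
\]
and the analogous one for $b$. We split it into two terms.
\begin{itemize}
\item[(a)] The difference of the prefactors $(a^1-a^2)(s^1)-(a^1-a^2)(s^2)$. Since $\nabla_s w_1^1-\nabla_s w_1^2=(w^1_{2},w^1_3)-(w^2_2,w^2_3)$ lies in $C^{1/2}$ with norm $\le\norm{w^1-w^2}_{\cY^7_\rho}$, this is bounded by $|s^1-s^2|^{1/2}\norm{w^1-w^2}_{\cY^7_\rho}$. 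We multiply by the $L(C^{1/2},C^{1/2})$ bound on ${\rm D}_aJ_1$. The same works for $b$, using the $C^{1/2}$ bound on $\zeta/|l-\ell|$ from \eqref{direct est} applied to $w^1-w^2$, which is linear.
\item[(b)] The difference of the derivative kernels evaluated at the coefficients at $s^1$ and at $s^2$. Apply \eqref{E.convolutionDerivativeHolder} with one more derivative in $a$ or $b$, combined with $|a^j(s^1)-a^j(s^2)|+|b^j(s^1)-b^j(s^2)|\lesssim|s^1-s^2|^{1/2}$ from \eqref{E.a1a2diff}. This gives $|s^1-s^2|^{1/2}$ times the small prefactor $|a^1-a^2|+|b^1-b^2|$.
\end{itemize}

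Summing the first and second pieces gives the claim.

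The main obstacle is the uniformity in $\delta=\ep|l-\ell|$ of the second-order parameter derivatives, which is needed in (b). Each $b$-derivative of the kernel produces a factor $\delta\,\partial$, and we must check that $\delta^2 {\rm D}_b^2$-type terms are still controlled by $|b|^{-r-k_3}$, which holds because of \eqref{E.convolutionDerivativeHolder} with $k_3\le2$. We must also check that all convex combinations stay in the admissible parameter region, so that the constants depend only on $\cC_1$.
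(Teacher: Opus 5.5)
Your proposal is correct and takes essentially the paper's route. The paper applies the four-point estimate of Lemma \ref{L.FTC} to the map sending $(a,b)$ to convolution with $J_1$ in $L(C^{1/2},C^{1/2})$. That estimate is exactly your interpolation in $w$ followed by the split in the coefficient point: your pieces (a) and (b) reproduce its terms $|a_1^1-a_1^2-a_2^1+a_2^2|$ and $(|a_1^1-a_1^2|+|a_2^1-a_2^2|)(|a_1^1-a_2^1|+|a_1^2-a_2^2|)$, and your first piece gives the $|s^1-s^2|^{1/2}$ contribution.

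One point needs more care. The cone condition on $b$ in \eqref{arg comp modulus} defines a double sector, which is not convex. Convex combinations stay admissible only because, for fixed $l,\ell$, the sign of $\Re b^1$ and $\Re b^2$ is determined by the sign of $\ell-l$, which is the remark the paper makes.
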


The proof of this result relies on the following elementary lemma, which is an immediate consequence of the fundamental theorem of calculus.
 
\begin{lemma} \label{L.FTC}
Let $B\subset \R^{n}$ be a closed convex set and let $g: B\rightarrow X$ be $C^2$ for some (real) Banach space $X$. Then for $y_1,y_2,y_3,y_4\in B$ it holds that
\begin{align*}
|g(y_1)-g(y_2)-g(y_3)+g(y_4)|_X &\lesssim  \| g\|_{W^{2,\infty}(B,X)}\Big( |y_1-y_2-y_3+y_4|\\
&\quad+ \big(|y_1-y_2|+|y_3-y_4| \big)\big(|y_1-y_3| + |y_2-y_4| \big)\Big)\,.
\end{align*}
\end{lemma}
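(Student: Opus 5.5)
The plan is to reduce the four-point difference to a second-order Taylor remainder for $g$, using only the fundamental theorem of calculus along segments inside $B$; convexity of $B$ guarantees that every segment used lies in $B$. We split
\[
g(y_1)-g(y_2)-g(y_3)+g(y_4)=\big[g(y_1)-g(y_2)-g(y_1-y_2+y_4)+g(y_4)\big]+\big[g(y_1-y_2+y_4)-g(y_3)\big]\,,
\]
but since $y_1-y_2+y_4$ need not lie in $B$, we avoid it and instead write both first differences as integrals:
\[
g(y_1)-g(y_2)=\int_0^1 {\rm D}g\big(y_2+\tau(y_1-y_2)\big)(y_1-y_2)\dd\tau\,,\qquad g(y_3)-g(y_4)=\int_0^1 {\rm D}g\big(y_4+\tau(y_3-y_4)\big)(y_3-y_4)\dd\tau\,.
\]
The points $y_2+\tau(y_1-y_2)$ and $y_4+\tau(y_3-y_4)$ lie in $B$ by convexity.

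Subtracting the two integrals, add and subtract $\int_0^1 {\rm D}g(y_2+\tau(y_1-y_2))(y_3-y_4)\dd\tau$. This gives two terms. The first is
\[
\int_0^1 {\rm D}g\big(y_2+\tau(y_1-y_2)\big)\,(y_1-y_2-y_3+y_4)\dd\tau\,,
\]
which is bounded by $\|{\rm D}g\|_{L^\infty}\,|y_1-y_2-y_3+y_4|$. The second is
\[
\int_0^1\Big[{\rm D}g\big(y_2+\tau(y_1-y_2)\big)-{\rm D}g\big(y_4+\tau(y_3-y_4)\big)\Big](y_3-y_4)\dd\tau\,.
\]

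To bound the second term, apply the mean value inequality to ${\rm D}g$ along the segment joining the two base points, which again lies in $B$. The distance between the base points is
\[
\big|(1-\tau)(y_2-y_4)+\tau(y_1-y_3)\big|\le |y_1-y_3|+|y_2-y_4|\,.
\]
Hence the second term is at most $\|{\rm D}^2g\|_{L^\infty}\,|y_3-y_4|\,\big(|y_1-y_3|+|y_2-y_4|\big)$, which is dominated by the claimed right-hand side.

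Adding the two bounds gives the estimate with constant $\|g\|_{W^{2,\infty}(B,X)}$. The only subtle point is staying inside $B$ rather than extending $g$, and the argument above uses only segments between points of $B$, so no step is a real obstacle. Vector-valued integration in a Banach space $X$ causes no difficulty, since $g\in C^2$ and the integrals are of continuous functions on compact intervals.
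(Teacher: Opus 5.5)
Your argument is correct and is exactly the fundamental-theorem-of-calculus argument the paper has in mind; the paper only asserts the lemma as an immediate consequence of that theorem without writing it out. You integrate ${\rm D}g$ along the segments $[y_2,y_1]$ and $[y_4,y_3]$, split off the term ${\rm D}g(\cdot)(y_1-y_2-y_3+y_4)$, and use convexity to bound the remaining difference of ${\rm D}g$ at the two base points by $\|{\rm D}^2 g\|_{L^\infty}\,(|y_1-y_3|+|y_2-y_4|)$; this even gives the sharper factor $|y_3-y_4|$ in place of $|y_1-y_2|+|y_3-y_4|$.
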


\begin{proof}[Proof of Lemma \ref{L.J1LipschitzHolder}]
We start by pointing out that there is no loss of generality in considering $s^1,s^2 \in \T^2$ with $|s^1-s^2| \leq \frac1{10}$. The case where $|s^1-s^2|\geq \frac1{10}$ immediately follows from Lemma \ref{L.LipschitzLinfty}. Also, we introduce the shortened notation
$$
a_j^h := a(s^j+i\be;\, w^h) = \nabla_s w_1^h[\ell,s^j+i\be] \,, \quad b_j^h e_3 := b(s^j + i \be;\, w^h)e_3 = |l-\ell|^{-1}\ze(s^j+i\be;\, w^h)\,, 
$$
for $j,h \in \{1,2\}$, and recall that the convolution kernel $J_1$ is $C^2$ (w.r.t.\ $L(C^\frac{1}{2},C^\frac{1}{2})$)  in these parameters by the Lemmas \ref{SingInt} and \ref{L.ab}. Moreover, let us stress that, considering separately the cases where $l < \ell $ and where $l > \ell$, one can easily check the convexity of the set $B$ defined by \eqref{arg comp modulus}.

\allowdisplaybreaks
If we now consider the linear map associated with the convolution with $J_1$ as a function of $a$ and $b$ to $L(C^\frac{1}{2},C^\frac{1}{2})$, then we can apply Lemma \ref{L.FTC} twice and obtain that
\begin{equation}  \label{E.conclussionJ1LipLip}
\begin{aligned}
    & \bigg| \int_{\T^2} \Big( J_1(s^1-\varsigma,s^1+i\be;\, w^1) - J_1(s^1-\varsigma,s^1+i\be;\, w^2) \\
    & \qquad- J_1(s^2-\varsigma,s^2+i\be;\, w^1) + J_1(s^2-\varsigma,s^2+i\be;\, w^2)  \Big)  \times w_{5-7}^2[\ell,\varsigma+i\be] \dd \varsigma \bigg| \\
    & \lesssim \Big(|a_1^1-a_1^2-a_2^1+a_2^2| + (|a_1^1-a_1^2|+|a_2^1-a_2^2|)(|a_1^1-a_2^1|+|a_1^2-a_2^2|) \\
    & \qquad + |b_1^1-b_1^2-b_2^1+b_2^2| + (|b_1^1-b_1^2|+|b_2^1-b_2^2|)(|b_1^1-b_2^1|+|b_1^2-b_2^2|) \\
    & \qquad + |s^1-s^2|^{\frac12} \Big) \norm{w_{5-7}[\ell,\cdot+i\beta]}_{C^\frac{1}{2}(\T^2)}\,.
\end{aligned}
\end{equation}
Let us point out that here we are implicitly using that a lower bound similar to \eqref{E.lowerbll2} holds uniformly in the set $B$ defined by \eqref{arg comp modulus}. 

Next, using  \eqref{direct est} and the fact that the $a$'s and $b$'s are affine in $w$, we get that
\begin{align*}
&|a_1^1-a_1^2-a_2^1+a_2^2|+|b_1^1-b_1^2-b_2^1+b_2^2|\lesssim |s^1-s^2|^\frac{1}{2}\norm{w^1-w^2}_{\cY_\rho^7}\,,\\
&|a_1^1-a_1^2|+|a_2^1-a_2^2| + |b_1^1-b_1^2|+|b_2^1-b_2^2|\lesssim \norm{w^1-w^2}_{\cY_\rho^7}\,,\\
&|a_1^1-a_2^1|+|a_1^2-a_2^2|+ |b_1^1-b_2^1|+|b_1^2-b_2^2|\lesssim |s^1-s^2|^\frac{1}{2}\,.
\end{align*}
The result follows combining \eqref{E.conclussionJ1LipLip} with these estimates. 
\end{proof}

\begin{lemma} \label{L.J2LipschitzHolder}
Let $\mathcal{C}_1$ be as in Proposition \ref{main est}. If $w^1,\, w^2 \in \cY^7_\rho$ satisfy \eqref{E.smallness main est}, \eqref{E.smallness main est 2}, \eqref{E.Zrho1} and \eqref{E.Zrho2}, then 
\begin{equation*}  
\begin{aligned}
    \sup_{l,\ell \in [-1,1]} & \left\{ \sup_{|\be| \leq \rho} \bigg[ \int_{\T^2} \big( J_2(\cdot-\varsigma,\cdot+i\be;\, w^1) - J_2(\cdot-\varsigma,\cdot+i\be;\, w^2) \big) \wedge w_{5-7}^2[\ell,\varsigma+i\be] \dd \varsigma\bigg]_{C^{\frac12}(\T^2)} \right\} \\[0.3cm]
    & \lesssim \|w^1-w^2\|_{\cY^7_\rho}\,,
\end{aligned}
\end{equation*}
with an implicit constant that depends only on $\cC_1$ but not on $\rho$ or $\ep$.
\end{lemma}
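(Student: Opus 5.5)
We follow the scheme of Lemma \ref{L.J2}, now applied to the $w$-difference instead of the kernel itself. Fix $s^1,s^2\in\T^2$. By Lemma \ref{L.LipschitzLinfty} we may assume $|s^1-s^2|\leq \frac1{32}$. For $j,h\in\{1,2\}$ write
\[
p_j^h:=\tilde d(s^j-\varsigma,s^j+i\be;\,w^h)\,,\qquad q_j^h:=d^\ep_{l,\ell}(s^j+i\be,\varsigma+i\be;\,w^h)\,,
\]
and set $G_j^h:=p_j^h/|p_j^h|_\C^3-q_j^h/|q_j^h|_\C^3$. The quantity to bound is then the integral of
\[
\bigl(G_1^1-G_1^2-G_2^1+G_2^2\bigr)\phi(s^1-\varsigma)+\bigl(G_2^1-G_2^2\bigr)\bigl(\phi(s^1-\varsigma)-\phi(s^2-\varsigma)\bigr)
\]
against $w^2_{5-7}$. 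We split the $\varsigma$-integral into the near region $|\varsigma-s^1|\leq 3|s^1-s^2|$ and the far region $|\varsigma-s^1|\geq 3|s^1-s^2|$.

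\emph{Near region.} Here it suffices to bound $|G_j^1-G_j^2|\lesssim |s^j-\varsigma|^{-3/2}\|w^1-w^2\|_{\cY^7_\rho}$. This is exactly the pointwise bound behind \eqref{E.J2LinftyLipschitz}: apply Lemma \ref{Lem double diff}~a) with $(p,q,r,t)=(p_j^1,q_j^1,p_j^2,q_j^2)$ and use the estimates preceding \eqref{E.J2Lipschitz1}--\eqref{E.J2Lipschitz3}. Integrating over a ball of radius $\sim|s^1-s^2|$ then gives $|s^1-s^2|^{1/2}\|w^1-w^2\|$. The cutoff-difference term is supported where $|s^j-\varsigma|\geq \frac1{32}$ by \eqref{E.supportPhiprime}. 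There the kernels are smooth and Lipschitz in $w$ by \eqref{bd d low}, so this term contributes $|s^1-s^2|\,\|w^1-w^2\|$.

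\emph{Far region.} This is the main step. We write
\[
G_1^1-G_1^2-G_2^1+G_2^2=\Psi(p_1^1,q_1^1,p_2^1,q_2^1)-\Psi(p_1^2,q_1^2,p_2^2,q_2^2)\,,
\]
where $\Psi(p,q,r,t)=p/|p|_\C^3-q/|q|_\C^3-r/|r|_\C^3+t/|t|_\C^3$, and apply Lemma \ref{Lem double diff}~b) with primes corresponding to $w^2$. The leading terms come in two kinds.

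For the linear term $(p-q-r+t)/|p|_\C^3$, formula \eqref{fracc dd} expresses it through $\digamma$. Taking the $w^1$ minus $w^2$ version, the remainder piece is bounded by $|s^1-s^2|^{3/2}\|w^1-w^2\|/|s^1-\varsigma|^3$. We also need the factor $1/|p|_\C^3$ to vary with $w$, which costs $\|w^1-w^2\|/|s^1-\varsigma|^3$ times the $w^2$ quantity. The piece involving $\bigl({\rm D}_s\digamma(s^1)-{\rm D}_s\digamma(s^2)\bigr)(s^1-\varsigma)$ is a singular integral. For it we invoke Lemma \ref{SingInt} with $c=3|s^1-s^2|$ and use its Lipschitz dependence on the parameters $a,b$, via \eqref{E.convolutionDerivativeHolder}, exactly as in \eqref{E.a1a2diff2}. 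The coefficient difference is $\lesssim |s^1-s^2|^{1/2}\|w^1-w^2\|$ because $\digamma$ is affine in $w$. The quadratic term $3p\langle p,\cdot\rangle_\C/|p|_\C^5$ is treated the same way, as in \eqref{2nd sing}--\eqref{3rd sing}.

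For the error terms on the right of \eqref{td est}, we need:
\begin{itemize}
\item $|p_1^1-p_1^2-q_1^1+q_1^2|\lesssim |s^1-\varsigma|^{3/2}\|w^1-w^2\|$, from Taylor's formula applied to $\digamma(\cdot;w^1)-\digamma(\cdot;w^2)$;
\item $|p_1^1-p_1^2-p_2^1+p_2^2|\lesssim (|s^1-\varsigma||s^1-s^2|^{1/2}+|s^1-s^2|)\|w^1-w^2\|$, by the argument of \eqref{pr bd} applied to the difference;
\item the single-difference bounds \eqref{rt bd}--\eqref{qt bd}.
\end{itemize}
With these, every error integrand is $\lesssim |s^1-\varsigma|^{-5/2}(|s^1-\varsigma||s^1-s^2|^{1/2}+|s^1-s^2|)\|w^1-w^2\|$, which integrates to $|s^1-s^2|^{1/2}\|w^1-w^2\|$ as in \eqref{rhs bd}.

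The main obstacle is the singular-integral pieces. Lemma \ref{SingInt} must be applied uniformly in the truncation $c$, with parameters that depend on both $s^j$ and $w^h$. This requires the Lipschitz-in-parameters bound over the convex parameter set $B$, using Lemma \ref{L.FTC} as in Lemma \ref{L.J1LipschitzHolder}, together with the lower bound $|b|\gtrsim 1$ from Lemma \ref{L.ab}, so that no loss in $\ep|l-\ell|$ appears.
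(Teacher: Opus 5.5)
Your proposal is correct and follows the paper's proof essentially step by step. The shared steps are the reduction to $|s^1-s^2|\leq \frac1{32}$ via Lemma \ref{L.LipschitzLinfty}, the near/far split at $3|s^1-s^2|$ with the near part controlled by the pointwise bound $|s^j-\varsigma|^{-3/2}\|w^1-w^2\|_{\cY^7_\rho}$, the cutoff-difference term handled through \eqref{E.supportPhiprime}, and Lemma \ref{Lem double diff}~b) in the far region, where the linear term is split into a numerator difference over $|p|_\C^3$ plus the $w^2$-numerator times $|p|_\C^{-3}-|p'|_\C^{-3}$, expanded via \eqref{frac ddd} and treated with Lemma \ref{SingInt}. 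The appeal to Lemma \ref{L.FTC} is superfluous: each singular piece already carries one small coefficient, so the first-order Lipschitz dependence on $(a,b)$ given by \eqref{E.convolutionDerivativeHolder} suffices.
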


\begin{proof}
We start by pointing out that there is no loss of generality in considering $s^1, s^2 \in \T^2$ with $|s^1-s^2| \leq \frac1{32}$. The case where $|s^1-s^2| \geq \frac{1}{32}$ follows from Lemma \ref{L.LipschitzLinfty}. Now, observe that
\begin{equation} \label{E.J2LipschitzBeginning}
\begin{aligned}
    & \bigg| \int_{\T^2} \Big( J_2(s^1-\varsigma,s^1+i\be;\, w^1) - J_2(s^1-\varsigma,s^1+i\be;\, w^2) \\
    & \qquad- J_2(s^2-\varsigma,s^2+i\be;\, w^1) + J_2(s^2-\varsigma,s^2+i\be;\, w^2)  \Big)  \times w_{5-7}^2[\ell,\varsigma+i\be] \dd \varsigma \bigg| \\
    & \leq \bigg|\int_{|\varsigma-s_1| \leq 3|s^1-s^2|} \big( J_2(s^1-\varsigma,s^1+i\be;\, w^1) - J_2(s^1-\varsigma,s^1+i\be;\, w^2)\big)\times w_{5-7}^2[\ell,\varsigma+i\be] \dd \varsigma \bigg| \\
    & \quad + \bigg|\int_{|\varsigma-s_2| \leq 4|s^1-s^2|} \big( J_2(s^2-\varsigma,s^2+i\be;\, w^1) - J_2(s^2-\varsigma,s^2+i\be;\, w^2)\big) \times w_{5-7}^2[\ell,\varsigma+i\be] \dd \varsigma \bigg| \\
    & \quad + \bigg| \int_{|\varsigma-s^1| \geq 3|s^1-s^2|} \Big( J_2(s^1-\varsigma,s^1+i\be;\, w^1) - J_2(s^1-\varsigma,s^1+i\be;\, w^2) \\
    & \qquad- J_2(s^2-\varsigma,s^2+i\be;\, w^1) + J_2(s^2-\varsigma,s^2+i\be;\, w^2)  \Big)  \times w_{5-7}^2[\ell,\varsigma+i\be] \dd \varsigma \bigg|\,.
\end{aligned}
\end{equation}
The first two terms can be handled as we did in Lemma \ref{L.LipschitzLinfty} to prove \eqref{E.J2LinftyLipschitz}, the main difference being the domain where we are integrating. This difference is precisely what makes appear the extra factor $|s_1-s_2|^{\frac12}$, and allows us to prove that
\begin{equation} \label{E.J2LipschitzLipschitz-1}
\begin{aligned}
     & \bigg|\int_{|\varsigma-s_1| \leq 3|s^1-s^2|} \big( J_2(s^1-\varsigma,s^1+i\be;\, w^1) - J_2(s^1-\varsigma,s^1+i\be;\, w^2)\big)\times w_{5-7}^2[\ell,\varsigma+i\be] \dd \varsigma \bigg| \\
    & \quad + \bigg|\int_{|\varsigma-s^2| \leq 4|s^1-s^2|} \big( J_2(s^2-\varsigma,s^2+i\be;\, w^1) - J_2(s^2-\varsigma,s^2+i\be;\, w^2)\big)\times  w_{5-7}^2[\ell,\varsigma+i\be] \dd \varsigma \bigg| \\[0.3cm]
    & \lesssim |s^1-s^2|^{\frac12} \|w^1-w^2\|_{\cY^7_\rho}\,, \quad \textup{for } \be \in B_\rho(0)\,.
\end{aligned}
\end{equation}
Having this estimate at hand, it only remains to deal with the last term on the right-hand side of \eqref{E.J2LipschitzBeginning}. To that end, we first split the kernel there as \allowdisplaybreaks
\begin{align*}
    & J_2(s^1-\varsigma,s^1+i\be;\, w^1) - J_2(s^1-\varsigma,s^1+i\be;\, w^2) - J_2(s^2-\varsigma,s^2+i\be;\, w^1) + J_2(s^2-\varsigma,s^2+i\be;\, w^2) \\
    & = \Bigg( \frac{d_{l,\ell}^{\eps}(s^1+i\beta,\varsigma+i\beta;\, w^1)}{\big|d_{l,\ell}^{\eps}(s^1+i\beta,\varsigma+i\beta;\, w^1)\big|_{\C}^3} - \frac{\tilde{d}(s^1-\varsigma,s^1+i\beta;\, w^1)}{\big|\tilde{d}(s^1-\varsigma,s^1+i\beta;\, w^1)\big|_{\C}^3} - \frac{d_{l,\ell}^{\eps}(s^1+i\beta,\varsigma+i\beta;\, w^2)}{\big|d_{l,\ell}^{\eps}(s^1+i\beta,\varsigma+i\beta;\, w^2)\big|_{\C}^3} \\
    & \quad + \frac{\tilde{d}(s^1-\varsigma,s^1+i\beta;\, w^2)}{\big|\tilde{d}(s^1-\varsigma,s^1+i\beta;\, w^2)\big|_{\C}^3} - \frac{d_{l,\ell}^{\eps}(s^2+i\beta,\varsigma+i\beta;\, w^1)}{\big|d_{l,\ell}^{\eps}(s^2+i\beta,\varsigma+i\beta;\, w^1)\big|_{\C}^3} + \frac{\tilde{d}(s^2-\varsigma,s^2+i\beta;\, w^1)}{\big|\tilde{d}(s^2-\varsigma,s^2+i\beta;\, w^1)\big|_{\C}^3} \\
    & \quad + \frac{d_{l,\ell}^{\eps}(s^2+i\beta,\varsigma+i\beta;\, w^2)}{\big|d_{l,\ell}^{\eps}(s^2+i\beta,\varsigma+i\beta;\, w^2)\big|_{\C}^3} - \frac{\tilde{d}(s^2-\varsigma,s^2+i\beta;\, w^2)}{\big|\tilde{d}(s^2-\varsigma,s^2+i\beta;\, w^2)\big|_{\C}^3} \Bigg) \phi(s^1- \varsigma) \\
    & \quad + \Bigg(\frac{d_{l,\ell}^{\eps}(s^2+i\beta,\varsigma+i\beta;\, w^1)}{\big|d_{l,\ell}^{\eps}(s^2+i\beta,\varsigma+i\beta;\, w^1)\big|_{\C}^3} - \frac{\tilde{d}(s^2-\varsigma,s^2+i\beta;\, w^1)}{\big|\tilde{d}(s^2-\varsigma,s^2+i\beta;\, w^1)\big|_{\C}^3} \\
    & \quad - \frac{d_{l,\ell}^{\eps}(s^2+i\beta,\varsigma+i\beta;\, w^2)}{\big|d_{l,\ell}^{\eps}(s^2+i\beta,\varsigma+i\beta;\, w^2)\big|_{\C}^3} + \frac{\tilde{d}(s^2-\varsigma,s^2+i\beta;\, w^2)}{\big|\tilde{d}(s^2-\varsigma,s^2+i\beta;\, w^2)\big|_{\C}^3} \Bigg) \big( \phi(s^1- \varsigma) - \phi(s^2-\varsigma) \big)\,.
\end{align*}
The part corresponding to the second parenthesis can be analyzed using Lemma \ref{Lem double diff} a). We set
\begin{equation} \label{E.p2q2r2t2}
\begin{aligned}
    & p = p(\varsigma,s^2+i\be;\, w^1) := \tilde{d}(s^2-\varsigma, s^2+i\be;\, w^1)\,,\\
    & q = q(\varsigma, s^2+i\be;\, w^1) := d_{l,\ell}^\ep(s^2+i\be, \varsigma+i\be;\, w^1)\,,\\
    & r = r(\varsigma,s^2+i\be;\, w^2) :=  \tilde{d}(s^2-\varsigma, s^2+i\be;\, w^2)\,, \\
    & t = t(\varsigma,s^2+i\be;\, w^2) := d_{l,\ell}^\ep(s^2+i\be, \varsigma+i\be;\, w^2)\,,
\end{aligned}
\end{equation}
and point out that
\begin{align*}
    &\Bigg|\frac{d_{l,\ell}^{\eps}(s^2+i\beta,\varsigma+i\beta;\, w^1)}{\big|d_{l,\ell}^{\eps}(s^2+i\beta,\varsigma+i\beta;\, w^1)\big|_{\C}^3} - \frac{\tilde{d}(s^2-\varsigma,s^2+i\beta;\, w^1)}{\big|\tilde{d}(s^2-\varsigma,s^2+i\beta;\, w^1)\big|_{\C}^3} \\
    & \quad - \frac{d_{l,\ell}^{\eps}(s^2+i\beta,\varsigma+i\beta;\, w^2)}{\big|d_{l,\ell}^{\eps}(s^2+i\beta,\varsigma+i\beta;\, w^2)\big|_{\C}^3} + \frac{\tilde{d}(s^2-\varsigma,s^2+i\beta;\, w^2)}{\big|\tilde{d}(s^2-\varsigma,s^2+i\beta;\, w^2)\big|_{\C}^3} \Bigg| \big| \phi(s^1- \varsigma) - \phi(s^2-\varsigma) \big| \\
    & = \left| \frac{p}{|p|_\C^3} - \frac{q}{|q|_\C^3} - \frac{r}{|r|_\C^3} + \frac{t}{|t|_\C^3} \right| \big| \phi(s^1- \varsigma) - \phi(s^2-\varsigma) \big| \\
    & \leq \left| \frac{p}{|p|_\C^3} - \frac{q}{|q|_\C^3} - \frac{r}{|r|_\C^3} + \frac{t}{|t|_\C^3} \right| \left( \int_0^1 |\nabla \phi(\la(s^1-s^2) + s^2-\varsigma)| \dd \la \right) |s^1-s^2|\,. 
\end{align*}
Taking into account \eqref{E.supportPhiprime}, one can argue as in the proof of \eqref{E.J2LinftyLipschitz} and conclude that
\begin{equation} \label{E.J2LipschitzLipschitz-2}
    \begin{aligned}
        & \left| \int_{|\varsigma-s_1|\geq 3|s^1-s^2|} \left(\frac{p}{|p|_\C^3} - \frac{q}{|q|_\C^3} - \frac{r}{|r|_\C^3} + \frac{t}{|t|_\C^3} \right) \big( \phi(s^1- \varsigma) - \phi(s^2-\varsigma) \big) \times w^2_{5-7}[\ell,\varsigma+i\be] \dd \varsigma \right| \\[0.3cm]
        & \qquad \lesssim |s_1-s_2|^{\frac12} \norm{w^1-w^2}_{\cY^7_\rho}, \quad \textup{for }\be \in B_\rho(0)\,.
    \end{aligned}
\end{equation}

To analyze the remaining term, we set
\begin{equation} \label{E.p1q1r1t1}
\begin{aligned}
    & p := \tilde{d}(s^1-\varsigma, s^1+i\be;\, w^1)\,, \qquad && p' :=  \tilde{d}(s^1-\varsigma, s^1+i\be;\, w^2)\,,\\ 
    & q := d_{l,\ell}^\ep(s^1+i\be, \varsigma+i\be;\, w^1)\,, \qquad && q' := d_{l,\ell}^\ep(s^1+i\be, \varsigma+i\be;\, w^2)\,,  \\
    & r :=  \tilde{d}(s^2-\varsigma, s^2+i\be;\, w^1)\,, && r' := \tilde{d}(s^2-\varsigma, s^2+i\be;\, w^2)\,, \\
    & t := d_{l,\ell}^\ep(s^2+i\be, \varsigma+i\be;\, w^1)\,, && t' := d_{l,\ell}^\ep(s^2+i\be, \varsigma+i\be;\, w^2)\,.
\end{aligned}
\end{equation}
The corresponding kernel can then be written as
\begin{equation}
    -\bigg( \frac{p}{|p|_\C^3} - \frac{q}{|q|_\C^3} - \frac{r}{|r|_\C^3} + \frac{t}{|t|_\C^3} - \frac{p'}{|p'|_\C^3} + \frac{q'}{|q'|_\C^3} + \frac{r'}{|r'
    |_\C^3} - \frac{t'}{|t'|_\C^3} \bigg) \phi(s_1-\varsigma)\,,
\end{equation}
to which we can apply Lemma \ref{Lem double diff} b). 

Arguing similarly as for \eqref{E.J2Lipschitz3}, one directly obtains from \eqref{direct est} and \eqref{bd d low} that the convolution with the kernel corresponding to the right-hand side in  \eqref{td est} can be bounded (up to a constant) by
$$
|s^1-s^2|^{\frac12} \norm{w^1-w^2}_{\cY_\rho^7}\,.
$$
It thus remains to treat the parts corresponding to the differences 
$$
\frac{p-q-r+t}{|p|_{\C}^3}-\frac{p'-q'-r'+t'}{|p'|_{\C}^3} \quad \textup{and} \quad \frac{3p\scalar{p}{q-p+r-t}_{\C}}{|p|_{\C}^5}-\frac{3p'\scalar{p'}{q'-p'+r'-t'}_{\C}}{|p'|_{\C}^5}\,.
$$

We show in detail how to deal with the first difference, the other one being similar. To that end, we divide the difference as  
\begin{align*}
&\frac{p-q-r+t}{|p|_{\C}^3}-\frac{p'-q'-r'+t'}{|p'|_{\C}^3} \\
&\quad =\frac{p-q-r+t-(p'-q'-r'+t')}{|p|_{\C}^3}+(p'-q'-r'+t')\left(\frac{1}{|p|_{\C}^3}-\frac{1}{|p'|_{\C}^3}\right)
\end{align*}
The convolution corresponding to the first term on the right-hand side can be treated exactly as in \eqref{fracc dd}-\eqref{1st sing} above. The difference $w^1-w^2$ appears because all the terms are affine in $w^1$ and $w^2$. In short, we get that
\begin{align*}
& \left|\int_{|\varsigma-s^1|\geq  3 |s^1-s^2|}\frac{p-q-r+t-(p'-q'-r'+t')}{|p|_{\C}^3} \phi(s^1-\varsigma)\times w_{5-7}^2[\ell,\varsigma+i\beta]\dd\varsigma\right| \\[0.3cm]
& \quad \lesssim |s^1-s^2|^\frac{1}{2}\norm{w^1-w^2}_{\cY_\rho^7}\,.
\end{align*}

Next, similarly to \eqref{fracc dd}, we expand
\begin{align}\begin{aligned}\label{frac ddd}
   p'-q'-r'+t'=&- \big( \digamma(s^1+i\beta,w^2)-\digamma(s^2+i\beta,w^2)-{\rm D}_s\digamma(s^2+i\beta,w^2)(s^1-s^2) \big) \\
    &  +   \big( {\rm D}_s\digamma(s^1+i\be,w^2) - {\rm D}_s\digamma(s^2+i\be,w^2) \big)(s^1-\varsigma)\,.
\end{aligned}\end{align}

Then, observe that by \eqref{bas est2}, \eqref{direct est}, \eqref{bd d up}  and \eqref{bd d low}, it follows that 
\begin{align*}
\cK_1(s^1,s^2,\beta)  := &\ \left|\frac{1}{|p|_{\C}^3}-\frac{1}{|p'|_{\C}^3}\right|\left|\digamma(s^1+i\beta,w^2)-\digamma(s^2+i\beta,w^2)-{\rm D}_s\digamma(s^2+i\beta,w^2)(s^1-s^2)\right| \\
 \lesssim & \ \frac{||p|_{\C}^3-|p'|_{\C}^3|}{||p|_{\C}^3|||p'|_{\C}^3|}|s^1-s^2|^\frac{3}{2} \lesssim \frac{1}{(|s^1-\varsigma|+\eps|l-\ell|)^3}\,|s^1-s^2|^\frac{3}{2} \norm{w^1-w^2}_{\cY_\rho^7}\,,
\end{align*}
which in turn implies that 
\begin{align*}
&\int_{|\varsigma-s^1|\geq  3|s^1-s^2|} \cK_1(s^1,s^2,\beta)  |w_{5-7}^2 [\ell,\varsigma+i\beta]|\dd\varsigma \\
& \quad \lesssim \norm{w_{5-7}^2[\ell,\cdot+i\beta]}_{L^\infty(\T^2)}\,|s^1-s^2|^\frac{3}{2}\norm{w^1-w^2}_{\cY_\rho^7} \int_{|\varsigma-s^1|\geq  3|s^1-s^2|}  \frac{\dd \varsigma}{|\varsigma-s^1|^3} \\
& \quad \lesssim |s^1-s^2|^\frac{1}{2}\norm{w^1-w^2}_{\cY_\rho^7} \,.
\end{align*}

Concerning the other summand in \eqref{frac ddd}, we appeal again to the differentiability statement in Lemma \ref{SingInt} and, arguing as in \eqref{E.544}-\eqref{1st sing},  conclude that 
\begin{align*}
& \biggl|\int_{|\varsigma-s^1|\geq 3|s^1-s^2|} \Big( {\rm D}_s \digamma(s^1+i\be,w^2) \\
& \quad - {\rm D}_s \digamma(s^2+i\be,w^2) \Big)(s^1-\varsigma)\left(\frac{1}{|p|_\C^3}-\frac{1}{|p'|_\C^3}\right) \phi(s^1-\varsigma) \wedge w_{5-7}^2[\ell, \varsigma+i\be] \dd \varsigma \biggr|  \\
& \quad \lesssim |s^1-s^2|^\frac{1}{2}\norm{w^1-w^2}_{\cY_\rho^7}\,.
\end{align*}

Combining these estimates, we conclude that
\begin{align*}
& \left|\int_{|\varsigma-s^1|\geq  3 |s^1-s^2|} \left( \frac{p-q-r+t}{|p|_{\C}^3}-\frac{p'-q'-r'+t'}{|p'|_{\C}^3} \right) \phi(s^1-\varsigma)\times w_{5-7}^2[\ell,\varsigma+i\beta]\dd\varsigma\right| \\[0.3cm]
& \quad \lesssim |s^1-s^2|^\frac{1}{2}\norm{w^1-w^2}_{\cY_\rho^7}\,,
\end{align*}
as desired.  
\end{proof}

Having the previous lemmas at hand, we can now conclude.

\begin{proof}[Proof of \eqref{bd K lip}]
Taking into account \eqref{E.decompKLipschitz} and  \eqref{E.JjwDependece}, the result immediately follows from the Lemmas \ref{L.LipschitzLinfty}, \ref{L.J1LipschitzHolder}, and \ref{L.J2LipschitzHolder}.
\end{proof}

\subsection{Proof of Proposition \ref{est kfar}}\label{sec bd kfar} This subsection is devoted to dealing with the far away part of the kernel. More precisely, we prove Proposition \ref{est kfar}. We deal only with the infinite sum. The corresponding estimate for the first summand is easier and can be handled similarly. We first deal with the proof of \eqref{bd Kfar}.

\begin{proof}[Proof of \eqref{bd Kfar}]
    For every $k \in \ZZ^2 \setminus \{0\}$, we show that the $\cY_\rho^3$--norm of the corresponding summand is bounded by $|k|^{-3}$. This will allow us to conclude the proof by summing up. To simplify the notation, we set
    \begin{align*}
W(z,\varsigma,k):=z-(z-\varsigma-k)+w_1[\ell,z]e_3-w_1[l,z-\varsigma]e_3=\varsigma+k+w_1[\ell,z]e_3-w_1[\ell,z-\varsigma]e_3\,,
\end{align*}
and stress that, for every $k \in \ZZ^2 \setminus \{0\}$, we want to bound the $\cY_\rho^3$--norm  of
\begin{equation} \label{E.aimWk}
\int_{-1}^1 \int_{[-\tfrac12,\tfrac12]^2} \left(\frac{W(s+i\beta,\varsigma,k)}{|W(s+i\be,\varsigma,k)|_{\C}^3}+\frac{W(s+i\beta,\varsigma,-k)}{|W(s+i\be,\varsigma,-k)|_{\C}^3}\right)\wedge w_{5-7}[\ell,s+i\beta-\varsigma]\dd\varsigma\dd\ell\,,
\end{equation}
by $|k|^{-3}$. Before going any further, note that we can rewrite the corresponding kernel as 
\begin{align*}
    K_W(s+i\be,\varsigma,k) : &= \frac{W(s+i\beta,\varsigma,k)}{|W(s+i\be,\varsigma,k)|_{\C}^3}+\frac{W(s+i\beta,\varsigma,-k)}{|W(s+i\be,\varsigma,-k)|_{\C}^3} \\
    & = \frac{2( \varsigma + (w_1[\ell,s+i\be]-w_1[\ell,s+i\be - \varsigma]))\, e_3}{|W(s+i\be,\varsigma,k)|_{\C}^3} \\
    &  \quad + \frac{W(s+i\beta,\varsigma,-k)(|W(s+i\be,\varsigma,k)|_{\C}^3-|W(s+i\be,\varsigma,-k)|_{\C}^3)}{|W(s+i\be,\varsigma,k)|_{\C}^3|W(s+i\be,\varsigma,-k)|_{\C}^3}\,.
\end{align*}
Having this decomposition at hand, it is not difficult to estimate the $L^\infty$--norm of \eqref{E.aimWk}. Indeed, arguing as in the proof of Lemma \ref{bds d}, using now \eqref{im small2}, we get that
$$
\Re |W(s+i\be,\varsigma,k)|_\C^2 \gtrsim |k|^2\,,  \quad\textup{for } k \in \ZZ^2 \setminus \{0\} \textup{ and } \be \in B_\rho(0)\,.
$$
This in particular implies that
\begin{equation} \label{E.lowerk}
||W(s+i\be, \varsigma,k)|_\C| \gtrsim|k|\,, \quad\textup{for } k \in \ZZ^2 \setminus \{0\} \textup{ and } \be \in B_\rho(0)\,.
\end{equation}
Likewise, it follows that
\begin{align}
2 |\varsigma + (w_1[\ell,s+i\be]- w_1[\ell,s+i\be-\varsigma]) e_3| \lesssim 1+ \norm{w_1[\ell,\cdot+i\be]}_{L^{\infty}(\TT^2)}\,.\label{up bd w}
\end{align}
Finally, using these estimates and arguing as in the proof of \eqref{bas est2}, we get that
$$
||W(s+i\be,\varsigma,k)|_{\C}^3-|W(s+i\be,\varsigma,-k)|_{\C}^3| \lesssim \frac{1+|k|^3}{|k|} \quad\textup{for } k \in \ZZ^2 \setminus \{0\} \textup{ and } \be \in B_\rho(0)\,.
$$
Thus, we infer that
$$
|K_W(s+i\be,\varsigma,k)|  \lesssim \frac{1}{|k|^3}\,, \quad\textup{for } k \in \ZZ^2 \setminus \{0\} \textup{ and } \be \in B_\rho(0)\,,
$$
which immediately allows to estimate the $L^{\infty}$--norm of \eqref{E.aimWk} as
\begin{equation*}
    \norm{ \int_{[-\tfrac12,\tfrac12]^2} K_W(s+i\be,\varsigma,k)\wedge w_{5-7}[\ell,s+i\beta-\varsigma]\dd\varsigma}_{L^{\infty}(\T^2)} \lesssim \frac{1}{|k|^3} \,, \quad\textup{for } k \in \ZZ^2 \setminus \{0\} \textup{ and } \be \in B_\rho(0)\,. 
\end{equation*}

Next, we deal with the $C^{\frac12}$--seminorm of \eqref{E.aimWk}. We consider arbitrary $s^1, s^2 \in \TT^2$ and split the difference of the integrals as 
\begin{align*}
    & \int_{-1}^1  \int_{[-\tfrac12,\tfrac12]^2}  K_W(s^1+i\be,\varsigma,k)\wedge w_{5-7}[\ell,s^1+i\beta-\varsigma] \dd \varsigma \dd \ell \\
    & \quad -  \int_{-1}^1  \int_{[-\tfrac12,\tfrac12]^2}  K_W(s^2+i\be,\varsigma,k)\wedge w_{5-7}[\ell,s^2+i\beta-\varsigma] \dd \varsigma \dd \ell \\
    & = \int_{-1}^1  \int_{[-\tfrac12,\tfrac12]^2}  K_W(s^1+i\be,\varsigma,k) \wedge \left(w_{5-7}[\ell,s^1+i\beta-\varsigma] - w_{5-7}[\ell,s^2+i\beta-\varsigma]\right)  \dd \varsigma \dd \ell \\
    & \quad + \int_{-1}^1  \int_{[-\tfrac12,\tfrac12]^2} \Big(K_W(s^1+i\be,\varsigma,k) - K_W(s^2+i\be,\varsigma,k) \Big) \wedge w_{5-7}[\ell,s^2+i\be-\varsigma] \dd\varsigma\dd\ell =: {\rm I}_1 + {\rm I}_2\,.
\end{align*}
We can handle ${\rm I}_1$ exactly as we did with the $L^{\infty}$--norm of \eqref{E.aimWk}, and obtain that
$$
{\rm I}_1 \lesssim \frac{1}{|k|^3} |s^1-s^2|^{\frac12} \sup_{|\ell| \leq 1} \norm{w_{5-7}[\ell,\cdot+i\be]}_{C^{\frac12}(\TT^2)}\,, \quad \textup{for $k \in \ZZ^2 \setminus \{0\}$ and $\be \in B_\rho(0)$}\,.
$$
Finally, to deal with ${\rm I}_2$, we will use Lemma \ref{Lem double diff} with
\begin{align*}
& p=W(s^1+i\beta,\varsigma,k)\,,\\
& q=-W(s^1+i\beta,\varsigma,-k)\,,\\
& r=W(s^2+i\beta,\varsigma,k)\,,\\
& t=-W(s^2+i\beta,\varsigma,-k)\,.
\end{align*}
Note that, arguing as we did to estimate the $L^{\infty}$--norm of \eqref{E.aimWk}, we get that
\begin{align*}
   & |p-q|+|r-t| \lesssim 1 + \|w_1[\ell,\cdot + i \be]\|_{L^\infty(\TT^2)}\,,\\
   & |p-r|+|q-t| \lesssim |s^1-s^2|^{\frac12} \|w_1[\ell,\cdot+i \be]\|_{C^{\frac12}(\TT^2)}\,.
\end{align*}
Then, using \eqref{E.lowerk}, it is straightforward to check that
$$
\left| \frac{p-q-r+t}{|p|^3_{\CC}} \right| + \left| \frac{3p \scalar{p}{p-q-r+t}_{\CC}}{|p|^5_{\CC}} \right| \lesssim \frac{1}{|k|^3} |s^1-s^2|^{\frac12} \|w_1[\ell,\cdot+i \be]\|_{C^{\frac12}(\TT^2)} \,,  
$$
for $k \in \ZZ^2 \setminus \{0\}$ and $\be \in B_\rho(0)$. 

Similarly, it follows that
\begin{align*}
  & \frac{\max\{|p|,|q|,|r|,|t|\}^7}{\min\{\Re|p|_{\C},\Re|q|_{\C},\Re|r|_{\C},\Re|t|_{\C}\}^{11}}\big(|p-q|+|r-t|\big)\big(|p-r|+|q-t|\big) \\ 
  & \qquad \lesssim  \frac{1}{|k|^4} |s^1-s^2|^{\frac12} \|w_1[\ell,\cdot+i \be]\|_{C^{\frac12}(\TT^2)} \Big(1+  \|w_1[\ell,\cdot + i \be]\|_{L^\infty(\TT^2)}\Big)\,,  
\end{align*}
for $k \in \ZZ^2 \setminus \{0\}$ and $\be \in B_\rho(0)$. Hence, using Lemma \ref{Lem double diff}, we conclude that
$$
{\rm I}_2 \lesssim \frac{1}{|k|^3} |s^1-s^2|^{\frac12} \sup_{|\ell| \leq 1} \norm{w_{5-7}[\ell,\cdot+i\be]}_{C^{\frac12}(\TT^2)}\,, \quad \textup{for $k \in \ZZ^2 \setminus \{0\}$ and $\be \in B_\rho(0)$}\,.
$$
This concludes the proof. 
\end{proof} 

We now turn to the proof of \eqref{bd Kfar lip}. Since the proof is similar to the one of \eqref{bd Kfar}, we skip some details.  
\begin{proof}[Proof of \eqref{bd Kfar lip}]
For every $k \in \ZZ^2 \setminus \{0\}$, we show that the $\cY_\rho^3$--norm of the corresponding summand is bounded by $|k|^{-3}\norm{w^1-w^2}_{\cY_\rho^7}$. This will then allow us to conclude the proof by summing up, as in the proof of \eqref{bd Kfar}. To emphasize the dependence in $w$, for $i \in \{1,2\}$, we set
\begin{align*}
W(z,\varsigma,k;\,w^i):=z-(z-\varsigma-k)+w_1^i[\ell,z]e_3-w_1^i[l,z-\varsigma]e_3=\varsigma+k+w_1^i[\ell,z]e_3-w_1^i[\ell,z-\varsigma]e_3\,. 
\end{align*}
We start estimating the $L^\infty$-norm. We may split each summand as
\begin{align}  
& \int_{-1}^1 \int_{[-\tfrac12,\tfrac12]^2} \left(\frac{W(s+i\beta,\varsigma,k;\,w^1)}{|W(s+i\be,\varsigma,k;\,w^1)|_{\C}^3}+\frac{W(s+i\beta,\varsigma,-k;\,w^1)}{|W(s+i\be,\varsigma,-k;\,w^1)|_{\C}^3}\right)\wedge w_{5-7}^1[\ell,s+i\beta-\varsigma] \nonumber \\
&\qquad -\left(\frac{W(s+i\beta,\varsigma,k;\,w^2)}{|W(s+i\be,\varsigma,k;\,w^2)|_{\C}^3}+\frac{W(s+i\beta,\varsigma,-k;\,w^2)}{|W(s+i\be,\varsigma,-k;\,w^2)|_{\C}^3}\right)\wedge w_{5-7}^2[\ell,s+i\beta-\varsigma]\dd\varsigma\dd\ell\,,\nonumber \\
& \label{spint} \quad=\int_{-1}^1 \int_{[-\tfrac12,\tfrac12]^2} \bigg(\frac{W(s+i\beta,\varsigma,k;\,w^1)}{|W(s+i\be,\varsigma,k;\,w^1)|_{\C}^3}+\frac{W(s+i\beta,\varsigma,-k;\,w^1)}{|W(s+i\be,\varsigma,-k;\,w^1)|_{\C}^3}\\
&\qquad-\frac{W(s+i\beta,\varsigma,k;\,w^2)}{|W(s+i\be,\varsigma,k;\,w^2)|_{\C}^3}-\frac{W(s+i\beta,\varsigma,-k;\,w^2)}{|W(s+i\be,\varsigma,-k;\,w^2)|_{\C}^3}\bigg)\wedge w_{5-7}^1[\ell,s+i\beta-\varsigma]\dd\varsigma\dd\ell \nonumber \\
& \qquad+\int_{-1}^1 \int_{[-\tfrac12,\tfrac12]^2} \bigg(\frac{W(s+i\beta,\varsigma,k;\,w^2)}{|W(s+i\be,\varsigma,k;\,w^2)|_{\C}^3}+\frac{W(s+i\beta,\varsigma,-k;\,w^2)}{|W(s+i\be,\varsigma,-k;\,w^2)|_{\C}^3}\bigg)\wedge \Big(w_{5-7}^1[\ell,s+i\beta-\varsigma]\nonumber\\
&\qquad-w_{5-7}^2[\ell,s+i\beta-\varsigma]\Big)\dd\varsigma\dd\ell\,. \nonumber
\end{align}
The second integral can be treated exactly as we did to estimate the $L^{\infty}$-norm of \eqref{E.aimWk}. Hence, we only discuss first one. To bound the first integral, we shall again use Lemma \ref{Lem double diff} a). We now set 
\begin{align*}
    & p=W(s,\varsigma,k;\,w^1)\,, \quad q=-W(s,\varsigma,-k;\,w^1) \,,\quad r=W(s,\varsigma,k;\,w^2)\,, \quad t=-W(s,\varsigma,-k;\,w^2)\,.
\end{align*}
Similarly as in \eqref{E.lowerk}-\eqref{up bd w}, we note that \begin{align}
\min\left(\Re |p|_{\C},\,\Re  |q|_{\C},\, \Re |r|_{\C},\, \Re |t|_{\C}\right)\gtrsim |k|\gtrsim |p|+|q|+|r|+|t|\,.\label{bd norm fa}
\end{align}
Also, we further note that \begin{align}
&|p-q|+|r-t|\lesssim 1+\norm{w_1^1[\ell,\cdot+i\beta]}_{L^\infty(\T^2)}+\norm{w_1^2[\ell,\cdot+i\beta]}_{L^\infty(\T^2)}\label{d fa1}\,,\\
&|p-r|+|q-t|+|p-r-q+t|\lesssim \norm{w_1^1[\ell,\cdot+i\beta]-w_1^2[\ell,\cdot+i\beta]}_{L^\infty(\T^2)}\lesssim \norm{w^1-w^2}_{\cY_\rho^7}\,,\label{d fa2}
\end{align}
as one can directly from the definitions of $p,q,r$ and $t$. Applying Lemma \ref{Lem double diff} a), we then get that
\begin{align*}
& \bigg|\frac{W(s+i\beta,\varsigma,k;\,w^1)}{|W(s+i\be,\varsigma,k;\,w^1)|_{\C}^3}+\frac{W(s+i\beta,\varsigma,-k;\,w^1)}{|W(s+i\be,\varsigma,-k;\,w^1)|_{\C}^3} \\
& \quad-\frac{W(s+i\beta,\varsigma,k;\,w^2)}{|W(s+i\be,\varsigma,k;\,w^2)|_{\C}^3}-\frac{W(s+i\beta,\varsigma,-k;\,w^2)}{|W(s+i\be,\varsigma,-k;\,w^2)|_{\C}^3}\bigg| \lesssim \frac{1}{|k|^3}\norm{w^1-w^2}_{\cY_\rho^7}\,,
\end{align*}
for  $k \in \ZZ^2 \setminus \{0\}$ and $\be \in B_\rho(0)$, yielding the desired bound. 

We move on to the bound of the Hölder seminorm. We take $s, s'\in \T^2$.
In order to bound the Hölder seminorm, one can again split the integral in question as in \eqref{spint}  and in \eqref{E.decompKLipschitz}. Every term except the double difference of $W$ can be handled as above and can be bounded by $|k|^{-3}\norm{w^1-w^2}_{\cY_\rho^7}$, as desired. Hence, we shall only consider the double difference.

Aiming towards using Lemma \ref{Lem double diff} b), we set \begin{align*}
& p=W(s,\varsigma,k;\,w^1)\,, \quad q=-W(s,\varsigma,-k;\,w^1)\,, \quad r=W(s,\varsigma,k;\,w^2)\,, \quad t=-W(s,\varsigma,-k;\,w^2)\,,\\
& p'=W(s',\varsigma,k;\,w^1)\,, \quad  q'=-W(s',\varsigma,-k;\,w^1) \,,\quad r'=W(s',\varsigma,k;\,w^2)\,, \quad t'=-W(s',\varsigma,-k;\,w^2)\,.
\end{align*}
Both of these sets of variables fulfill \eqref{bd norm fa}-\eqref{d fa2}, with $s'$ appropriately replacing $s$.

We then have the following additional estimates for the relevant differences in Lemma \ref{Lem double diff} b): 
\begin{equation}
\begin{aligned}
\mel|p-p'|+|q-q'|+|r-r'|+|t-t'| \\
& \lesssim |s-s'|^\frac{1}{2}\left(\norm{w_1^1[\ell,\cdot+i\beta]}_{C^\frac{1}{2}(\T^2)}+\norm{w_1^2[\ell,\cdot+i\beta]}_{C^\frac{1}{2}(\T^2)}\right) \lesssim |s-s'|^\frac{1}{2}\label{d pp'1}\,,
\end{aligned}
\end{equation}
and 
\begin{equation}
\begin{aligned}
\mel|p-p'-r+r'|+|q-q'-t+t'| \\
& \lesssim |s-s'|^\frac{1}{2}\norm{w_1^1[\ell,\cdot+i\beta]-w_1^2[\ell,\cdot+i\beta]}_{C^\frac{1}{2}(\T^2)}\lesssim |s-s'|^\frac{1}{2}\norm{w^1-w^2}_{\cY_\rho^7}.\label{d pp'}
\end{aligned}
\end{equation}
We may further estimate \begin{align*}
\mel\left|\frac{p-q-r+t}{|p|_{\C}^3}-\frac{p'-q'-r'+t'}{|p'|_{\C}^3}\right|\\
&\leq \frac{1}{||p|_{\C}^3|}\left(|p-p'-r+r'|+|q-q'-t+t'|\right)+|p'-q'-r'+t'|\frac{||p|_{\C}^3-|p'|_{\C}^3|}{||p|_{\C}^3|||p'|_{\C}^3|}\\
&\lesssim \frac{1}{|k|^3}|s-s'|^\frac{1}{2}\norm{w^1-w^2}_{\cY_\rho^7}+\norm{w^1-w^2}_{\cY_\rho^7}|p-p'||k|^{-4}\\
&\lesssim\frac{1}{|k|^{3}}|s-s'|^{\frac12} \norm{w^1-w^2}_{\cY_\rho^7}\, ,
\end{align*}
where we have estimated the first summand in the second line with \eqref{d pp'} and \eqref{bd norm fa} and used \eqref{bas est2}, \eqref{d fa2} and \eqref{d pp'1} to estimate the second one. Similarly, one can estimate \begin{align*}
\mel\left|\frac{p\scalar{p}{p-q-r+t}_{\C}}{|p|_{\C}^5}-\frac{p'\scalar{p'}{p'-q'-r'+t'}}{|p'|_{\C}^5}\right|\lesssim  \frac{1}{|k|^{3}}|s-s'|^{\frac12} \norm{w^1-w^2}_{\cY_\rho^7}\,.
\end{align*}

Combining all these estimates with Lemma \ref{Lem double diff} b), we conclude that 
\begin{align*}
&\left|\frac{p}{|p|_{\C}^3}-\frac{q}{|q|_{\C}^3}-\frac{r}{|r|_{\C}^3}+\frac{t}{|t|_{\C}^3}-\frac{p'}{|p'|_{\C}^3}-\frac{q'}{|q'|_{\C}^3}-\frac{r'}{|r'|_{\C}^3}+\frac{t'}{|t'|_{\C}^3}\right| \lesssim \frac{1}{|k|^{3}}|s-s'|^\frac{1}{2}\norm{w^1-w^2}_{\cY_\rho^7}\,.
\end{align*}
This immediately allows us to bound the corresponding H\"older-seminorm as desired. Thus, we can conclude the proof by summing up. 
\end{proof}

\section{Proof of Proposition \ref{bd F}} \label{S.prop44}

First of all, taking into account \eqref{velo split} and Propositions \ref{main est} and \ref{est kfar}, we immediately deduce that
\begin{align}
&\norm{U_\eps(w^i)}_{\cY_\rho^3}\lesssim \norm{w^i}_{\cY_\rho^7}\,, \quad \textup{for } i \in \{1,2\}\,,\label{bd ueps}\\
&\norm{U_\eps(w^1)-U_\eps(w^2)}_{\cY_\rho^3}\lesssim \norm{w^1-w^2}_{\cY_\rho^7}\,.\label{diff ueps}
\end{align} 
Combining these estimates with Lemma \ref{cauchy est}, for $j \in \{1,\ldots,4\}$ and $i \in \{1,2\}$, one can check that
\begin{equation} \label{E.1234-1}
    \|F_j(w^i)\|_{\cY_{\rho'}} \lesssim \frac{1}{\rho-\rho'}\,, 
\end{equation}
and that
\begin{equation} \label{E.1234-2}
     \|F_j(w^1)- F_j(w^2)\|_{\cY_{\rho'}} \lesssim \frac{1}{\rho-\rho'}\, \|w^1-w^2\|_{\cY_{\rho}^7}\,,
\end{equation}
uniformly in $0 < \rho' < \rho \leq \rho_0$. 

However, when dealing with $F_{5-7}$, the term $A_\ep(w) w_{5-7}$, requires extra care. Let us start with some preliminary lemmas. The first is direct calculus, and can be proved by arguing as in Lemma \ref{Lem double diff}. 

\begin{lemma} \label{lemma abcd}
Let $a,b,c,d\in \C^3$ be such that $\min (\Re |a|_{\C},\Re |b|_{\C}, \Re |c|_{\C}, \Re |d|_{\C})>0$, then 
\begin{equation*}
\begin{aligned}
\left||a|_{\C}-|b|_{\C}-|c|_{\C}+|d|_{\C}\right|\lesssim& \frac{(|a|+|b|+|c|+|d|)^2}{\min (\Re |a|_{\C},\Re |b|_{\C}, \Re |c|_{\C}, \Re |d|_{\C})^3}\Big(|a-b-c+d|(|a|+|b|+|c|+|d|)\\
&\qquad+(|a-c|+|b-d|)|c-d|\Big)\,.
\end{aligned}
\end{equation*}
\end{lemma}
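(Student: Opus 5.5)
We use the algebraic identity for differences of complex moduli, as in \eqref{bas est}:
\[
|a|_\C-|b|_\C=\frac{\scalar{a-b}{a+b}_\C}{|a|_\C+|b|_\C}.
\]
Applying it twice and rearranging gives
\[
|a|_\C-|b|_\C-|c|_\C+|d|_\C=\frac{\scalar{a-b}{a+b}_\C}{|a|_\C+|b|_\C}-\frac{\scalar{c-d}{c+d}_\C}{|c|_\C+|d|_\C}.
\]
We then telescope the right-hand side into pieces. Each piece either contains the double difference $a-b-c+d$, or contains a product of $c-d$ with one of the cross differences $a-c$, $b-d$.

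Concretely, we write the difference as
\[
\frac{\scalar{a-b-c+d}{a+b}_\C}{|a|_\C+|b|_\C}+\frac{\scalar{c-d}{a+b-c-d}_\C}{|a|_\C+|b|_\C}+\scalar{c-d}{c+d}_\C\Big(\frac{1}{|a|_\C+|b|_\C}-\frac{1}{|c|_\C+|d|_\C}\Big).
\]
We treat the three terms in turn.

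\emph{First term.} It is bounded by $|a-b-c+d|(|a|+|b|)/\bigl|\,|a|_\C+|b|_\C\bigr|$.

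\emph{Second term.} We use $|a+b-c-d|\le|a-c|+|b-d|$, so it is bounded by $|c-d|(|a-c|+|b-d|)$ divided by the same denominator.

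\emph{Third term.} The bracket equals
\[
\frac{(|c|_\C-|a|_\C)+(|d|_\C-|b|_\C)}{(|a|_\C+|b|_\C)(|c|_\C+|d|_\C)}.
\]
Using \eqref{bas est}, its numerator is $\lesssim (|a-c|+|b-d|)(|a|+|b|+|c|+|d|)/m$. Multiplying by $|\scalar{c-d}{c+d}_\C|\le|c-d|(|c|+|d|)$ gives a contribution of order $|c-d|(|a-c|+|b-d|)(|a|+\dots+|d|)^2$ divided by $m$ times the two denominators.

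The only genuinely nontrivial point is the lower bound on the denominators. Set $m:=\min(\Re|a|_\C,\Re|b|_\C,\Re|c|_\C,\Re|d|_\C)$. Since all real parts of the square roots are positive,
\[
\bigl|\,|a|_\C+|b|_\C\bigr|\ge\Re(|a|_\C+|b|_\C)\ge 2m,
\]
and similarly for $c,d$. We must also track the homogeneity of the powers: the target has denominator $m^3$ and a numerator of total degree $3$ in lengths. Since $m\le\bigl||a|_\C\bigr|\le|a|$ by \eqref{E.complex-modulus}, we have $m\lesssim |a|+|b|+|c|+|d|$. Hence any term with a lower power of $m$ in the denominator can be upgraded to the stated form by multiplying numerator and denominator by a suitable power of $m$ and bounding the $m$ in the numerator by $(|a|+\dots+|d|)$.

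Concretely, the first term is
\[
\lesssim \frac{|a-b-c+d|(|a|+\dots+|d|)}{m}\le\frac{(|a|+\dots+|d|)^3|a-b-c+d|}{m^3},
\]
the second term is
\[
\lesssim \frac{|c-d|(|a-c|+|b-d|)}{m}\le \frac{(|a|+\dots+|d|)^2}{m^3}\,|c-d|(|a-c|+|b-d|),
\]
and the third term already has exactly the form $(|a|+\dots)^2 m^{-3}|c-d|(|a-c|+|b-d|)$.

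Summing the three bounds yields the claim. I expect no real obstacle. The only care needed is to keep each term in a form containing either the double difference or the product $|c-d|(|a-c|+|b-d|)$, and to use the positivity of real parts rather than the moduli to bound the denominators from below.
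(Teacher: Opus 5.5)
Your proof is correct and follows essentially the route the paper has in mind. The paper only says the lemma is ``direct calculus'' as in Lemma~\ref{Lem double diff}, and its key step \eqref{est dd norms} uses exactly your telescoping of $\scalar{a-b}{a+b}_\C/(|a|_\C+|b|_\C)$ into three pieces: a double-difference term, a $\scalar{c-d}{a+b-c-d}_\C$ term, and a difference-of-reciprocal-denominators term. These are then bounded using $\Re(|a|_\C+|b|_\C)\geq 2m$, \eqref{bas est}, and $m\leq |a|$ to match the stated powers.
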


Next, for $\tau_i,\, \hat{\tau}_i$ with $i \in \{1,2\}$ as in \eqref{def taus}, we introduce the notation
$$
\tau_i(w) \quad \textup{and} \quad \hat{\tau}_i(w)\,, 
$$
to emphasize the dependence on $w$. Having this notation at hand, we prove the following key lemma.
 
\begin{lemma}\label{lem taus}
Assume that \eqref{E.cC1} and \eqref{E.smallness main est} hold, then it follows that
\begin{align}
\norm{\tau_1}_{\cY_\rho^3}+\norm{\tau_2}_{\cY_\rho^3}+\norm{\hat{\tau}_1}_{\cY_\rho^3}+\norm{\hat{\tau}_2}_{\cY_\rho^3}+\norm{\frac{1}{|\hat{\tau}_1|_{\C}}}_{\cY_\rho}+\norm{\frac{\scalar{\hat{\tau}_2}{\tau_1}_{\C}}{|\hat{\tau}_1|_{\C}}}_{\cY_{\rho}}+\norm{\frac{1}{|\hat{\tau}_2-\scalar{\hat{\tau}_2}{\tau_1}\tau_1|_{\C}}}_{\cY_\rho}\lesssim 1\,,\label{bound taus}
\end{align}
and that
\begin{equation}\begin{aligned}
\mel\norm{\tau_1(w^1)-\tau_1(w^2)}_{\cY_\rho^3}+\norm{\tau_2(w^1)-\tau_2(w^2)}_{\cY_\rho^3}+\norm{\frac{1}{|\tau_1(w^1)|_{\C}}-\frac{1}{|\tau_1(w^2)|_{\C}}}_{\cY_\rho}\\
&+\norm{\hat{\tau}_1(w^1)-\hat{\tau}_1(w^2)}_{\cY_\rho^3}+\norm{\hat{\tau}_2(w^1)-\hat{\tau}_2(w^2)}_{\cY_\rho^3} \\
&+\norm{\frac{1}{|\hat{\tau}_2(w^1)-\scalar{\hat{\tau}_2(w^1)}{\tau_1(w^1)}_{\C}\tau_1(w^1)|_{\C}}-\frac{1}{|\hat{\tau}_2(w^2)-\scalar{\hat{\tau}_2(w^2)}{\tau_1(w^2)}_{\C}\tau_1(w^2)|_{\C}}}_{\cY_\rho} \\
&+\norm{\frac{\scalar{\hat{\tau}_2(w^1)}{\tau_1(w^1)}_{\C}}{|\hat{\tau}_1(w^1)|_{\C}}-\frac{\scalar{\hat{\tau}_2(w^2)}{\tau_1(w^2)}_{\C}}{|\hat{\tau}_1(w^2)|_{\C}}}_{\cY_{\rho}}\lesssim \norm{w^1-w^2}_{\cY_\rho^7}\,.\label{diff taus}
\end{aligned}\end{equation}
\end{lemma}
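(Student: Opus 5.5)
The plan is to view every quantity in Lemma~\ref{lem taus} as the composition of a fixed holomorphic function of the two complex variables $(w_2,w_3)$ with the pair $(w_2[l,\cdot],w_3[l,\cdot])$. Explicitly, $\hat\tau_1=(1,0,w_2)$ and $\hat\tau_2=(0,1,w_3)$ are affine in $w$. The remaining objects are rational combinations of $w_2,w_3$ and the complexified moduli $|\hat\tau_1|_\C=\sqrt{1+w_2^2}$ and $|\hat\tau_2-\scalar{\hat\tau_2}{\tau_1}_\C\tau_1|_\C$; here all norms are read as $|\cdot|_\C$ and inner products as $\scalar{\cdot}{\cdot}_\C$, as holomorphy requires. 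A direct computation gives the Gram--Schmidt quantity
\[
|\hat\tau_2|\hat\tau_1|^2-\hat\tau_1\scalar{\hat\tau_1}{\hat\tau_2}|_\C^2=(1+w_2^2)(1+w_2^2+w_3^2)\,.
\]
Likewise $|\hat\tau_2-\scalar{\hat\tau_2}{\tau_1}\tau_1|_\C^2=(1+w_2^2+w_3^2)/(1+w_2^2)$. So everything reduces to the two scalar functions $1+w_2^2$ and $1+w_2^2+w_3^2$, their principal square roots, and reciprocals.

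\textbf{Step 1 (nondegeneracy).} By \eqref{E.cC1} we have $|w_2|,|w_3|\le\cC_1$. By \eqref{E.smallness main est}, $|\Im w_2|,|\Im w_3|\le (300\cC_1^4)^{-1}$. Hence
\[
\Re(1+w_2^2)=1+(\Re w_2)^2-(\Im w_2)^2\ge 1-\tfrac{1}{300^2}\,,
\]
and similarly for $1+w_2^2+w_3^2$. Also $|\Im(1+w_2^2)|\le 2|\Re w_2||\Im w_2|\le 1/150$, so these values lie in a compact subset $\mathcal K$ of the right half-plane. This compact set depends only on $\cC_1$. On $\mathcal K$ the principal branches $\sqrt z$ and $1/\sqrt z$ are holomorphic and bounded, with bounded derivatives. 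The same holds for $\Re\sqrt z\ge\sqrt{\Re z}$, which is used in \eqref{bas est}.

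\textbf{Step 2 (pointwise and Hölder bounds, \eqref{bound taus}).} Let $G$ be any of the listed quantities, written as $G=g(w_2,w_3)$ with $g$ holomorphic on a neighborhood $\mathcal N$ of the admissible set $\{|a_j|\le\cC_1,\ |\Im a_j|\le (300\cC_1^4)^{-1}\}$. Then $\sup_{\mathcal N}|g|+|\nabla g|\lesssim 1$, with constants depending on $\cC_1$. The $L^\infty$ bound is immediate. For the $C^{1/2}$ seminorm at fixed $\beta$ I would use the mean value inequality along the segment joining $(w_2,w_3)(s^1+i\beta)$ and $(w_2,w_3)(s^2+i\beta)$. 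This segment stays in the convex admissible set, since both the modulus bound and the imaginary-part bound are convex constraints. It gives $|G(s^1)-G(s^2)|\lesssim |s^1-s^2|^{1/2}\|w\|_{\cY^7_\rho}$. Holomorphy in $z$ and periodicity are preserved under composition, and reality on $\T^2$ holds because the principal root of a positive real is real. This yields \eqref{bound taus}.

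\textbf{Step 3 (Lipschitz bounds, \eqref{diff taus}).} Write $G(w^1)-G(w^2)=\int_0^1 \nabla g(w^\lambda)\cdot(w^1-w^2)\,d\lambda$, where $w^\lambda=\lambda w^1+(1-\lambda)w^2$. The sup-norm part follows since $w^\lambda$ stays in the convex admissible set. For the Hölder seminorm we need the four-point difference $g(y_1)-g(y_2)-g(y_3)+g(y_4)$, where $y_j$ are the values of $w^1,w^2$ at $s^1,s^2$. This is exactly Lemma~\ref{L.FTC}, applied with $\|g\|_{W^{2,\infty}}\lesssim1$ on the convex set. Its first term $|y_1-y_2-y_3+y_4|$ is bounded by $|s^1-s^2|^{1/2}\|w^1-w^2\|_{\cY^7_\rho}$. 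Its product term is bounded by $\|w^1-w^2\|\cdot|s^1-s^2|^{1/2}$. Alternatively, one can use Lemma~\ref{lemma abcd} directly for the moduli.

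The main obstacle is Step~1, which must make sure the branch-cut issue never arises. All three moduli must have real parts bounded below uniformly, and along the interpolation segments as well. The explicit reduction of the Gram--Schmidt expressions to $1+w_2^2$ and $1+w_2^2+w_3^2$ is what makes this clean. If that identity failed for the complexified products, I would instead bound $\Re|\cdot|_\C^2$ perturbatively from the real case, where it is at least $1$, using the smallness \eqref{E.smallness main est} with its $\cC_1^{-4}$ factor to absorb the cross terms.
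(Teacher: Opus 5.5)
Your proof is correct, and it organizes the argument more systematically than the paper does.

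\textbf{The paper's route.} It first proves lower bounds on $|\hat\tau_1|_\C$ and on $|\hat\tau_2|\hat\tau_1|_\C^2-\hat\tau_1\scalar{\hat\tau_1}{\hat\tau_2}_\C|_\C$, by expanding the squared moduli and using the smallness of the imaginary parts. It then asserts that \eqref{bound taus} follows from holomorphy, and proves \eqref{diff taus} term by term. Reciprocals are handled by writing $1/A-1/B=(B-A)/(AB)$ and invoking the lower bounds. The H\"older part of differences of complexified moduli uses \eqref{bas est} together with the special four-point estimate of Lemma~\ref{lemma abcd}. The remaining terms are left as ``similar''.

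\textbf{Your route.} You write every quantity as $g(w_2,w_3)$ for a single function $g$ that is holomorphic near the convex, compact admissible set. The mean value inequality then gives \eqref{bound taus}. One application of Lemma~\ref{L.FTC} gives the H\"older part of \eqref{diff taus} for all seven quantities at once, and Lemma~\ref{lemma abcd} becomes just the special case $g=|\cdot|_\C$.

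\textbf{The Gram identity.} Your identity $|\hat\tau_2|\hat\tau_1|_\C^2-\hat\tau_1\scalar{\hat\tau_1}{\hat\tau_2}_\C|_\C^2=(1+w_2^2)(1+w_2^2+w_3^2)$ makes the nondegeneracy step immediate, and it is the correct expansion. The paper's displayed polynomial has $3(w_2w_3)^2$ where it should be $(w_2w_3)^2$; this error is harmless for its real-part bound.

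\textbf{One detail to write out.} When you reduce $\tau_2$ and $1/|\hat\tau_2-\scalar{\hat\tau_2}{\tau_1}_\C\tau_1|_\C$ to principal roots of $1+w_2^2$ and $1+w_2^2+w_3^2$, you are using that the principal root of their product (or quotient) equals the product (or quotient) of the principal roots. This holds because both arguments are at most about $1/75$. It also confirms that your $g$ is holomorphic on a neighborhood of the admissible set.
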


\begin{proof}
We shall first show that \begin{align}
\min\left(\frac{1}{||\hat{\tau}_1|_{\C}|},\,\frac{1}{\left||\hat{\tau}_2|\hat{\tau}_1|_{\C}^2-\hat{\tau}_1\scalar{\hat{\tau}_1}{\hat{\tau}_2}_{\C}|_{\C}\right|}\right)\gtrsim 1.\label{claim lbound}
\end{align}
For the first term, using \eqref{E.smallness main est} and that $\cC_1 \geq 1$, we can directly estimate \begin{align}
\Re|\hat{\tau}_1|_{\C}^2=1+(\Re w_2)^2-(\Im w_2)^2 \geq \frac12\,,\label{claim step 0}
\end{align}
which yields the desired lower bound thanks to the elementary estimate $|z|\geq \sqrt{\Re z^2}$.

Concerning the second, note that
$$
|\hat{\tau}_2|\hat{\tau}_1|_{\C}^2-\hat{\tau}_1\scalar{\hat{\tau}_1}{\hat{\tau}_2}_{\C}|_{\C}^2 = 1+ 3(w_2 w_3)^2 + w_2^4 + 2 w_2^2 + w_3^2\,,
$$
and that \eqref{E.cC1} and \eqref{E.smallness main est} imply that
$$
|(\Re w_j) (\Im w_i)| \leq \frac{\cC_1}{300 \cC_1^4} \leq \frac{1}{300 \cC_1^3}\,, \quad \textup{for all } i,j \in \{1,2\}\,.
$$
Hence, using that $\cC_1 \geq 1$, it is immediate to check that
 \begin{align*}
\Re \big|\hat{\tau}_2|\hat{\tau}_1|_{\C}^2-\hat{\tau}_1\scalar{\hat{\tau}_1}{\hat{\tau}_2}_{\C}\big|_{\C}^2\geq \frac{1}{2}\,,
\end{align*}
which in turn implies \eqref{claim lbound}.

Note that \eqref{claim lbound} immediately implies \eqref{bound taus}, since these functions are holomorphic away from $0$, as long as the denominators are bounded.

Similarly, we can estimate the difference of these quantities with respect to different $w$'s as 
\begin{align*}
\norm{\tau_1(w^1)-\tau_1(w^2)}_{\cY_\rho^3}+\norm{\tau_2(w^1)-\tau_2(w^2)}_{\cY_\rho^3}\lesssim \norm{w^1-w^2}_{\cY_\rho^7}\,.
\end{align*}
directly by the definition. 

For the difference of the quantities involving a complexified modulus, we can estimate 
\begin{align*}
    \norm{\frac{1}{|\tau_1(w^1)|_{\C}}-\frac{1}{|\tau_1(w^2)|_{\C}}}_{\cY_\rho}=\norm{\frac{|\tau_1(w^2)|_{\C}-|\tau_1(w^1)|_{\C}}{|\tau_1(w^1)|_{\C}|\tau_1(w^2)|_{\C}}}_{\cY_\rho}\lesssim \norm{|\tau_1(w^2)|_{\C}-|\tau_1(w^1)|_{\C}}_{\cY_\rho}\,,
\end{align*}
where we have again used the lower bound \eqref{claim lbound}. Then, to estimate the right-hand side, we rely on Lemma \ref{lemma abcd}.  Expanding the definitions and using Lemma \ref{lemma abcd}, as well as \eqref{claim step 0}, we get that
\begin{align*}
& \norm{|\tau_1(w^2)|_{\C}-|\tau_1(w^1)|_{\C}}_{\cY_\rho}\\
& \quad \leq\sup_{|\beta|\leq \rho,\, (s^1,s^2)\in \T^4,\, l\in [-1,1]} \bigg\{ |s^1-s^2|^{-\frac{1}{2}} \Big||\tau_1(w^1)[l,s^1+i\beta]|_{\C}-|\tau_1(w^1)[l,s^2+i\beta]|_{\C}\\
&\qquad -|\tau_1(w^2)[l,s^1+i\beta]|_{\C}+|\tau_1(w^2)[l,s^2+i\beta]|_{\C}\Big|\\
&\qquad+\left||\tau_1(w^1)[l,s^1+i\beta]|_{\C}-|\tau_1(w^2)[l,s^1+i\beta]|_{\C}\right| \bigg\} \\
& \quad \lesssim \sup_{|\beta|\leq \rho,\, (s^1,s^2)\in \T^4,\, l\in [-1,1]} \bigg\{ |s^1-s^2|^{-\frac{1}{2}}\Big(\Big|\tau_1(w^1)[l,s^1+i\beta]-\tau_1(w^1)[l,s^2+i\beta]-\tau_1(w^2)[l,s^1+i\beta]\\
&\qquad+\tau_1(w^2)[l,s^2+i\beta]\Big|+\Big(\big|\tau_1(w^1)[l,s^1+i\beta]-\tau_1(w^2)[l,s^1+i\beta]\big|\\
&\qquad+\big|\tau_1(w^1)[l,s^2+i\beta]-\tau_1(w^2)[l,s^2+i\beta]\big|\Big)\left|\tau_1(w^2)[l,s^1+i\beta]-\tau_1(w^2)[l,s^1+i\beta]\right|\Big)\\
&\qquad +\left|\tau_1(w^1)[l,s^1+i\beta]-\tau_1(w^2)[l,s^1+i\beta]\right| \bigg\} \\
&\quad \lesssim \norm{w^1-w^2}_{\cY_\rho^7}\,.
\end{align*}
Note that here  we have also used \eqref{bas est}, and the fact that $\tau_1$ is an affine function in $w$. Combining the two previous estimates, we immediately get that
\begin{align*}
\norm{\frac{1}{|\tau_1(w^1)|_{\C}}-\frac{1}{|\tau_1(w^2)|_{\C}}}_{\cY_\rho}\lesssim \norm{w^1-w^2}_{\cY_\rho^7}.
\end{align*}
The other bounds in \eqref{diff taus} can be shown in a similar manner. We skip the details. 
\end{proof}

Having this lemma at hand, we can now deal with $A_\ep(w)w_{5-7}$. Indeed, taking into account \eqref{def Aeps}, we see that, for $i \in \{1,2\}$, 
 \begin{align*}
\mel\norm{A_\eps(w^i) w_{5-7}^i}_{\cY_{\rho'}^3}\\
&\lesssim \frac{1}{\rho-\rho'}\norm{U_\eps}_{\cY_\rho^3}\norm{w^i}_{\cY_{\rho'}^7}\Biggl(\norm{\tau_1(w^i)}_{\cY_{\rho'}^3}\norm{\frac{1}{|\hat{\tau}_1(w^i)|_{\C}}}_{\cY_{\rho'}}\\
&\qquad+\norm{\tau_2(w^i)}_{\cY_{\rho'}^3}\norm{\frac{1}{|\hat{\tau}_2(w^i)-\scalar{\hat{\tau}_2(w^i)}{\tau_1(w^i)}_\C \tau_1(w^i)|_{\C}}}_{\cY_{\rho'}}\left(1+\norm{\frac{\scalar{\hat{\tau}_2(w^i)}{\tau_1(w^i)}_{\C}}{|\hat{\tau}_1(w^i)|_{\C}}}_{\cY_{\rho'}}\right)\Biggr)\\
&\lesssim \frac{1}{\rho-\rho'}\,,
\end{align*}
uniformly in $0 \leq \rho' < \rho \leq \rho_0$. Note that here we have used Lemma \ref{cauchy est}, \eqref{bd ueps} and \eqref{bound taus}.

Similarly, using Lemma \ref{cauchy est}, \eqref{diff ueps}, \eqref{diff taus},  and the discrete product rule, we can estimate \begin{align*}
\norm{A_\eps(w^1)w^1-A_\eps(w^2)w^2}_{\cY_{\rho'}^3}\lesssim \frac{1}{\rho-\rho'}\norm{w^1-w^2}_{\cY_\rho^7}\,.
\end{align*}
uniformly in $0 \leq \rho' < \rho \leq \rho_0$. At this point, for $j \in \{5,6,7\}$ and $i \in \{1,2\}$, one can easily get that
\begin{equation} \label{E.567-1}
    \|F_j(w^i)\|_{\cY_{\rho'}} \lesssim \frac{1}{\rho-\rho'}\,, 
\end{equation}
and that
\begin{equation} \label{E.567-2}
     \|F_j(w^1)- F_j(w^2)\|_{\cY_{\rho'}} \lesssim \frac{1}{\rho-\rho'}\, \|w^1-w^2\|_{\cY_{\rho}^7}\,,
\end{equation}
uniformly in $0 \leq \rho' < \rho \leq \rho_0$. 

\begin{proof}[Proof of Proposition \ref{bd F}]
    The result immediately follows from \eqref{E.1234-1}, \eqref{E.1234-2}, \eqref{E.567-1} and \eqref{E.567-2}. 
\end{proof}

\section{The case of general closed vortex sheets}
\label{S.closedSurface}

We explain how the proof extends to vortex sheets that are normal graphs over a fixed analytic closed surface in~$\RR^3$.  The flat-graph analysis uses the ambient geometry only through four objects: the graph parametrization, the Biot--Savart kernel, the transport law for the sheet strength, and the jump formula.  All four admit exact intrinsic analogues on a closed analytic surface.  The only genuinely new feature is topological: on a surface of genus greater than zero, the tangential sheet strength is not globally a perp-gradient, and one must work with a divergence-free tangent vector density instead.

\subsection{Geometric setting}

Let $M$ be a closed, oriented, real-analytic two-manifold.  We fix a real-analytic embedding $\widehat{\mathbf{x}} : M \to \RR^3$, and endow~$M$ with the metric induced by the Euclidean metric on~$\R^3$ via the embedding~$\widehat{\mathbf{x}}$. 

We denote the corresponding Levi--Civita connection by~$\nabla^M$, and by $\mathbf{n}:M\to\RR^3$ its unit normal. Likewise, let $\mathcal{S} := -\nabla^M\mathbf{n}$ be the Weingarten map. Then, we set
\[
    P_r(y) := I - r\mathcal{S}(y)\,, \qquad J_r(y) := \det P_r(y)\,.
\]
For small enough $r_0>0$, the tubular map $\mathbf{x}:M\times (-r_0,r_0)\to \R^3$ defined by
\[
    \mathbf{x}(y,r) := \widehat{\mathbf{x}}(y) + r\,\mathbf{n}(y)
\]
is a diffeomorphism onto its image. Moreover,
it satisfies ${\rm D}_y\mathbf{x}(y,r)[v] = {\rm D}\widehat{\mathbf{x}}(y)[P_r(y)v]$, so $J_r(y)$ is the Jacobian of $y\mapsto\mathbf{x}(y,r)$ relative to the reference area form~$dA_M$.

For $h : M \to (-r_0,r_0)$, the \emph{normal graph} and its parametrizing map are
\[
    \Gamma_h := \{\mathbf{x}(y,h(y)) : y\in M\}\,,
    \qquad
    \mathbf{x}_h(y) := \mathbf{x}(y,h(y))\,.
\]
For a tangent vector field $b$ on~$M$, its pushforward by $\mathbf{x}_h$ is the map $M\to\RR^3$ given by
\[
    \mathcal{P}_h b(y) := {\rm D}\mathbf{x}_h(y)[b(y)].
\]
Hence, the vector $ \mathcal{P}_h b(y) \in\RR^3$ is tangent to $\Gamma_h$ at the point $\mathbf{x}_h(y)$.  

The corresponding vector-valued measure on~$\Gamma_h$ is $(\mathcal{P}_h b/G_h)\,dS_{\Gamma_h}$, where $dS_{\Gamma_h}$ is the surface measure on~$\Gamma_h$. Here $G_h(\cdot,t)$ is the area density with respect to the measure $dA_M$, defined as
\[
dS_{\Gamma_h(t)}(\mathbf{x}_{h(\cdot,t)}(y))=G_h(y,t)\,dA_M(y)\,.
\]
Therefore, the aforementioned vector-valued measure is characterized by
\begin{equation}\label{E.cc3d_Ph}
    \int_{\Gamma_h}
    \frac{\mathcal{P}_h b(\mathbf{x}_h^{-1}(x))}{G_h(\mathbf{x}_h^{-1}(x))}\cdot\Phi(x)\,dS_{\Gamma_h}(x)
    = \int_M \mathcal{P}_h b(y)\cdot\Phi(\mathbf{x}_h(y))\,dA_M(y)\,,
\end{equation}
for every $\Phi\in C_c^\infty(\RR^3;\RR^3)$.

\subsection{Layered vorticities}

For a fixed $\ep > 0$ and  for each label $l\in(-1,1)$, let $h_{\ep,l} : M\to(-r_0,r_0)$ be a collection of smooth functions, and let $b_{\ep,l}$ be a divergence-free tangent vector field on~$M$ (i.e., $\operatorname{div}_M b_{\ep,l} = 0$).  Setting
\[
    \mathbf{x}_{\ep,l}(y,t) := \mathbf{x}\bigl(y,h_{\ep,l}(y,t)\bigr)\,,
    \quad \textup{and} \quad
    \Gamma_{\ep,l}(t) := \mathbf{x}_{\ep,l}(M,t)\,,
\]
we will consider \emph{layered vorticities}, that is, distributions of the form
\begin{equation}\label{E.cc3d_omeps}
    \langle\om_\ep(\cdot,t),\Phi\rangle
    := \int_{-1}^1\!\int_M
    \mathcal{P}_{h_{\ep,l}(y,t)} b_{\ep,l}(y,t)\cdot\Phi\!\bigl(\mathbf{x}_{\ep,l}(y,t)\bigr)
    \,dA_M(y)\,\dl,
\end{equation}
which represent the superposition $\om_\ep = \int_{-1}^1 (G_{h_{\ep,l}}^{-1}\mathcal{P}_{h_{\ep,l}}b_{\ep,l})\,dS_{\Gamma_{\ep,l}} \dl$.

A first observation is that this defines a divergence-free field on~$\R^3$:

\begin{lemma}\label{L.cc3d_divfree}
For every $t\in[0,T]$, $\operatorname{div}\om_\ep(\cdot,t)=0$ in $\mathcal{D}'(\RR^3)$.
\end{lemma}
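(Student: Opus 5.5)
The plan is to test $\omega_\ep(\cdot,t)$ against $\nabla\psi$ for an arbitrary scalar $\psi\in C_c^\infty(\RR^3)$ and to show, leaf by leaf, that $\langle\om_\ep(\cdot,t),\nabla\psi\rangle=0$. That suffices, because $\langle\operatorname{div}\om_\ep,\psi\rangle=-\langle\om_\ep,\nabla\psi\rangle$. Inserting $\Phi=\nabla\psi$ into \eqref{E.cc3d_omeps}, the integrand for a fixed label $l$ is
\[
\mathcal{P}_{h_{\ep,l}}b_{\ep,l}(y)\cdot\nabla\psi(\mathbf{x}_{\ep,l}(y))
={\rm D}\mathbf{x}_{h_{\ep,l}}(y)[b_{\ep,l}(y)]\cdot\nabla\psi(\mathbf{x}_{h_{\ep,l}}(y))
=\mathrm{d}(\psi\circ\mathbf{x}_{h_{\ep,l}})(y)[b_{\ep,l}(y)]\,,
\]
by the chain rule, where $\mathbf{x}_{h_{\ep,l}}=\mathbf{x}(\cdot,h_{\ep,l}(\cdot,t))$. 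So the integrand is just the directional derivative of the smooth function $f_l:=\psi\circ\mathbf{x}_{h_{\ep,l}}$ on $M$ along the tangent field $b_{\ep,l}$, that is, $b_{\ep,l}\cdot\nabla^M f_l$ in the induced metric.

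Next I would apply the divergence theorem on the closed manifold $M$ with respect to the area form $dA_M$:
\[
\int_M b_{\ep,l}\cdot\nabla^M f_l\,dA_M=-\int_M f_l\,\operatorname{div}_M b_{\ep,l}\,dA_M=0\,,
\]
since $\operatorname{div}_M b_{\ep,l}=0$ and $\partial M=\emptyset$. The point to check is that this is the right measure. In \eqref{E.cc3d_omeps} the integral is taken against $dA_M$ itself, and the Jacobian factor $G_h$ has already been absorbed by \eqref{E.cc3d_Ph}. The divergence-free condition is likewise imposed with respect to the reference metric on $M$, not the metric of $\Gamma_{\ep,l}$, so no weighted divergence appears. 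The topology of $M$ plays no role here, because only $\operatorname{div}_M b=0$ is used and not any perp-gradient representation.

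Finally I would integrate in $l\in(-1,1)$ and use Fubini. This is justified provided $h_{\ep,l}$ and $b_{\ep,l}$ are bounded and measurable in $l$, so that the double integral is absolutely convergent, which holds under the standing smoothness and uniform-boundedness assumptions. The result is $\langle\om_\ep(\cdot,t),\nabla\psi\rangle=0$ for every $\psi$, which is the claim. The only step needing care is the bookkeeping in the first paragraph: confirming that the pushforward ${\rm D}\mathbf{x}_h[b]$ dotted with $\nabla\psi$ is exactly the intrinsic derivative of $\psi\circ\mathbf{x}_h$, so that no extra density such as $J_r$ or $G_h$ sneaks in. Once that is verified, the remainder is immediate.
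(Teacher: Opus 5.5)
Your proposal is correct and follows essentially the same route as the paper. The paper also uses the chain rule to rewrite $\mathcal{P}_{h_{\ep,l}}b_{\ep,l}\cdot\nabla\varphi(\mathbf{x}_{\ep,l})$ as $b_{\ep,l}\cdot\nabla_M(\varphi\circ\mathbf{x}_{\ep,l})$, and then integrates by parts on the closed surface $M$ using $\operatorname{div}_M b_{\ep,l}=0$. Your additional remarks on the choice of measure $dA_M$ and on the use of Fubini in $l$ are sound; the paper simply leaves them implicit.
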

\begin{proof}
For $\varphi\in C_c^\infty(\RR^3)$, the chain rule gives
$\mathcal{P}_{h_{\ep,l}}b_{\ep,l}\cdot\nabla\varphi(\mathbf{x}_{\ep,l})
= b_{\ep,l}\cdot\nabla_M(\varphi\circ\mathbf{x}_{\ep,l})$.
Hence 
$$
\langle\operatorname{div}\om_\ep,\varphi\rangle = -\langle\om_\ep,\nabla\varphi\rangle
= -\int_{-1}^1\!\int_M b_{\ep,l}\cdot\nabla_M(\varphi\circ\mathbf{x}_{\ep,l})\,dA_M\,\dl\,.
$$
Since $M$ is closed, integrating by parts and using $\operatorname{div}_M b_{\ep,l}=0$, gives $\langle\operatorname{div}\om_\ep,\varphi\rangle=0$.
\end{proof}

It is convenient to define the \emph{weighted thickness} as
\begin{equation}\label{E.cc3d_q}
    q_{\ep,l} := J_{h_{\ep,l}}\,\ep^{-1}\pd_lh_{\ep,l}\,.
\end{equation}
This equals the Jacobian of the volume-filling map $F_\ep(y,l,t):=\mathbf{x}(y,h_{\ep,l}(y,t))$ ($\det {\rm D}_{(y,l)}F_\ep = \ep\,q_{\ep,l}$).
Provided that $q_{\ep,l}\geq c_0>0$, the layers are strictly ordered and one can verify that 
\[
    \om_\ep\bigl(\mathbf{x}(y,h_{\ep,l}(y,t)),t\bigr)
    = \frac{\mathcal{P}_{h_{\ep,l}(y,t)} b_{\ep,l}(y,t)}{\ep\,q_{\ep,l}(y,t)}\,.
\]

\subsection{Main statement}

Formal computations suggest that the limiting vortex sheet $(\Gamma_h,b)$ should be governed by the \emph{Birkhoff--Rott system}. For a function $h$ and a divergence-free tangent vector field~$b$ on~$M$, one should have that
\begin{equation} \label{E.cc3d_evh}
\left\{
\begin{aligned}
    \, \pd_t h + V[h,b]\cdot\nabla_M h &= W[h,b]\,, \\
    \pd_t b + \mathcal{L}_{V[h,b]} b &= 0\,.
\end{aligned}
\right.
\end{equation}
Here, $\mathcal L$ denotes the Lie derivative, $W[h,b]:=\mathrm{BR}[h,b]\cdot\mathbf{n}$ and $V[h,B]\in T_yM$ is defined by the equation 
$$
{\rm D}\mathbf{x}_h(y)\,V[h,b](y)=\Pi_{T\Gamma_h}\mathrm{BR}[h,b](y)\,,
$$ 
where $\Pi_{T\Gamma_h}$ denotes the tangential projector, and where
\begin{equation}\label{E.cc3d_BR}
    \mathrm{BR}[h,b](y) := \mathrm{P.V.}\;\frac{1}{4\pi}\int_M
    \frac{\mathcal{P}_{h(z)} b(z)\times\bigl(\mathbf{x}_h(y)-\mathbf{x}_h(z)\bigr)}
    {\bigl|\mathbf{x}_h(y)-\mathbf{x}_h(z)\bigr|^3}\,dA_M(z)\,.
\end{equation}

To obtain an analog of Theorem \ref{main thm} for general closed surfaces in~$\R^3$, we then consider layered initial data $(h^0_{\ep,l},q^0_{\ep,l},b^0_{\ep,l})_{|l|<1}$ as above with the following properties. To state them, given $\rho>0$, let $A_\rho(M)$ be the Banach space of analytic functions on~$M$ with the norm $\|f\|_\rho := \sum_{m\geq 0}(\rho^m/m!)\,\|\nabla^m f\|_{L^\infty(M)}$, and let $A_\rho(TM)$ be its counterpart for vector fields on~$M$. For some fixed $\rho_0 > 0$, we impose the following conditions on the initial data:

\begin{enumerate}[(i)]
\item Support:  $b^0_{\ep,l}\equiv 0$ for $|l|\geq\tfrac{1}{2}$. \smallbreak
\item Regularity:  Each datum belongs to $A_{\rho_0}(M)$ (resp.\ $A_{\rho_0}(TM)$), the maps in $l$ are $C^r((-1,1))$ for some non-integer $r > 1$, $\pd_lh^0_{\ep,l}=\ep\,J_{h^0_{\ep,l}}^{-1}q^0_{\ep,l}$, and $\operatorname{div}_M b^0_{\ep,l}=0$.
\item Divergence-free: With $\omega_\ep^0$ defined as in \eqref{E.cc3d_omeps}, $\div \omega_\ep^0 = 0$ in $\R^3$.  \smallbreak
\item Positivity:  $q^0_{\ep,l}(y)\geq c_0>0$ for all $|l|<1$ and all $y\in M$. 

\item Normalization:  There is some $b_0\in A_{\rho_0}(TM)$ such that $\int_{-1}^1 b^0_{\ep,l}\,\dl \to b_0$ in $A_{\rho_0}(TM)$ as $\ep \to 0$. 
\end{enumerate}

\noindent We are ready to state the result on the desingularization of a general vortex sheet in $\R^3$:

\begin{theorem}\label{T.cc3d_main}
Let $\rho_0 > 0$ be a real number and $r > 1$ a non-integer. For $\ep \in (0,1)$ small enough, consider the initial layer data $(h^0_{\ep,l},q^0_{\ep,l},b^0_{\ep,l})_{|l|\leq 1}$ satisfying the above hypotheses (i)--(v), and such that
$$
C_0 := \sup_{\ep \in (0,1)} \sup_{|l|\leq 1}\bigl(\|h^0_{\ep,l}\|_{\rho_0} + \|\nabla_M h^0_{\ep,l}\|_{\rho_0}+\|q^0_{\ep,l}\|_{\rho_0}+\|b^0_{\ep,l}\|_{\rho_0}\bigr) < \infty\,.
$$
Then, for any $\rho \in (0,\rho_0)$, there exist $\ep_0 \in (0,1)$, a constant $ C > 0$ and a time  $T > 0$ such that, for every $\ep \in (0,\ep_0]$, there is a unique family 
\[
    (h_{\ep,l},q_{\ep,l},b_{\ep,l})_{|l|<1}
    \in C^1\bigl([0,T),\,A_{\rho}(M)^2\times A_{\rho}(TM)\bigr),
\]
which coincide with $(h^0_{\ep,l},q^0_{\ep,l},b^0_{\ep,l})_{|l|<1}$ at time~$0$, such that:
\begin{enumerate}[(i)]
\item \emph{Evolution:}  On $[0,T)$ the triple satisfies the system
\begin{equation}\label{E.cc3d_sys}
    \pd_th_{\ep,\ell} + V_{\ep,\ell}\cdot\nabla_M h_{\ep,\ell} = W_{\ep,\ell},\quad
    \pd_t q_{\ep,\ell} + \operatorname{div}_M\!(q_{\ep,\ell}V_{\ep,\ell}) = 0,\quad
    \pd_t b_{\ep,\ell}+\mathcal{L}_{V_{\ep,\ell}}b_{\ep,\ell}=0,
\end{equation}
with $\operatorname{div}_M b_{\ep,\ell}=0$, $\pd_\ell h_{\ep,\ell}=\ep\,J_{\rho_{\ep,\ell}}^{-1}q_{\ep,\ell}$, and with $V_{\ep,\ell}$, $W_{\ep,\ell}$ being the tangential and normal components of the total velocity $\mathcal{U}_{\ep,\ell}:=\int_{-1}^1 K_\ep^{l,\ell}[h_{\ep,l},b_{\ep,l}]\,\dl$, where
\[
    K_\ep^{l,\ell}[h_{\ep,l},b_{\ep,l}](y,t)
    := \frac{1}{4\pi}\int_M
    \frac{\mathcal{P}_{h_{\ep,l}(z,t)}b_{\ep,l}(z,t)
    \times\bigl(\mathbf{x}(y,h_{\ep,\ell}(y,t))-\mathbf{x}(z,h_{\ep,l}(z,t))\bigr)}
    {\bigl|\mathbf{x}(y,h_{\ep,\ell}(y,t))-\mathbf{x}(z,h_{\ep,l}(z,t))\bigr|^3}
    \,dA_M(z)\,.
\]

\item \emph{Birkhoff--Rott:} There is a unique solution $(h,b)\in A_{\rho}(M) \times A_{\rho}(TM)$ to the system \eqref{E.cc3d_evh} on $[0,T)$, with initial data $(0,b_0)$. \smallbreak




\item \emph{Euler solution:}  The vorticity $\om_\ep$ defined by~\eqref{E.cc3d_omeps} is a weak solution of the 3D incompressible Euler equations on $\RR^3\times[0,T]$, with $\supp \om_\ep(\cdot, t)$ contained in a tubular neighborhood of $\Gamma_{h(\cdot,t)}$ of width $C\ep$.  If in addition the maps $l\mapsto\rho^0_{\ep,l},q^0_{\ep,l},B^0_{\ep,l}$ are $C^r$ for all $r > 1$, then $\om_\ep(\cdot,t)\in C^\infty([0,T]\times\RR^3,\RR^3)$. \smallbreak

\item \emph{Distributional convergence:} If the initial data converges as
    $$
    h_{\ep,0}^0 \to 0 \quad \textup{in } A_{\rho_0}(M)\,, \quad \textup{and} \quad \int_{-1}^1 b_{\ep,l}^0 \dd l \to b_0\quad \textup{in }A_{\rho_0}(TM)\,,\ \textup{ as } \ep \to 0^{+}\,,
    $$
    then the solution converges to the vortex sheet in the following sense: let $(h, b)$ be the unique analytic solution to \eqref{E.cc3d_evh} with initial data $(0, b_0)$. Then, it follows that, as $\ep \to 0^{+},$
    \begin{align*}
    & \circ \ h_{\eps,l}\rightarrow h\quad \text{in $L^\infty([0,T],C^k(M))$ for all $l\in (-1,1)$ and all $k\in \N$}\,,\\
    &\circ\ \int_{-1}^1b_{\eps,l}\dd l\rightarrow b \quad \text{in $L^\infty([0,T],C^k(M))$ for all $k\in \N\,.$}
    \end{align*}
\end{enumerate}
\end{theorem}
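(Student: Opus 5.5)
The plan is to transplant the flat-graph argument of Sections \ref{S.effective}--\ref{S.prop44} to $M$. The point is that, after localizing with a finite analytic atlas of $M$, every ingredient becomes a graph-type problem with analytic coefficients.

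\textbf{Step 1: effective system.} First I derive \eqref{E.cc3d_sys} exactly as in Lemma \ref{eff system}.
\begin{itemize}
\item Tangency of $\om_\ep$ to each leaf propagates by the Cauchy formula, as in Lemma \ref{tan lemma}.
\item Differentiating $\mathbf{x}(y,h_{\ep,l}(y,t))=X_t(\cdot)$ in $t$ and splitting the velocity into tangential and normal parts gives the $h$-equation. Here $V,W$ are defined through ${\rm D}\mathbf{x}_h$ and $\mathbf{n}$.
\item The $q$-equation follows from incompressibility. Since $\ep q_{\ep,l}$ is the Jacobian of $F_\ep$, it is transported as a density, and the computation from the proof of \eqref{eq:3.3} applies.
\item Because $\mathcal{P}_h b$ is the pushforward of $b$ by ${\rm D}\mathbf{x}_h$, the Cauchy formula \eqref{Cauchy} says that $b$ is Lie-transported: $\pd_t b+\mathcal{L}_V b=0$. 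This intrinsic form replaces the vector-stretching operator $\mathcal A_\ep$. It also preserves $\operatorname{div}_M b=0$, which must be taken with respect to the $l$-independent form $dA_M$; this is consistent with \eqref{E.cc3d_omeps} by Lemma \ref{L.cc3d_divfree}.
\item The converse, as in Lemma \ref{weak sol}, is obtained by testing \eqref{E.cc3d_omeps} against $\Phi$ and using Lemma \ref{L.cc3d_divfree} instead of the transport argument for $\div\om_\ep$.
\end{itemize}

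\textbf{Step 2: Nishida's theorem.} Next I recast the system in the scale $A_\rho$ and apply Theorem \ref{Nishida}.
\begin{itemize}
\item I complexify the analytic atlas of $M$: each chart extends holomorphically to a complex neighbourhood of width $\rho$, and I use the equivalent sup-over-strips norm.
\item The unknown is $w=(h,\nabla_M h,q,b)$, constrained as in $\cZ_\rho$.
\item The loss of one derivative, coming from $\nabla_M h$, $\operatorname{div}_M(qV)$ and $\mathcal{L}_V b$, is handled by the Cauchy estimate between $A_\rho$ and $A_{\rho'}$. This produces the factor $(\rho-\rho')^{-1}$, as in Proposition \ref{bd F}.
\item Everything therefore reduces to analogues of Propositions \ref{main est} and \ref{est kfar}. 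I need $\mathcal{U}_{\ep,\ell}$ to be bounded and Lipschitz in $A_\rho$ uniformly in $\ep$.
\end{itemize}

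\textbf{Step 3: kernel estimates.} Split $K^{l,\ell}_\ep$ with a cutoff in $d_M(y,z)$.
\begin{itemize}
\item The far part is analytic with a uniformly nondegenerate denominator, since $\widehat{\mathbf{x}}$ is an embedding. It is handled as in Proposition \ref{est kfar}, with the lattice sum replaced by compactness.
\item For the near part, work in a chart $y=\psi(s)$ and write
$\mathbf{x}(\psi(s),h_\ep(s))-\mathbf{x}(\psi(\varsigma),h_{\ep,l}(\varsigma))=\digamma(s)-\digamma(\varsigma)+\ep\zeta(s)$. This is the exact analogue of $d^\ep_{l,\ell}$, with $\digamma(s)=\mathbf{x}(\psi(s),h_{\ep,\ell}(s))$ and $\ep\zeta(s)=\mathbf{x}(\psi(s),h_{\ep,\ell})-\mathbf{x}(\psi(s),h_{\ep,l})$.
\item The key structural fact is that $\zeta$ is almost normal, with $|\zeta|\approx|l-\ell|$ by the positivity $q\geq c_0$. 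Meanwhile ${\rm D}\digamma$ spans the tangent plane.
\item The linearized kernel $\tilde d(v,s)={\rm D}\digamma(s)v+\ep\zeta(s)$ is then of the type in Lemma \ref{SingInt}, after a complex linear change of variables bringing ${\rm D}\digamma$ to the form of $M$. This works provided $\Im$ parts are small, which I arrange by shrinking $\rho_0$ as in \eqref{im smal}.
\item The remainder $J_2$ is controlled by Lemma \ref{Lem double diff} and Lemma \ref{bds d} verbatim. The Jacobian $\sqrt{\det g}$ and $\mathcal{P}_h b$ are analytic coefficients absorbed into the density $w_{5-7}$.
\end{itemize}
This gives an $\ep$-independent lifespan and proves items (i) and (iii).

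\textbf{Step 4: convergence.} Items (ii) and (iv) follow as in the proof of Theorem \ref{main thm} (iii).
\begin{itemize}
\item Uniform $A_\rho$ bounds give compactness, by Arzel\`a--Ascoli in a slightly smaller $\rho'$.
\item The limit is a weak Euler measure solution supported on $\Gamma_h$. Passing to the limit uses the $L^\infty$ bound on $u_\ep$ obtained from the maximum principle.
\item Existence of the analytic Birkhoff--Rott solution also comes from passing to the limit in \eqref{E.cc3d_sys}, since $\int \mathcal{U}\dl$ converges to the principal value $\mathrm{BR}$. Uniqueness follows from \cite{SS}, or directly from Nishida's uniqueness applied to \eqref{E.cc3d_evh} with the $\ep=0$ version of the same kernel bounds.
\item Uniqueness of the limit then upgrades subsequential convergence to full convergence.
\end{itemize}

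\textbf{Main obstacle.} I expect the hardest step to be Step 3. The issue is uniform control of $\Re|\tilde d|_\C^2\gtrsim|s-\varsigma|^2+\ep^2|l-\ell|^2$ in curved charts, because the offset $\ep\zeta$ is no longer exactly orthogonal to ${\rm D}\digamma$. I would first try to prove the lower bound via the normal component of $\zeta$ together with the tilted-graph inequality \eqref{lower bd}. If needed, I would treat the tangential part of $\zeta$, of size $O(\ep|l-\ell|\,r_0)$, as a shift $v\mapsto v+\ep|l-\ell|c$ of the convolution variable. Lemma \ref{SingInt} tolerates such a shift after a $C^{1/2}$-bounded translation.
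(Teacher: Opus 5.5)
Your plan is correct in outline and follows the paper's own sketch essentially point by point. The shared steps are the effective system from the Cauchy formula and transport of the Jacobian $\ep q$, Nishida's theorem in complexified analytic charts, reduction of the near-diagonal kernel to the flat tangent-plane chord so that Lemmas~\ref{SingInt}, \ref{Lem double diff} and~\ref{bds d} apply, and identification of the limit through weak Euler solutions plus analytic uniqueness. Three local corrections:
\begin{itemize}
\item The Cauchy formula pushes forward $b/q$ rather than $b$, so $b$ is transported as a vector density. This is exactly what preserves $\operatorname{div}_M b=0$.
\item $\ep\zeta=(h_{\ep,\ell}-h_{\ep,l})(y)\,\mathbf n(y)$ is exactly normal to $M$, and $\digamma$ must be built from the source leaf $h_{\ep,l}$. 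Your ``main obstacle'' is therefore just the oblique offset already present in the flat case, handled by \eqref{lower bd} after rotating $\mathbf n(y)$ to $e_3$.
\item $\mathcal U_{\ep,\ell}$ does not converge to $\mathrm{BR}$. It is asymptotically $\mathrm{BR}+\tfrac12\int_{-1}^1\operatorname{sgn}(\ell-l)\,[u]_l\dl$, with $[u]_l$ the tangential jump generated by leaf $l$, and this term does not average out in $\ell$. So the Birkhoff--Rott limit must come from the weak Euler formulation, not from that claim.
\end{itemize}
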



\subsection{Comments on the proof}

The proof of Theorem~\ref{T.cc3d_main} follows the same strategy as in the flat case.  We just indicate the three main points where the geometry enters.

\smallbreak
\noindent\textbf{Evolution system.}  The kinematic equation~\eqref{E.cc3d_sys}$_1$ is derived by differentiating $\mathbf{x}(y,h_{\ep,\ell}(y,t))$ in time and projecting onto $\mathbf{n}$. The thickness equation~\eqref{E.cc3d_sys}$_2$ follows from incompressibility and the identity $\operatorname{div} u = J_r^{-1}\operatorname{div}_M(J_r U) + J_r^{-1}\pd_r(J_r U^n)$ in tubular coordinates; the weighted variable $q_{\ep,l}$ is chosen precisely to remove curvature source terms.  Equation~\eqref{E.cc3d_sys}$_3$ is Lie transport of a tangent density, which preserves $\operatorname{div}_M b_{\ep,\ell}=0$ automatically.

\smallbreak
\noindent\textbf{Singular-kernel estimates.}  In a local analytic chart near the diagonal, the geometric chord satisfies
\[
    \mathbf{x}(y,h_{\ep,\ell}(y,t))-\mathbf{x}(z,h_{\ep,l}(z,t))
    = {\rm D}\mathbf{x}_{h_{\ep,l}}(y)(y-z)
    + \bigl(h_{\ep,\ell}(y)-h_{\ep,l}(y)\bigr)\mathbf{n}(y)
    + O(|y-z|^2).
\]
The principal part is the flat tangent-plane chord, and all operator estimates from the periodic case carry over with ${\rm D}\widehat{\mathbf{x}}$, $\mathbf{n}$, $G_h$, $\P_h$ as analytic multiplicative coefficients. 

\smallbreak 
\noindent\textbf{Analytic function spaces.}  The periodic strip analytic norms are replaced by embedding $M$ into a complex manifold and using suitable tubular neighborhoods of $M$ instead of the strips.  These satisfy the same algebra and Cauchy estimates, and since all geometric factors (particularly, the embedding $\widehat{\mathbf{x}}$ and the normal $\mathbf n$ are analytic in $h$ and $\nabla h$ (for $\|h\|_{L^\infty(M)}<r_0$), the nonlinear vector field defined by the system~\eqref{E.cc3d_sys} has exactly the same one-derivative loss as in the flat case.  Nishida's theorem therefore applies without modification.

\vspace{-0.2cm}
\section*{Acknowledgements} 
This work has received funding from the European Research Council (ERC) under the European Union's Horizon 2020 research and innovation programme through the grant agreement~862342 (A.E. and D.M.). A.E. is also partially supported by the grant PID2022-136795NB-I00 of the Spanish Science Agency and the ICMAT--Severo Ochoa grant CEX2019-000904-S. A.J.F. is partially supported by the grants PID2023-149451NA-I00 of MCIN/AEI/10.13039/501100011033/FEDER, UE;  and RYC2024-049142-I, MICINN (Spain).

\appendix
\vspace{-0.1cm}
\section{Cauchy's integral formula}\label{Sec 7}

\begin{lemma}\label{cauchy est}
Let $\rho'\in (0,\rho)$. Then, for any $f\in \cX_\rho$, it follows that $\norm{ \nabla f}_{\cX_{\rho'}^2} \lesssim \frac{1}{\rho-\rho'}\,\norm{f}_{\rho}.$
\end{lemma}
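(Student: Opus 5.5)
The plan is to use the Cauchy integral formula in each complex variable separately, after shrinking the strip. Fix $f\in\cX_\rho$ and $|\beta|\le\rho'$, and set $\delta:=\rho-\rho'>0$. For $z=s+i\beta$ and $j\in\{1,2\}$, the one-variable function $\xi\mapsto f(z+\xi e_j)$ is holomorphic for $|\xi|<\delta$, because $|\beta+\Im(\xi)e_j|\le|\beta|+|\xi|<\rho$. Cauchy's formula on the circle $|\xi|=\delta/2$ then gives
\[
\partial_{z_j}f(z)=\frac{1}{2\pi i}\oint_{|\xi|=\delta/2}\frac{f(z+\xi e_j)}{\xi^2}\dd\xi
=\frac{1}{2\pi}\int_0^{2\pi}\frac{f\bigl(z+\tfrac{\delta}{2}e^{i\theta}e_j\bigr)}{\tfrac{\delta}{2}e^{i\theta}}\dd\theta\,.
\]
On $\T^2$ the complex derivative agrees with the real partial derivative $\partial_{s_j}$, since $f$ is holomorphic.

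The key observation is that the formula expresses $\partial_{s_j}f(\cdot+i\beta)$ as an average, over $\theta$, of functions $s\mapsto f(s+i\beta+\tfrac{\delta}{2}e^{i\theta}e_j)$, each multiplied by a constant of modulus $2/\delta$. Write $\tfrac{\delta}{2}e^{i\theta}=a_\theta+ib_\theta$ with $a_\theta, b_\theta$ real. Then
\[
s\mapsto f(s+i\beta+\tfrac{\delta}{2}e^{i\theta}e_j)=f\bigl((s+a_\theta e_j)+i(\beta+b_\theta e_j)\bigr)
\]
is a real translate of $f(\cdot+i\beta')$ with $\beta'=\beta+b_\theta e_j$. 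Since $|\beta'|\le\rho'+\delta/2<\rho$, this translate has $C^{1/2}(\T^2)$ norm at most $\|f\|_\rho$, because Hölder norms on the torus are translation invariant.

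The $C^{1/2}$ norm is convex, so Minkowski's inequality for the $\theta$-integral (applied both to the sup part and to the difference quotients) gives
\[
\|\partial_{s_j}f(\cdot+i\beta)\|_{C^{1/2}(\T^2)}\le \frac{2}{\delta}\,\|f\|_\rho\,.
\]
Taking the supremum over $|\beta|\le\rho'$ and summing over $j$ yields $\|\nabla f\|_{\cX^2_{\rho'}}\le \frac{4}{\rho-\rho'}\|f\|_\rho$.

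It remains to check that $\nabla f$ lies in $\cX_{\rho'}$. Derivatives of holomorphic functions are holomorphic, periodicity is inherited, and the derivatives are real on $\T^2$ because $f$ is real there.

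The only mildly delicate point is the geometry: the Cauchy circle must stay inside the closed strip $|\Im z|\le\rho$. This is exactly why the radius is $\delta/2$ rather than $\delta$. Nothing else presents an obstacle.
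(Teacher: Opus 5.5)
Your proof is correct and follows essentially the paper's route: a Cauchy estimate on discs of radius $\tfrac12(\rho-\rho')$, which is where the factor $(\rho-\rho')^{-1}$ comes from. The paper applies the polydisk formula to the differences $\nabla f(\cdot+x)-\nabla f(\cdot)$, whereas you use the one-variable formula together with translation invariance of the H\"older norm and Minkowski's inequality; this handles the H\"older seminorm a bit more cleanly and makes explicit the factor $|x|^{1/2}$ that the paper leaves implicit in its last inequality.
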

\begin{proof}  
Let $f \in \cX_\rho$ be fixed but arbitrary. The components of $\nabla f(\cdot+x) - \nabla f(\cdot)$ are harmonic for every $x \in \TT^2$. Moreover, for all $z \in \TT_{\rho'}^2$, the polydisk $B_{\frac{1}{2}(\rho-\rho')}^{\C}(z_1)\times B_{\frac{1}{2}(\rho-\rho')}^{\C}(z_2)$ lies in $\T^2_\rho$. Applying then Cauchy's integral formula in polydisks \cite[Chapter I, Theorem 1.3]{Range}, we get that
\begin{align*}
\mel|\nabla f(z+x)-\nabla f(z)|\lesssim \int_{B_{\frac{1}{2}(\rho-\rho')}^{\C}(z_1)\times B_{\frac{1}{2}(\rho-\rho')}^{\C}(z_2)} \left( \frac{|f(\xi+x)-f(\xi)|}{|z_1-\xi_1|^2|z_2-\xi_2|}+\frac{|f(\xi+x)-f(\xi)|}{|z_1-\xi_1||z_2-\xi_2|^2}\right)\dd\xi_1\dd\xi_2\\
&\lesssim \frac{1}{\rho-\rho'}\norm{f(\cdot+x)-f(\cdot)}_{L^\infty(\T^2_\rho)}\leq \frac{1}{\rho-\rho'}\norm{f}_{\cX_\rho}\,.
\end{align*}
Since the same argument can be used for the $L^\infty$-term in the definition of the norm $\|\cdot\|_{\cX_{\rho'}}^2$, the result immediately follows. 
\end{proof}

\begin{lemma}\label{lem lsc}
Let $\rho'\in (0,\rho)$. Then, the embedding $\cX_{\rho}\hookrightarrow \cX_{\rho'}$ is compact. 
\end{lemma}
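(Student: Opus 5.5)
The plan is to use the Cauchy estimates to gain regularity when shrinking the strip, and then apply Arzelà--Ascoli on a slightly larger compact set. Let $(f_n)$ be bounded in $\cX_\rho$, say $\norm{f_n}_\rho\leq 1$. Fix an intermediate radius $\rho''\in(\rho',\rho)$. By Lemma~\ref{cauchy est}, $\norm{\nabla f_n}_{\cX_{\rho''}^2}\lesssim (\rho-\rho'')^{-1}$. So the family $(f_n)$ is bounded in $L^\infty$ on the closed strip $\T^2\times\overline{B_{\rho''}(0)}$, which is compact since $\T^2$ is compact. The family is also uniformly Lipschitz there, in both the real and the imaginary directions; holomorphy (Cauchy–Riemann) controls $\partial_\beta f$ by $\nabla_s f$.

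First, Arzelà--Ascoli yields a subsequence converging uniformly on $\T^2\times\overline{B_{\rho''}(0)}$ to some $f$. Uniform limits of holomorphic functions are holomorphic. Periodicity passes to the limit, and so does the reality condition $f(\T^2)\subset\R$. Hence $f$ is a holomorphic periodic function on the closed $\rho''$-strip, and in particular $f\in\cX_{\rho'}$.

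Second, upgrade the convergence to the $\cX_{\rho'}$ norm, i.e. uniform in $|\beta|\leq\rho'$ convergence in $C^{1/2}(\T^2)$. Set $g_n:=f_n-f$. Then $g_n\to0$ uniformly on the $\rho''$-strip, and $\nabla g_n$ is uniformly bounded on the $\rho'$-strip. The bound on $\nabla g_n$ comes from Lemma~\ref{cauchy est} applied between $\rho''$ and $\rho'$, or from the Cauchy formula on polydisks directly; for $f$ this uses the bound inherited from the limit. The key step is the interpolation inequality
\[
[g]_{C^{1/2}}\leq 2\norm{g}_{L^\infty}^{1/2}\norm{\nabla g}_{L^\infty}^{1/2},
\]
which follows by splitting $|x-y|\lessgtr \norm{g}_{L^\infty}/\norm{\nabla g}_{L^\infty}$. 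Applied on each slice $\cdot+i\beta$ with $|\beta|\leq\rho'$, it gives $\norm{g_n}_{\rho'}\to0$.

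The main (mild) obstacle is the bookkeeping in the gradient bound. Lemma~\ref{cauchy est} bounds $\nabla g_n$ in terms of the full $\cX$ norm of $g_n$ on the bigger strip. That norm is finite for $f_n$, but for the limit $f$ it must be verified. I would avoid this by bounding $\nabla g_n$ directly via Cauchy's formula in terms of $\sup_{\rho''\text{-strip}}|g_n|$, which already tends to zero. This even gives $\norm{\nabla g_n}_{L^\infty(\rho'\text{-strip})}\lesssim(\rho''-\rho')^{-1}\sup|g_n|\to0$. With that bound, the $C^{1/2}$ convergence is immediate and the interpolation is not even needed.
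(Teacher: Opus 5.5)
Your proof is correct and follows the same route as the paper. Lemma~\ref{cauchy est} gives uniform $C^1$ bounds on a thinner strip, compactness (your Arzel\`a--Ascoli step, or the compact embedding $C^1\hookrightarrow C^{1/2}$ the paper invokes) extracts a convergent subsequence, and the limit is holomorphic. Your intermediate radius $\rho''$ and the Cauchy-formula bound on $\nabla(f_n-f)$ in terms of $\sup|f_n-f|$ just make explicit two points the paper leaves implicit: the limit is holomorphic on a neighborhood of the closed $\rho'$-strip, and the convergence holds in the $\cX_{\rho'}$ norm.
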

\begin{proof}
By Cauchy's integral formula, namely Lemma \ref{cauchy est}, it follows that
$
\norm{u}_{C^1(\T_{\rho'}^2)}\lesssim \norm{u}_{\rho}.
$
Hence, by the compactness of the embedding $C^1(\T_{\rho'}^2)\hookrightarrow C^\frac{1}{2}(\T_{\rho'}^2)$, any bounded sequence must have a convergent subsequence. The limit of this subsequence must satisfy the Cauchy--Riemann equations distributionally. It is therefore holomorphic, and it belongs to $\cX_{\rho'}$.
\end{proof}

\bibliographystyle{amsplain}

\end{document}